\documentclass[12pt]{amsart}
\usepackage{amsfonts,amsmath,amssymb,amscd,amsthm,enumerate}
\usepackage{amsrefs}
\usepackage{geometry}
\usepackage{adjustbox}
\usepackage[all]{xy}
\usepackage{stmaryrd}
\geometry{margin=3cm}
% \addtolength{\oddsidemargin}{-1in}
%\addtolength{\evensidemargin}{-1in}

%	\addtolength{\textwidth}{2in}
%    \addtolength{\topmargin}{-1in}
%	\addtolength{\textheight}{1.75in}
    
\newtheorem{theorem}{Theorem}
\newtheorem{prop}[theorem]{Proposition}
\newtheorem{lem}[theorem]{Lemma}
\newtheorem{cor}[theorem]{Corollary}
\theoremstyle{definition}

\newtheorem{rem}[theorem]{Remark}
\newtheorem{mydef}[theorem]{Definition}
\newtheorem{example}[theorem]{Example}

\renewcommand{\epsilon}{\varepsilon}
\def\lbra{{[}\!{[}}
\def\rbra{{]}\!{]}}
\def\ad{{\rm ad}}

\def\C{\mathbb{C}}

\def\R{\mathbb{R}}

\def\T{\mathbb{T}}

\def\x{\mathbf{x}}

\def\v{\mathbf{v}}
\def\u{\mathbf{u}}
\def\y{\mathbf{y}}
\def\z{\mathbf{z}}
\def\w{\mathbf{w}}
\def\e{\mathbf{e}}
\def\b{\mathbf{b}}
\def\<{\langle}
\def\>{\rangle}

\def\ad{{\rm ad}}

\newcommand{\BigWedge}{\mathord{\adjustbox{valign=B,totalheight=.6\baselineskip}{$\bigwedge$}}}

\usepackage{multicol} %%%%%%%%%%%

\begin{document}
\title{Polynomial Structures in Generalized Geometry}
\author{Marco Aldi and Daniele Grandini}
% Keywords command
\providecommand{\keywords}[1]
{
  \small	
  \textbf{\textit{Keywords---}} #1
}

\begin{abstract}
On the generalized tangent bundle of a smooth manifold, we study skew-symmetric endomorphisms satisfying an arbitrary polynomial equation with real constant coefficients. We investigate the compatibility of these structures with the de Rham operator and the Dorfman bracket. In particular, we isolate several conditions that when restricted to the motivating example of generalized almost complex structure are equivalent to the notion of integrability. 
\end{abstract}
\keywords{Generalized Geometry, polynomial structures, Nijenhuis torsion}
\subjclass[2020]{53D18}
\maketitle

\section{Introduction}
Let $M$ be a differentiable manifold. An {\it almost complex structure} on $M$ is an endomorphism $f$ of the tangent bundle $TM$ such that $f^2+I=0$, where $I$ denotes the identity endomorphism. It is easy to see that in order to support an almost complex structure, $M$ has to be even-dimensional. The odd-dimensional analogue is an {\it almost contact structure}, i.e.\ an endomorphism $f$ of $TM$ together with a vector $E$ and a 1-form $\eta$ such that $f^2+I=E\otimes \eta$. Yano \cite{Yano63} introduced the notion of {\it $f$-structure} i.e.\ an endomorphism of $TM$ such that $f^3+f=0$, of which almost complex structures and almost contact structures are both particular cases. In a different direction, {\it almost tangent structures} \cites{Eliopoulos65, Lehmann-Lejeune66} are endomorphisms $f$ of $TM$ such that $f^2=0$. Motivated by the quartic case, Goldberg and Yano \cite{GoldbergYano70} (see also \cite{GoldbergPetridis73}) looked at general {\it polynomial structures} i.e.\ endomorphisms $f$ of $TM$ satisfying $P(f)=0$ for some polynomial $P(x)$ with real constant coefficients and such that $I, f,\ldots,f^{\deg(P)-1}$ induce linearly independent endomorphisms on each fiber of $TM$.    

Hitchin \cite{Hitchin03} has shown that certain geometric structures arising from the study of the supersymmetric sigma-model can be understood as {\it generalized almost complex structures}, i.e.\ as endomorphisms $\varphi$ of the generalized tangent bundle $\T M= TM\oplus T^*M$ that are skew-symmetric with respect to the tautological inner product and such that $\varphi^2+I=0$. Any classical almost complex structure $f$ gives rise to a generalized almost complex structure by setting $\varphi=f\oplus (-f^{t})$. Just as in the classical case, generalized almost complex structures can only exist on even-dimensional manifolds. {\it Generalized $F$-structures} i.e.\ skew-symmetric endomorphisms $\varphi$ of the generalized tangent bundle such that $\varphi^3+\varphi=0$ were introduced by Vaisman \cite{Vaisman08} as a common extension of the notions of generalized almost complex structure and of classical $f$-structure.

The goal of the present paper is to fill a gap in the literature and study the analogue of classical polynomial structures in the context of generalized geometry i.e.\ skew-symmetric endomorphisms $\varphi$ of the generalized tangent bundle such that $P(\varphi)=0$ for some polynomial $P(x)$ with real constant coefficients. We call these structures {\it generalized polynomial structures}. In addition to generalized almost complex structures and generalized $F$-structures, the quadratic case has recently received some attention with progress being made for {\it generalized almost tangent structures} \cite{BlagaCrasmareanu14}, and more generally for {\it generalized metallic structures} \cite{BlagaNannicini20}. With the exception of the somewhat degenerate case of generalized almost tangent structures, a common feature of the particular examples of generalized polynomial structures studied in the literature up to this point is that the polynomial $P(x)$ has simple roots and thus $\T M\otimes \mathbb C$ splits into the direct sum of the corresponding eigenbundles. In general, this is no longer the case and $\varphi$ may very well be non-diagonalizable. 

Our first main observation, Theorem \ref{theorem:40} in Section \ref{sec:3}, is that given a generalized polynomial structure $\varphi$, the complexified generalized tangent bundle decomposes into the direct sum of the generalized eigenbundles $L_\lambda$ of $\varphi$. Moreover, each $L_\lambda$ is an isotropic eigenbundle and for each non-zero $\lambda$ the restriction of the tautological inner product to $E_\lambda=L_\lambda\oplus L_{-\lambda}$ is non-degenerate. In particular, every generalized polynomial structure admits a Jordan-Chevalley decomposition into its semisimple and nilpotent parts, both of which are also generalized polynomial structures. Another important consequence is that every generalized polynomial structure induces a canonical (up to overall shifts and involutions) multigrading on (complexified) differential forms. In the particular case of generalized almost complex structures, we recover the familiar decomposition of the complexified generalized tangent bundle into the $\pm\sqrt{-1}$-eigenbundles and the corresponding grading of complex differential forms \cite{Gualtieri11} (later extended to generalized F-structures in \cite{AldiGrandini17}). Finally, using results by Burgoyne and Cushman \cite{BurgoyneCushman77} in the case where $M$ is a point, we offer a detailed description of the possibly indecomposable blocks that can arise.

Generalized complex structures are characterized \cites{Gualtieri11, Cavalcanti06} among all generalized almost complex structures in three equivalent ways: 1) by the vanishing of their Courant-Nijenhuis torsion, 2) by the closure of their eigenbundles with respect to the Dorfman bracket, and 3) the decomposition $d=\partial + \overline \partial$, with $\partial$ (respectively $\overline \partial$) of degree $1$ (respectively $-1$) with respect to the induced grading on complex differential forms. In Theorem \ref{theorem:56} of Section \ref{sec:4}, our second main result, we extend this characterization to arbitrary generalized polynomial structures. Naively, based on the example of generalized complex structures and on the definition of integrability for classical polynomial structures \cites{GoldbergPetridis73,IshiharaYano65, Vanzura76}, one might expect that imposing the vanishing of the Courant-Nijenhuis torsion leads to an interesting class of generalized polynomial structures. Unfortunately, as we point out in Section \ref{sec:4} this condition is too strong as it forces the generalized polynomial structure to have at most two distinct eigenvalues. This is in sharp contrast with the classical case, where the vanishing of the Nijenhuis torsion imposes no limit on the number of distinct eigenvalues of a polynomial structure. As it turns out, this discrepancy is closely related to the fact that if the restriction of the tautological inner product to a proper subbundle $E\subseteq \T M\otimes \mathbb C$ is non-degenerate, then $E$ cannot be closed under the Dorfman bracket. Hence, the natural extension of condition 2) above is the requirement that if $\lambda\neq - \mu$ then the Dorfman bracket $\lbra L_\lambda, L_\mu \rbra$ is contained in $L_\lambda+L_\mu$. In particular no condition is imposed on the Dorfman bracket of two sections of $L_0$, the generalized eigenbundle corresponding to the eigenvalue $0$. Correspondingly, the natural extension of condition 1) to arbitrary generalized polynomial structures is the vanishing of what we call the {\it shifted Courant-Nijenhuis torsion} of the semisimple part. The appearance of the shifted Courant-Nijenhuis is not entirely unexpected because its vanishing precisely encodes the notion of {\it strong integrability} of generalized F-structures \cites{PoonWade11, AldiGrandini15}. The fact that only the semisimple part plays a role in Theorem \ref{theorem:56} is also not entirely unexpected since the generalized eigenbundles and the multigrading of differential forms that they induce only depend on the semisimple part.

Having obtained in Section \ref{sec:4} a satisfactory understanding of how the semisimple part of a generalized polynomial structure interacts with the de Rham operator, we would like to grasp the geometric meaning of the nilpotent part. We tackle this task in Section \ref{sec:5}. Our starting point is the observation \cites{Guttenberg07, Tomasiello08} that the integrability of a generalized almost complex structure $\varphi$ is equivalent to the vanishing of the derived bracket with respect to the operator $(\ad_{\widetilde \varphi}^2+I)(d)$, where $\widetilde \varphi$ is a lift of $\varphi$ to the space of $\mathbb R$-linear operators acting on differential forms and $\ad_{\widetilde \varphi}$ denotes the corresponding adjoint action (with respect to the natural graded commutator of operators acting on forms). As pointed out in \cite{AldiGrandini17} a similar result holds for a generalized F-structure $\varphi$, whose strong integrability is equivalent to the vanishing of the derived bracket with respect to the operator $(\ad_{\widetilde \varphi}^3+\ad_{\widetilde\varphi})(d)$. With this motivation in mind, in the present paper we introduce the {\it minimal torsion} $\mathcal M_\varphi$ of a generalized polynomial structure $\varphi$ with minimal polynomial $P(x)$ as the derived bracket for the operator $P(\ad_{\widetilde{\varphi}})(d)$. The terminology is justified by the fact that if $P(x)$ is quadratic (including the case of generalized almost complex structures discussed above) then $\mathcal M_\varphi=2\mathcal T_\varphi$, where $\mathcal T_\varphi$ is the usual Courant-Nijenhuis torsion of $\varphi$. Similarly, if $P(x)$ is cubic then $\mathcal M_\varphi = -3 \mathcal S_\varphi$, where $S_\varphi$ is the shifted Courant-Nijenhuis torsion introduced in Section \ref{sec:4}. Our first result in this direction, Theorem \ref{theorem:57}, explicitly relates the minimal torsion of a generalized polynomial structure to what we call {\it (shifted) higher Courant-Nijenhuis torsions} (i.e.\ the natural extension of higher Nijenhuis and Haantjes torsions \cites{Kosmann-Schwarzbach19, TempestaTondo18a, TempestaTondo21} to sections of the generalized tangent bundle). Our second result on minimal torsion, Theorem \ref{theorem:83}, characterizes the vanishing of the minimal torsion in terms of a suitable decomposition of the de Rham operator. 

We say that a generalized polynomial structure is {\it minimal} if its minimal torsion vanishes. Our perspective is that minimality provides a useful notion of compatibility with the structure of Courant algebroid on the generalized tangent bundle that involves both the semisimple and the nilpotent parts. A natural question at this point is how minimality relates to the compatibility conditions, which do not involve the nilpotent part, discussed in Section \ref{sec:4}. As it turns out, for a generic generalized polynomial structure, minimality implies that the semisimple part satisfies the equivalent conditions of Theorem \ref{theorem:56}. As a consequence, we have that, generically, the decomposition of the de Rham operator provided by Theorem \ref{theorem:83} coincides with the decomposition into multigraded components guaranteed by Theorem \ref{theorem:56}. The non-generic case is characterized by a sort of ``resonance'' condition in which an eigenvalue can be written as a non-trivial sum of three other eigenvalues. This phenomenon is rather unexpected and it would be interesting to understand it better. Here we limit ourselves to show, by means of an example, that this condition cannot be entirely removed. 

Throughout the paper, we place particular emphasis on the geometrically relevant \cite{GoldbergYano70} quartic case. In particular, we show that minimal generalized polynomial structures with minimal polynomial $(x^2+1)^2$ are nothing but commuting pairs consisting of a generalized complex structure and a weak generalized tangent structure. Furthermore, minimal generalized polynomial structures with minimal polynomial $x^2(x^2+1)$ are nothing but commuting pairs consisting of a strongly integrable generalized F-structure and a weak generalized tangent structure with vanishing bivariate Courant-Nijenhuis torsion. 

In Section \ref{sec:6} we further illustrate the main constructions of the paper with concrete examples of polynomial structures on low-dimensional Lie groups. We treat these cases by constructing a basis of the underlying Lie algebra that is adapted to the block decomposition of a given generalized polynomial structure. This allows us to explicitly compute the higher Courant-Nijenhuis torsions and their shifted counterparts. While our focus is on particular examples, our calculations are algorithmic in nature and could conceivably be carried out in a systematic fashion with the aid of suitable computer algebra systems. We hope to come back to this point in future work.

\vskip.3in

 {\bf Acknowledgements:} We would like to thank the anonymous referees for the many useful suggestions.

\section{Preliminaries}

The goal of this section is to make the paper reasonably self-contained by providing the necessary background on the geometry of the generalized tangent bundle of a smooth manifold. We find it convenient to systematically employ the language, developed in \cite{AldiGrandini17} (see also \cite{Buttin74} for the classical case), of $\mathbb R$-linear operators acting on the sheaf of differential forms. The well-known concept of derived bracket (\cites{Vinogradov90, Kosmann-Schwarzbach04}) is of crucial importance in the reminder of the paper.

\subsection{Operators acting on differential forms}

Let $M$ be a smooth manifold of dimension $n$. We denote by $\Omega_M$ the sheaf of differential forms on $M$ and by $\mathcal E_M$ the sheaf of algebras of $\mathbb R$-linear endomorphisms of $\Omega_M$. The usual grading $\Omega_M=\bigoplus_{i=0}^n \Omega_M^i$ induces a compatible grading on $\mathcal E_M$. Sections of $\mathcal E_M$ are endowed with the natural composition product, giving rise to the standard (graded) commutator $[\psi,\varphi]=\psi\varphi-(-1)^{kl}\varphi\psi$. Each section $\zeta \in \mathcal E_M^l$ acts on $\mathcal E_M$ as a left-derivation of degree $l$ via the adjoint action $\ad_\zeta(\varphi)=[\zeta,\varphi]$.

\begin{example}
By letting differential forms act by left multiplication $\eta \omega=\eta \wedge \omega$ (the wedge notation will be omitted), $\Omega_M$ can be canonically identified with a subsheaf of $\mathcal E_M$ . With this identification in mind, we observe that $[\Omega_M,\Omega_M]=0$.   
\end{example}

\begin{example}
The tangent sheaf $\mathcal X_M^1$ is canonically identified with the subsheaf of $\mathcal E_M$ consisting of those local sections $X$ such that $X(1)=0$ and $[X,\omega]=\iota_X \omega$, for any $\omega\in \Omega_M$. Taking linear combinations of compositions of vectors (now identified with sections of $\mathcal E_M$) yields the full sheaf $\mathcal X_M$ of all polyvector fields acting on $M$. It is easy to see that $\mathcal X_M$ and $\mathcal X_M\oplus \Omega_M$ are both closed under the commutator.
\end{example}

\begin{example}
The de Rham differential is a global section of $\mathcal E_M^1$. In our notation, $[d,d]=2d^2=0$, $[X,d]=\mathcal L_X$ for every $X\in TM$, and $[d,\omega]=d\omega$ for every $\omega\in \Omega_M$.  
\end{example}

\begin{mydef}
Let $D$ be an $\mathbb R$-linear endomorphism of $\mathcal E_M$, of degree $k$. The {\it derived bracket} associated to $D$ is the bilinear operation 
\begin{equation}
\lbra \psi, \varphi\rbra_D = (-1)^{kl+1}[D\psi, \varphi]\,,
\end{equation}
for all $\psi\in \mathcal E^l_M$ and $\varphi\in \mathcal E_M$.  In particular, if $D=\ad_\delta$, for some $\delta\in \mathcal E_M$ we use the shorthand notation $\lbra \psi,\varphi \rbra_\delta$ for $\lbra \psi,\varphi \rbra_{\ad_\delta}=[[\psi,\delta],\varphi]$. 
\end{mydef}

\begin{example}
If $\delta=d$, the resulting bracket is known as the {\it Dorfman bracket}, for which we will reserve the special notation $\lbra\,\, ,\,\, \rbra$. More generally, if $H$ is a closed 3-form on $M$ then the derived bracket corresponding to $\delta=d+H$ is known as a {\it twisted Dorfman bracket}. For simplicity, in the rest of the paper we mostly focus on the untwisted case. 
\end{example}

\subsection{The generalized tangent bundle}

The {\it generalized tangent bundle} of $M$ is the bundle $\mathbb T M$ associated to the sheaf $\mathcal X_M^1\oplus \Omega_M^1$. The {\it tautological inner product} is the symmetric, $\Omega_M^0$-bilinear map $\langle \,\, , \,\,\rangle$ defined by the formula
\begin{equation}
\langle \x,\y\rangle = \frac{1}{2} [\x,\y]
\end{equation}
for all $\x,\y \in \Omega_M$. 

\begin{rem}
The generalized tangent bundle is closed under any twisted Dorfman bracket. 
\end{rem}

\begin{lem}\label{lemma:7}
Let $\varphi,\delta$ be sections of $\mathcal E_M$, with $\delta$ of odd degree. Then
\begin{enumerate}[1)]
    \item $[\varphi,\mathbb T M]\subseteq \Omega_M^0$ if and only if $\varphi\in \Omega_M^0\oplus \mathbb T M$;
    \item $[\varphi,\mathbb T M]=0$ if and only if $\varphi\in \Omega_M^0$;
    \item $\lbra \T M,\T M\rbra_\delta =0$ if and only if $\delta\in \T M$. 
\end{enumerate}
\end{lem}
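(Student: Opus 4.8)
The plan is to work fiberwise and use the explicit description of $\mathcal E_M$ in low degrees coming from the preceding examples, writing a general section $\varphi$ in terms of its homogeneous components and testing it against elements $X + \xi \in \mathbb T M$ (with $X \in \mathcal X_M^1$, $\xi \in \Omega_M^1$). For part (1), the ``if'' direction is immediate: if $\varphi = c + (Y + \eta)$ with $c \in \Omega_M^0$ and $Y + \eta \in \mathbb T M$, then for any $X + \xi \in \mathbb T M$ we have $[\varphi, X+\xi] = [Y+\eta, X+\xi] = 2\langle Y+\eta, X+\xi\rangle \in \Omega_M^0$, using that $[\Omega_M, \Omega_M] = 0$ and the defining identities for vectors from the second example. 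For the ``only if'' direction, I would decompose $\varphi = \sum_i \varphi_i$ into homogeneous components $\varphi_i$ of degree $i$; the condition $[\varphi, \mathbb T M] \subseteq \Omega_M^0$ is graded, so it must hold componentwise after accounting for the degree shift (bracketing with a degree-$0$ element $X$ or a degree-$1$ element $\xi$). Pinning down $\varphi$ amounts to a local computation: evaluating $[\varphi, X]$ and $[\varphi, \xi]$ and forcing the result to lie in $\Omega_M^0$ for all choices of $X, \xi$ forces all components of $\varphi$ outside $\Omega_M^0 \oplus \mathcal X_M^1 \oplus \Omega_M^1$ to vanish and constrains the surviving part to be of the stated form.

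For part (2), one simply refines the argument of part (1): if in addition $[\varphi, \mathbb T M] = 0$ rather than merely landing in $\Omega_M^0$, then writing $\varphi = c + (Y + \eta)$ as above, the vanishing of $2\langle Y + \eta, X + \xi\rangle$ for all $X + \xi \in \mathbb T M$ forces $Y + \eta = 0$ by non-degeneracy of the tautological inner product, leaving $\varphi = c \in \Omega_M^0$; the converse is clear since $[\Omega_M^0, -] = 0$ on all of $\mathcal E_M$.

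For part (3), recall $\lbra \mathbf x, \mathbf y \rbra_\delta = (-1)^{kl+1}[[\mathbf x, \delta], \mathbf y]$ where here $\mathbf x, \mathbf y \in \mathbb T M$ have degree giving the overall sign; since $\mathbb T M$ is generated in degrees $0$ and $1$ and $\delta$ has odd degree, $[\mathbf x, \delta]$ is again a section of $\mathcal E_M$, and the condition $\lbra \mathbb T M, \mathbb T M\rbra_\delta = 0$ reads $[[\mathbf x, \delta], \mathbf y] = 0$ for all $\mathbf x, \mathbf y \in \mathbb T M$. By part (2), this is equivalent to $[\mathbf x, \delta] \in \Omega_M^0$ for every $\mathbf x \in \mathbb T M$, i.e. $[\delta, \mathbb T M] \subseteq \Omega_M^0$, which by part (1) says $\delta \in \Omega_M^0 \oplus \mathbb T M$. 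To upgrade this to $\delta \in \mathbb T M$, I would use the degree hypothesis: $\delta$ has odd degree, so its $\Omega_M^0$-component (which is even, degree $0$) must vanish, leaving $\delta \in \mathbb T M$. The converse direction follows because for $\delta \in \mathbb T M$ and $\mathbf x \in \mathbb T M$ we get $[\mathbf x, \delta] = 2\langle \mathbf x, \delta\rangle \in \Omega_M^0$, whence $[[\mathbf x, \delta], \mathbf y] = 0$.

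The main obstacle I anticipate is bookkeeping in part (1): one must carefully enumerate the homogeneous components of an arbitrary $\varphi \in \mathcal E_M$ and verify that bracketing with the two ``test'' families ($X \in \mathcal X_M^1$ and $\xi \in \Omega_M^1$) is sufficiently rigid to kill everything except the $\Omega_M^0 \oplus \mathbb T M$ part. This is where the explicit commutator identities $[X, \omega] = \iota_X \omega$, $[\Omega_M, \Omega_M] = 0$, and the derivation property of $\ad_X$ do the real work, and the computation should be carried out in a local frame so that one can argue pointwise. Parts (2) and (3) are then essentially formal consequences of (1) together with the non-degeneracy of $\langle\,,\rangle$ and the parity constraint on $\delta$.
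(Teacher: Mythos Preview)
Your derivation of parts (2) and (3) from part (1) is correct and matches the paper's approach (the paper simply says these ``follow immediately''; your argument via non-degeneracy and the parity constraint on $\delta$ is exactly what is meant). One minor slip: you write ``$[\Omega_M^0,-]=0$ on all of $\mathcal E_M$,'' which is false (for instance $[f,d]=-df$), but what you actually need is $[\Omega_M^0,\mathbb T M]=0$, which is true and suffices.

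The gap is in part (1). You correctly identify that this is where the work lies, but calling it ``bookkeeping'' undersells it, and your outline does not contain the key moves. Two specific issues. First, your suggestion to ``argue pointwise'' in a local frame is misguided: sections of $\mathcal E_M$ are $\mathbb R$-linear (not $\Omega_M^0$-linear) endomorphisms of $\Omega_M$, so they are not determined by pointwise data, and there is no a priori reason a general $\varphi$ decomposes into the ``standard'' pieces you can enumerate in a frame. Second, merely testing $[\varphi,X]$ and $[\varphi,\xi]$ and observing the degree shift does not by itself pin $\varphi$ down; for instance, if $\varphi$ has degree $k\ge 2$ then degree reasons give $[\varphi,\mathbb T M]=0$, but concluding $\varphi=0$ from this is essentially the content of part (2), which you are deriving from part (1). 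The paper's argument avoids this circularity by a concrete sequence of steps you do not mention: (i) use the Jacobi identity and the fact that every function is locally of the form $[\alpha,X]$ to deduce $[\varphi,\Omega_M^0]=0$; (ii) evaluate commutators at $1\in\Omega_M^0$ to obtain $\varphi(1)\in\Omega_M^0\oplus\Omega_M^1$; (iii) split into cases according to whether $\varphi(1)=0$, in the nonzero case showing $\varphi$ is multiplication by $\varphi(1)$, and in the zero case using $\Omega_M^0$-linearity (i.e.\ tensoriality) of $\alpha\mapsto\varphi(\alpha)$ to identify $\varphi$ with contraction by a vector. These are the steps that make the proof work; your outline would need them, or something equivalent, to be complete.
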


\begin{proof}The backward implications are all straightforward from the definitions.
Assume without loss of generality that $\varphi\neq 0$ and that is of degree $k$. If $[\varphi,\T M]\subseteq \Omega_M^0$, then 
\begin{equation}
[\varphi,[\alpha,X]]=[[\varphi,\alpha],X]+(-1)^k[\alpha,[\varphi,X]]=0
\end{equation}
for every $\alpha\in \Omega^1_M$, $X\in \mathcal X_M^1$. Since every function is locally expressible as the tautological inner product of a 1-form and a vector, we conclude that $\left[\varphi,\Omega_M^0\right]=0$. Since $\iota_X\varphi(1)\in \Omega_M^0$ for all $X\in \mathcal X_M^1$  we have $\varphi(1)\in \Omega^1_M\oplus \Omega^0_M$. Let $\varphi(1)\neq 0$: in this case $k\in\{0,1\}$ and by degree comparison, $\left[\varphi, \Omega_M^1\right]=0$, which in turn implies
that $\varphi(\omega)=\varphi(1)\omega$ for all $\omega\in \Omega_M$, i.e. $\varphi\in \Omega_M^k$ with $k\in\{0,1\}$. Now, let $\varphi(1)=0$:
for all $\alpha\in \Omega_M^1$ we have that the linear map $\alpha\mapsto\varphi(\alpha)\in\Omega_M^0$ is tensorial and hence of the form $\iota_X$ for some $X\in \mathcal X_M^1$. Also note that from $\varphi\neq 0$ and $\left[\varphi,\Omega_M^1\right]\subseteq \Omega_M^0$ follows $X\neq 0$, so that $k=-1$. Now, again by degree comparison $\left[\varphi,\mathcal X^1_M\right]=0$ whence $\varphi(\omega)=\iota_X\omega$ for all $\omega\in \Omega_M$. This concludes the proof of 1). The proofs of 2) and 3) follow immediately.  
\end{proof}

\begin{lem}\label{lem:8}
Let $\delta\in \mathcal E_M$ be $\Omega_M^0$-linear and odd. Then the restriction of the derived bracket associated to $\delta$ to $\T M$ is $\Omega_M^0$-bilinear and skew-symmetric.
\end{lem}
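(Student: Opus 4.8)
The plan is to verify the two asserted properties---$\Omega_M^0$-bilinearity and skew-symmetry---directly from the definition of the derived bracket $\lbra \x, \y \rbra_\delta = (-1)^{kl+1}[[\x,\delta],\y]$, exploiting the degree bookkeeping and the hypotheses on $\delta$. Fix $\x, \y \in \T M$, so both have odd degree (namely $k = l = \pm 1$ in the notation of Lemma \ref{lemma:7}), and $\delta$ has odd degree as well.

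First I would handle $\Omega_M^0$-bilinearity. For a function $\alpha \in \Omega_M^0$, I want to show $\lbra \alpha\x, \y\rbra_\delta = \alpha\lbra \x,\y\rbra_\delta$ and similarly in the second slot. The key inputs are: (i) $\Omega_M^0 = \Omega_M^0$ commutes with itself (functions wedge-commute, so $[\alpha, \x] = 0$ when $\alpha$ is a function acting by multiplication and $\x \in \Omega_M^1 \subseteq \T M$; more carefully, for $\x \in \mathcal X_M^1$ one has $[\x, \alpha] = \iota_\x \alpha = 0$ since $\alpha$ is a $0$-form, and for $\x \in \Omega_M^1$ it is immediate); and (ii) $\delta$ is $\Omega_M^0$-linear, i.e. $[\delta, \alpha] = \delta(\alpha) \cdot (\text{mult.})$ behaves so that $[\alpha\x, \delta] = \alpha[\x,\delta] \pm [\alpha,\delta]\x$ and the unwanted term, after the outer bracket with $\y$, vanishes or reassembles. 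Concretely I would expand $[[\alpha\x, \delta], \y]$ using the graded Leibniz rule twice, collect the terms, and observe that each term in which a derivation lands on $\alpha$ produces a factor $[\alpha,\delta]$ or $[\alpha,\y]$; the latter is zero because $\y$ has odd degree and acts on the function $\alpha$ as $\iota$ of a vector (giving $0$) or by $0$-form multiplication, and the former contributes $\delta(\alpha)$ which then gets bracketed against $\y \in \T M$ --- but by part (1) of Lemma \ref{lemma:7} the resulting expression, being a bracket of a $0$-or-$1$-form with $\T M$, needs to be controlled; here I would use that $\delta(\alpha) \in \Omega_M^{\le 1}$ and that the total-degree count forces the stray terms to cancel pairwise between the two applications of Leibniz. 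This is the step I expect to be the main obstacle: getting the signs and the degree-$0$ versus degree-$1$ casework exactly right so that all terms involving a derivative of $\alpha$ genuinely cancel, leaving only $\alpha[[\x,\delta],\y]$.

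Next, skew-symmetry. Here the clean route is: by Lemma \ref{lemma:7}(3), since $\delta \notin \T M$ in general the derived bracket need not vanish, but skew-symmetry on $\T M$ follows from the graded Jacobi identity together with the fact that $\lbra \x, \y \rbra_\delta + (-1)^{?}\lbra \y, \x \rbra_\delta$ equals (up to sign) $[[\x,\y],\delta]$ or $[\delta, [\x,\y]]$-type expression, and $[\x,\y] \in \Omega_M^0$ (it is twice the tautological inner product), on which $\delta$ acts by an $\Omega_M^0$-linear odd operator --- so $[\delta, [\x,\y]] = \delta([\x,\y])$ lands in $\Omega_M^{\le 1}$, but we need it to land in a slot that cancels. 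Actually the standard fact (Kosmann-Schwarzbach) is that a derived bracket is skew-symmetric \emph{up to an exact term}; the point of the Lemma is that when restricted to $\T M$ with $\delta$ odd and $\Omega_M^0$-linear, that exact term vanishes. So I would write $\lbra \x,\y\rbra_\delta + \lbra \y,\x\rbra_\delta = \pm\, D([\x,\y])$ for the appropriate operator built from $\delta$, note $[\x,\y] \in \Omega_M^0$, and invoke $\Omega_M^0$-linearity of $\delta$ to see this term is a function times an operator of the wrong degree to lie in $\T M$ --- hence, comparing with the left-hand side which does lie in $\T M$ (the bracket of two sections of $\T M$ with an $\Omega_M^0$-linear odd $\delta$ stays in $\T M$ by the Remark preceding Lemma \ref{lemma:7}), we conclude both sides vanish, giving skew-symmetry. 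Throughout, the only nonroutine care is in tracking that ``$\Omega_M^0$-linear'' for $\delta$ means precisely $[\delta,\alpha] = [[\delta,\alpha], \cdot]$ acts as multiplication by the function $\delta(\alpha)$, which is what powers both halves of the argument.
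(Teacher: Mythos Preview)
You have misread the hypothesis. Saying that $\delta\in\mathcal E_M$ is $\Omega_M^0$-linear means $\delta(f\omega)=f\,\delta(\omega)$ for every $f\in\Omega_M^0$, i.e.\ $[\delta,f]=0$ as operators. It does \emph{not} mean that $[\delta,f]$ acts as multiplication by $\delta(f)$; that would be a derivation-type condition, and it is precisely what the de Rham operator satisfies and what the present Lemma is designed to rule \emph{out}. Your closing sentence makes this misreading explicit, and it propagates through both halves of the argument.

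Once $[\delta,f]=0$ is used, both properties are one-line computations, as in the paper. For bilinearity: $[f\x,\delta]=f[\x,\delta]+[f,\delta]\x=f[\x,\delta]$, whence $\lbra f\x,\y\rbra_\delta=f\lbra\x,\y\rbra_\delta$ (the extra term $[f,\y][\x,\delta]$ from the second Leibniz step also vanishes since $[\Omega_M^0,\T M]=0$). There is no ``casework'' and no cancellation to arrange; the anticipated obstacle does not exist. For skew-symmetry: the graded Jacobi identity gives $\lbra\x,\y\rbra_\delta-\lbra\y,\x\rbra_\delta=[\delta,[\x,\y]]$, and since $[\x,\y]=2\langle\x,\y\rangle\in\Omega_M^0$ this bracket is zero directly. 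Your proposed route---arguing that $[\delta,[\x,\y]]$ has ``the wrong degree to lie in $\T M$'' and then comparing with a left-hand side you claim lies in $\T M$---is both unnecessary and unsound: the Lemma does not assert $\lbra\T M,\T M\rbra_\delta\subseteq\T M$, and the Remark you cite concerns only twisted Dorfman brackets, not arbitrary $\Omega_M^0$-linear odd $\delta$.
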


\begin{proof} To see that the derived bracket is skew-symmetric we calculate 
\begin{equation}
\lbra \x,\y\rbra_\delta-\lbra \y,\x\rbra_\delta= [\delta,[\x,\y]]=0
\end{equation}
for all $\x,\y\in \T M$. Similarly, for all $\x,\y\in \T M$ and for all $f\in \Omega_M^0$
\begin{equation}
\lbra f \x,\y\rbra_\delta -f\lbra \x,\y\rbra_\delta = [[f,\delta]\x,\y]=0 \,.
\end{equation}
\end{proof}

\begin{mydef}
A {\it skew-symmetric endomorphism} of $\T M$ is an $\Omega_M^0$-linear bundle endomorphism $\varphi$ of $\T M$ such that $\langle \varphi(\x),\y\rangle + \langle \x, \varphi(\y)\rangle =0$ for all $\x,\y\in \T M$.
\end{mydef}

\begin{prop}\label{prop:9}
Let $\varphi$ be a skew-symmetric endomorphism of $\T M$. Then there is a unique even element $\widetilde \varphi\in \mathcal E_M$ such that
\begin{enumerate}[1)]
\item $[\widetilde \varphi, \Omega_M^0]=0$;
\item $[\widetilde \varphi,\x]=\varphi(\x)$ for all $\x\in \T M$;
\item $\widetilde \varphi(1)\in \Omega^2_M$.
\end{enumerate}
\end{prop}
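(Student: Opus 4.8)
The plan is to construct $\widetilde\varphi$ explicitly on generators of $\Omega_M$ and then verify it has the required properties and uniqueness. Write $\varphi(X+\alpha) = f(X) + g(\alpha) + h(X) + k(\alpha)$ decomposed according to $\T M = \mathcal X_M^1 \oplus \Omega_M^1$, so that $f\colon \mathcal X_M^1 \to \mathcal X_M^1$, $g\colon \Omega_M^1 \to \mathcal X_M^1$, $h\colon \mathcal X_M^1 \to \Omega_M^1$, $k\colon \Omega_M^1 \to \Omega_M^1$ are $\Omega_M^0$-linear bundle maps. The skew-symmetry of $\varphi$ with respect to the tautological inner product translates into: $k = -f^t$ (so the "diagonal" part is determined by an endomorphism of $TM$ and its transpose), $g$ is a bivector (i.e.\ $g = -g^t$ as a map $T^*M \to TM$, equivalently an element of $\Omega_M^0$-linear skew maps), and $h$ is a 2-form (i.e.\ $h = -h^t$). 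This is the standard decomposition of a skew endomorphism of $\T M$; I would first record it, since the natural candidate for $\widetilde\varphi$ is built precisely from these pieces.

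Next I would write down the candidate: take $\widetilde\varphi$ to act on $\Omega_M^0$ as $0$, on a $1$-form $\alpha$ as $\widetilde\varphi(\alpha) = h(\alpha^\sharp?) \ldots$ — more precisely, declare $\widetilde\varphi(1) = B$ where $B\in\Omega_M^2$ is the $2$-form associated to $h$, let $\widetilde\varphi$ restricted to $\Omega_M^1$ be the derivation-type extension built from $f$ (acting as $\mathcal L$-like on forms, i.e.\ $\widetilde\varphi(\alpha) = -f^t(\alpha) \wedge 1 + B\wedge\alpha + (\text{bivector term})$), and then extend to all of $\Omega_M$ by requiring $\widetilde\varphi$ to be a degree-zero derivation of the wedge product, i.e.\ $\widetilde\varphi(\omega\wedge\eta) = \widetilde\varphi(\omega)\wedge\eta + \omega\wedge\widetilde\varphi(\eta) - \widetilde\varphi(1)\wedge\omega\wedge\eta$ (the correction term is needed precisely because $\widetilde\varphi(1)\neq 0$ in general). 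Concretely: the bivector part $g$ contributes an operator of the form $\iota_g$ (contraction with the bivector, a degree $-2$ operator), the endomorphism part $f$ contributes the Lie-derivative-style extension of $-f^t$, and the $2$-form part contributes $B\wedge(-)$ together with its action on $1$. Set $\widetilde\varphi$ to be the sum of these three operators; each is $\Omega_M^0$-linear by inspection, so 1) holds, and 3) holds by construction. For 2), I would check on $\x = X$ and $\x = \alpha$ separately that $[\widetilde\varphi, \x]$ reproduces $\varphi(\x)$, using the commutation relations recorded in the Examples ($[X,\omega] = \iota_X\omega$, $[\alpha,\omega] = \alpha\wedge\omega$) together with the known commutators of $\iota_g$, of the $f$-derivation, and of $B\wedge(-)$ with vectors and forms.

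For uniqueness, suppose $\widetilde\varphi_1, \widetilde\varphi_2$ both satisfy 1)--3) and set $\psi = \widetilde\varphi_1 - \widetilde\varphi_2$. Then $\psi$ is even, $[\psi, \Omega_M^0] = 0$, $[\psi, \x] = 0$ for all $\x\in\T M$, and $\psi(1)\in\Omega_M^2$. By Lemma \ref{lemma:7}, part 2), $[\psi,\T M] = 0$ forces $\psi\in\Omega_M^0$; but $\psi$ also satisfies $\psi(1)\in\Omega_M^2$, and an element of $\Omega_M^0$ acting by left multiplication sends $1$ to a $0$-form, so $\psi(1)\in\Omega_M^0\cap\Omega_M^2 = 0$, hence $\psi = 0$. (One must be slightly careful that Lemma \ref{lemma:7} is stated for $\varphi$ of homogeneous degree; since $\psi$ is even one applies it componentwise, or notes that the argument there only used $[\psi,\Omega_M^0]=0$ and $[\psi,\T M]=0$, which hold in each degree.)

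The main obstacle I anticipate is bookkeeping rather than conceptual: getting the three constituent operators ($\iota_g$, the $f$-derivation, $B\wedge$) and the derivation correction term exactly right so that 2) holds \emph{on the nose} for both vectors and $1$-forms, and in particular pinning down the sign and placement conventions so that the $f$-part of $\varphi$ on $\mathcal X_M^1$ and the $-f^t$-part on $\Omega_M^1$ both emerge from a single operator. Once the candidate is correctly assembled, properties 1) and 3) are immediate, property 2) is a direct commutator computation, and uniqueness is a short application of Lemma \ref{lemma:7}.
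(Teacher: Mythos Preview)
Your approach is correct in outline but genuinely different from the paper's. The paper does \emph{not} decompose $\varphi$ along $\T M = TM\oplus T^*M$; instead it defines $\widetilde\varphi(1)=\epsilon\in\Omega_M^2$ via $\iota_X\epsilon=-\varphi(X)(1)$, sets $\widetilde\varphi(f)=f\epsilon$, and then extends to all of $\Omega_M$ by the single recursion
\[
\widetilde\varphi(\eta\gamma)=\eta\,\widetilde\varphi(\gamma)+\varphi(\eta)(\gamma)\qquad(\eta\in\Omega_M^1,\ \gamma\in\Omega_M),
\]
which is nothing but condition 2) for one-forms used as a definition. The only thing left to check is condition 2) for vector fields, which the paper does by a short induction on form degree. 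Your route---writing $\varphi$ as $f\oplus(-f^t)$ plus a bivector $g$ plus a $2$-form $B$ and lifting each piece separately to $D_f$, $\iota_g$, and $B\wedge(-)$---is the standard spin-representation construction and is equally valid; it makes the structure of the lift transparent and connects to formulas you will find elsewhere in the generalized-geometry literature, at the cost of relying on the splitting of $\T M$. The paper's recursion is more intrinsic and would carry over verbatim to exact Courant algebroids without a preferred splitting.

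One concrete error to fix: your ``derivation with correction'' formula
\[
\widetilde\varphi(\omega\eta)=\widetilde\varphi(\omega)\eta+\omega\,\widetilde\varphi(\eta)-\widetilde\varphi(1)\,\omega\eta
\]
is false whenever the bivector component $g$ is nonzero, because $\iota_g$ is a \emph{second-order} operator on the exterior algebra (for $g=X\wedge Y$ one has $\iota_g=\iota_X\iota_Y$, which produces cross-terms on a product). This does not damage your argument---the three summands $D_f$, $\iota_g$, $B\wedge(-)$ are already globally defined on $\Omega_M$ and need no extension rule---but you should delete that formula and simply \emph{define} $\widetilde\varphi$ as their sum, then verify 1)--3) by direct commutator computations (watching the sign in $[B\wedge,\iota_X]=-(\iota_XB)\wedge$, which forces $B=-\epsilon$ in the paper's notation). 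Your uniqueness argument via Lemma \ref{lemma:7} is exactly the paper's.
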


\begin{proof}
Define $\widetilde \varphi(1)=\epsilon$, where $\epsilon\in \Omega_M^2$ is uniquely defined by $\iota_X\epsilon=-\varphi(X)(1)$ for all $X\in \mathcal{X}_M^1$ (note that $\epsilon$ is alternating precisely because $\varphi$ is skew-symmetric).
Condition 1) suggests to define $\widetilde \varphi(f)=f\epsilon$ for all $f\in \Omega_M^0$. Moreover, condition 2) applied to all $\x\in \Omega_M^1$ inspires to extend recursively $\widetilde \varphi$ to all of $\Omega_M$:
\begin{equation}\label{eq:1}
\widetilde \varphi(\eta\gamma)=\eta\widetilde \varphi(\gamma)+\varphi(\eta)(\gamma)
\end{equation}
for all $\eta\in \Omega_M^1$ and $\gamma\in \Omega_M^k$. In order to show that $\widetilde \varphi$ so constructed is the required element it remains only to prove that $\widetilde \varphi$ satisfies condition 2) for all $\x\in \mathcal{X}_M^1$ as well. Let $X\in \mathcal X_M^1$ and let $\omega\in \Omega_M^k$. We need to prove that
\begin{equation}\label{eq:prop10}\widetilde\varphi(\iota_X\omega)-\iota_X\widetilde\varphi(\omega)=\varphi(X)(\omega)\end{equation}
by induction on $k$: if $k=0$ the identity (\ref{eq:prop10}) follows from condition 1) and the definition of $\epsilon$. Suppose now $k>0$ and that  (\ref{eq:prop10}) holds for $0\leq h<k$. Then, we can assume $\omega=\alpha\beta$, with $\alpha\in \Omega_M^1$ and $\beta\in\Omega_M^{k-1}$. Then
\begin{eqnarray} \widetilde\varphi(\iota_X\omega)-\iota_X\widetilde\varphi(\omega)&=& \widetilde\varphi(\iota_X(\alpha\beta))-\iota_X\widetilde\varphi(\alpha\beta)\nonumber\\
&=& \widetilde\varphi((\iota_X\alpha)\beta-\alpha(\iota_X\beta))-\iota_X\widetilde\varphi(\alpha\beta)\nonumber\\
&=& \widetilde\varphi((\iota_X\alpha)\beta)-\widetilde\varphi(\alpha(\iota_X\beta))-\iota_X\widetilde\varphi(\alpha\beta)\nonumber\\
&=& (\iota_X\alpha)\widetilde\varphi(\beta)-\alpha\widetilde\varphi(\iota_X\beta)-\varphi(\alpha)(\iota_X\beta)-\iota_X\left(\alpha\widetilde\varphi(\beta)+\varphi(\alpha)(\beta)\right)\nonumber\\
&=&\alpha\iota_X\widetilde\varphi(\beta)-\alpha\widetilde\varphi(\iota_X\beta)-\varphi(\alpha)(\iota_X\beta)-\iota_X\left(\varphi(\alpha)(\beta)\right)\nonumber\\
&=&- \alpha\varphi(X)(\beta)-[\varphi(\alpha),X](\beta)\nonumber\\
&=&- \alpha\varphi(X)(\beta)+[\alpha,\varphi(X)](\beta)\nonumber\\
&=&\varphi(X)(\omega),\nonumber
\end{eqnarray}
which concludes the inductive step and the proof of the existence of the required element. The uniqueness of $\widetilde \varphi$ is an immediate consequence of conditions 1), 2), 3), and Lemma \ref{lemma:7}. \end{proof}

\begin{rem}\label{rem:11}
Condition 3) in Proposition \ref{prop:9} is somewhat ad hoc, its sole purpose being to ensure uniqueness. Any two operators  satisfying both 1) and 2), which we may think of as different lifts of $\varphi$ to $\mathcal E_M$, differ by addition of a section of $\Omega_M^0$.
\end{rem}

\subsection{Quasi split structures and gradings}
A {\it quasi split structure} is a subbundle $E$ of $\T M\otimes \C$ that satisfies the following requirements:
\begin{enumerate}[i)]
\item $E$ is {\it non-degenerate}, i.e.\ the restriction of the tautological inner product to $E$ is non-degenerate;
\item $E$ is {\it split}, i.e.\ $E$ splits fiberwise as direct sum of two isotropic subspaces.
\end{enumerate}
\begin{rem} Every non-degenerate subbundle $E\subseteq \T M\otimes \mathbb C$ is a quasi split structure if and only if the rank of $E$ is equal to twice the {\it Witt index} (i.e.\ the maximum dimension of an isotropic subspace) of $E_p$, for each $p\in M$.
\end{rem}
\begin{rem}
Let $E$ be a subbundle of $\T M$. If the restriction of the tautological inner product to $E$ has signature $(k,k)$ for some $k\in \{1,\ldots, \dim(M)\}$, then $E\otimes \C$ is a quasi split structure.  Quasi split structures of this form are called {\it split structures} \cites{AldiGrandini16, AldiGrandini17}.
\end{rem}

\begin{rem}\label{rem:13}
A feature of the Dorfman bracket is that if $f\in \Omega_M^0$ and $\x,\y\in \T M$, then \begin{equation}\label{eq:2}
\lbra f\x,\y\rbra = f\lbra \x,\y\rbra - 2\langle d f,\y\rangle \x  + 2\langle \x,\y \rangle d f\,.
\end{equation}
The last term of \eqref{eq:2} is not present if the Courant bracket (i.e.\ the skew-symmetrization of the restriction of the Dorfman bracket to the generalized tangent bundle) is used instead. In particular, this shows that an isotropic subbundle $L\subseteq \T M$ is closed under the Courant bracket if and only if it is closed under the Dorfman bracket. However for non-isotropic subbundles, the last term of \eqref{eq:2} does not vanish in general. 
\end{rem}

\begin{lem}\label{lem:14}
$\T M\otimes \mathbb C$ is the only non-zero  quasi split structure that is closed under the Dorfman bracket.
\end{lem}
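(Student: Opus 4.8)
The plan is to exploit the last term of formula \eqref{eq:2}, the one singled out in Remark \ref{rem:13} as being absent on isotropic subbundles, in order to force any non-zero non-degenerate subbundle closed under the Dorfman bracket to contain every exact $1$-form, and then to finish by a short linear-algebra argument. It is worth noting in advance that only the non-degeneracy half of the quasi split hypothesis will enter.

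First I would work locally. Fix $p\in M$. Since $E$ is non-degenerate and non-zero, we may pick $\v\in E_p\setminus\{0\}$ and then, again by non-degeneracy, some $\w\in E_p$ with $\langle\v,\w\rangle\neq 0$. Extending $\v,\w$ to local sections $\x,\y$ of $E$ over a neighborhood $U$ of $p$ and shrinking $U$, we may assume that $U$ carries coordinates and that $f_0:=\langle\x,\y\rangle$ is nowhere-vanishing on $U$. For any local function $g\in\Omega_M^0(U)$, formula \eqref{eq:2} rearranges to
\[
2 f_0\, dg = \lbra g\x,\y\rbra - g\lbra\x,\y\rbra + 2\langle dg,\y\rangle\,\x .
\]
The first term lies in $\Gamma(U,E)$ because $E$ is closed under the Dorfman bracket (and $g\x\in\Gamma(U,E)$ since $E$ is a subbundle), and the other two terms lie in $\Gamma(U,E)$ because $E$ is stable under multiplication by functions. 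Hence $f_0\,dg\in\Gamma(U,E)$, and since $f_0$ is invertible, $dg\in\Gamma(U,E)$. As the differentials of the coordinate functions span $T^*_qM$ for every $q\in U$, this yields $(T^*M\otimes\C)|_U\subseteq E|_U$; in particular $T^*_pM\otimes\C\subseteq E_p$.

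Then I would conclude pointwise. The subspace $T^*_pM\otimes\C$ is isotropic of complex dimension $n=\frac{1}{2}\dim_\C(\T_pM\otimes\C)$, hence maximal isotropic, so $(T^*_pM\otimes\C)^\perp=T^*_pM\otimes\C$. From $T^*_pM\otimes\C\subseteq E_p$ we get $E_p^\perp\subseteq(T^*_pM\otimes\C)^\perp=T^*_pM\otimes\C\subseteq E_p$, whence $E_p^\perp\subseteq E_p\cap E_p^\perp=0$ by non-degeneracy of $E_p$. Since the tautological inner product is non-degenerate on $\T_pM\otimes\C$, this forces $E_p=\T_pM\otimes\C$, and as $p$ was arbitrary, $E=\T M\otimes\C$.

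The step I expect to require the most care is the passage from formula \eqref{eq:2} to ``$dg\in E$'', namely producing near an arbitrary point a pair of sections of $E$ whose inner product is a unit so that one may legitimately divide by $\langle\x,\y\rangle$. This is precisely where non-degeneracy of $E$ is used, and where the feature flagged in Remark \ref{rem:13} — the extra term of \eqref{eq:2} vanishes on isotropic subbundles but not in general — is doing the work; everything after that is formal.
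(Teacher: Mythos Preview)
Your proof is correct and follows essentially the same route as the paper's: both arguments use non-degeneracy to produce local sections $\x,\y$ of $E$ with $\langle\x,\y\rangle$ a unit, then rearrange \eqref{eq:2} to force every exact $1$-form (hence all of $T^*M\otimes\C$) into $E$, and conclude $E=\T M\otimes\C$. The paper's version is simply more terse, asserting the final step without spelling out the linear algebra; your observation that only the non-degeneracy half of the quasi split hypothesis is used is accurate and matches what the paper's argument actually invokes.
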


\begin{proof}
Let $E$ be a non-zero quasi split structure. By non-degeneracy, $E$ admits (in a fixed local neighborhood) at least two sections $\x, \y$ such that $\langle \x,\y\rangle\neq 0 $ and the projection of $\x$ onto $TM$ is non-zero. Since $f$ in \eqref{eq:2} can be chosen arbitrarily, we have $T^*M\subseteq E$ and thus $E=\T M$.  
\end{proof}

\begin{rem}
Let $E$ be a quasi split structure and let $E^\perp$ be its orthogonal complement in $\mathbb T M\otimes\C$ with respect to the tautological inner product. If $L$ is a maximal isotropic subbundle of $E$, then any maximal isotropic subspace of $\T M\otimes \C$ that contains $L$ is of the form $L\oplus L_{\perp}$, for some $L_{\perp}$ maximal isotropic in $E^{\bot}$. In particular, $E^{\bot}$ is also a quasi split structure: in fact, the Witt index of $E^{\perp}$ satisfies
\begin{equation}
{\rm Witt}(E^{\bot})=n-{\rm Witt}(E)=n-\frac{1}{2}{\rm rk}(E)=\frac{1}{2}{\rm rk}(E^{\bot})\,.
\end{equation}
\end{rem}

\begin{rem}
The formula $\x \x \omega = \langle \x, \x\rangle \omega$, valid for every $\x \in \T M \otimes \mathbb C$ and every $\omega\in \Omega_M\otimes \mathbb C$, shows the existence of a $\mathbb C$-linear map from the Clifford algebra $C\ell (\T M\otimes \mathbb C)$ to $\mathcal E_M\otimes \mathbb C$ known as the {\it standard spin representation} \cites{Gualtieri11, Meinrenken13}. The standard spin representation is irreducible, surjective, and faithful. It provides a fiberwise identification of $C\ell (\T M\otimes \mathbb C)$ with $\mathcal E_M\otimes \mathbb C$. Given a quasi split structure $E$, the standard spin representation restricts to two commuting actions of $C\ell(E)$ and $C\ell(E^\perp)$. 
\end{rem}

\begin{mydef}
Let $E\subseteq \T M\otimes \mathbb C$ be a quasi split structure and let $L$ be maximal isotropic in $E$. The {\it canonical bundle} of $L$ is the subsheaf $K_L\subseteq \Omega_M\otimes \mathbb C$ of all sections $\rho$ such that $l\rho=0$ for all $l\in L$.
\end{mydef}

\begin{rem}\label{rem:14}
Fiberwise, the canonical bundle is an example of a pure subspace in the sense of \cite{Batista14}. As proved in \cite{Batista14}, $K_L$ has rank $2^{\dim(M)-{\textrm{rank}}(L)}$, showing in particular that $K_L$ is indeed a vector bundle. Also proved in \cite{Batista14} is the identification $L={\rm Ann}(K_L)$.
\end{rem}

\begin{prop}\label{prop:16}
Let $E$ be a quasi split structure of rank $2k$. Each decomposition $E=L\oplus L'$ into maximal isotropic subspaces gives rise to a canonical grading
\begin{equation}\label{eq:grading}
\Omega_M\otimes \mathbb C=\bigoplus_{r=0}^k (\BigWedge^r L')\otimes K_L\,.
\end{equation}
\end{prop}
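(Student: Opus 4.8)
The plan is to reduce the statement to a fiberwise claim about the Clifford action of $E$ on $\Omega_M \otimes \mathbb{C}$, then build the isomorphism explicitly using the canonical bundle $K_L$ as a ``line'' of highest-weight vectors for the $C\ell(E)$-action and $L'$ as a set of lowering operators. The key observation, already available from the discussion of the standard spin representation and Remark \ref{rem:14}, is that $K_L$ has rank $2^{\dim(M) - \operatorname{rank}(L')}$ (since $\operatorname{rank}(L) = \operatorname{rank}(L')$), and that $\Omega_M \otimes \mathbb{C}$, having rank $2^{\dim M}$, should decompose under the commuting actions of $C\ell(E)$ and $C\ell(E^\perp)$ into a tensor product of an irreducible $C\ell(E^\perp)$-module (which is $K_L$, up to the choice of $L_\perp$) with the spinor module of $C\ell(E)$, and the latter is exactly $\BigWedge^\bullet L'$.

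First I would fix a local frame: write $L = \operatorname{span}(e_1, \dots, e_k)$, $L' = \operatorname{span}(f_1, \dots, f_k)$ with $\langle e_i, f_j\rangle = \tfrac12 \delta_{ij}$ (possible after rescaling since $E = L \oplus L'$ is a non-degenerate pairing of isotropics), so that in $\mathcal{E}_M \otimes \mathbb{C}$ the relations $e_i e_j + e_j e_i = 0$, $f_i f_j + f_j f_i = 0$, $e_i f_j + f_j e_i = \delta_{ij}$ hold by $\x\x\omega = \langle \x,\x\rangle\omega$ and polarization. Then I would define the map
\begin{equation}
\Phi \colon \bigoplus_{r=0}^k (\BigWedge^r L') \otimes K_L \longrightarrow \Omega_M \otimes \mathbb{C}, \qquad (f_{i_1} \wedge \cdots \wedge f_{i_r}) \otimes \rho \longmapsto f_{i_1} \cdots f_{i_r} \rho,
\end{equation}
where on the right the $f_{i}$ act by the spin representation (Clifford multiplication). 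This is well-defined because the $f_i$ anticommute, so the assignment is alternating in the $f$'s, and it is clearly $\Omega_M^0$-linear. The content is that $\Phi$ is an isomorphism of sheaves of $\Omega_M^0 \otimes \mathbb{C}$-modules.

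For injectivity and surjectivity I would argue fiberwise, so assume $M$ is a point. Surjectivity: the subalgebra of $C\ell(E) \subseteq \mathcal{E}_M \otimes \mathbb{C}$ generated by the $f_i$ together with $K_L$ generates a $C\ell(E)$-submodule of $\Omega_M \otimes \mathbb{C}$ (using $e_i \rho = 0$ for $\rho \in K_L$ and the Clifford relations to push any $e_i$ to the right where it annihilates $K_L$, possibly producing lower-degree terms); since the spin representation restricted to $C\ell(E^\perp)$ commutes with this and $K_L$ is a full irreducible $C\ell(E^\perp)$-module of the complementary rank, the span is everything — alternatively one can count ranks directly: $\dim \bigl(\bigoplus_r \BigWedge^r L'\bigr) = 2^k$ and $\operatorname{rank} K_L = 2^{n-k}$ give total rank $2^n = \operatorname{rank}(\Omega_M \otimes \mathbb{C})$, so it suffices to prove injectivity. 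Injectivity: suppose $\sum_{I} f_I \rho_I = 0$ with $f_I = f_{i_1}\cdots f_{i_r}$ over increasing multi-indices $I$ and $\rho_I \in K_L$; apply the operator $e_{j_1}\cdots e_{j_r}$ to extract the coefficient $\rho_J$ for the top multi-index $J$ (using the Clifford relations and $e_i \rho = 0$ on $K_L$, an ``integration by parts'' that isolates one term up to a nonzero scalar), concluding $\rho_J = 0$ for all $J$ by downward induction on $|J|$, hence all $\rho_I = 0$.

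The main obstacle I anticipate is the bookkeeping in the injectivity step: pushing the $e$'s through a product of $f$'s via $e_i f_j = \delta_{ij} - f_j e_i$ generates a sum of terms of various degrees in the $f$'s, and one must verify that after applying $e_{j_1}\cdots e_{j_r}$ (with $J$ chosen maximal among the multi-indices appearing) exactly one term survives with a nonzero coefficient while all cross terms either vanish on $K_L$ or are killed by maximality of $|J|$ in a suitable ordering; making ``maximal'' precise (it is cleanest to induct downward on cardinality $r$, and within fixed $r$ to note that $e_{j_1}\cdots e_{j_r} f_{i_1}\cdots f_{i_r}$ acts on $K_L$ as $\pm\delta_{IJ}$ times a nonzero scalar) is the one place where care is needed. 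Everything else — the Clifford relations, the $\Omega_M^0$-linearity, the rank count from Remark \ref{rem:14} — is immediate from what precedes. Finally I would remark that the grading \eqref{eq:grading} is by the subspaces $\Phi\bigl((\BigWedge^r L')\otimes K_L\bigr)$, indexed by $r \in \{0, \dots, k\}$, and that a different choice of $L_\perp$ (hence of the concrete realization of $K_L$) changes the decomposition only by the ambiguity already built into the definition of $K_L$, not the grading itself.
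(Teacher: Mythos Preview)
Your proposal is correct, but the route differs from the paper's. The paper does not construct the map $\Phi$ directly nor argue injectivity via Clifford contractions; instead it first chooses an auxiliary splitting $E^\perp = L_\perp \oplus L_\perp'$ into maximal isotropics, so that $L\oplus L_\perp$ is maximal isotropic in all of $\T M\otimes\mathbb C$, and then invokes the already-known canonical grading (from \cite{Gualtieri11}) associated to the full decomposition $\T M\otimes\mathbb C=(L\oplus L_\perp)\oplus(L'\oplus L_\perp')$. The grading \eqref{eq:grading} then falls out by identifying $K_L$ with $(\BigWedge^\bullet L_\perp')\otimes K_{L\oplus L_\perp}$ (a rank comparison using Remark \ref{rem:14}) and collapsing the resulting bigrading along the $L_\perp'$-direction. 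Your argument is more self-contained and avoids the auxiliary choice of $L_\perp$, $L_\perp'$ altogether, at the price of the explicit Clifford bookkeeping you correctly flag; the paper's argument is shorter because it offloads exactly that bookkeeping to Gualtieri's result, and it has the side benefit of making transparent why the grading is independent of how one extends $L$ to a full maximal isotropic---a point that feeds directly into the remark following the proposition. One small wording issue: $C\ell(E)$ and $C\ell(E^\perp)$ super-commute rather than commute, so your surjectivity sketch via ``commuting actions'' is loose, but since you immediately fall back on the rank count plus injectivity this does not affect the argument.
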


\begin{proof}
Pick a decomposition $E^\perp=L_\perp\oplus L_\perp'$ into maximal isotropic subspaces. By construction, $C\ell(E^\perp)$ acts on $K_L$. The $C\ell(E^\perp)$-submodule generated by the pure spinor line $K_{L\oplus L_\perp}$ is  (see e.g.\ Proposition 3.10 in \cite{Meinrenken13}) isomorphic to the spinor module $\BigWedge^{\bullet} L_\perp'$. Since the latter is of rank $2^{\dim(M)-k}$, we conclude from Remark \ref{rem:14} that $K_L=(\BigWedge^\bullet L_\perp')\otimes  K_{L\oplus L_\perp}$. Hence the grading  \eqref{eq:grading} is obtained from the canonical grading (see e.g.\ \cite{Gualtieri11}) of $\Omega_M\otimes \mathbb C$ induced by the maximal isotropic decomposition $\T M\otimes \mathbb C=(L\oplus L_\perp)\oplus (L'\oplus L_\perp')$ upon forgetting the information coming from the the choice of $L_\perp$ and $L_\perp'$.
\end{proof}

\begin{mydef}
We refer to the grading \eqref{eq:grading} as the {\it $(L,L')$-grading}. 
\end{mydef}

\begin{rem}
As shown in \cite{Gualtieri11}, $K_{L'\oplus L_\perp'}=\det(L'\oplus L_\perp')\otimes K_{L\oplus L_\perp}$. Acting on both sides by $C\ell(E^\perp)$, we obtain the identification $K_{L'}=\det(L')\otimes K_L$ and thus of $(\BigWedge^{k-r} L)\otimes K_{L'}$ with $(\BigWedge^r L) \otimes K_{L'}$. Hence, the $(L,L')$-grading is related to the $(L',L)$-grading by an overall involution. 
\end{rem}

\begin{cor}\label{cor:19}
Consider a decomposition  $\T M\otimes \mathbb C=\bigoplus_{i=1}^s E_i$ into quasi split structures $E_1,\ldots, E_s$ of respective rank $2k_1,\ldots,2k_s$. The additional datum of isotropic decompositions $E_i=L_i\oplus L'_i$ for all  $1\le i \le t\le s$ gives rise to a canonical multigrading
\begin{equation}\label{eq:multigrading}
\Omega_M\otimes \mathbb C = \bigoplus_{r_1=0}^{k_1}\cdots\bigoplus_{r_t=0}^{k_t}(\BigWedge^{r_1} L_1')\otimes \cdots \otimes (\BigWedge^{r_t} L_t')\otimes K_{L_1\oplus \cdots \oplus L_t} \,.   
\end{equation}
\end{cor}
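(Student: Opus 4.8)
The plan is to reduce Corollary \ref{cor:19} to an iterated application of Proposition \ref{prop:16}, using the compatibility of the grading construction with the commuting Clifford actions that a direct-sum decomposition into quasi split structures provides. The key structural fact to invoke is that if $\T M\otimes \mathbb C = E \oplus E^{\perp_E}$ with $E$ a quasi split structure, then $E^\perp$ (its orthogonal complement with respect to the tautological inner product) is again a quasi split structure, and more generally if $\T M\otimes \mathbb C=\bigoplus_i E_i$ is an orthogonal decomposition into quasi split structures, then for any index subset $S$ the partial sum $\bigoplus_{i\in S}E_i$ is a quasi split structure whose orthogonal complement is $\bigoplus_{i\notin S}E_i$. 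This is exactly what makes the Clifford-algebraic bookkeeping in the proof of Proposition \ref{prop:16} go through at each stage.

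First I would set up an induction on $t$. For $t=0$ there is nothing to grade and \eqref{eq:multigrading} reads $\Omega_M\otimes\mathbb C = K_{\emptyset}=\Omega_M\otimes\mathbb C$ once we interpret the empty canonical bundle as all of $\Omega_M\otimes\mathbb C$ (equivalently, start the induction at $t=1$, which is precisely Proposition \ref{prop:16} applied to $E_1$, after absorbing the orthogonal complement as in that proof). For the inductive step, suppose the multigrading \eqref{eq:multigrading} has been established for the first $t-1$ summands, i.e.
\begin{equation*}
\Omega_M\otimes\mathbb C=\bigoplus_{r_1=0}^{k_1}\cdots\bigoplus_{r_{t-1}=0}^{k_{t-1}}(\BigWedge^{r_1}L_1')\otimes\cdots\otimes(\BigWedge^{r_{t-1}}L_{t-1}')\otimes K_{L_1\oplus\cdots\oplus L_{t-1}}\,.
\end{equation*}
Now apply Proposition \ref{prop:16} to the quasi split structure $E_t$, whose orthogonal complement is $F:=\bigoplus_{i\neq t}E_i$. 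Exactly as in the proof of Proposition \ref{prop:16}, choosing an isotropic decomposition $F=L_F\oplus L_F'$ compatible with the already-chosen $L_i,L_i'$ for $i<t$ (and with some auxiliary splitting of $\bigoplus_{i>t}E_i$), we get that $C\ell(E_t)$ acts on the canonical bundle $K_{L_1\oplus\cdots\oplus L_{t-1}\oplus L_F}$ and generates a module isomorphic to $\BigWedge^\bullet L_t'$ tensored with the pure spinor line; iterating the identity $K_L=(\BigWedge^\bullet L_\perp')\otimes K_{L\oplus L_\perp}$ used there, we obtain $K_{L_1\oplus\cdots\oplus L_{t-1}}=\bigoplus_{r_t=0}^{k_t}(\BigWedge^{r_t}L_t')\otimes K_{L_1\oplus\cdots\oplus L_t}$. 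Substituting this into the inductive hypothesis yields \eqref{eq:multigrading} and completes the step.

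The main obstacle — really the only nontrivial point — is verifying that the Clifford actions of the $C\ell(E_i)$ genuinely commute and that one can choose the auxiliary isotropic splittings of the various summands coherently, so that the successive applications of Proposition \ref{prop:16} are compatible and the tensor factors end up in the stated order with the stated ranges $0\le r_i\le k_i$. This is handled by the observation already recorded in the excerpt that the standard spin representation restricts to commuting actions of $C\ell(E)$ and $C\ell(E^\perp)$ for any quasi split $E$; applied repeatedly to the nested orthogonal decompositions $\bigoplus_{i\ge j}E_i$, it gives pairwise-commuting actions of $C\ell(E_1),\dots,C\ell(E_s)$, and the rank count $\operatorname{rk} K_{L_1\oplus\cdots\oplus L_t}=2^{\dim(M)-(k_1+\cdots+k_t)}$ from Remark \ref{rem:14} confirms that no multiplicities are lost. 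I would keep this part brief, since it is a direct iteration of the already-proven Proposition \ref{prop:16}, and simply remark that the multigrading \eqref{eq:multigrading} is, as in Proposition \ref{prop:16}, obtained from the ordinary canonical grading of $\Omega_M\otimes\mathbb C$ attached to a full maximal isotropic decomposition of $\T M\otimes\mathbb C$ refining all the $L_i\oplus L_i'$, upon forgetting the data coming from the auxiliary splittings.
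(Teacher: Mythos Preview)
Your inductive argument is correct, but the paper takes a shorter route: it applies Proposition \ref{prop:16} \emph{once} to the single quasi split structure $E_1\oplus\cdots\oplus E_t$ with its isotropic decomposition $(L_1\oplus\cdots\oplus L_t)\oplus(L_1'\oplus\cdots\oplus L_t')$, yielding the $(L_1\oplus\cdots\oplus L_t,\,L_1'\oplus\cdots\oplus L_t')$-grading, and then the multigrading drops out from the elementary identity $\BigWedge^r(L_1'\oplus\cdots\oplus L_t')=\bigoplus_{r_1+\cdots+r_t=r}\BigWedge^{r_1}L_1'\otimes\cdots\otimes\BigWedge^{r_t}L_t'$. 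Your approach instead peels off one $E_i$ at a time and establishes the relative identity $K_{L_1\oplus\cdots\oplus L_{t-1}}=\bigoplus_{r_t}(\BigWedge^{r_t}L_t')\otimes K_{L_1\oplus\cdots\oplus L_t}$ at each step; this is valid (it is exactly the intermediate identity $K_L=(\BigWedge^\bullet L_\perp')\otimes K_{L\oplus L_\perp}$ from the proof of Proposition \ref{prop:16}, applied with a judicious choice of $L_\perp$), but it requires tracking the auxiliary splittings and commuting Clifford actions that you flag as ``the only nontrivial point.'' The paper's one-shot application sidesteps all of that bookkeeping, since the exterior-algebra decomposition of a direct sum is automatic. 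Your final remark---that the multigrading is the canonical one attached to a full maximal isotropic refinement, after forgetting auxiliary data---is exactly the content of the paper's one-line proof.
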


\begin{proof}
This follows immediately from Proposition \ref{prop:16} applied to the $(L_1\oplus\cdots \oplus  L_t,L_1'\oplus\cdots \oplus L_t')$-grading. 
\end{proof}

\begin{lem}\label{lem:23}
Let $\delta\in \mathcal E_M\otimes \mathbb C$ be an operator such that $[\delta,\T M\otimes \mathbb C]\subseteq \T M\otimes \mathbb C$ and let $L,L'\subseteq \T M\otimes \mathbb C$ be isotropic subbundles such that $L\oplus L'$ is a quasi split structure of rank $2k$. If $\lbra L,L\rbra_\delta\subseteq L$ and $\lbra L',L'\rbra_\delta\subseteq L'$, then $\delta$ decomposes into components of degree $-1,0,1$ with respect to the $(L,L')$-grading.
\end{lem}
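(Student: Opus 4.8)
The plan is to show that $\delta$ cannot have any component of degree $\leq -2$ or $\geq 2$ with respect to the $(L,L')$-grading, using the hypotheses on the derived brackets $\lbra L,L\rbra_\delta$ and $\lbra L',L'\rbra_\delta$ together with the module-theoretic description of the grading coming from Proposition \ref{prop:16}. First I would unwind the $(L,L')$-grading via the spin representation: the grading $\Omega_M\otimes\C = \bigoplus_{r=0}^k (\BigWedge^r L')\otimes K_L$ is exactly the eigenspace decomposition for the degree operator attached to $L\oplus L'$, and crucially $\ell \in L$ acts with degree $-1$ (it lowers $\BigWedge^r L'$) while $\ell'\in L'$ acts with degree $+1$. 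So sections of $L$ and of $L'$, viewed as operators on forms, are homogeneous of degree $\mp 1$. Since $[\delta,\T M\otimes\C]\subseteq \T M\otimes\C$ by hypothesis, $\delta$ is an even operator (being a lift-type operator as in Proposition \ref{prop:9}); decompose $\delta = \sum_{j} \delta_j$ into its homogeneous components of degree $j$ with respect to the $(L,L')$-grading, where $j$ ranges over even integers. The goal is to prove $\delta_j = 0$ for $|j|\geq 2$.

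The key step is a bracket-degree bookkeeping argument. For $\ell_1,\ell_2 \in L$, the derived bracket $\lbra \ell_1,\ell_2\rbra_\delta = [[\ell_1,\delta],\ell_2]$ is a section of $\T M\otimes\C$ (using $[\delta,\T M\otimes\C]\subseteq\T M\otimes\C$), and it is the commutator of operators of degrees $-1$, $j$, $-1$ summed over the homogeneous pieces $\delta_j$ of $\delta$; hence its degree-$(j-2)$ component is $[[\ell_1,\delta_j],\ell_2]$, an operator of degree $j-2$. On the other hand, a section of $\T M\otimes\C$ decomposes, under the $(L,L')$-grading-induced action, only into components of degree $-1$, $0$, $+1$ (it lies in $L\oplus L' \oplus(\text{degree-}0\text{ part})$; more precisely every element of $\T M\otimes\C$ acts on forms with grading-components in $\{-1,0,1\}$, since $\T M\otimes\C = (L\oplus L_\perp)\oplus(L'\oplus L_\perp')$ and all four summands act with degree in $\{-1,1\}$, while $L_\perp, L_\perp'$ act trivially on the $L'$-exterior-algebra factor). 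Therefore $[[\ell_1,\delta_j],\ell_2]=0$ unless $j-2\in\{-1,0,1\}$, i.e. unless $j\in\{1,2,3\}$; combined with $j$ even this forces $j=2$. So for all $\ell_1,\ell_2\in L$ and all even $j\neq 2$ we get $[[\ell_1,\delta_j],\ell_2]=0$; moreover the $\lbra L,L\rbra_\delta\subseteq L$ hypothesis says the total derived bracket lands in $L$, which has pure degree $-1$, so also the $j=2$ component must vanish: $[[\ell_1,\delta_2],\ell_2]=0$ for all $\ell_1,\ell_2\in L$. Symmetrically, using $\lbra L',L'\rbra_\delta\subseteq L'$ and that $L'$ has pure degree $+1$, we get $[[\ell_1',\delta_j],\ell_2']=0$ for all $\ell_1',\ell_2'\in L'$ and all even $j$ with $j\neq -2$, and also for $j=-2$.

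It remains to convert these vanishing statements into $\delta_j=0$. The map $\x\mapsto [\x,\delta_j]$ is a graded derivation, so from $[[\ell_1,\delta_j],\ell_2]=0$ for all $\ell_1,\ell_2\in L$ (valid for even $j\geq 4$) I would argue, exactly as in the proof of Lemma \ref{lemma:7}, that $[\ell,\delta_j]$ pairs trivially against $L$; since $\langle\,,\,\rangle$ restricted to $L\oplus L'$ is non-degenerate and $L$ is maximal isotropic, together with the analogous statement $[[\ell_1',\delta_j],\ell_2']=0$ this should force $[\delta_j,\T M\otimes\C]\subseteq\Omega_M^0$, hence by Lemma \ref{lemma:7}(1) $\delta_j\in\Omega_M^0\oplus\T M\otimes\C$; but $\delta_j$ is even, so $\delta_j\in\Omega_M^0$, and an element of $\Omega_M^0$ has grading-degree $0$, contradicting $j\geq 4$ unless $\delta_j=0$. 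The same disposes of $j\leq -4$ using the $L'$ statement. The only subtlety is the borderline degrees $j=2$ and $j=-2$: here I have the vanishing of $[[\ell_1,\delta_2],\ell_2]$ for $\ell_i\in L$ but a priori not for mixed or $L'$-arguments, so I would instead observe that $[\ell,\delta_2]$ has grading-degree $1$ and, pairing trivially with all of $L$, must be a section of $L'$ (the degree-$+1$ part of $\T M\otimes\C$ orthogonal to $L$); then applying the derivation property once more against $L'$ and using $\lbra L',L\rbra$ considerations — or more directly, comparing with the $j=-2$ analysis — forces $\delta_2$ to act trivially, hence $\delta_2\in\Omega_M^0$ and so $\delta_2=0$; symmetrically $\delta_{-2}=0$. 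The main obstacle I anticipate is precisely handling these borderline components $\delta_{\pm 2}$ cleanly, since the hypotheses only control same-labeled derived brackets; the way around it is to exploit that a degree-$(\pm1)$ section of $\T M\otimes\C$ that is isotropically paired away from $L$ (resp. $L'$) must live in $L'$ (resp. $L$), plus a degree count showing $[\delta_{\pm2},\T M\otimes\C]$ cannot be supported where it would need to be.
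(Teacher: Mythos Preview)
Your approach—decomposing $\delta=\sum_j\delta_j$ into $(L,L')$-homogeneous pieces and killing $\delta_j$ for $|j|\ge 2$—is genuinely different from the paper's, and it can be made to work, but your execution has two concrete problems.

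First, the assertion that $\delta$ is even is unjustified and false in the main application ($\delta=d$); drop it and let $j$ range over all integers.

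Second, and more seriously, your conversion from $[[\ell_1,\delta_j],\ell_2]=0$ to $\delta_j=0$ does not go through as written. You invoke Lemma~\ref{lemma:7} to say ``$[\ell,\delta_j]$ pairs trivially against $L$'', but $[\ell,\delta_j]$ is a priori just an operator on forms, not a section of $\T M\otimes\C$, so neither the tautological inner product nor Lemma~\ref{lemma:7} applies to it; and you never address $[E^\perp,\delta_j]$, which the hypotheses do not control. The missing observation is one you almost make: since $[\x,\delta]=\sum_j[\x,\delta_j]$ is itself in $\T M\otimes\C$ and hence supported in $(L,L')$-degrees $\{-1,0,1\}$, each $[\x,\delta_j]$ \emph{is} the corresponding graded piece of a generalized vector, hence lies in $L$, $E^\perp$, $L'$, or is zero. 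This gives $[\T M\otimes\C,\delta_j]=0$ for $|j|\ge 3$ with no derived-bracket input at all, so $\delta_j\in\Omega_M^0$ by Lemma~\ref{lemma:7}(2), whence $\delta_j=0$. For $j=2$ one gets $[L'\oplus E^\perp,\delta_2]=0$ and $[\ell,\delta_2]\in L'$; now $\lbra L,L\rbra_\delta\subseteq L$ yields $0=[[\ell_1,\delta_2],\ell_2]=2\langle[\ell_1,\delta_2],\ell_2\rangle$, and the non-degenerate pairing of $L$ with $L'$ forces $[L,\delta_2]=0$, hence $\delta_2=0$; symmetrically $\delta_{-2}=0$.

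The paper avoids all of this by a direct filtration argument (after Gualtieri). Set $F_r=\{\omega:(\BigWedge^{r+1}L)\cdot\omega=0\}$, so $F_0=K_L$ and $F_r=\bigoplus_{s\le r}(\BigWedge^sL')\otimes K_L$. For $\rho\in K_L$ one unravels $\lbra\ell_1,\ell_2\rbra_\delta\rho=0$ (using $\ell_i\rho=0$) to obtain $\ell_2\ell_1\delta\rho=0$, i.e.\ $\delta(F_0)\subseteq F_1$; induction gives $\delta(F_r)\subseteq F_{r+1}$. Running the same argument with $L'$ and intersecting the two filtrations pins $\delta$ to degrees $\{-1,0,1\}$. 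This is shorter and sidesteps the component-by-component bookkeeping.
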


\begin{proof}
Our proof is modeled after the proof of Theorem 2.9 in \cite{Gualtieri11}. Let $F_r\subseteq \Omega_M\otimes \mathbb C$ consist of all forms annihilated by $\BigWedge^{r+1}L$ so that, in particular, $F_0=K_L$. The assumption $\lbra L,L\rbra_\delta\subseteq L$ implies $\lbra L,L\rbra_\delta F_0=0$ and, unraveling the definition of the derived bracket, $\delta(F_0)\subseteq F_1$. In a similar way, proceeding by induction on $r$, we prove that $\delta(F_r)\subseteq F_{r+1}$ for all non-negative integers $r$. Similarly, $\delta(F'_r)\subseteq F'_{r+1}$, where $\{F'_r\}_r$ is the filtration of $\Omega_M\otimes \mathbb C$ induced by $L'$. Hence, 
\begin{equation}
\delta((\BigWedge^r L')\otimes K_L)\subseteq \delta(F_r)\cap \delta(F'_{k-r})\subseteq F_{r+1}\cap F'_{k-r+1}
\end{equation}
which concludes the proof.
\end{proof}

\begin{lem}\label{lem:24}
Let $L,L'\subseteq \T M\otimes \mathbb C$ be isotropic subbundles such that $E=L\oplus L'$ is a quasi split structure. Assume $\delta \in \mathcal E_M$ is such that
\begin{enumerate}[1)]
    \item $\delta$ is of degree 1 with respect to a grading that commutes with the $(L,L')$-grading;
    \item $\lbra L,L'\rbra_\delta \subseteq E^\perp$;
    \item $\lbra L,L\oplus E^\perp\rbra_\delta\subseteq L$ and $\lbra L',L'\oplus E^\perp\rbra_\delta\subseteq L'$. 
\end{enumerate}
Then $\delta$ is of degree $0$ with respect to the $(L,L')$-grading.
\end{lem}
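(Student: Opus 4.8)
The plan is to build on Lemma~\ref{lem:23}, which already guarantees that $\delta$ decomposes into components of degree $-1$, $0$, and $1$ with respect to the $(L,L')$-grading, and then to rule out the $\pm 1$ components using the extra hypotheses. Write $\delta = \delta_{-1} + \delta_0 + \delta_1$ for this decomposition. First I would observe that since the $(L,L')$-grading arises from the filtrations $\{F_r\}$ (forms annihilated by $\BigWedge^{r+1}L$) and $\{F'_r\}$ (forms annihilated by $\BigWedge^{r+1}L'$), and since $\delta(F_r)\subseteq F_{r+1}$ and $\delta(F'_r)\subseteq F'_{r+1}$ as in the proof of Lemma~\ref{lem:23}, the component $\delta_1$ raises the $L'$-degree by one (equivalently lowers the $L$-degree filtration slot while staying inside $F'_{\bullet}$) and $\delta_{-1}$ does the opposite. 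So it suffices to show that $\delta$ preserves, say, the $L'$-degree, which forces $\delta_1 = 0$; the vanishing of $\delta_{-1}$ then follows by the symmetric argument swapping the roles of $L$ and $L'$.

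The key input is hypothesis 2), $\lbra L, L'\rbra_\delta \subseteq E^\perp$, together with the strengthened closure conditions in 3) which now allow $E^\perp$ to appear. The idea is to revisit the inductive step of Lemma~\ref{lem:23} but track the degree more carefully. On $K_L = F_0$: for $l\in L$, $l'\in L'$, $\rho\in K_L$, unravel $\lbra l, l'\rbra_\delta\,\rho$. By hypothesis 2), $\lbra l,l'\rbra_\delta \in E^\perp$, and $E^\perp$ acts on $K_L$ (since $C\ell(E^\perp)$ does); the point is that $E^\perp$ preserves $K_L$ but $L'$ genuinely shifts the $(L,L')$-degree. Writing out $[[\delta, l], l']\rho$ and using $l\rho = 0$, one isolates $l'\,\delta(\rho)$ modulo terms that land in $K_L$ after acting by $l$. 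Hypothesis 2) says the leftover, $l'\delta(\rho)$ up to the $E^\perp$-part, is annihilated by $l$, i.e.\ $\delta(\rho)$ has no component of $(L,L')$-degree $1$; combined with Lemma~\ref{lem:23} giving $\delta(\rho)\in F_1$, this pins $\delta(\rho)$ to $(\BigWedge^0 L')\otimes K_L \oplus (E^\perp\text{-shift})$, i.e.\ degree $0$. Then I would induct on the $L'$-degree $r$: assuming $\delta$ preserves degrees $\le r$, write a degree-$r$ form as $l'_1\cdots l'_r \rho$ with $\rho\in K_L$, push $\delta$ through using the graded Leibniz-type identity $\delta(\x\omega) = [\delta,\x]\omega + (-1)^{|\x|}\x\,\delta(\omega)$ valid since $[\delta,\T M]\subseteq \T M$, and control each resulting term with hypothesis 3): the bracket $[\delta, l']$ decomposes, by 3), into an $L'$-part (degree-preserving, raising $r$ by one in the expression but the Clifford action of $L'$ on $L'$ is trivial so it is absorbed) plus an $E^\perp$-part (degree-preserving). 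Hypothesis 1) ensures all of this is compatible with the auxiliary grading so no cross-terms sneak in.

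The main obstacle I anticipate is the bookkeeping in the inductive step: one must carefully separate, at each stage, the genuine $(L,L')$-degree shift from the action of $E^\perp$, which is degree-zero for the $(L,L')$-grading but nontrivial as a Clifford action, and from the action of $L'$ on a form already built from $L'$'s (where $l'l' = \langle l',l'\rangle = 0$ by isotropy, so such terms collapse). Getting the signs right in the graded Leibniz rule for the Clifford action, and making sure hypothesis 1) is invoked exactly where needed to prevent the auxiliary grading from mixing components, is where the proof has to be done with care rather than by citation. Once $\delta_1 = 0$ is established, the identical argument with $L \leftrightarrow L'$ (using the second halves of 2) and 3)) gives $\delta_{-1} = 0$, so $\delta = \delta_0$ is of $(L,L')$-degree $0$, as claimed.
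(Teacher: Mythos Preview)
Your overall strategy---show that $\delta$ preserves the two filtrations $\{F_r\}$ and $\{F'_r\}$ separately, hence is of degree $0$---is exactly right and is what the paper does. But the way you divide the labor among hypotheses 1)--3) is essentially inverted from the paper, and in your version the base case does not close.

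Concretely, your base-case argument on $K_L$ uses hypothesis 2) by expanding $\lbra l,l'\rbra_\delta\,\rho$ for $l\in L$, $l'\in L'$, $\rho\in K_L$. If you carry out that expansion you get
\[
[[l,\delta],l']\rho \;=\; l\,\delta(l'\rho)\;+\;\delta(ll'\rho)\;-\;l'l\,\delta\rho,
\]
and the middle term equals $\delta\bigl(2\langle l,l'\rangle\rho\bigr)$, which involves $[\delta,2\langle l,l'\rangle]$ acting on $\rho$. Nothing in the hypotheses controls $[\delta,f]$ for a function $f$, so this term obstructs the isolation of $l\delta\rho$ (or $l'\delta\rho$) that you describe. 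The paper avoids exactly this obstruction by pairing $L$ against a \emph{maximal isotropic} $L''=\mathrm{Ann}(\rho)\supseteq L$ rather than against $L'$: since $\langle L,L''\rangle=0$, the function term never appears. Hypothesis 3) then gives $\lbra L'',L\rbra_\delta\subseteq L$, hence $L''L\delta\rho=0$, so $L\delta\rho$ lands in the \emph{line} $K_{L''}=\mathbb C\rho$, i.e.\ $L\delta\rho=g\rho$ for a function $g$. This is the one place hypothesis 1) is actually used: the auxiliary grading forces $g=0$ by degree comparison. Your invocation of 1) (``so no cross-terms sneak in'') is too vague to do this job.

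Your inductive step also has a gap: you want to decompose $[\delta,l']$ itself into an $L'$-part and an $E^\perp$-part via hypothesis 3), but 3) constrains the \emph{derived} bracket $[[l',\delta],\cdot]$, not the single commutator $[\delta,l']$; indeed $[\delta,l']$ need not lie in $\T M\otimes\mathbb C$ at all (this is not among the hypotheses, and you cannot invoke Lemma~\ref{lem:23} to supply it). The paper instead runs the induction directly on the filtration: assuming $\delta(F_r)\subseteq F_r$, hypothesis 2) gives $\lbra L,L'\rbra_\delta F_r\subseteq E^\perp F_r\subseteq F_r$, and expanding the derived bracket shows $L\delta F_{r+1}\subseteq F_r$, i.e.\ $\delta(F_{r+1})\subseteq F_{r+1}$. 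So the roles are: hypothesis 3) and the pure-spinor trick handle the base case, hypothesis 1) kills the scalar there, and hypothesis 2) drives the induction.
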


\begin{proof}
Let $\rho\in K_L$ be a (locally defined) pure spinor so that $L''={\rm Ann}(\rho)$ is maximal isotropic and contains $L$. Using 3) and expanding the derived bracket of $\delta$, we obtain 
\begin{equation}
    0=\lbra L'',L\rbra_\delta \rho = L''L\delta \rho\,.
\end{equation}
Hence there exists a local function $g$ such that $L\delta \rho = g\rho$. On the other hand, condition 1) and degree comparison force $g=0$ and thus $\delta(K_L)\subseteq K_L$. Let $\{F_r\}$ be the filtration of $\Omega_M\otimes \mathbb C$ defined by $L$ introduced in the proof of Lemma \ref{lem:23}. In particular, $LF_r=F_{r-1}$ and $L'F_r=F_{r+1}$. Let us assume $\delta(F_r)\subseteq F_{r}$ so that, as a consequence of 2), we have $\lbra L',L\rbra_\delta F_r\subseteq F_r$. On the other hand, expanding the definition of the derived bracket and using induction on $r$ we have 
\begin{equation}
\lbra L,L'\rbra_\delta F_r\subseteq L\delta F_{r+1} + F_r\,.
\end{equation}
Therefore, $\delta(F_{r+1})\subseteq F_{r+1}$ which, by induction on $r$, proves that $\delta(F_r)\subseteq F_r$ for all $r$. Using a similar argument for $L'$, we conclude that $\delta$ preserves the $(L,L')$-grading. 
\end{proof}

\section{Generalized Polynomial Structures}\label{sec:3}

In this section we introduce the notion of generalized polynomial structure, modeled after the classical polynomial structures of Goldberg and Yano \cite{GoldbergYano70}. We show that the generalized tangent bundle decomposes into the direct sum of generalized eigenbundles.  

\subsection{Definition and basic properties} 

\begin{mydef}
Let $N$ be a positive integer. A skew-symmetric endomorphism $\varphi$ of $\T M$ is called a \emph{generalized polynomial structure of degree} $N$  if there is a monic polynomial $P(x)\in \R[x]$ of degree $N$ such that, for all $p\in M$, $P(x)$ is the minimal polynomial of the endomorphism $\varphi_p:\T_p M\rightarrow \T_p M$. We will refer to the polynomial $P(x)$ as the \emph{minimal polynomial} of $\varphi$. 
\end{mydef}

\begin{example}
Let $f:TM\to TM$ be a polynomial structure with minimal polynomial $p(x)$. If $f^T$ denotes the transpose action on $T^*M$, then $\varphi=f\oplus (-f^T)$ is by construction a skew-symmetric endomorphism of $\T M$. Moreover, for every $X\in TM$ and $\alpha\in T^*M$, $0=\langle p(f)(X),\alpha\rangle = \langle X,p(f^T)\alpha\rangle$ implies that $p(\varphi)p(-\varphi)$ vanishes on $\T M$. Hence $\varphi$ is a generalized polynomial structure whose minimal polynomial $P(x)$ divides $p(x)p(-x)$. More precisely, if $q(x)$ denotes the largest (monic) even divisor of $p(x)$, then
\begin{equation*}
    P(x)=(-1)^{\deg(p)-\deg(q)}\frac{p(x)p(-x)}{q(x)}\,.
\end{equation*}
\end{example}

\begin{lem}\label{lem:GPS} 
Let $P(x)\in \R[x]$ be a monic polynomial of degree $N$, and let $\varphi$ be a skew-symmetric endomorphism of $\T M$. Then $\varphi$ is a generalized polynomial structure with minimal polynomial $P(x)$ if and only if the following conditions are satisfied:
\begin{enumerate}[1)]
\item $P(\varphi)(\x)=0$ for all $\x\in \T M$;
\item for all $p\in M$, the endomorphisms $I_p,\varphi_p,\varphi^2_p,\dots\varphi^{N-1}_p$ of $\T_p M$ are linearly independent.
\end{enumerate}
\end{lem}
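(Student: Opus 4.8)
The statement is an ``if and only if'' that essentially reformulates the fiberwise definition of a generalized polynomial structure (which speaks of minimal polynomials) in terms of two separately checkable conditions: a single polynomial identity valid on all of $\T M$, together with a pointwise linear-independence condition on the powers of $\varphi$. The plan is to reduce everything to pointwise linear algebra, i.e.\ to statements about the single endomorphism $\varphi_p$ of the finite-dimensional vector space $\T_p M$, since both conditions 1) and 2) are fiberwise in nature and $P(\varphi)(\x) = 0$ for all $\x \in \T M$ is equivalent to $P(\varphi_p) = 0$ for all $p$. For the forward implication, suppose $\varphi$ is a generalized polynomial structure with minimal polynomial $P(x)$; then by definition $P(x)$ is, for every $p$, the minimal polynomial of $\varphi_p$, so $P(\varphi_p) = 0$, giving 1), and the minimality (in the divisibility sense) of $P(x)$ among annihilating polynomials of $\varphi_p$ is exactly the assertion that no nonzero polynomial of degree $< N$ annihilates $\varphi_p$, which translates to the linear independence of $I_p, \varphi_p, \dots, \varphi_p^{N-1}$ in $\End(\T_p M)$, giving 2).

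For the converse, assume 1) and 2) hold for a skew-symmetric endomorphism $\varphi$ with $P(x)$ monic of degree $N$. Fix $p \in M$ and let $m_p(x)$ denote the (genuine) minimal polynomial of $\varphi_p$. Condition 1) says $P(\varphi_p) = 0$, so $m_p(x) \mid P(x)$, whence $\deg m_p \le N$. Condition 2) says that $I_p, \varphi_p, \dots, \varphi_p^{N-1}$ are linearly independent, so no monic polynomial of degree strictly less than $N$ can annihilate $\varphi_p$; in particular $\deg m_p \ge N$. Combining, $\deg m_p = N$, and since $m_p$ is monic and divides the monic degree-$N$ polynomial $P$, we conclude $m_p = P$. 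As $p$ was arbitrary, $P(x)$ is the minimal polynomial of $\varphi_p$ for every $p \in M$, which is precisely the definition of a generalized polynomial structure with minimal polynomial $P(x)$.

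The argument is almost entirely bookkeeping, so I do not anticipate a genuine obstacle; the only point that deserves a word of care is the equivalence between ``$P(\varphi)(\x) = 0$ for all $\x \in \T M$'' (a statement about the operator $\varphi$ acting on sections, or equivalently on the associated bundle) and ``$P(\varphi_p) = 0$ for all $p$'' (a fiberwise statement), but this is immediate because $\varphi$ is an $\Omega_M^0$-linear bundle endomorphism, so evaluation at $p$ commutes with forming polynomials in $\varphi$. One should also note that condition 2) is genuinely a pointwise (not merely generic) hypothesis in the statement, matching the pointwise requirement in the definition, so no semicontinuity or rank-jumping subtleties arise; the minimal polynomial is literally constant equal to $P$ across $M$.
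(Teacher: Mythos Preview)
Your proposal is correct and follows essentially the same argument as the paper's own proof: both directions reduce to the elementary linear-algebra fact that, at each point $p$, the minimal polynomial of $\varphi_p$ divides $P$ by condition 1) and has degree at least $N$ by condition 2), forcing equality. The paper's write-up is slightly terser and omits your remarks on the fiberwise-vs.-global equivalence, but the logical content is identical.
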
 

\begin{proof}
Suppose $\varphi$ is a generalized polynomial structure with minimal polynomial $P(x)$. Let us fix a point $p\in M$ and let $\x\in \T_p M$. Since $P(x)$ is the minimal polynomial of $\varphi_p$, then $P(\varphi_p)(\x)=0$. Moreover, suppose there exist real numbers $a_0,a_1,\ldots,a_{N-1}$ such that
\begin{equation}
a_0I_p+a_1\varphi_p+a_2\varphi_p^2+\dots+a_{N-1}\varphi_p^{N-1}=0.
\end{equation}
Then the polynomial $Q(x)=a_0+a_1x+a_2x^2+\dots+a_{N-1}x^{N-1}$ satisfies $Q(\varphi_p)=0$. Since ${\rm deg}\ Q(x) < {\rm deg}\ P(x)$, this forces $Q(x)=0$ i.e.\ $a_i=0$ for all $i$. Vice versa, suppose that conditions 1) and 2) are satisfied. It follows from 1) that $P(x)$ is multiple of $Q_p(x)$, the (monic) minimal polynomial of $\varphi_p$. On the other hand, 2) implies ${\rm deg}\ Q_p(x)\geq {\rm deg}\ P(x)$ and thus $Q_p(x)=P(x)$.
\end{proof}

\begin{rem}
Lemma \ref{lem:GPS} shows that generalized polynomial structures naturally extend the notion of classical polynomial structure as introduced by Goldberg and Yano \cite{GoldbergYano70} (and further studied in \cite{GoldbergPetridis73}) to the generalized tangent bundle.
\end{rem}

\begin{example} 
A generalized polynomial structure with minimal polynomial $P(x)=x^2+1$ is nothing but a {\it generalized almost complex structure} in the sense of Hitchin and Gualtieri \cites{Hitchin03,Gualtieri11}. Similarly, generalized polynomial structures with minimal polynomial $P(x)=x^3+x$ are precisely (non-zero) {\it generalized $F$-structures} in the sense of Vaisman \cite{Vaisman08} that are not generalized almost complex structures.
\end{example}

\begin{example} 
Generalized polynomial structures with minimal polynomial of the form $P(x)=x^2-p x-q$ are also known as skew-symmetric {\it generalized metallic structures} \cite{BlagaNannicini20}. (Throughout the present paper we will assume generalized metallic structures to be additionally skew-symmetric with respect to the tautological inner product, in contrast to \cite{BlagaNannicini20}.) In particular, {\it generalized almost tangent structures} \cite{BlagaCrasmareanu14} are generalized metallic structures such that $p=q=0$. 
\end{example}

\begin{rem}\label{rem:29}
Let $f$ be a (classical) metallic structure i.e.\ a polynomial structure with minimal polynomial $p(x)=x^2-ax-b$ and let $\varphi=f\oplus(-f^T)$. If $a=0$, then $\varphi$ is a generalized metallic structure with minimal polynomial $p(x)$. If $a\neq 0$, then $\varphi$ is a generalized polynomial structure of degree 4 whose with minimal polynomial is $p(x)p(-x)$.  
\end{rem}

\begin{rem}
Given a skew-symmetric endomorphism $\varphi:\T M\rightarrow \T M$, consider the ideal
\begin{equation}
{\mathcal I}(\varphi)=\{P(x)\in\R[x]: P(\varphi)(\x)=0\ \mbox{ for all }\x\in \T M\}\,.
\end{equation}
If $\varphi$ is a generalized polynomial structure, then its minimal polynomial $P(x)$ belongs to $\mathcal I(\varphi)$. In particular, ${\mathcal I}(\varphi)$ is non-trivial. Among all the elements of $\mathcal I(\varphi)$, $P(x)$ is the unique monic polynomial of smallest degree. This motivates the following. 
\end{rem}

\begin{mydef} \label{def: WGPS}
A skew-symmetric endomorphism $\varphi$ of $\T M$ is called a \emph{weak generalized polynomial structure} if ${\mathcal I}(\varphi)\neq 0$. The monic polynomial $P(x)$ of smallest degree in ${\mathcal I}(\varphi)$ is called the {\it minimal polynomial of $\varphi$}, while ${\rm deg}\ P(x)$ is called the {\it degree} of $\varphi$. 
\end{mydef}

\begin{lem} 
Let $M$ be a smooth connected manifold. A skew-symmetric endomorphism $\varphi$ of $\T M$ is a weak generalized polynomial structure if and only if its spectrum (the set of the eigenvalues, regardless of the multiplicities) is constant.
\end{lem}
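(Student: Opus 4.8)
The plan is to prove both implications by exploiting the fact that the coefficients of the characteristic polynomial of $\varphi_p$ vary continuously (indeed smoothly) in $p$, while $\varphi$ being skew-symmetric with respect to a split form of signature $(n,n)$ controls the multiplicity structure. First I would set up notation: for each $p\in M$ let $\chi_p(x)=\det(xI-\varphi_p)$ be the characteristic polynomial and let $\sigma(\varphi_p)\subseteq\C$ be its set of roots. Since $\varphi$ is a smooth bundle endomorphism, the map $p\mapsto\chi_p(x)$ is a smooth map into the space of monic degree-$2n$ polynomials; in particular its coefficients are smooth functions on $M$.

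For the forward direction, assume $\varphi$ is a weak generalized polynomial structure with minimal polynomial $P(x)$, so $P(\varphi)(\x)=0$ for all $\x\in\T M$. Then at every point $p$ the minimal polynomial $Q_p(x)$ of $\varphi_p$ divides $P(x)$, hence $\sigma(\varphi_p)$ is contained in the fixed finite set $Z$ of roots of $P(x)$. Now I would argue that the characteristic polynomial is constant: write $\chi_p(x)=\prod_{\lambda\in Z}(x-\lambda)^{m_\lambda(p)}$ where $m_\lambda(p)=\dim\ker(\varphi_p-\lambda I)^{2n}$ is the algebraic multiplicity (zero if $\lambda\notin\sigma(\varphi_p)$), and $\sum_\lambda m_\lambda(p)=2n$ is constant. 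Each $m_\lambda(p)$ is locally constant: indeed, for $\lambda$ fixed, $m_\lambda(p)$ can be detected as $2n$ minus the rank of a smooth family of endomorphisms (a high power of $\varphi_p-\lambda I$ acting on the complexified generalized tangent space, or more carefully using the fact that algebraic multiplicities of a continuous family of matrices with eigenvalues confined to a discrete set must be locally constant since eigenvalues cannot move). By connectedness of $M$, each $m_\lambda$ is globally constant, and hence $\sigma(\varphi_p)=\{\lambda\in Z: m_\lambda>0\}$ is constant. This gives constancy of the spectrum.

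For the converse, assume the spectrum $S=\sigma(\varphi_p)$ is independent of $p$. Here is where the skew-symmetry enters: because $\varphi$ is skew-symmetric with respect to a nondegenerate symmetric form, its spectrum is stable under $\lambda\mapsto-\lambda$, and more importantly the Jordan structure is constrained — but actually for this direction I only need finiteness of $S$, which is automatic, and I want to produce a single polynomial $P(x)$ with $P(\varphi)\equiv 0$. The natural candidate is $P(x)=\prod_{\lambda\in S}(x-\lambda)^{e}$ for $e$ large enough (e.g.\ $e=2n$), or better, take $P$ to be a polynomial with real coefficients (grouping complex-conjugate roots, which is legitimate since $\varphi_p$ is a real endomorphism so $S$ is conjugation-invariant) vanishing to sufficiently high order at each point of $S$: then $P(\varphi_p)=0$ for every $p$ because $P$ is divisible by the minimal polynomial $Q_p(x)$ of each $\varphi_p$ (whose roots lie in $S$ with multiplicities bounded by $2n$). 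Hence $P(x)\in\mathcal I(\varphi)$ and $\mathcal I(\varphi)\neq 0$, so $\varphi$ is a weak generalized polynomial structure.

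The main obstacle, and the step deserving care, is the local constancy of the algebraic multiplicities $m_\lambda(p)$ in the forward direction. The clean way to see it: the hypothesis $P(\varphi)\equiv 0$ forces all eigenvalues to lie in the fixed finite set $Z$; since eigenvalues of a continuous matrix family depend continuously (as an unordered tuple), an eigenvalue cannot jump from one point of $Z$ to another, so near any $p_0$ the eigenvalue set is locally constant and so is each algebraic multiplicity (the latter because the sum of multiplicities over a small disk around $\lambda$ is locally constant by the argument principle / upper semicontinuity combined with the total being constant). Alternatively one avoids multiplicities entirely and argues directly that $\{p: \lambda\in\sigma(\varphi_p)\}$ is both open (eigenvalues persist under perturbation when they are isolated in $Z$) and closed (eigenvalues are roots of the continuous $\chi_p$), hence all of $M$ or empty. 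I would present this last open-closed argument as the cleanest route, since it needs only continuity of $\chi_p$ and connectedness.
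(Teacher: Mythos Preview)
Your proof is correct and shares the paper's core ideas (Cayley--Hamilton for one direction, eigenvalues as roots of an annihilating polynomial for the other), but you supply the continuity/topology details that the paper leaves implicit. For the converse the paper simply invokes Cayley--Hamilton on the characteristic polynomial, asserting without proof that constant spectrum forces constant coefficients; this is true but requires exactly the local-constancy-of-multiplicities step you take the trouble to justify. Your variant for that direction, using $P(x)=\prod_{\lambda\in S}(x-\lambda)^{2n}$ instead, is a small but nice improvement since it bypasses any discussion of multiplicities. For the forward direction the paper only observes that every eigenvalue must lie in the finite root set of any $Q\in\mathcal I(\varphi)$ and then declares the spectrum constant; your open-closed argument on $\{p:\lambda\in\sigma(\varphi_p)\}$ makes explicit why an eigenvalue confined to a discrete set cannot appear at some points and disappear at others. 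So yours is a more carefully filled-in version of the same approach rather than a genuinely different route.
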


\begin{proof} 
If $\varphi$ has a constant spectrum, then its characteristic polynomial $Q(x)={\rm det}(\varphi-xI)$ has constant coefficients and positive degree. Moreover, by the Cayley-Hamilton Theorem, $Q(x)\in {\mathcal I}(\varphi)$. Vice versa, if $Q(x)$ is any polynomial of positive degree in ${\mathcal I}(\varphi)$, then $Q(\lambda)=0$ for every eigenvalue $\lambda$ of $\varphi$. Since by definition all polynomials in $\mathcal I(\varphi)$ have constant coefficients, then the spectrum is also constant.
\end{proof}

\begin{lem}\label{lem:36}  
A skew-symmetric endomorphism $\varphi$ is a weak generalized polynomial structure if and only if there is a monic polynomial $P(x)\in \R[x]$ of positive degree $N$ such that 
\begin{enumerate}[1)]
    \item $P(\varphi)(\x)=0$ for all $\x\in\T M$;
    \item the endomorphisms $I,\varphi,\varphi^2,\dots,\varphi^{N-1}$ are $\R$-linearly independent.
\end{enumerate}
\end{lem}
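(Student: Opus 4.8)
The plan is to prove Lemma \ref{lem:36} by reducing it to the already-established characterization of the (non-weak) situation, namely Lemma \ref{lem:GPS}, together with the fact that being a weak generalized polynomial structure on a connected manifold is equivalent to having constant spectrum. The key observation is that the two conditions 1) and 2) are \emph{pointwise} conditions together with a global statement, and that condition 2) holding at every point is equivalent to $I,\varphi,\dots,\varphi^{N-1}$ being $\R$-linearly independent as global endomorphisms (a single nontrivial global relation would specialize to a nontrivial fiberwise relation, and conversely if the global sections are independent then they are independent at some point, hence — once we know the rank is semicontinuous and the spectrum is constant — at every point).

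First I would prove the forward implication. Suppose $\varphi$ is a weak generalized polynomial structure with minimal polynomial $P(x)$ of degree $N$. By definition $P(\varphi)(\x)=0$ for all $\x\in\T M$, which is exactly condition 1). For condition 2), suppose toward a contradiction that there are real numbers $a_0,\dots,a_{N-1}$, not all zero, with $a_0 I+a_1\varphi+\dots+a_{N-1}\varphi^{N-1}=0$ as an endomorphism of $\T M$. Then $Q(x)=a_0+a_1 x+\dots+a_{N-1}x^{N-1}$ is a nonzero polynomial in $\mathcal I(\varphi)$ of degree strictly less than $N$, contradicting minimality of $P(x)$. Hence $I,\varphi,\dots,\varphi^{N-1}$ are $\R$-linearly independent.

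For the converse, suppose such a $P(x)$ exists satisfying 1) and 2). Condition 1) immediately gives $P(x)\in\mathcal I(\varphi)$, so $\mathcal I(\varphi)\neq 0$ and $\varphi$ is a weak generalized polynomial structure; it only remains to check that $P(x)$ is in fact the minimal polynomial, i.e.\ of smallest degree in $\mathcal I(\varphi)$. If $Q(x)\in\mathcal I(\varphi)$ were a nonzero polynomial of degree $M<N$, then $Q(\varphi)=0$ as a global endomorphism, giving a nontrivial $\R$-linear relation among $I,\varphi,\dots,\varphi^{M}\subseteq\{I,\varphi,\dots,\varphi^{N-1}\}$, contradicting 2). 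Therefore $P(x)$ is the minimal polynomial and $\deg P(x)=N$ is the degree of $\varphi$.

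The main subtlety — and the step I would be most careful about — is the passage between the fiberwise formulation of condition 2) used in Lemma \ref{lem:GPS} and the global formulation stated here in Lemma \ref{lem:36}. A global relation $Q(\varphi)=0$ clearly restricts to $Q(\varphi_p)=0$ at every $p$, so global linear independence is implied by pointwise linear independence at even a single point; the only thing one needs is that $Q(\varphi)=0$ as a global endomorphism is the \emph{same} as $Q(\varphi_p)=0$ for all $p$, which is immediate since $Q(\varphi)$ is a bundle endomorphism and vanishes iff it vanishes on each fiber. This makes the argument above entirely formal, with no local-to-global obstruction: unlike in Lemma \ref{lem:GPS}, here we do not need to know that the \emph{same} polynomial $P$ works pointwise a priori — that is hypothesized — so the proof is essentially a restatement of the definition of $\mathcal I(\varphi)$ and its minimal generator.
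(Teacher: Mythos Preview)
Your core argument (the second and third paragraphs) is correct and is exactly the straightforward adaptation of Lemma~\ref{lem:GPS} that the paper invokes as its entire proof. The surrounding commentary, however, is superfluous: the constant-spectrum characterization is never used, and the fiberwise-versus-global discussion in your first and last paragraphs is irrelevant here, since in the weak setting condition~2) is stated globally from the outset and $\mathcal I(\varphi)$ is defined in terms of global endomorphisms --- there is no local-to-global passage to negotiate.
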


\begin{proof} 
The proof is a straightforward adaptation of the proof of Lemma \ref{lem:GPS}.
\end{proof}

\begin{rem}
Lemma \ref{lem:36} shows that weak generalized polynomial structures of constant rank extend the notion of polynomial structure described by Goldberg and Yano \cite{GoldbergYano70} to the generalized tangent bundle.
\end{rem}

\begin{rem}
It follows from Remark \ref{rem:29} that every generalized polynomial structure is also a weak generalized polynomial structure (and the two definitions of the minimal polynomial coincide). The converse is not necessarily true as the monic polynomial of smallest degree in $\mathcal I(\varphi)$ is a priori only a multiple of the minimal polynomials of the fiberwise endomorphisms $\varphi_p$.
\end{rem}

\begin{example}
Given a generalized polynomial structure $\varphi$ with minimal polynomial $P(x)=x^2$ and a function $g\in \Omega_M^0$, consider $\varphi'=g\varphi$. Then, the minimal polynomial $Q_p(x)$ of $\varphi'_p$ satisfies
\begin{equation}
    Q_p(x)=\left\{\begin{array}{lc} x^2&\mbox{ if } g(p)\neq 0,\\
x&\mbox{ if } g(p)=0.\end{array}\right.
\end{equation}
In particular, suppose $g$ is non-zero and non-invertible. Then $\varphi'$ is not a generalized polynomial structure, but still a weak generalized polynomial structure. Moreover, $P(x)=x^2$ is still the minimal polynomial of $\varphi'(x)$ in the sense of Definition \ref{def: WGPS}.
\end{example}

\begin{lem}
Let $\varphi$ be a weak generalized polynomial structure such that all eigenvalues of $\varphi$ have constant multiplicity $1$ (i.e.\ $\varphi$ is semisimple). Then $\varphi$ is a generalized polynomial structure.
\end{lem}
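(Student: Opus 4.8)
The plan is to show that, granted the semisimplicity hypothesis, the monic polynomial $P(x)$ of smallest degree in $\mathcal I(\varphi)$ is precisely the minimal polynomial of the endomorphism $\varphi_p$ at each point $p\in M$; by definition this is exactly what it means for $\varphi$ to be a generalized polynomial structure with minimal polynomial $P(x)$.

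First I would reduce to the case of $M$ connected (otherwise one argues on each component) and invoke the lemma stated just before Lemma \ref{lem:36} to conclude that $\varphi$ has constant spectrum $\Lambda\subseteq\mathbb C$. Writing $Q(x)=\prod_{\lambda\in\Lambda}(x-\lambda)$, this is a monic \emph{real} polynomial, since $\Lambda$ is invariant under complex conjugation. The hypothesis that $\varphi$ is semisimple means that each $\varphi_p$ is diagonalizable over $\mathbb C$, so its minimal polynomial has only simple roots and is therefore equal to $\prod_{\lambda\in\mathrm{Spec}(\varphi_p)}(x-\lambda)=Q(x)$, independently of $p$. In particular $Q(\varphi_p)=0$ for every $p$, i.e.\ $Q\in\mathcal I(\varphi)$, and hence $P(x)\mid Q(x)$.

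For the reverse divisibility, note that since $P\in\mathcal I(\varphi)$ we have $P(\varphi_p)=0$ for all $p$, so the minimal polynomial $Q(x)$ of $\varphi_p$ divides $P(x)$. As $P$ and $Q$ are both monic, $P(x)=Q(x)$; hence $P(x)$ is the minimal polynomial of $\varphi_p$ at every point, and $\varphi$ is a generalized polynomial structure. I do not foresee a genuine obstacle: the content is the elementary observation that the minimal polynomial of a semisimple operator depends only on its spectrum and is therefore locally --- hence, by constancy of the spectrum, globally --- constant on a weak generalized polynomial structure. The only points demanding a modicum of care are the bookkeeping about connected components and the linear-algebra fact that a diagonalizable endomorphism has $\prod(x-\lambda)$ (over \emph{distinct} eigenvalues) as its minimal polynomial, both of which are standard.
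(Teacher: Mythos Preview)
Your proposal is correct and follows essentially the same argument as the paper: both recognize that under the semisimplicity hypothesis and constancy of the spectrum, the pointwise minimal polynomial $Q_p(x)=\prod_{\lambda\in\Sigma}(x-\lambda)$ is independent of $p$ and therefore coincides with $P(x)$. Your version spells out the two divisibilities $P\mid Q$ and $Q\mid P$ explicitly, whereas the paper compresses this into the single remark that since $P(\varphi_p)=0$ and $P(x)$ has only simple roots, $P(x)$ must already be the minimal polynomial of each $\varphi_p$.
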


\begin{proof}
Let $P(x)$ be the polynomial in ${\mathcal I}(\varphi)$ of smallest degree, and let $p\in M$. Since $P(\varphi_p)=0$ and $P(x)$ has only simple roots, then $P(x)$ is also the minimal polynomial of $\varphi_p$ for each $p\in M$.
\end{proof}

\begin{rem}\label{rem:34}
Let $\varphi$ be a weak generalized polynomial structure. Since $\varphi$ is in particular a skew-symmetric endomorphism of $\T M$, its spectrum must be symmetric with respect to $0\in \mathbb C$. As a consequence, the minimal polynomial $P(x)$ of $\varphi$ is either even or odd, i.e. of the form $P(x)=x^k Q(x^2)$, for some non negative integer $k$ and some $Q(x)\in \R[x]$. For example, imposing that a generalized metallic structure with minimal polynomial $x^2-px-q$ is skew-symmetric implies the condition $p=0$.
\end{rem}

\begin{example}
Let $f:T M\to T M$ be a polynomial structure and let $\lambda$ be an eigenvalue of the associated generalized polynomial structure $\varphi=f\oplus (-f^t)$. Then $L_\lambda=(L_\lambda\cap TM)\oplus (L_\lambda \cap T^*M)$. The first summand is the generalized eigenbundle with eigenvalue $\lambda$ for the action of $f$ of $TM$. The second summand is the generalized eigenspace with eigenvalue $-\lambda$ for the transpose action of $f^t$ on $T^*M$. 
\end{example}

\subsection{Generalized eigenbundles}
Let $\varphi$ be a weak generalized polynomial structure and let $\lambda$ be an eigenvalue of $\varphi$. We have a sequence of subsheaves of $\T M\otimes\C$:
\begin{equation}\label{filt1}
{\rm Ker}(\varphi-\lambda I)\subseteq {\rm Ker}((\varphi-\lambda I)^2)\subseteq {\rm Ker}((\varphi-\lambda I)^3)\subseteq \cdots\subset {\rm Ker}((\varphi-\lambda I)^n)\subseteq\cdots. 
\end{equation}
The inclusions in (\ref{filt1}) are all proper, until the sequence stabilizes. The smallest integer $m(\lambda)$ at which (\ref{filt1}) stabilizes coincides with the {\it multiplicity} of $\lambda$ in the minimal polynomial of $\varphi$. We denote 
\begin{equation}
    L_{\lambda}:={\rm Ker}((\varphi-\lambda I)^{m(\lambda)})
\end{equation}
the corresponding subsheaf. We also denote by $\Sigma(\varphi)$ the spectrum of $\varphi$ i.e.\ the set of all eigenvalues of $\varphi$. We reserve the notation $\Sigma_+(\varphi)$ for any fixed subset $\Sigma_+(\varphi)\subset \Sigma(\varphi)$ such that $\Sigma_+(\varphi)$ and $-\Sigma_+(\varphi)$ partition $\Sigma(\varphi)\setminus\{0\}$.

\begin{rem} 
If $\varphi$ is a generalized polynomial structure, then $L_{\lambda}$ is a vector subbundle of $\T M\otimes \C$, whose complex rank equals the multiplicity of $\lambda$ in the minimal polynomial. In this case we refer to $L_\lambda$ as the {\it generalized eigenbundle of $\varphi$ with eigenvalue $\lambda$}. In light of Remark \ref{rem:34}, $L_{\lambda}$ and $L_{-\lambda}$ have equal rank.
\end{rem}

\begin{rem}\label{rem:im}
Similarly to \eqref{filt1}, we have the filtration
\begin{equation}\label{filt2}
{\rm Im}(\varphi-\lambda I)\supseteq {\rm Im}((\varphi-\lambda I)^2)\supseteq {\rm Im}((\varphi-\lambda I)^3)\supseteq \cdots\supseteq {\rm Im}((\varphi-\lambda I)^n)\supseteq\cdots. 
\end{equation}
which stabilizes at $n=m(\lambda)$. If $\varphi$ is a generalized polynomial structure, then the sheaf
\begin{equation}
R_{\lambda}:={\rm Im}((\varphi-\lambda I)^{m(\lambda)})=\bigoplus_{\mu\in \Sigma(\varphi)\setminus\{\lambda\}}L_{\mu}
\end{equation}
is a vector bundle. More generally, given any subset
$S$ of $\Sigma(\varphi)$, we have
\begin{equation}
    {\rm Im}\left(\prod_{\lambda\in S}(\varphi-\lambda I)^{m(\lambda)}\right)=\bigoplus_{\mu\in \Sigma(\varphi)\setminus S}L_{\mu}
\end{equation}
and
\begin{equation}
\qquad{\rm Ker}\left(\prod_{\lambda\in S}(\varphi-\lambda I)^{m(\lambda)}\right)=\bigoplus_{\mu\in \Sigma(\varphi)\setminus S}R_{\mu}\,.
\end{equation}
\end{rem}

\begin{theorem} \label{theorem:40}
Let $\varphi$ be a generalized polynomial structure. The following properties are satisfied:
\begin{enumerate}[i)]
\item For all $\lambda\in\Sigma(\varphi)\setminus\{0\}$, the bundles $L_{\lambda}$ are isotropic;
\item for all $\lambda\in\Sigma(\varphi)\setminus\{0\}$, the bundles $$E_{\lambda}:=L_{\lambda}\oplus L_{-\lambda}$$ are quasi split structures;
\item the bundle $E_0:=L_0$ is a quasi split structure;
\item the bundle $\T M\otimes{\C}$ splits in the orthogonal sum
\begin{equation}\label{eq:splitting}\T M\otimes{\C}=E_0\oplus\bigoplus_{\lambda\in \Sigma_+(\varphi)}E_{\lambda}\end{equation}
\end{enumerate}
\end{theorem}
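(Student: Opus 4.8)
The plan is to exploit skew-symmetry of $\varphi$ together with the algebraic structure of the generalized eigenbundles, treating the statement as a statement about the symmetric bilinear form $\langle\,,\,\rangle$ restricted to the various $L_\lambda$. The underlying principle is the classical fact that for a skew-symmetric operator (with respect to a nondegenerate symmetric form), generalized eigenspaces for eigenvalues $\lambda$ and $\mu$ are orthogonal unless $\mu=-\lambda$. Since all the bundles in question have constant rank (by the remarks preceding the theorem, because $\varphi$ is a genuine generalized polynomial structure), it suffices to prove everything fiberwise at a point $p\in M$, so I would fix $p$ and work with the finite-dimensional vector space $V=\T_p M\otimes\C$ equipped with the nondegenerate symmetric form and the skew-symmetric operator $\varphi_p$.

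First I would establish the orthogonality statement: if $\x\in L_\lambda$ and $\y\in L_\mu$ with $\lambda+\mu\neq 0$, then $\langle\x,\y\rangle=0$. The standard argument: skew-symmetry gives $\langle(\varphi-\lambda)\x,\y\rangle=-\langle\x,(\varphi+\lambda)\y\rangle$, and iterating, one shows by a binomial-type induction that $\langle(\varphi-\lambda)^a\x,(\varphi-\mu)^b\y\rangle$ can be pushed around so that, choosing $a=m(\lambda)$, $b=m(\mu)$ large enough and using that $\varphi+\lambda$ is invertible on $L_\mu$ when $\mu\neq-\lambda$ (its only eigenvalue there is $\mu+\lambda\neq0$), one concludes $\langle\x,\y\rangle=0$. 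This immediately yields i): $L_\lambda$ is isotropic for $\lambda\neq0$ since $\lambda+\lambda=2\lambda\neq0$, so $L_\lambda\perp L_\lambda$. It also shows that in the decomposition $V=\bigoplus_\lambda L_\lambda$ (which holds because $P(x)$ is the minimal polynomial and $V$ is the direct sum of generalized eigenspaces), the form pairs $L_\lambda$ nondegenerately with $L_{-\lambda}$ and kills everything else; hence $E_\lambda=L_\lambda\oplus L_{-\lambda}$ and $E_0=L_0$ are nondegenerate, and the sum in \eqref{eq:splitting} is orthogonal, giving iv).

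For ii) and iii), once nondegeneracy of $E_\lambda$ is known, the split condition must be checked, i.e.\ that $E_\lambda$ decomposes fiberwise as a direct sum of two isotropic subspaces of half the dimension. For $\lambda\neq0$ this is automatic: $L_\lambda$ and $L_{-\lambda}$ are themselves isotropic (by i)) of equal rank (Remark preceding the theorem, using that the spectrum is symmetric), so $E_\lambda=L_\lambda\oplus L_{-\lambda}$ is manifestly split. For $E_0=L_0$ the situation is genuinely different because $L_0$ need not be isotropic; here I would use the remark following the definition of quasi split structures, namely that a nondegenerate $E$ is quasi split iff its rank equals twice its Witt index. Equivalently, I would argue that the ambient form on $V=\T_pM\otimes\C$ is split (signature $(n,n)$ type, or rather: $\T M$ carries a real form of signature $(n,n)$, hence $\T M\otimes\C$ is a split quadratic space), and then use that $E_0$ is the orthogonal complement of the split nondegenerate subspace $\bigoplus_{\lambda\in\Sigma_+}E_\lambda$; the orthogonal complement of a split subspace in a split space is split (this is essentially the Remark in the paper about $E^\perp$ being quasi split with $\operatorname{Witt}(E^\perp)=n-\operatorname{Witt}(E)$).

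The main obstacle I anticipate is exactly the split property of $E_0$: showing that the Witt index of $L_0$ is half its rank. The clean way is the orthogonal-complement argument just sketched, but it requires knowing that $\bigoplus_{\lambda\in\Sigma_+(\varphi)}E_\lambda$ is split, which follows from each $E_\lambda$ ($\lambda\neq0$) being split together with orthogonality, and then invoking that $\T M\otimes\C$ is itself a split (neutral) quadratic space of rank $2n$ so its Witt index is exactly $n$. One then computes $\operatorname{Witt}(L_0)=n-\sum_{\lambda\in\Sigma_+}\operatorname{rk}(L_\lambda)=n-\tfrac12\sum_{\lambda\neq0}\operatorname{rk}(L_\lambda)=\tfrac12\operatorname{rk}(L_0)$, as needed. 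I would also take care to note that the ranks are locally constant so that the pointwise conclusions globalize to statements about subbundles, and that a skew-symmetric $\varphi$ forces $\varphi(0$-eigenvalue part$)$ to behave well with $0\in\Sigma(\varphi)$ being its own negative, which is why $E_0$ must be handled separately from the generic $E_\lambda$.
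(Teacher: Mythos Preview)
Your proposal is correct and follows the same overall strategy as the paper: establish the orthogonality $L_\lambda \perp L_\mu$ whenever $\lambda + \mu \neq 0$, then deduce i)--iv), handling the split property of $E_0$ via the Witt-index/orthogonal-complement argument (which is exactly the content of the paper's earlier Remark showing that $E^\perp$ is quasi split whenever $E$ is). The only real difference is tactical, in how the orthogonality itself is proved. You argue directly from $\langle(\varphi-\lambda)\x,\y\rangle = -\langle\x,(\varphi+\lambda)\y\rangle$ together with the invertibility of $\varphi+\lambda$ on $L_\mu$ when $\mu\neq-\lambda$. The paper instead sets $F_\lambda(x) = P(x)/(x-\lambda)^{m(\lambda)}$, uses $L_\lambda = \mathrm{Im}\,F_\lambda(\varphi)$, observes that the adjoint $(F_\lambda(\varphi))^*$ equals $\pm F_{-\lambda}(\varphi)$ by the symmetry of the spectrum, and concludes $(L_\lambda)^\perp = \mathrm{Ker}\,F_{-\lambda}(\varphi) = \bigoplus_{\mu\neq-\lambda} L_\mu$ in one stroke. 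The paper's trick is slightly slicker (no invertibility or inductive step needed); yours is the standard textbook argument and equally valid. You are also more explicit than the paper about deriving iii), which the paper's proof leaves implicit under ``It suffices to show\ldots''.
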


\begin{proof} It suffices to show that if $\lambda\in \Sigma(\varphi) $, then 
\begin{equation}
(L_{\lambda})^{\bot}=\bigoplus_{\mu\in \Sigma(\varphi)\setminus\{-\lambda\}} L_{\mu}\,.
\end{equation}
To see this, let $P(x)$ be the minimal polynomial of $\varphi$, and let
$F_{\lambda}(x)=P(x)/(x-\lambda)^{m(\lambda)}.$ From Remark \ref{rem:im} it follows that $L_{\lambda}$ is the image of $F_{\lambda}(\varphi)$.
Also note that $(F_{\lambda}(\varphi))^*=(-1)^{N-m(\lambda)}F_{-\lambda}(\varphi)$, where $N={\rm deg}\ P(x)$.
This implies that $\x\in (L_{\lambda})^{\bot}$ if and only if $\x\in {\rm Ker}( F_{-\lambda}(\varphi))$. Again from Remark \ref{rem:im}, the last condition is equivalent to 
$\x\in R_{-\lambda}$.
\end{proof}

\begin{cor}\label{cor:42}
Let $\varphi$ be a generalized polynomial structure that is neither invertible nor nilpotent, then $L_0$ is not closed under the Dorfman bracket.  
\end{cor}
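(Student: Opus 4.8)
The plan is to derive a contradiction from the assumption that $L_0$ is closed under the Dorfman bracket, using Lemma \ref{lem:14} together with Theorem \ref{theorem:40}. The key observation is that for a generalized polynomial structure $\varphi$ that is neither invertible nor nilpotent, the eigenvalue $0$ is present (so $L_0 \neq 0$) and there is at least one non-zero eigenvalue $\lambda \in \Sigma_+(\varphi)$ (so $L_0 \neq \T M \otimes \C$). By part iii) of Theorem \ref{theorem:40}, $L_0 = E_0$ is a quasi split structure. Thus $L_0$ is a non-zero quasi split structure which is \emph{properly} contained in $\T M \otimes \C$, and by Lemma \ref{lem:14} it cannot be closed under the Dorfman bracket.

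First I would spell out why $0 \in \Sigma(\varphi)$ and why $\Sigma(\varphi) \neq \{0\}$. Since $\varphi$ is not invertible, $0$ is an eigenvalue of some (hence, by constancy of the spectrum, every) fiber $\varphi_p$, so $L_0 \neq 0$; since $\varphi$ is not nilpotent, its minimal polynomial $P(x)$ is not a power of $x$, so $\varphi$ has a non-zero eigenvalue and hence $E_\lambda \neq 0$ for some $\lambda \in \Sigma_+(\varphi)$. By the orthogonal splitting \eqref{eq:splitting} of Theorem \ref{theorem:40}, $\T M \otimes \C = E_0 \oplus \bigoplus_{\lambda \in \Sigma_+(\varphi)} E_\lambda$ with the second summand non-trivial, so $L_0 = E_0$ is a proper non-zero subbundle.

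Next I would invoke part iii) of Theorem \ref{theorem:40} to note that $E_0 = L_0$ is a quasi split structure, and then apply Lemma \ref{lem:14}, which states that $\T M \otimes \C$ is the \emph{only} non-zero quasi split structure closed under the Dorfman bracket. Since $L_0$ is a non-zero quasi split structure strictly smaller than $\T M \otimes \C$, it follows that $L_0$ is not closed under the Dorfman bracket, as claimed.

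There is essentially no main obstacle here: the corollary is a direct consequence of the two previously established results, and the only point requiring a line of care is the bookkeeping that ``neither invertible nor nilpotent'' translates precisely into ``$L_0$ is a proper non-zero subbundle,'' which in turn is exactly the hypothesis needed to separate $L_0$ from both $0$ and $\T M \otimes \C$ in Lemma \ref{lem:14}.
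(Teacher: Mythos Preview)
Your proposal is correct and follows essentially the same route as the paper: use Theorem~\ref{theorem:40} to see that $L_0$ is a non-zero quasi split structure, use non-nilpotency to see it is proper, and conclude via Lemma~\ref{lem:14}. The paper's proof is more terse, but the argument is identical.
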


\begin{proof}
Since $\varphi$ is not invertible, Theorem \ref{theorem:40} implies that $L_0$ is a non-zero quasi split structure. On the other hand $L_0$ is not nilpotent and thus a proper subbundle of $\T M$. The result then follows from Lemma \ref{lem:14}.
\end{proof}

\begin{rem}
The splitting \eqref{eq:splitting} does not determine uniquely $\varphi$. However, there is a unique polynomial structure $\varphi_s$ such that $L_{\lambda}$ is the $\lambda$-eigenbundle of $\varphi_s$. Setting $\varphi_n:=\varphi-\varphi_s$, the decomposition $\varphi=\varphi_s+\varphi_n$ coincides fiberwise with the Jordan-Chevalley decomposition of $\varphi$, where $\varphi_s$ is the {\it semisimple part of $\varphi$} and $\varphi_n$ is the {\it nilpotent part of $\varphi$}.
\end{rem}

\begin{rem}
The decomposition \eqref{eq:splitting} can be computed explicitly as follows. Let $a_{\lambda,i}$ be the coefficients of the partial fraction decomposition
\begin{equation}
    \frac{1}{P(x)}=\sum_{\lambda\in \Sigma(\varphi)}\sum_{i=1}^{m(\lambda)}\frac{a_{\lambda,i}}{(x-\lambda)^i}\,.
\end{equation}
For each $\lambda\in \Sigma(\varphi)$ and for each $i\in\{1,\ldots,m(\lambda)\}$, let $Q_{\lambda,i}$ be the polynomial defined by the relation $(x-\lambda)^i Q_{\lambda,i}(x)=P(x)$. Then the projector $\mathcal P_\lambda :\T M\otimes \mathbb C\to L_\lambda$ satisfies
\begin{equation}\label{eq:9}
    \mathcal P_\lambda(\x)= \sum_{i=1}^{m(\lambda)}a_{\lambda,i}Q_{\lambda,i}(\varphi)(\x)
\end{equation}
for all $\x\in \T M\otimes \mathbb C$. As a consequence of \eqref{eq:9}, the semisimple part of $\varphi$ can be written as
\begin{equation}\label{eq:44}
    \varphi_s = \sum_{\lambda \in \Sigma(\varphi)}\lambda \mathcal P_\lambda = \sum_{\lambda\in \Sigma(\varphi)} \sum_{i=1}^{m(\lambda)}\lambda a_{\lambda,i}Q_{\lambda,i}(\varphi)\,.
\end{equation}
In particular, we recover the known fact \cite{Humphreys78} that  the semisimple and the nilpotent parts of $\varphi$ are both polynomials in $\varphi$.
\end{rem}

\begin{example}
Let $c$ be a non-zero real number and let $\varphi$ be a generalized polynomial structure with minimal polynomial $P(x)=(x^2+c)^2$. The projectors on $L_{\pm \sqrt{-c}}$ are
\begin{align*}
\mathcal P_{\pm\sqrt{-c}}&=\pm\frac{1}{4c\sqrt{-c}}(\varphi\mp \sqrt{-c})(\varphi\pm \sqrt{-c})^2-\frac{1}{4c}(\varphi\pm \sqrt{-c})^2\\
&= \mp\frac{1}{4c\sqrt{-c}}(\varphi^3+3c\varphi\mp 2c\sqrt{-c})
\end{align*}
from which we calculate the semisimple part of $\varphi$ to be $\varphi_s=\frac{1}{2c}\varphi^3+\frac{3}{2}\varphi$. Hence the nilpotent part of $\varphi$ is $\varphi_n=-\frac{1}{2c}\varphi^3-\frac{1}{2}\varphi$.
\end{example}

\subsection{Block decomposition}\label{decompos}
The content of this subsection is adapted from \cite{BurgoyneCushman77}. Let $\varphi$ be a polynomial structure, and let $p\in M$. A \emph{real block}  of $\varphi$ at $p$ is a non-degenerate subspace $V\subseteq \T_p M$ such that $\varphi(V)\subseteq V$. Analogously, a \emph{complex block} of $\varphi$ at $p$ is a non-degenerate subspace $V\subseteq \T_p M\otimes\C$ such that $\varphi(V)\subseteq V$.
Moreover, a real block (respectively, a complex block) $V$ is called \emph{indecomposable} if there is no real block (respectively, complex block)  $V'$ such that $V'\subseteq V$ and $V'\neq V, V'\neq \{0\}$. Note that since $\varphi$ is skew, $\T_p M$ splits into an orthogonal sum of real indecomposable blocks, and similarly $\T_p M\otimes \C$ splits into an orthogonal sum of complex indecomposable blocks. Let ${\rm deg}(V)$ denote the degree of nilpotency of $\varphi_n$ on a block $V$, that is the smallest positive integer $k$ such that restriction of $\varphi_n^k$ to $V$ vanishes. 
The general structure of indecomposable blocks is described by the following
\begin{lem}\label{lem:block1}
For every indecomposable block $V$ of degree $k$, there exist a subspace $W\subset V$ satisfying the following properties:
\begin{enumerate}[(i)]
\item $V=W\oplus K$, where $K={\rm Ker}\left(\varphi_{n|V}^{k-1}\right)$;
\item $\varphi_s(W)\subset W$;
\item the bilinear form $\tau_{k-1}(\w,\w'):=\langle \w, \varphi_n^{k-1}(\w')\rangle$ (symmetric for $k$ odd and skewsymmetric for $k$ even) is non-degenerate on $W$;
\item $V=W\oplus \varphi_n(W)\oplus \varphi^2_n(W)\oplus\dots\oplus  \varphi^{k-1}_n(W)$;
\item if $V$ is a complex block and $\overline{V}=V$, then $\overline{W}=W$;
\item for any non negative integer $j<k-1$, the bilinear forms $\tau_{j}(\w,\w'):=\langle \w, \varphi_n^{j}(\w')\rangle$ vanish identically on $W$;
\item there are no proper subspaces $W'\subset W$, $W'\neq 0$ that satisfy properties (i)-(iii).
\end{enumerate}
\end{lem}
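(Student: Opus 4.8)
The plan is to work entirely inside a single indecomposable block $V$ of degree $k$, using the interplay between the skew-symmetry of $\varphi$, the nilpotency of $\varphi_n$, and the commuting Jordan--Chevalley pieces $\varphi_s,\varphi_n$. First I would record the basic structural facts: since $\varphi$ is skew, so are $\varphi_s$ and $\varphi_n$ (being polynomials in $\varphi$), and $[\varphi_s,\varphi_n]=0$. Skew-symmetry of $\varphi_n$ gives $\langle \varphi_n^i(\u),\w\rangle = (-1)^i\langle \u,\varphi_n^i(\w)\rangle$, which is exactly what controls the (anti)symmetry type of each $\tau_j$. Next I would analyze the restriction of $\varphi_n$ to $V$: because $V$ is indecomposable and $\varphi_n^k|_V=0$ while $\varphi_n^{k-1}|_V\neq 0$, a standard pairing argument shows that $\tau_{k-1}$ descends to a non-degenerate form on $V/\mathrm{Ker}(\varphi_n^{k-1}|_V)$ — this is the quotient on which (iii) will live. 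The candidate for $W$ is a $\varphi_s$-invariant complement to $K=\mathrm{Ker}(\varphi_n^{k-1}|_V)$; such a complement exists because $\varphi_s$ is semisimple (diagonalizable over $\C$, and acting on a real block one can pick $W$ defined over $\R$ by averaging/eigenspace decomposition, which gives (v) and, over $\R$, the reality of $W$).

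The heart of the argument is to show that with this choice (i)--(vii) all hold. For (i) and (ii) the construction is designed to deliver them. For (iii): $\tau_{k-1}$ is nothing but the non-degenerate form on the quotient pulled back to $W$, which is isomorphic to the quotient because $W\cap K=0$ and $\dim W = \dim V - \dim K$; non-degeneracy on $W$ follows. For (vi): if $j<k-1$ and $\w,\w'\in W$, write $\varphi_n^j(\w')$ and note that $\tau_j(\w,\w')=\langle \w,\varphi_n^j(\w')\rangle$; since $\varphi_n^{k-1-j}$ applied to $\varphi_n^j(\w')$ lands in the image of $\varphi_n^{k-1}$ and $W$ is a complement to $K\supseteq \mathrm{Im}(\varphi_n)\cap\cdots$ — more precisely, one uses that $\langle W, \varphi_n^j(W)\rangle$ would have to be "absorbed" into higher powers by skew-symmetry: $\langle \w,\varphi_n^j\w'\rangle = \pm\langle \varphi_n^j \w,\w'\rangle$, and iterating together with non-degeneracy of $\tau_{k-1}$ and indecomposability forces these intermediate pairings to vanish (otherwise one could split off a proper block). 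For (iv): the subspaces $W,\varphi_n(W),\ldots,\varphi_n^{k-1}(W)$ sum to $V$ by a dimension count (each has dimension $\dim W$ since $\varphi_n$ is injective from $\varphi_n^i(W)$ up to level $k-1$, by non-degeneracy of $\tau_{k-1}$) and the sum is direct because the $\tau_{k-1}$-pairing is block-anti-diagonal with respect to the grading by powers of $\varphi_n$. For (vii): a proper $0\neq W'\subsetneq W$ satisfying (i)--(iii) would, via (iv) applied to $W'$, generate a proper $\varphi$-invariant non-degenerate subspace of $V$, contradicting indecomposability.

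I expect the main obstacle to be the simultaneous bookkeeping in (iv) and (vi): establishing that the chain $W\oplus\varphi_n(W)\oplus\cdots\oplus\varphi_n^{k-1}(W)$ is a direct sum filling all of $V$ \emph{and} that all intermediate forms $\tau_j$ vanish on $W$ requires a careful induction that couples the two statements — non-degeneracy of $\tau_{k-1}$ is used to get linear independence of the $\varphi_n^i(W)$, and vanishing of the $\tau_j$ for $j<k-1$ is what makes the decomposition in (iv) orthogonal-compatible and rules out smaller blocks. A clean way to organize this is to fix, via non-degeneracy of $\tau_{k-1}$ on $W$, a basis of $W$ and track the Gram matrices of all the $\tau_j$; skew-symmetry of $\varphi_n$ then pins down their shape, and indecomposability is invoked exactly once, at the end, to kill the off-pattern entries. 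The reality statement (v) is comparatively routine: when $\overline V=V$, the semisimple operator $\varphi_s$ is defined over $\R$, hence its eigenspace-complement $W$ to $K$ can be chosen $\overline{\,\cdot\,}$-invariant.
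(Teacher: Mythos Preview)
Your plan handles (i)--(iv) and (vii) correctly and in essentially the same spirit as the paper: pick a $\varphi_s$-invariant complement $W$ to $K$, deduce non-degeneracy of $\tau_{k-1}$ on $W$, and use that $V'=\bigoplus_{i=0}^{k-1}\varphi_n^i(W)$ is a block, hence equals $V$ by indecomposability.

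The genuine gap is in (vi). You claim that the intermediate forms $\tau_j$, $j<k-1$, vanish on $W$ automatically, arguing that otherwise ``one could split off a proper block''. This is false: for a \emph{generic} $\varphi_s$-invariant complement $W$ to $K$, the forms $\tau_j$ do \emph{not} vanish on $W$, and $V$ is still indecomposable. (A small example: take $k=2$, $V$ four-dimensional with basis $e_1,e_2,f_1,f_2$, $\varphi_s=0$, $\varphi_n(e_2)=e_1$, $\varphi_n(f_2)=f_1$, and a skew-compatible pairing with $\langle e_2,f_2\rangle=c\neq 0$; then $W=\mathrm{span}(e_2,f_2)$ satisfies (i)--(iv) but $\tau_0(e_2,f_2)=c\neq 0$.) The identity $\langle \w,\varphi_n^j\w'\rangle=\pm\langle \varphi_n^j\w,\w'\rangle$ that you invoke only records the (anti)symmetry of $\tau_j$; it does not kill it, and indecomposability gives no leverage here.

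What the paper actually does is \emph{modify} $W$ to force (vi), and separately to force (v). Concretely, after securing (i)--(iv), one replaces $W$ by the graph of a suitable map $W\to K$: first $f=-\tfrac12\,\sigma\pi\sigma|_W$ to achieve $\overline W=W$, and then, working downward in $j$, maps of the form $f_j=\tfrac{(-1)^{k-j}}{2}\varphi_n^{k-1-j}\,\widehat\tau_{k-1}^{-1}\widehat\tau_j$ (using the non-degeneracy of $\tau_{k-1}$) to kill $\tau_j$ while keeping $\tau_h=0$ for $h>j$. Each such replacement preserves (i)--(v). Your plan is missing precisely this correction step; without it, (vi) cannot be obtained, and the later ``block anti-diagonal'' picture you want for (iv) (which you implicitly rely on) is only upper-anti-triangular, not anti-diagonal.
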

\begin{proof}
The existence of a subspace $W$ satisfying (i) and (ii) follows from $\varphi_s(K)\subset K$ and the fact that $\varphi_s$ is semisimple.
Now, we prove that $W$ satisfies (iii): if $\w\in W$ is such that $\tau_{k-1}(\w',\w)=0$ for all $\w'\in W$, then  $\langle \v, \varphi_n^{k-1}(\w)\rangle=0$ for all $\v\in V$, whence $\w\in W\cap K=0$. Moreover, note that $W$ generates a splitting
$$V':=W\oplus \varphi_n(W)\oplus\dots \oplus\varphi_n^{k-1}(W)\subseteq V.$$
Since $\tau_{k-1}$ is non-degenerate we deduce that $V'$ is a block, so that by indecomposability $V'=V$ and (iv) is proved. A similar reasoning shows (vii).
Next, we prove that we can modify  $W$ so that  (v) is satisfied as well: let $\overline{V}=V$, let $\sigma: V\rightarrow V$ be the complex conjugation map and let $\pi: V\rightarrow K$ the projection onto $K$ with respect to the decomposition $V=W\oplus K$. Now, we can replace $W$ with the graph  of the map 
$$f:W\rightarrow K, \qquad f=-\frac{1}{2}\sigma \pi\sigma_{|W}$$
which satisfies (i)-(v). It remains to prove that we can further modify $W$ so that (vi) is also satisfied. Let 
$\widehat{\tau}_j: W\rightarrow W^*$ such that $\widehat{\tau}_j(\w)(\w')=\tau_j(\w,\w')$. Suppose $j<k-1$ is such that $\widehat{\tau}_{h}=0$ for $j<h<k-1$.
Then, the graph $W''$ of the map
$$f_j:W\rightarrow K \qquad f_j=\frac{(-1)^{k-j}}{2}N^{k-1-j}\widehat{\tau}^{-1}_{h-1}\widehat{\tau}_j$$
is still a complement of $K$, satisfies properties (i)-(v) and additionally  $\tau_h(W'',W'')=0$ for  $j\leq h<k-1$. Replacing $W$ with $W''$ and repeating this construction, after a finite number of iterations we obtain the requested property. \end{proof}
\begin{mydef}
We will refer to a subspace $W$ satisfying the properties of the previous lemma as a \emph{semisimple component} of the block $V$.
\end{mydef}
\begin{lem}\label{lem:block2}
For any indecomposable complex block $V$ with ${\rm deg}(V)=k$ and semisimple component $W$, one of the following conditions is satisfied:
\begin{enumerate}[i)]
    \item $W\subset E_0$, $k$ is odd and ${\rm dim}(W)=1$. In particular, $W={\rm Span}_{\C}(\w)$ where $\w\in E_0$ and $\langle \w, \varphi_n^{k-1}(\w)\rangle=1$;
    \item $W\subset E_{\lambda}$ for some $\lambda\in\Sigma(\varphi)$ and ${\rm dim}(W)=2$. In particular, $W={\rm Span}_{\C}(\w_+,\w_-)$ where $\w_{\pm}\in L_{\pm\lambda}$ and $\langle \w_+, \varphi_n^{k-1}(\w_-)\rangle=\frac{1}{2}$. 
\end{enumerate}
\end{lem}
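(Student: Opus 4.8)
The plan is to exploit the $\varphi_s$-invariance of the semisimple component $W$ together with the orthogonality of the generalized eigenbundles recorded in Theorem \ref{theorem:40}, reducing $W$ to the stated minimal form by the same block-generating device already used in the proof of Lemma \ref{lem:block1}(iv).

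First I would isolate the following reduction principle. Recall that $\varphi_s$ and $\varphi_n$ are commuting polynomials in $\varphi$, both skew-symmetric (for $\varphi_s$ this follows at once from Theorem \ref{theorem:40}, whence also for $\varphi_n=\varphi-\varphi_s$), so that $\varphi_n$ preserves every generalized eigenbundle $L_\mu$. Suppose $W'\subseteq W$ is non-zero, $\varphi_s$-invariant, and such that the restriction of $\tau_{k-1}$ to $W'$ is non-degenerate. Then, exactly as in the proof of Lemma \ref{lem:block1}(iv), $V':=W'\oplus\varphi_n(W')\oplus\cdots\oplus\varphi_n^{k-1}(W')$ is a direct sum (because $W'\cap K=0$ by Lemma \ref{lem:block1}(i)), it is $\varphi$-invariant (because $\varphi_s$ commutes with $\varphi_n$ and $\varphi_n^k$ vanishes on $V$), and it is non-degenerate because its Gram matrix relative to this decomposition is anti-triangular with the non-degenerate form $\pm\tau_{k-1}$ on the anti-diagonal; hence $V'$ is a block of $V$. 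Indecomposability of $V$ forces $V'=V$, and since $\dim V=k\dim W$ by Lemma \ref{lem:block1}(i),(iv), while $\dim V'=k\dim W'$ by the same reasoning, we conclude $W'=W$.

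Next I would decompose $W=\bigoplus_\lambda W_\lambda$ into $\varphi_s$-eigenspaces $W_\lambda=W\cap L_\lambda$, which is legitimate by Lemma \ref{lem:block1}(ii). Since $\varphi_n^{k-1}(W_\mu)\subseteq L_\mu$ and, by Theorem \ref{theorem:40}, $\langle L_\lambda,L_\mu\rangle=0$ whenever $\mu\neq-\lambda$, the form $\tau_{k-1}$ pairs $W_\lambda$ only with $W_{-\lambda}$, and it vanishes on $W_\lambda$ for $\lambda\neq0$ because $L_\lambda$ is isotropic. Non-degeneracy of $\tau_{k-1}$ on $W$ therefore shows that $S:=\{\lambda:W_\lambda\neq0\}$ is symmetric about the origin, that $\tau_{k-1}$ restricts to a perfect pairing $W_\lambda\times W_{-\lambda}\to\C$ for each $\lambda\in S\setminus\{0\}$, and that $\tau_{k-1}$ is non-degenerate on $W_0$ — a symmetric form if $k$ is odd and a skew form if $k$ is even, by Lemma \ref{lem:block1}(iii). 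I would then exhibit a small admissible $W'$ in each case and invoke the reduction principle. If some $\lambda\in S$ is non-zero, choose $\w_+\in W_\lambda$ and $\w_-\in W_{-\lambda}$ with $\tau_{k-1}(\w_+,\w_-)\neq0$; then $W'={\rm Span}_\C(\w_+,\w_-)$ is two-dimensional, $\varphi_s$-invariant, and $\tau_{k-1}|_{W'}$ is non-degenerate (its diagonal entries vanish because $L_{\pm\lambda}$ are isotropic), so $W=W'\subseteq E_\lambda$, and rescaling $\w_-$ produces case ii). If instead $S=\{0\}$, then $W=W_0\subseteq E_0$: for $k$ odd a non-degenerate symmetric form yields $\w$ with $\tau_{k-1}(\w,\w)\neq0$, so $W'={\rm Span}_\C(\w)$ is admissible and $W=W'$ is a line, giving case i) after rescaling; for $k$ even a hyperbolic pair of the non-degenerate skew form $\tau_{k-1}$ spans an admissible two-dimensional $W'=W\subseteq E_0$, which is case ii) with $\lambda=0$.

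The only step requiring genuine care is the verification that $V'$ is a block — in particular its non-degeneracy and the fact that the asserted sum is direct — but this is a verbatim repetition of what is already carried out in the proof of Lemma \ref{lem:block1}(iv), so in the write-up it can simply be cited rather than repeated. Everything else amounts to elementary bookkeeping of $\varphi_s$-eigenspaces made available by Theorem \ref{theorem:40}, together with standard facts about symmetric and skew bilinear forms over $\C$.
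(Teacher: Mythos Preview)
Your proof is correct and rests on the same ingredients as the paper's: the $\varphi_s$-invariance of $W$, the orthogonality from Theorem \ref{theorem:40}, and the minimality property Lemma \ref{lem:block1}(vii) --- your ``reduction principle'' is precisely (vii), so in a final write-up you can simply cite it rather than reprove it. The only organizational difference is that the paper first observes that indecomposability forces $V\subset E_\lambda$ for a single $\lambda$ (since the $V\cap E_\lambda$ are mutually orthogonal blocks summing to $V$) and then appeals to (vii), whereas you obtain $W\subset E_\lambda$ as a consequence of the reduction principle applied to the eigenspace decomposition of $W$; both routes are equally short.
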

\begin{proof}
From the indecomposability $V\subset E_{\lambda}$ for some $\lambda$. If $\lambda\neq 0$, from Lemma \ref{lem:block1} follows that   $W=(W\cap L_{\lambda})\oplus(W\cap L_{-\lambda})$ where the summands are isotropic with respect to the bilinear form $\tau_{k-1}$, and that ${\rm dim}(W)=2$. If $\lambda=0$ and $k$ is even, $\tau_{k-1}$ is a indecomposable, non-degenerate skewsymmetric bilinear form, so that ${\rm dim}(W)=2$. If $\lambda=0$ and $k$ is odd, $\tau_{k-1}$ is a indecomposable, non-degenerate symmetric bilinear form, so that ${\rm dim}(W)=1$.
\end{proof}
\begin{rem} 
We denote by $\Delta_k(0)$ and  $\Delta_k(\lambda,-\lambda)$ the families of indecomposable blocks (also called \emph{ indecomposable complex types}) satisfying (i) and (ii) respectively in Lemma \ref{lem:block2}.  Note that for $k$ odd, the family  $\Delta_k(0,0)$ does not contain any indecomposable blocks. However, for $k$ odd we can redefine $\Delta_k(0,0):=\Delta_k(0)\oplus\Delta_k(0)$, that is the family of blocks that are orthogonal sum of two blocks of type $\Delta_k(0)$.
\end{rem}

\begin{rem}\label{ex: blocks} 
If ${\rm dim}\ M=3$, then at any $p\in M$ the complex block decomposition of $\varphi_p$ is of one of the following forms: 
\begin{enumerate}
    \item $\Delta_5(0)\oplus\Delta_1(0)$;
    \item $\Delta_2(\lambda_1,-\lambda_1)\oplus\Delta_1(\lambda_2,-\lambda_2)$;
    \item $\Delta_3(\lambda,-\lambda)$;
    \item $\Delta_3(0)\oplus\Delta_1(\lambda,-\lambda)\oplus\Delta_1(0)$;
  \item $\Delta_1(\lambda_1,-\lambda_1)\oplus \Delta_1(\lambda_2,-\lambda_2)\oplus \Delta_1(\lambda_3,-\lambda_3)$.
\end{enumerate}
Each decomposition corresponds to a minimal polynomial which may depend on the eigenvalues involved. Note that there are constraints on the eigenvalues: in the decompositions (2), (3) and (4) the eigenvalues must be either real or imaginary, while in (5) if $\lambda_i$ is neither real or imaginary, then there is a $\lambda_j$ with $j\neq i$ such that $\lambda_j=\pm\overline{\lambda_i}$ while the remaining eigenvalues $\pm \lambda_k$ are either real or imaginary.
\end{rem}
\noindent We would like to describe the indecomposable real blocks  as well. Let $V$ be a indecomposable complex block of degree $k$ and semisimple component $W$. Moreover, let $\tau=\tau_{k-1}$ be the non-degenerate bilinear form as before.\\ \\ Let first $V=\overline{V}$, so that $W=\overline{W}$ and $V$ is  of type either $\Delta_k(0)$ or $\Delta_k(\lambda,-\lambda)$ for $\lambda\in \R\cup\sqrt{-1}{\R}$. Then, $W=W_0\oplus\sqrt{-1}W_0$ and $V=V_0\oplus\sqrt{-1}V_0$, where $$V_0=W_0\oplus \varphi_{n}(W_0)\oplus \varphi^2_{n}(W_0)\oplus\dots\oplus \varphi^{k-1}_{n}(W_0)$$
and $V_0$ is a real indecomposable block, with semisimple part $W_0$. We denote by ${\rm sign}(V_0):=(n_+,n_-)$ the signature of the tautological inner product on $V_0$.

\begin{lem}\label{lem:block3} Let $V$ be an indecomposable complex block with semisimple part $W$ such that $\overline{V}=V$ and $\overline{W}=W$. One of the following conditions is satisfied:
\begin{enumerate}[(i)]
\item $V\in \Delta_{2h+1}(0)$: $W_0={\rm Span}_{\R}(\w)$ where $\varphi_s(\w)=0$ and $\langle \w, \varphi_n^{2h}(\w)\rangle=\epsilon=\pm 1$.
Moreover, when $\epsilon=+1$ we have ${\rm sign}(V_0)=(h+1,h)$ if $h$ is even, while  ${\rm sign}(V_0)=(h,h+1)$ if $h$ is odd.
When $\epsilon=-1$ we have ${\rm sign}(V_0)=(h,h+1)$ if $h$ is even, while  ${\rm sign}(V_0)=(h+1,h)$ if $h$ is odd.
\item $V\in \Delta_{k}(\lambda_0,-\lambda_0)$, with $\lambda_0\in \R$: $W_0={\rm Span}_{\R}(\w_+,\w_-)$ where $\w_{\pm}\in W_0\cap L_{\pm\lambda_0}$ and $\langle \w_+, \varphi_n^{k-1}(\w_-)\rangle=\frac{1}{2}$.
Moreover, ${\rm sign}(V_0)=(k,k)$;
\item $V\in \Delta_{2h+1}(\sqrt{-1}\lambda_0,-\sqrt{-1}\lambda_0)$, with $\lambda_0\in \R$: $W_0={\rm Span}_{\R}(\u,\v)$ where $\varphi_s(\u)=\lambda_0 \v$, $\varphi_s(\v)=-\lambda_0 \u$ and $\langle u, \varphi_n^{2h}(\v)\rangle=0$,  $\langle \u, \varphi_n^{2h}(\u)\rangle=\langle \v, \varphi_n^{2h}(\v)\rangle=\epsilon=\pm 1$.
Moreover, when $\epsilon=+1$ we have ${\rm sign}(V_0)=(2h+2,2h)$ if $h$ is even, while  ${\rm sign}(V_0)=(2h,2h+2)$ if $h$ is odd.
When $\epsilon=-1$ we have ${\rm sign}(V_0)=(2h,2h+2)$ if $h$ is even, while  ${\rm sign}(V_0)=(2h+2,2h)$ if $h$ is odd.
\item $V\in \Delta_{2h}(\sqrt{-1}\lambda_0,-\sqrt{-1}\lambda_0)$, with $\lambda_0\in \R$: $W_0={\rm Span}_{\R}(\u,\v)$ where $\varphi_s(\u)=\lambda_0 \v$, $\varphi_s(\v)=-\lambda_0 \u$ and $\langle \u, \varphi_n^{2h-1}(\v)\rangle=\pm\frac{1}{2}$.
Moreover, ${\rm sign}(V_0)=(2h,2h)$.
\end{enumerate}
\end{lem}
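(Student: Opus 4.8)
The plan is to reduce to the four block types listed, then produce in each a suitably normalized real semisimple component, and finally read off ${\rm sign}(V_0)$ from the way the tautological inner product interacts with the decomposition $V_0=\bigoplus_{j=0}^{k-1}\varphi_n^j(W_0)$ provided by Lemma~\ref{lem:block1}(iv). For the reduction: $V$ lies inside a single $E_\lambda$ (as in the proof of Lemma~\ref{lem:block2}), and since $\varphi$ is a real operator $\overline{E_\lambda}=E_{\overline\lambda}$, so the hypothesis $\overline V=V$ forces $\overline\lambda\in\{\lambda,-\lambda\}$, i.e.\ $\lambda\in\R$ or $\lambda\in\sqrt{-1}\,\R$. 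Together with Lemma~\ref{lem:block2}, which fixes $\dim_{\mathbb C}W$ (equal to $1$ exactly when $\lambda=0$, which then forces $k$ odd, and to $2$ otherwise), and the hypothesis $\overline W=W$, which furnishes the real form $W_0$ with $\varphi_s(W_0)\subseteq W_0$, this lands us in precisely the cases (i)--(iv), with (iii) and (iv) distinguished by the parity of $k$.

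The heart of the argument is the choice of normalized real bases, and the one fact to isolate is that the non-degenerate bilinear form $\tau_{k-1}(\w,\w')=\langle\w,\varphi_n^{k-1}(\w')\rangle$ on $W$ --- symmetric for $k$ odd and skew for $k$ even by Lemma~\ref{lem:block1}(iii) --- also satisfies $\tau_{k-1}(\overline\w,\overline{\w'})=\overline{\tau_{k-1}(\w,\w')}$, since $\varphi_n$ is a real endomorphism and hence commutes with complex conjugation. In case (i), $W\subseteq E_0$ gives $\varphi_s|_W=0$, and a real rescaling of a real generator $\w$ of $W_0$ brings $\langle\w,\varphi_n^{2h}(\w)\rangle$ to $\epsilon\in\{\pm1\}$. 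In case (ii), $\overline{L_{\pm\lambda_0}}=L_{\pm\lambda_0}$ (as $\lambda_0\in\R$) lets one pick real generators $\w_\pm$ of $W_0\cap L_{\pm\lambda_0}$, with $\tau_{k-1}(\w_\pm,\w_\pm)=0$ --- by isotropy of $L_{\pm\lambda_0}$, or by skew-symmetry of $\tau_{k-1}$ in the degenerate subcase $\lambda_0=0$ (where necessarily $k$ is even) --- and a real rescaling of $\w_+$ brings $\langle\w_+,\varphi_n^{k-1}(\w_-)\rangle$ to $\tfrac12$. In cases (iii)--(iv), $\overline{L_\lambda}=L_{-\lambda}$; fixing a generator $\w_+$ of $W\cap L_\lambda$ and setting $\u=\w_++\overline{\w_+}$, $\v=\sqrt{-1}(\w_+-\overline{\w_+})$ yields a real basis of $W_0$ with $\varphi_s(\u)=\lambda_0\v$, $\varphi_s(\v)=-\lambda_0\u$, and a short computation exploiting isotropy of $L_{\pm\lambda}$ gives
\[
\langle\u,\varphi_n^{k-1}(\u)\rangle=\langle\v,\varphi_n^{k-1}(\v)\rangle=2\,{\rm Re}\,z,\qquad\langle\u,\varphi_n^{k-1}(\v)\rangle=2\,{\rm Im}\,z,
\]
where $z=\tau_{k-1}(\w_+,\overline{\w_+})\neq0$. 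The conjugation identity combined with the symmetry of $\tau_{k-1}$ forces $z\in\R$ when $k=2h+1$ (so $\langle\u,\varphi_n^{2h}(\v)\rangle=0$ and, after a real rescaling, $\langle\u,\varphi_n^{2h}(\u)\rangle=\langle\v,\varphi_n^{2h}(\v)\rangle=\epsilon\in\{\pm1\}$), while the same identity with the skew-symmetry of $\tau_{k-1}$ forces $z\in\sqrt{-1}\,\R$ when $k=2h$ (so $\langle\u,\varphi_n^{2h-1}(\u)\rangle=\langle\v,\varphi_n^{2h-1}(\v)\rangle=0$ and, after a real rescaling, $\langle\u,\varphi_n^{2h-1}(\v)\rangle=\pm\tfrac12$). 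The leftover sign $\epsilon$ (resp.\ the sign of $\pm\tfrac12$) is intrinsic: the residual freedom on $\w_+$ is a $\C^\times$ rescaling $z$ by positive reals only, and --- unlike in case (ii) --- the relations $\varphi_s(\u)=\lambda_0\v$, $\varphi_s(\v)=-\lambda_0\u$ leave no further room.

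For the signature, the key observation is an orthogonality pattern: using that $\varphi_n$ is skew-symmetric, that $\varphi_n^k$ vanishes on $V$, and that $\tau_j\equiv0$ on $W_0$ for $j<k-1$ (Lemma~\ref{lem:block1}(vi)), one gets $\langle\varphi_n^i(W_0),\varphi_n^j(W_0)\rangle=0$ whenever $i+j\neq k-1$, while for $i+j=k-1$ the pairing is $(-1)^i\tau_{k-1}|_{W_0}$, which is non-degenerate. Hence in $V_0=\bigoplus_{j=0}^{k-1}\varphi_n^j(W_0)$ the subspaces $\varphi_n^i(W_0)\oplus\varphi_n^{k-1-i}(W_0)$ with $2i<k-1$ are mutually orthogonal neutral subspaces of signature $(\dim W_0,\dim W_0)$, and when $k=2h+1$ there is additionally the orthogonal middle summand $\varphi_n^h(W_0)$ carrying the form $(-1)^h\tau_{2h}|_{W_0}$. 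For $k$ even this already yields ${\rm sign}(V_0)=(\tfrac{k}{2}\dim W_0,\tfrac{k}{2}\dim W_0)$, i.e.\ $(k,k)$ in case (ii) with $k$ even and $(2h,2h)$ in case (iv). For $k=2h+1$ one adds to $(h\dim W_0,h\dim W_0)$ the signature of $(-1)^h\tau_{2h}|_{W_0}$: in case (i) this form is the scalar $(-1)^h\epsilon$, giving $(h+1,h)$ or $(h,h+1)$ according to its sign; in case (iii) it is $(-1)^h\epsilon$ times the identity in the basis $(\u,\v)$, giving $(2h+2,2h)$ or $(2h,2h+2)$; and in case (ii) with $k$ odd (so $\lambda_0\neq0$) it pairs the isotropic lines $\R\w_\pm$ and is therefore hyperbolic, with signature $(1,1)$, giving $(2h+1,2h+1)=(k,k)$. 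Translating ``the sign of $(-1)^h\epsilon$'' into the parity of $h$ together with the sign of $\epsilon$ recovers the four displayed formulas.

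I expect the only genuinely delicate point to be the normalization step in cases (iii)--(iv): deducing that $z$ is real (resp.\ purely imaginary) from the interplay between the (anti)symmetry of $\tau_{k-1}$ and complex conjugation, and then checking that the residual sign is a bona fide invariant of the real block rather than an artifact of the chosen generator. Once the orthogonality pattern $i+j=k-1$ is established, everything else is elementary linear algebra of an anti-diagonal block form.
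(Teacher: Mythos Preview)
Your proof is correct and follows essentially the same route as the paper: construct the real basis of $W_0$ from $\w_+$ and its complex conjugate (the paper writes $\w_+=\u-\sqrt{-1}\v$ with $\w_-=\overline{\w_+}$ in case (iii) and $\w_-=\sqrt{-1}\,\overline{\w_+}$ in case (iv), which is equivalent to your $\u=\w_++\overline{\w_+}$, $\v=\sqrt{-1}(\w_+-\overline{\w_+})$ up to scale), and then read off the signature from Lemma~\ref{lem:block1}. Your write-up is in fact more complete than the paper's sketch: you make explicit the key observation that $\overline z=(-1)^{k-1}z$ forces $z$ real or purely imaginary according to parity, and you spell out the anti-diagonal orthogonality pattern $\langle\varphi_n^i(W_0),\varphi_n^j(W_0)\rangle=0$ for $i+j\neq k-1$ (using skewness of $\varphi_n$ and Lemma~\ref{lem:block1}(vi)) that underlies the signature count, whereas the paper simply declares the signatures a ``straightforward consequence of Lemma~\ref{lem:block1}.''
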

\begin{proof}In cases (i) and (ii), we can choose the basis vectors (as described in Lemma \ref{lem:block2}) in $W_0$. In case (iii) we can choose $\w_-=\overline{\w}_+$ and write $\w_+=\u-\sqrt{-1}\v$. In case (iii) we can choose $\w_-=\sqrt{-1}\overline{\w}_+$ and write $\w_+=\u-\sqrt{-1}\v$. Finally, the calculation of the signatures is a straightforward consequence of Lemma \ref{lem:block1}.\end{proof}
\noindent We will denote by $\Delta^{\pm}_{2h+1}(0)$, $\Delta^0_{k}(\lambda_0,-\lambda_0)$, and $\Delta_{k}^{\pm}(\sqrt{-1}\lambda_0,-\sqrt{-1}\lambda_0)$ the families of the real parts $V_0$ corresponding respectively to  the conditions (i), (ii), and the condition (iii) for $k=2h+1$ or (iv) for $k=2h$. In particular, note that if $k$ is even, $\Delta_{k}^-(\sqrt{-1}\lambda_0,-\sqrt{-1}\lambda_0)=\Delta_{k}^{+}(-\sqrt{-1}\lambda_0,\sqrt{-1}\lambda_0)$, so that we can set the sign to be a $+$ after reordering the eigenvalues.\\ \\
Finally, consider the case of an indecomposable complex block $V$ such that $\overline{V}\neq V$: in this case we have $\overline{V}\cap V=0$ and $V\in \Delta_k(\lambda,-\lambda)$ with $\lambda\notin \R\cup\sqrt{-1}\R$. In this case, $VV:=V\oplus \overline{V}$ admits a real part $VV_0$ and the subspace $WW:=W\oplus \overline{W}$ admits a real part $WW_0$, such that
$$VV_0=WW_0\oplus \varphi_n(WW_0)\oplus\dots \oplus\varphi^{k-1}_n(WW_0)$$
and $VV_0$ is an indecomposable real block, with semisimple part $WW_0$.
\begin{lem}Let $\lambda=a+\sqrt{-1}b$, where $a\neq 0\neq b$. Then
$WW_0={\rm Span}_{\R}\{\u_+,\u_-,\v_+,\v_-\}$ such that
$$\varphi_s(\u_{\pm})=\pm(a\u_{\pm}+b\v_{\pm}), \qquad \varphi_s(\v_{\pm})=\pm(-b\u_{\pm}+a\v_{\pm})$$
and
$$\langle \u_{\pm},\varphi_n^{k-1}(\u_{\pm})\rangle=\langle \v_{\pm},\varphi_n^{k-1}(\v_{\pm})\rangle=\langle \u_{\pm},\varphi_n^{k-1}(\v_{\pm})\rangle=\langle \u_{\pm},\varphi_n^{k-1}(\v_{\mp})\rangle=0$$
$$\langle \u_{+},\varphi_n^{k-1}(\u_-)\rangle=\frac{1}{2}, \qquad \langle \v_{+},\varphi_n^{k-1}(\v_-)\rangle=-\frac{1}{2}.$$
Moreover, ${\rm sign}(VV_0)=(2k,2k)$.
\end{lem}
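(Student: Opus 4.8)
The plan is to produce the real basis directly from the complex data of Lemma~\ref{lem:block2}. Since $V$ is an indecomposable block of type $\Delta_k(\lambda,-\lambda)$ with $\lambda=a+\sqrt{-1}b$ and $a\neq 0\neq b$, Lemma~\ref{lem:block2}(ii) furnishes a semisimple component $W=\Span_\C(\w_+,\w_-)$ with $\w_\pm\in W\cap L_{\pm\lambda}$, so that $\varphi_s(\w_\pm)=\pm\lambda\,\w_\pm$, and with $\langle\w_+,\varphi_n^{k-1}(\w_-)\rangle=\tfrac12$. The operators $\varphi_s$ and $\varphi_n$ are real polynomials in $\varphi$ and hence commute with complex conjugation; since moreover $\overline{L_\mu}=L_{\overline\mu}$, the conjugate $\overline W=\Span_\C(\overline{\w_+},\overline{\w_-})$ is a semisimple component of $\overline V$ with $\overline{\w_\pm}\in L_{\pm\overline\lambda}$. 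The condition $a\neq 0\neq b$ is exactly the statement that $\lambda,-\lambda,\overline\lambda,-\overline\lambda$ are pairwise distinct, so $L_\lambda,L_{-\lambda},L_{\overline\lambda},L_{-\overline\lambda}$ are four distinct generalized eigenbundles, $V\cap\overline V=0$, and $WW:=W\oplus\overline W$ has complex dimension $4$.

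I would then set $\u_\pm:=\tfrac1{\sqrt2}(\w_\pm+\overline{\w_\pm})$ and $\v_\pm:=\tfrac{\sqrt{-1}}{\sqrt2}(\w_\pm-\overline{\w_\pm})$, equivalently $\w_\pm=\tfrac1{\sqrt2}(\u_\pm-\sqrt{-1}\,\v_\pm)$. These four vectors are fixed by conjugation and form an invertible $\C$-linear reparametrization of the basis $\w_+,\w_-,\overline{\w_+},\overline{\w_-}$ of $WW$, hence an $\R$-basis of the real part $WW_0$. Substituting $\w_\pm=\tfrac1{\sqrt2}(\u_\pm-\sqrt{-1}\,\v_\pm)$ into $\varphi_s(\w_\pm)=\pm\lambda\,\w_\pm$ and separating real and imaginary parts yields the asserted identities $\varphi_s(\u_\pm)=\pm(a\u_\pm+b\v_\pm)$ and $\varphi_s(\v_\pm)=\pm(-b\u_\pm+a\v_\pm)$. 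For the pairings, expand each $\langle\cdot,\varphi_n^{k-1}(\cdot)\rangle$ bilinearly in $\w_+,\w_-,\overline{\w_+},\overline{\w_-}$; since $\varphi_n^{k-1}$ preserves each eigenbundle, Theorem~\ref{theorem:40} — each $L_\mu$ with $\mu\neq 0$ isotropic, and distinct nonzero eigenvalues yielding orthogonal eigenbundles except for the pair $\mu,-\mu$ — annihilates every term except $\langle\w_+,\varphi_n^{k-1}(\w_-)\rangle=\tfrac12$ and its conjugate $\langle\overline{\w_+},\varphi_n^{k-1}(\overline{\w_-})\rangle=\tfrac12$. Keeping track of the scalar factors $\tfrac1{\sqrt2}$ and $\sqrt{-1}$ then produces $\langle\u_+,\varphi_n^{k-1}(\u_-)\rangle=\tfrac12$, $\langle\v_+,\varphi_n^{k-1}(\v_-)\rangle=-\tfrac12$, and the vanishing of all remaining listed pairings. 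The only delicate point here is the factor $\tfrac1{\sqrt2}$, forced by the mismatch between the normalization $\langle\w_+,\varphi_n^{k-1}(\w_-)\rangle=\tfrac12$ of Lemma~\ref{lem:block2} and the target normalization.

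For the signature statement, note that $\dim_\C V=2k$: Lemma~\ref{lem:block1}(iv) exhibits $V$ as the sum of the $k$ subspaces $\varphi_n^j(W)$, each of complex dimension $2$ by Lemma~\ref{lem:block2}(ii). Hence $VV_0$ is a real vector space of dimension $\dim_\C(V\oplus\overline V)=4k$, and it is non-degenerate because $VV=V\oplus\overline V$ is: $V$ and $\overline V$ are non-degenerate complex blocks, and they are mutually orthogonal by the same distinctness-of-eigenvalues argument (no $L_{\pm\lambda}$ pairs non-trivially with any $L_{\pm\overline\lambda}$). Finally $(V\cap L_\lambda)\oplus(\overline V\cap L_{\overline\lambda})=(V\cap L_\lambda)\oplus\overline{(V\cap L_\lambda)}$ is conjugation-stable and isotropic (both summands are isotropic and $\langle L_\lambda,L_{\overline\lambda}\rangle=0$) of complex dimension $2k$, so its real part is a $2k$-dimensional isotropic subspace of $VV_0$. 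A non-degenerate real quadratic space of dimension $4k$ admitting a $2k$-dimensional isotropic subspace has signature $(2k,2k)$, which completes the proof. The bulk of the argument is routine once the eigenbundle orthogonality relations of Theorem~\ref{theorem:40} are available; the only mild obstacles are the normalization constant and the dimension count just described.
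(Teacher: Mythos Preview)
Your proof is correct and follows essentially the same route as the paper, whose proof is the single line ``Calculation analogous to the proof of Lemma~\ref{lem:block3}.'' You have simply spelled out that calculation: pass from the complex basis $\w_\pm,\overline{\w_\pm}$ of $WW$ supplied by Lemma~\ref{lem:block2} to real and imaginary parts, then read off the $\varphi_s$-action and the $\tau_{k-1}$-pairings using the orthogonality relations among the $L_\mu$ from Theorem~\ref{theorem:40}. Your signature argument via a $2k$-dimensional real isotropic subspace is a clean way to handle what the paper leaves as ``a straightforward consequence of Lemma~\ref{lem:block1}.''
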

\begin{proof}
Calculation analogous to the proof of Lemma \ref{lem:block3}.
\end{proof}
\noindent The families of the real subspaces $VV_0$ described above are denoted  $\Delta_k^0(\lambda,-\lambda,\overline{\lambda},-\overline{\lambda})$. We refer to the families 
\begin{equation}
\Delta^{\pm}_{2h+1}(0)\, ,\quad \Delta^0_{k}(\lambda_0,-\lambda_0)\, , \quad  \Delta_{k}^{\pm}(\sqrt{-1}\lambda_0,-\sqrt{-1}\lambda_0)\, , \textrm{ and }\quad  \Delta_k^0(\lambda,-\lambda,\overline{\lambda},-\overline{\lambda})
\end{equation} 
as \emph{indecomposable real types}. From the above discussion follows the
\begin{prop}Let $\varphi:\T M\rightarrow \T M$ be a skew-symmetric endomorphism. Then, at any point $p\in M$, the generalized tangent space $\T_p M$ splits into an orthogonal sum of real blocks, each of  type either $\Delta^{\pm}_{2h+1}(0)$, $\Delta^0_{k}(\lambda_0,-\lambda_0)$, $\Delta_{k}^{\pm}(\sqrt{-1}\lambda_0,-\sqrt{-1}\lambda_0)$ or $\Delta_k^0(\lambda,-\lambda,\overline{\lambda},-\overline{\lambda})$. 
\end{prop}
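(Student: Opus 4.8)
The plan is to combine the complex block decomposition developed in this subsection with a Galois-theoretic bookkeeping of which complex blocks can occur together. Fix a point $p\in M$ and recall from the opening remarks of the subsection that, since $\varphi_p$ is skew-symmetric, $\T_p M\otimes\C$ splits orthogonally into indecomposable complex blocks, each of one of the types $\Delta_k(0)$ or $\Delta_k(\lambda,-\lambda)$ classified in Lemma \ref{lem:block2}. The goal is to reassemble these complex blocks into $\sigma$-invariant orthogonal summands defined over $\R$, where $\sigma$ denotes complex conjugation, and to identify each such summand with one of the four listed real types.

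First I would analyze how $\sigma$ acts on the set of complex blocks. Because $\varphi$ is real, $\sigma$ permutes the indecomposable complex summands, preserving the degree $k$ and sending a block in $\Delta_k(\lambda,-\lambda)$ to one in $\Delta_k(\overline\lambda,-\overline\lambda)$ (and $\Delta_k(0)$ to $\Delta_k(0)$); moreover $\sigma$ preserves the tautological inner product, so it carries orthogonal summands to orthogonal summands and hence can be arranged to act summand-by-summand. There are then exactly two cases: either a complex block $V$ satisfies $\overline V=V$, or it pairs off with a distinct block $\overline V$ with $V\cap\overline V=0$. In the first case, the constraint that the eigenvalue multiset $\{\lambda,-\lambda\}$ be $\sigma$-stable forces $\lambda\in\R$ or $\lambda\in\sqrt{-1}\,\R$ (together with $\lambda=0$), and Lemma \ref{lem:block3} provides the real form $V_0$ with $V=V_0\oplus\sqrt{-1}V_0$ together with its signature, landing us in one of the types $\Delta^{\pm}_{2h+1}(0)$, $\Delta^0_k(\lambda_0,-\lambda_0)$, or $\Delta^{\pm}_k(\sqrt{-1}\lambda_0,-\sqrt{-1}\lambda_0)$. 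In the second case $\lambda\notin\R\cup\sqrt{-1}\R$ and, as recorded just before the last lemma, $VV:=V\oplus\overline V$ is a $\sigma$-invariant, non-degenerate, $\varphi$-invariant subspace with a real form $VV_0$ of type $\Delta^0_k(\lambda,-\lambda,\overline\lambda,-\overline\lambda)$.

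Then I would assemble the global statement. Taking the real parts $V_0$ (respectively $VV_0$) of the $\sigma$-orbits of complex blocks yields a collection of $\varphi_p$-invariant real subspaces of $\T_p M$ that are mutually orthogonal (orthogonality of the complexified summands descends to their real forms because the inner product is real) and whose complexifications exhaust $\T_p M\otimes\C$; hence they give an orthogonal direct sum decomposition of $\T_p M$ itself. By the analysis above each summand is of one of the four listed indecomposable real types, which is exactly the assertion of the Proposition.

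The main obstacle is the bookkeeping in the case $\overline V = V$: one must check that the real form $V_0$ furnished by Lemma \ref{lem:block3} is genuinely non-degenerate (so that it is a legitimate real block) and genuinely indecomposable over $\R$, and that no finer $\R$-block hides inside it. Non-degeneracy is immediate from the non-degeneracy of $\tau_{k-1}$ on the semisimple component together with property (iv) of Lemma \ref{lem:block1}; real indecomposability follows because any proper $\R$-block would complexify to a proper complex block inside $V$, contradicting indecomposability of $V$. Apart from this point, which is essentially a reorganization of results already proved, the remaining verifications — that $\sigma$ can be made to respect the orthogonal decomposition, and that the eigenvalue constraints force $\lambda$ into $\R\cup\sqrt{-1}\R$ in the self-conjugate case — are routine.
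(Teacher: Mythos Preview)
Your proposal is correct and follows essentially the same approach as the paper: the paper does not give a separate proof but simply states that the Proposition ``follows from the above discussion,'' and that discussion is precisely the complex-to-real passage you have written out --- classify indecomposable complex blocks via Lemma~\ref{lem:block2}, treat the self-conjugate case $\overline V=V$ with Lemma~\ref{lem:block3}, pair off the remaining blocks with their conjugates to obtain the type $\Delta_k^0(\lambda,-\lambda,\overline\lambda,-\overline\lambda)$, and take real parts. Your added remarks on why $V_0$ is non-degenerate and $\R$-indecomposable, and on arranging the decomposition compatibly with $\sigma$, make explicit the routine points the paper leaves implicit.
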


\begin{example} If $\varphi$ is a generalized almost complex structure, its real block decomposition is fiberwise of type
$m\cdot\Delta^+_{1}(\sqrt{-1},-\sqrt{-1})\oplus m\cdot\Delta^-_{1}(\sqrt{-1},-\sqrt{-1})$, where $2m=n$. More generally, if $\varphi$ is a generalized $f$-structure, the real block decomposition is of type
$m_1\cdot\Delta^+_{1}(\sqrt{-1},-\sqrt{-1})\oplus m_2\cdot\Delta^-_{1}(\sqrt{-1},-\sqrt{-1})\oplus n_1\cdot \Delta^+_{1}(0)\oplus n_2\cdot \Delta^-_{1}(0)$, where $2m_1+n_1=n=2m_2+n_2$.
\end{example}

\begin{example}\label{ex:54}
Suppose $M$ is 3-dimensional, fix $p\in M$, fix a basis $\{v_1,v_2,v_3\}$ of $T_pM$, and dual basis given by $\{\alpha_1,\alpha_2,\alpha_3\}$. Consider the endomorphism $\varphi:\T_pM\rightarrow \T_p M$ such that $\varphi(v_1)=v_2$, $\varphi(v_2)=v_3+\alpha_3$, $\varphi(v_3)=-\alpha_2$,
$\varphi(\alpha_1)=0$, $\varphi(\alpha_2)=-\alpha_1$, and $\varphi(\alpha_3)=-\alpha_2$. Then $\varphi$
has a decomposition of real type $\Delta_5^+(0)\oplus\Delta_1^-(0)$ and minimal polynomial $P(x)=x^5$.
\end{example}

\begin{example}\label{ex:55}
Continuing with the notation of Example \ref{ex:54}, suppose now that $\varphi:\T_pM\rightarrow \T_p M$ is defined by
$\varphi(v_1)=v_1-2\alpha_2$, $\varphi(v_2)=-v_2+2\alpha_1$, $\varphi(v_3)=\alpha_3$,
$\varphi(\alpha_1)=-\alpha_1$, $\varphi(\alpha_2)=\alpha_2$, and $\varphi(\alpha_3)=v_3$. Then $\varphi$
has decomposition of real type $\Delta_2^0(1,-1)\oplus\Delta_1^0(1,-1)$ and minimal polynomial $P(x)=(x^2-1)^2$.
\end{example}

\begin{example}\label{ex:56}
Again in the notation of Example \ref{ex:54}, suppose that  $\varphi:\T_pM\rightarrow \T_p M$ is such that
$\varphi(v_1)= v_2-\alpha_2$, $\varphi(v_2)= \alpha_1-\alpha_3$,  $\varphi(v_3)=v_2+\alpha_2$, 
$\varphi(\alpha_1)=0$, $\varphi(\alpha_2)=-\alpha_1-\alpha_3$, and $\varphi(\alpha_3)=0$. Then $\varphi$
has decomposition of type $\Delta^+_3(0)\oplus \Delta^-_3(0)$ and minimal polynomial $P(x)=x^3$.
\end{example}

\begin{rem} If $M$ is 3-dimensional and orientable, then it is parallelizable \cite{BenedettiLisca18}. Interpreting $\{v_1,v_2,v_3\}$ as a global frame and $\{\alpha_1,\alpha_2,\alpha_3\}$ as the associated dual frame, Examples \ref{ex:54}-\ref{ex:56} apply to $M$.
\end{rem}
\section{Weak generalized Nijenhuis operators}\label{sec:4}

In this section we investigate the compatibility of generalized polynomial structures with the Dorfman bracket or, equivalently, with the de Rham operator. Our main finding is that the semisimple part of a generalized polynomial structure controls the Dorfman involutivity of the generalized eigenbundles and the decomposition of the de Rham operator induced by the associated multigrading of $\Omega_M$. 

\begin{mydef}
Let $\varphi$ be an endomorphism of $\T M$. The {\it Courant-Nijenhuis torsion of $\varphi$} is defined as 
\begin{equation}
\mathcal T_\varphi(\x,\y)=\varphi^2\lbra \x,\y\rbra +\lbra \varphi(\x),\varphi(\y)\rbra - \varphi(\lbra \varphi(\x),\y\rbra +\lbra \x,\varphi(\y)\rbra)
\end{equation}
for all $\x,\y\in \T M$. Similarly, we define the {\it shifted Courant-Nijenhuis torsion of $\varphi$} as
\begin{equation}\label{eq:12}
    \mathcal S_\varphi(\x,\y)=\mathcal T_\varphi(\varphi(\x),\y)+ \mathcal T_\varphi(\x,\varphi(\y))
\end{equation}
for all $\x,\y\in \T M$.
\end{mydef}

\begin{mydef}[\cite{BoualemBrouzet12}] A {\it generalized Nijenhuis operator} is an endomorphism $\varphi$ of $\T M$ with vanishing Courant-Nijenhuis torsion. A {\it weak generalized Nijenhuis operator} is an endomorphism of $\T M$ with vanishing shifted Courant-Nijenhuis torsion.
\end{mydef}

\begin{rem}
If follows from \eqref{eq:12} that every generalized Nijenhuis operator is also a weak generalized Nijenhuis operator.
\end{rem}

\begin{example}[\cite{Gualtieri11}]\label{ex:52}
Let $\varphi$ be a generalized almost complex structure. Then $\varphi$ is a generalized complex structure if and only if it is a generalized Nijenhuis operator.
\end{example}

\begin{example}[\cite{BlagaCrasmareanu14}]\label{ex:53}
Let $\varphi$ be a weak generalized almost tangent structure. Then by definition $\varphi$ is a weak generalized tangent structure if and only if it is a generalized Nijenhuis operator (and not merely a weak generalized Nijenhuis operator, as the terminology might suggest).  
\end{example}

\begin{rem}\label{rem:54}
Let $\varphi$ be a non-zero semisimple generalized polynomial structure. Then 
\begin{equation}
    \mathcal T_\varphi(\x,\y)=(\varphi-\mu I)(\varphi-\nu I)\lbra \x,\y\rbra
\end{equation}
for all $\x\in L_\mu$, $\y\in L_\nu$.  Since ${\rm Ker}((\varphi-\mu I)(\varphi-\nu I))=L_\mu+L_\nu$ we conclude that $\varphi$ is a generalized Nijenhuis operator if and only if $\lbra L_\mu,L_\nu\rbra \subseteq L_\mu+L_\nu$. In particular, taking $\mu=\pm\lambda$ we see that the quasi split structure $E_\lambda$ is closed under the Dorfman bracket. Since $\varphi$ is non-zero and semisimple we have $\lambda\neq 0$. On the other hand, Remark \ref{rem:13} implies that $E_\lambda=\T M\otimes \mathbb C$ and thus $\varphi$ has precisely two eigenvalues: $\pm \lambda$. 
\end{rem}

\begin{rem}
Classically, integrability of polynomial structures is defined by requiring the vanishing of their Nijenhuis torsion. Thus, naively, one might be tempted to define a generalized polynomial structure $\varphi$ to be integrable if and only if $\varphi$ is a generalized Nijenhuis operator. Remark \ref{rem:54} shows that such a definition would be too restrictive, even for semisimple generalized polynomial structures with more than two eigenvalues. As noted in \cite{Kosmann-Schwarzbach19}, the necessity of relaxing the vanishing of the Nijenhuis torsion when working with general eigenbundles was already pointed in the classical case by Haantjes \cite{Haantjes55}. The connection between the shifted Courant-Nijenhuis torsion and the generalized analogues of the higher torsions introduced in \cite{Haantjes55} plays an important role in the reminder of this paper.
\end{rem}

\begin{theorem}\label{theorem:56}
Let $\varphi$ be a generalized polynomial structure. The following are equivalent:
\begin{enumerate}[1)]
\item the semisimple part of $\varphi$ is a weak generalized Nijenhuis operator;
\item $(\mu+\nu)\lbra L_\mu,L_\nu\rbra\subseteq L_\mu+L_\nu$ for all $\mu,\nu\in \Sigma(\varphi)$;
\item there exists a decomposition of the de Rham operator $d=\sum_{\lambda\in \Sigma(\varphi)} \delta_\lambda$ such that for every $\lambda\in \Sigma(\varphi)\setminus\{0\}$, $\delta_\lambda$ is of degree $1$ with respect to the $(L_\lambda,L_{-\lambda})$-grading and of degree $0$ with respect to the $(L_\mu,L_{-\mu})$-grading whenever $\mu\notin\{0,\pm\lambda\}$. 
\end{enumerate}
\end{theorem}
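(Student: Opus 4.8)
The plan is to prove the cycle of implications $1)\Rightarrow 2)\Rightarrow 3)\Rightarrow 1)$, reducing everything to the semisimple part $\varphi_s$ since all three conditions depend only on the eigenbundle decomposition $\T M\otimes\C=\bigoplus_\lambda L_\lambda$, which is the data carried by $\varphi_s$ (by the Jordan-Chevalley remark following Theorem \ref{theorem:40}). For $1)\Rightarrow 2)$, I would start from the identity in Remark \ref{rem:54}: for $\x\in L_\mu$, $\y\in L_\nu$ one has $\mathcal T_{\varphi_s}(\x,\y)=(\varphi_s-\mu I)(\varphi_s-\nu I)\lbra\x,\y\rbra$. Computing $\mathcal S_{\varphi_s}(\x,\y)$ from \eqref{eq:12} gives $\mathcal S_{\varphi_s}(\x,\y)=\mathcal T_{\varphi_s}(\mu\x,\y)+\mathcal T_{\varphi_s}(\x,\nu\y)=(\mu+\nu)(\varphi_s-\mu I)(\varphi_s-\nu I)\lbra\x,\y\rbra$. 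So the vanishing of $\mathcal S_{\varphi_s}$ says precisely that $(\mu+\nu)\lbra\x,\y\rbra\in\mathrm{Ker}((\varphi_s-\mu I)(\varphi_s-\nu I))=L_\mu+L_\nu$ for all such $\x,\y$, which is exactly 2).

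For $2)\Rightarrow 3)$, the idea is to build the operators $\delta_\lambda$ as the components of $d$ with respect to the multigrading of $\Omega_M\otimes\C$ induced, via Corollary \ref{cor:19}, by the decomposition $\T M\otimes\C=E_0\oplus\bigoplus_{\lambda\in\Sigma_+(\varphi)}E_\lambda$ together with the isotropic splittings $E_\lambda=L_\lambda\oplus L_{-\lambda}$. Writing $d=\sum d_{(n_\lambda)_{\lambda\in\Sigma_+}}$ for the decomposition into multihomogeneous pieces, I would show that the only pieces that can occur have each $n_\lambda\in\{-1,0,1\}$: this is exactly Lemma \ref{lem:23} applied to each factor $E_\lambda$, using the hypothesis $\lbra L_{\pm\lambda},L_{\pm\lambda}\rbra\subseteq L_{\pm\lambda}$ (the $\mu=\nu=\pm\lambda$ case of 2), noting $2\lambda\neq 0$). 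Then I would group these pieces: let $\delta_\lambda$ (for $\lambda\in\Sigma_+$) be the sum of all multihomogeneous components of $d$ having $(L_\lambda,L_{-\lambda})$-degree equal to $+1$, let $\delta_{-\lambda}$ be the sum of those with $(L_\lambda,L_{-\lambda})$-degree $-1$, and let $\delta_0=d-\sum_{\lambda\neq 0}\delta_\lambda$. To see this is a valid assignment I must check that no component of $d$ has degree $+1$ (or $-1$) with respect to two different $E_\lambda$-gradings simultaneously; this is where Lemma \ref{lem:24} enters — the cross term $\lbra L_\mu,L_\nu\rbra$ for $\mu+\nu\neq 0$ being contained in $L_\mu+L_\nu\subseteq E_\mu\oplus E_\nu$ (an isotropic-plus-isotropic, hence controllable, situation) forces any such component to have degree $0$ with respect to all but one of the gradings. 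Concretely I would apply Lemma \ref{lem:24} with $L=L_\lambda$, $L'=L_{-\lambda}$, $E=E_\lambda$, and $\delta$ one of the finitely many multihomogeneous components of $d$; condition 2) of that lemma, $\lbra L_\lambda,L_{-\lambda}\rbra_\delta\subseteq E_\lambda^\perp=\bigoplus_{\mu\neq\pm\lambda}E_\mu$, and condition 3) follow from hypothesis 2) of the theorem together with the observation that $L_0=E_0\subseteq E_\lambda^\perp$ and the eigenbundle brackets respect the coarse $E_\mu$-decomposition.

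For $3)\Rightarrow 1)$, I would run the argument of $1)\Rightarrow 2)$ in reverse at the level of brackets: if $d=\sum\delta_\lambda$ with the stated degree properties, then for $\x\in L_\mu$, $\y\in L_\nu$ with $\mu+\nu\neq 0$ I would unwind the derived bracket $\lbra\x,\y\rbra=\lbra\x,\y\rbra_d=\sum_\lambda\lbra\x,\y\rbra_{\delta_\lambda}$ and use the degree constraints — $\x$ lives in $(\BigWedge^{\bullet}L'_\mu)\otimes\cdots$ in the spin picture, so applying $\delta_\lambda$ shifts the $E_\lambda$-degree in a controlled way — to conclude $\lbra\x,\y\rbra\in L_\mu+L_\nu$, hence (multiplying through, harmlessly, by $\mu+\nu$) condition 2); then reverse the computation of the first paragraph to get $\mathcal S_{\varphi_s}=0$, i.e.\ 1). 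Alternatively, and perhaps more cleanly, I would close the triangle as $3)\Rightarrow 2)\Rightarrow 1)$, deriving 2) directly from the degree decomposition of $d$ via the spin representation and then invoking the already-established equivalence of 1) and 2).

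\textbf{Main obstacle.} The routine parts are the bracket identities of paragraph one and the filtration bookkeeping. The genuinely delicate step is $2)\Rightarrow 3)$: one must verify carefully that the hypotheses of Lemma \ref{lem:24} are met by \emph{each} multihomogeneous component of $d$ — in particular that condition 1) of that lemma (being of degree $1$ with respect to a grading commuting with the $(L_\lambda,L_{-\lambda})$-grading) holds, which requires using the \emph{other} $E_\mu$-gradings as the auxiliary commuting grading and checking that $d$ itself, being degree $1$ in the form degree, distributes its multihomogeneous pieces so that each has a well-defined degree with respect to every $E_\mu$-grading — and that the definitions of $\delta_\lambda$ do not overlap or omit any piece. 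Getting the conventions for $\Sigma_+$ versus $-\Sigma_+$ and the sign of the grading on $E_\lambda$ to line up consistently across all $\lambda$ is the point most likely to require care.
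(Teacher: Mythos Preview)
Your proposal is correct and follows essentially the same route as the paper: reduce to the semisimple case, establish $1)\Leftrightarrow 2)$ via the identity $\mathcal S_{\varphi_s}(\x,\y)=(\mu+\nu)(\varphi_s-\mu I)(\varphi_s-\nu I)\lbra\x,\y\rbra$, prove $3)\Rightarrow 2)$ by decomposing the Dorfman bracket along $d=\sum_\lambda\delta_\lambda$ and reading off degrees, and prove $2)\Rightarrow 3)$ using Lemma~\ref{lem:23} to bound the $(L_\lambda,L_{-\lambda})$-degrees of the pieces of $d$ and Lemma~\ref{lem:24} to show that $\delta_\lambda$ has degree $0$ in every other $(L_\mu,L_{-\mu})$-grading. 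The only organizational difference is that the paper peels off one $E_\lambda$ at a time (an induction on $|\Sigma_+(\varphi)|$) whereas you decompose into multihomogeneous pieces simultaneously; since the paper itself notes that the order of eigenvalues is irrelevant, these are two presentations of the same argument, and the ``main obstacle'' you flag---verifying hypothesis~1) of Lemma~\ref{lem:24}---is handled exactly as you suggest, by taking the $(L_\lambda,L_{-\lambda})$-grading itself (in which $\delta_\lambda$ has degree $1$ by construction) as the auxiliary commuting grading.
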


\begin{proof} Since the three statements only depend on the semisimple part of $\varphi$, we may (without loss of generality) assume that $\varphi$ is semisimple. With this assumption, the equivalence of 1) and 2) follows from the formula
\begin{equation}
    \mathcal S_\varphi(\x,\y)=(\mu+\nu)(\varphi-\mu I)(\varphi-\nu I)\lbra \x,\y\rbra\,,
\end{equation}
which is valid for all $\x,\in L_\mu$, $\y\in L_\nu$. Assume that a decomposition of the de Rham operator as in 3) is given. Viewed as operators acting on $\Omega_M\otimes \mathbb C$, sections of $L_\lambda$ are of degree $-1$ with respect to the $(L_\lambda,L_{-\lambda})$-grading. Since the generalized tangent bundle is closed under the Dorfman bracket, given $\x\in L_\mu$, $\y\in L_\nu$ such that $\mu+\nu\neq 0$ we have 
\begin{equation}
    \lbra \x, \y \rbra = \sum_{\lambda \in \Sigma(\varphi)}\lbra \x,\y\rbra_{\delta_\lambda} = \lbra \x,\y\rbra_{\delta_\mu} + \lbra \x,\y\rbra_{\delta_\nu}\in L_\mu+L_\nu\, , 
\end{equation}
which implies 2). We are left to prove that 2) implies 30). The statement is trivial if $\varphi$ is nilpotent so we may use induction on the number of non-zero eigenvalues. Suppose $\lambda\in \Sigma(\varphi)\setminus\{0\}$. In particular, this implies that $L_{\pm\lambda}$ is closed under the Dorfman bracket. By Lemma \ref{lem:23}, $d$ has component of degree $1$, $-1$, and $0$ with respect to the $(L_\lambda,L_{-\lambda})$-grading. Define $\delta_{\pm\lambda}$ to be the components of degree $\pm 1$ and let $\delta=d-\delta_\lambda-\delta_{-\lambda}$. If $\pm \lambda$ are the only non-zero eigenvalues then we may set $\delta_0=\delta$ and the Theorem is proved. If there is another eigenvalue $\mu\notin\{0,\pm \lambda\}$, then using the closure of $L_\mu$ under the Dorfman bracket and Lemma \ref{lem:23} again we may decompose $\delta$ into component of degree $1$, $-1$, $0$ with respect to $(L_\mu,L_{-\mu})$-grading and, if necessary, iterate. Since the order in which the non-zero eigenvalues are selected is irrelevant, we are left to show that $\delta_\lambda$ is of degree $0$ with respect to the $(L_\mu,L_{-\mu})$-grading whenever $\mu\notin\{0,\pm \lambda\}$. To see this, we note that $\delta_\lambda$ is of degree $1$ with respect to the $(L_\lambda,L_{-\lambda})$-grading (which commutes with the $(L_\mu,L_{-\mu})$-grading). Together with 2), this shows that the the assumptions of Lemma \ref{lem:24} are met proving that $\delta_\lambda$ does indeed preserve the $(L_\mu,L_{-\mu})$-grading. 
\end{proof}

\begin{rem}\label{rem:61}
Let $\varphi$ be a generalized polynomial structure with exactly two eigenvalues. This condition is equivalent to requiring that the minimal polynomial of $\varphi$ is of the form $P(x)=(x^2+c)^N$ for some non-zero real number $c$. Then the semisimple part $\varphi_s$ of $\varphi$ is a generalized Nijenhuis operator if and only if $\varphi_s$ is a weak generalized Nijenhuis operator if and only if $L_{\sqrt{-c}}$ and $L_{-\sqrt{-c}}$ are both closed under the Dorfman bracket. In the special case $N=c=1$, the equivalent conditions of Theorem \ref{theorem:56} are also equivalent to the statement that the generalized almost complex structure $\varphi=\varphi_s$ is integrable. Furthermore, in this case $\delta_{\sqrt{-1}}=\partial$ and $\delta_{-\sqrt{-1}}=\overline \partial$.   
\end{rem}

\begin{example}\label{ex:58}
Let $\varphi$ be a generalized polynomial structure with exactly three eigenvalues, so that its minimal polynomial is of the form $P(x)=x^{N_1}(x^2+c)^{N_2}$ for some non-zero real number $c$. Then $\varphi_s$ is a generalized Nijenhuis operator if and only if   
\begin{equation}\label{eq:15}
\lbra L_{\pm\sqrt{-c}},L_{\pm\sqrt{-c}}\rbra \subseteq L_{\pm\sqrt{-c}}\quad \textrm { and } \quad \lbra L_0,L_{\pm\sqrt{-c}}\rbra \subseteq L_0\oplus L_{\pm\sqrt{-c}}\,.
\end{equation}
Let us further specialize to the case $N_1=N_2=c=1$ i.e.\ to the case in which $\varphi=\varphi_s$ is a generalized $F$-structure. If $\varphi$ is a weak generalized Nijenhuis operator, then the first condition in \eqref{eq:15} shows that $\varphi$ is a generalized CRF structure. Generalized $F$-structure satisfying both conditions in \eqref{eq:15} are sometimes referred to \cites{PoonWade11,AldiGrandini15,AldiGrandini17} as {\it strongly integrable}. Furthermore, in this case the decomposition of the de Rham operator is the one discussed in \cite{AldiGrandini17}.
\end{example}

\begin{rem} Let $\varphi$ be a generalized polynomial structure. Assume $\varphi$ is a weak generalized Nijenhuis operator and let $d=\sum_\lambda \delta_\lambda$ be the decomposition of the de Rham operator whose existence is guaranteed by Theorem \ref{theorem:56}. Setting equal to zero the components of definite multidegree in $d^2=0$, we obtain 
$(\mu+\nu)[\delta_\mu,\delta_\nu]=0$ for every $\lambda,\mu\in \Sigma(\varphi)$. In particular, the components of the de Rham operator corresponding to non-zero eigenvalues are differentials. We also have the additional equation
\begin{equation}
    \delta_0^2=-\frac{1}{2}\sum_{\lambda\in \Sigma_+(\varphi)} [\delta_\lambda,\delta_{-\lambda}]\,.
\end{equation}
\end{rem}

\begin{rem}
In the classical case, Vanzurova \cite{Vanzurova98} restated and studied the integrability of semisimple polynomial structures in terms of a decomposition of the de Rham operator.
\end{rem}

\section{Minimality}\label{sec:5}

The goal of this section is to introduce a novel notion of compatibility between generalized polynomial structures and the de Rham operator that depends on both their semisimple and nilpotent parts.  

\subsection{The minimal torsion and the Courant tensor}

Let $\varphi$ be a generalized polynomial structure on $M$ with minimal polynomial $P$ and let $\widetilde \varphi$ be the lift of $\varphi$ to $\mathcal E_M$, whose existence is guaranteed by Proposition \ref{prop:9}. We define the {\it minimal operator of $\varphi$} as $\delta_\varphi=P(\ad_{\widetilde \varphi})(d)$, where $d$ is the de Rham operator.

\begin{example}
Let $\varphi$ be a generalized almost complex structure. Then $P(x)=x^2+1$ and $\delta_\varphi=[\widetilde \varphi,[\widetilde\varphi,d]]+d=-\lbra \widetilde \varphi,\widetilde\varphi\rbra+d$. 
\end{example}

\begin{lem}\label{lem:40}
The minimal operator of a generalized polynomial structure is $\Omega_M^0$-linear.
\end{lem}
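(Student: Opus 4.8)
The plan is to show that $\delta_\varphi = P(\ad_{\widetilde\varphi})(d)$ satisfies the characterization of $\Omega_M^0$-linear operators implicit in the earlier machinery, namely that $[\delta_\varphi, \Omega_M^0] = 0$ (cf.\ the discussion after Lemma \ref{lemma:7} and condition 1) of Proposition \ref{prop:9}). Equivalently, since $\Omega_M^0$-linear means commuting with left multiplication by functions, I would compute $[\delta_\varphi, f]$ for an arbitrary $f \in \Omega_M^0$ and show it vanishes. The key structural input is that $\ad_{\widetilde\varphi}$ is itself a graded derivation of the commutator algebra $\mathcal E_M$, so $\ad_{\widetilde\varphi}^k$ acts on a commutator by the graded Leibniz rule, and $P(\ad_{\widetilde\varphi})$ is a polynomial in this derivation.

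First I would record the base facts: $[d, f] = df \in \Omega_M^1$ and $[\widetilde\varphi, f] = 0$ for all $f \in \Omega_M^0$, the latter being condition 1) of Proposition \ref{prop:9}. Next I would observe that because $\ad_{\widetilde\varphi}$ is a derivation and $\widetilde\varphi$ commutes with $\Omega_M^0$, for any $g \in \Omega_M^0 \cup \Omega_M^1$ we have $\ad_{\widetilde\varphi}(gh) = g\,\ad_{\widetilde\varphi}(h) + (\ad_{\widetilde\varphi}g)h$, and in particular $\ad_{\widetilde\varphi}$ preserves the (left) $\Omega_M^0$-module structure up to the term $(\ad_{\widetilde\varphi}g)h$. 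Then I would compute, using that $\ad_{\widetilde\varphi}$ is a derivation for the graded commutator,
\begin{equation}
[\delta_\varphi, f] = [P(\ad_{\widetilde\varphi})(d), f].
\end{equation}
The cleanest route is to note that $\ad$ is a Lie algebra homomorphism, so $\ad_{P(\ad_{\widetilde\varphi})(d)} = P(\ad_{\widetilde\varphi})(\ad_d)$ as operators on $\mathcal E_M$ — here I use that $\ad_{[\psi,\varphi]} = [\ad_\psi, \ad_\varphi]$ and hence $\ad_{\ad_{\widetilde\varphi}(\chi)} = [\ad_{\widetilde\varphi}, \ad_\chi]$, so applying the polynomial $P$ commutes through. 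Therefore
\begin{equation}
[\delta_\varphi, f] = P(\ad_{\widetilde\varphi})(\ad_d)(f) = P(\ad_{\widetilde\varphi})(df).
\end{equation}
Since $df \in \Omega_M^1 \subseteq \T M$ and $[\widetilde\varphi, \T M] \subseteq \T M$ (condition 2) of Proposition \ref{prop:9}), each application of $\ad_{\widetilde\varphi}$ keeps us inside $\T M \subseteq \mathcal E_M$; in fact $\ad_{\widetilde\varphi}(df) = \varphi(df) \in \T M$, and inductively $\ad_{\widetilde\varphi}^k(df) = \varphi^k(df)$. Hence $[\delta_\varphi, f] = P(\varphi)(df)$, which vanishes because $P$ is the minimal polynomial of $\varphi$ and $P(\varphi) = 0$ on all of $\T M$ by Lemma \ref{lem:GPS}.

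Finally I would conclude: $[\delta_\varphi, f] = 0$ for all $f \in \Omega_M^0$ means $\delta_\varphi$ is $\Omega_M^0$-linear, as claimed. The main subtlety to get right — the one place where a false step is easy — is the identity $\ad_{P(\ad_{\widetilde\varphi})(d)} = P(\ad_{\widetilde\varphi}) \circ \ad_d$ on $\mathcal E_M$; this needs the Jacobi identity in the form "$\ad$ of a bracket is the bracket of $\ad$'s" together with care about signs in the graded setting, but since $\widetilde\varphi$ is even all sign factors are trivial and the verification is clean. Everything else is a routine unwinding of the derivation property and the defining properties of $\widetilde\varphi$ from Proposition \ref{prop:9}.
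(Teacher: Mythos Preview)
Your proof is correct and follows essentially the same approach as the paper: both compute $[\delta_\varphi,f]=P(\ad_{\widetilde\varphi})(df)=P(\varphi)(df)=0$ using that $[\widetilde\varphi,f]=0$. The paper reaches the key identity in one line by noting that $\ad_f$ commutes with $\ad_{\widetilde\varphi}$ and hence with $P(\ad_{\widetilde\varphi})$, whereas you route through the Jacobi identity $\ad_{\ad_{\widetilde\varphi}(\chi)}=[\ad_{\widetilde\varphi},\ad_\chi]$; both arguments are equivalent once unwound.
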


\begin{proof}
Since $[\ad_{\widetilde \varphi},f]=0$ for all $f\in \Omega_M^0$, then $[\delta_\varphi,f]=P(\ad_{\widetilde\varphi})(df)=P(\varphi)(df)=0$
\end{proof}

\begin{mydef}
The {\it minimal torsion of $\varphi$}, denoted by $\mathcal M_\varphi:\BigWedge^2 \T M\to \T M$, is by definition the derived bracket with respect to the minimal operator of $\varphi$ i.e.\ $\mathcal M_\varphi(\x,\y)=\lbra \x,\y\rbra_{\delta_\varphi}$ for all $\x,\y\in \T M$. The {\it Courant tensor of $\varphi$} is the map $\mathcal C_\varphi:\BigWedge^3 \T M\to \Omega_M^0$ defined by the formula $\mathcal C_\varphi(\x,\y,\z)=\langle \mathcal M_\varphi(\x,\y),\z\rangle$ for all $\x,\y,\z\in \T M$.
\end{mydef}

\begin{rem}
More generally, if $\delta\in \mathcal E_M$ is an operator such that $\lbra \T M, \T M\rbra_\delta \subseteq \T M$ one can introduce the $\Omega_M^0$-valued ternary operation
\begin{equation}
    T_\delta(\x,\y,\z)=\langle \lbra \x,\y\rbra_\delta ,\z \rangle 
\end{equation}
for all $\x,\y,\z\in \T M$. In the particular case $\delta=d$ one obtains the $\mathbb R$-trilinear map $T(\x,\y,\z)=\langle \lbra \x,\y\rbra ,\z \rangle$ introduced in \cite{Courant90}. 
\end{rem}

\begin{rem}\label{rem:68}
The definition of the minimal operator depends on the particular lift of $\varphi$ to $\mathcal E_M$ obtained by imposing condition 3) in Proposition \ref{prop:9}. If a different lift $\widetilde \varphi + g$ is chosen, for some function $g\in \Omega_M^0$, then $P(\ad_{\widetilde \varphi+g})(d)=\delta_{\varphi}-P'(\varphi)(dg)$. In particular, the definition of the minimal operator depends on the choice of lift only up to addition of a section of $\T M$. Hence, by Lemma \ref{lemma:7}, the definition of the minimal torsion (and thus of the Courant tensor) of $\varphi$ is independent on condition 3) in Proposition \ref{prop:9}. 
\end{rem}

\begin{rem}
Combining Lemma \ref{lem:8} and Lemma \ref{lem:40}, we see that $\mathcal M_\varphi$ is indeed skew-symmetric and $\Omega_M^0$-bilinear. An analogous calculation shows that $\mathcal C_\varphi$ is totally anti-symmetric and $\Omega_M^0$-trilinear. 
\end{rem}

\begin{prop}\label{prop:44}
Let $\varphi$ be a generalized polynomial structure on $M$. Then the following conditions are equivalent:
\begin{enumerate}[1)]
\item $\delta_\varphi\in \T M$; 
\item the minimal torsion of $\varphi$ vanishes;
\item the Courant tensor of $\varphi$ vanishes.
\end{enumerate}
\end{prop}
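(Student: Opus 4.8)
The plan is to prove the cycle $1) \Rightarrow 2) \Rightarrow 3) \Rightarrow 1)$, using the characterizations of sections of $\T M$ established in Lemma \ref{lemma:7} together with the properties of derived brackets.

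First I would show $1) \Rightarrow 2)$. Suppose $\delta_\varphi \in \T M$. By Lemma \ref{lem:40} we already know $\delta_\varphi$ is $\Omega_M^0$-linear, and it is odd (being a polynomial in $\ad_{\widetilde\varphi}$ applied to $d$, and $d$ is odd while $\ad_{\widetilde\varphi}$ is even, so $P(\ad_{\widetilde\varphi})(d)$ is odd). Hence Lemma \ref{lemma:7}, part 3), applies directly: if $\delta_\varphi \in \T M$ then $\lbra \T M, \T M\rbra_{\delta_\varphi} = 0$, which is precisely the vanishing of $\mathcal M_\varphi$. Actually one must be slightly careful: part 3) of Lemma \ref{lemma:7} as stated gives an ``if and only if'' for $\delta$ odd, so this implication is immediate.

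Next, $2) \Rightarrow 3)$ is trivial since $\mathcal C_\varphi(\x,\y,\z) = \langle \mathcal M_\varphi(\x,\y),\z\rangle$ by definition, so vanishing of $\mathcal M_\varphi$ forces vanishing of $\mathcal C_\varphi$.

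The substantive implication is $3) \Rightarrow 1)$, and this is where I expect the main work. Assume $\mathcal C_\varphi \equiv 0$, i.e. $\langle \lbra \x, \y \rbra_{\delta_\varphi}, \z \rangle = 0$ for all $\x,\y,\z \in \T M$. The first point is that $\lbra \x,\y\rbra_{\delta_\varphi} = [[\x,\delta_\varphi],\y]$ lies in $\Omega_M^0$ for all $\x,\y \in \T M$: indeed expanding $[\x,\delta_\varphi]$ and using $\Omega_M^0$-linearity of $\delta_\varphi$ (Lemma \ref{lem:40}) one checks the relevant degree bookkeeping, or alternatively one invokes the fact that $\delta_\varphi$ preserves $\T M$ (which itself needs to be verified — but this follows because $\ad_{\widetilde\varphi}$ preserves $\Omega^0_M \oplus \T M$ by Lemma \ref{lemma:7} part 1) applied to $\widetilde\varphi$ together with $[\widetilde\varphi,\Omega^0_M]=0$, and $d$ preserves $\T M$, so $P(\ad_{\widetilde\varphi})(d)$ maps $\T M$ into $\Omega_M^0 \oplus \T M$; combined with $\Omega^0_M$-linearity from Lemma \ref{lem:40} one concludes $\lbra \T M, \T M \rbra_{\delta_\varphi} \subseteq \T M$ and in fact the bracket value pairs into $\Omega^0_M$ after one more commutator). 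Then, since the tautological inner product is non-degenerate on $\T M$, vanishing of the pairing against every $\z$ forces $\lbra \x,\y\rbra_{\delta_\varphi} = 0$ for all $\x,\y \in \T M$, i.e. $\mathcal M_\varphi = 0$ (so in effect $3) \Rightarrow 2)$). Finally, $\lbra \T M, \T M\rbra_{\delta_\varphi} = 0$ together with $\delta_\varphi$ odd lets us apply Lemma \ref{lemma:7} part 3) in the reverse direction to conclude $\delta_\varphi \in \T M$. The main obstacle is the preliminary verification that $\delta_\varphi$ restricts nicely to $\T M$ (so that the ternary operation and the application of Lemma \ref{lemma:7} are legitimate); once that is in place the equivalences are formal consequences of non-degeneracy of $\langle\,,\rangle$ and of Lemma \ref{lemma:7}.
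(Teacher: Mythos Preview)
Your proposal is correct and follows essentially the same approach as the paper: the equivalence of 1) and 2) is exactly Lemma \ref{lemma:7} part 3), and the equivalence of 2) and 3) is the non-degeneracy of the tautological inner product. The paper's proof is much terser (two sentences) and does not spell out the preliminary verification that $\mathcal M_\varphi$ lands in $\T M$, which you rightly identify as needed for the non-degeneracy argument; but the logical skeleton is identical.
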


\begin{proof}
The equivalence between 2) and 3) is a consequence of the non-degeneracy of the tautological inner product. The equivalence of 1) and 2) is a consequence of Lemma \ref{lemma:7}.
\end{proof}

\begin{mydef}
A generalized polynomial structure is {\it minimal} if the equivalent conditions 1)-3) in Proposition \ref{prop:44} are satisfied.
\end{mydef}

\begin{prop}\label{prop:46}
Let $\varphi$ be a generalized polynomial structure with minimal polynomial $P(x)=a_Nx^N+\cdots +a_1x+a_0$. Then
\begin{equation}\label{eq:7}
    \mathcal C_\varphi(\x,\y,\z)=\sum_{i=0}^N \sum_{i_1+i_2+i_3=i} a_i (-1)^i\binom{i}{i_1,i_2,i_3} T(\varphi^{i_1}(\x),\varphi^{i_2}(\y),\varphi^{i_3}(\z))
\end{equation}
for all $\x,\y,\z\in \T M$.
\end{prop}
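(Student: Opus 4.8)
The plan is to unwind the definition $\mathcal C_\varphi(\x,\y,\z)=\langle \lbra \x,\y\rbra_{\delta_\varphi},\z\rangle$ with $\delta_\varphi = P(\ad_{\widetilde\varphi})(d)$, and to push the operator $P(\ad_{\widetilde\varphi})$ through the commutators defining the derived bracket. The starting point is the identity $P(\ad_{\widetilde\varphi})(d) = \sum_{i=0}^N a_i \ad_{\widetilde\varphi}^i(d)$, so that $\lbra \x,\y\rbra_{\delta_\varphi} = [[\x,\delta_\varphi],\y] = \sum_i a_i [[\x,\ad_{\widetilde\varphi}^i(d)],\y]$. The key computational tool is the graded Leibniz rule for $\ad_{\widetilde\varphi}$ together with the fact, from Proposition \ref{prop:9}, that $[\widetilde\varphi,\x]=\varphi(\x)$ for $\x\in\T M$ and $[\widetilde\varphi,f]=0$ for $f\in\Omega_M^0$. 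Since $\ad_{\widetilde\varphi}$ is an (even) derivation of the graded commutator, I would expand $\ad_{\widetilde\varphi}^i([\,[\x,d],\y\,])$ by the multinomial version of the Leibniz rule, distributing the $i$ copies of $\ad_{\widetilde\varphi}$ among the three ``slots'' $\x$, $d$, $\y$: this produces $\ad_{\widetilde\varphi}^{i_1}(\x)=\varphi^{i_1}(\x)$, a factor $\ad_{\widetilde\varphi}^{i_3}(\y)=\varphi^{i_3}(\y)$, and $\ad_{\widetilde\varphi}^{i_2}(d)$ acting in the middle, each term weighted by $\binom{i}{i_1,i_2,i_3}$, with $i_1+i_2+i_3=i$.

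The next step is to recognize that $[[\varphi^{i_1}(\x), \ad_{\widetilde\varphi}^{i_2}(d)],\varphi^{i_3}(\y)] = \lbra \varphi^{i_1}(\x),\varphi^{i_3}(\y)\rbra_{\ad_{\widetilde\varphi}^{i_2}(d)}$ up to the sign $(-1)^{kl+1}$ built into the derived bracket. Here I must be careful with signs: $\x$ and $\y$ have degree $-1$ as operators, $d$ has degree $1$, and $\ad_{\widetilde\varphi}$ is even, so $\ad_{\widetilde\varphi}^{i_2}(d)$ has degree $1$; the Leibniz expansion of $\ad_{\widetilde\varphi}^i$ applied to a product of things of these degrees will, because all the intermediate degrees are odd, contribute the factor $(-1)^i$ (more precisely $(-1)^{i_1 \cdot(\text{stuff})+\dots}$, which one checks collapses to $(-1)^i$ since the relevant crossing degrees are all odd). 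This $(-1)^i$ is exactly the factor appearing in \eqref{eq:7}. Finally, pairing with $\z$ and using that the ternary form associated to $\delta$ is $T_\delta(\x,\y,\z)=\langle\lbra\x,\y\rbra_\delta,\z\rangle$, one identifies each summand $\langle \lbra \varphi^{i_1}(\x),\varphi^{i_3}(\y)\rbra_{\ad_{\widetilde\varphi}^{i_2}(d)},\z\rangle$. The remaining point is that $\langle \lbra \u,\v\rbra_{\ad_{\widetilde\varphi}^{i_2}(d)},\w\rangle = \langle \lbra \u, \varphi^{i_2}(\v)\rbra ,\w\rangle$ is \emph{not} quite what we want; rather, one wants to absorb the middle operator $\ad_{\widetilde\varphi}^{i_2}(d)$ differently. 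Here I would instead observe that $\ad_{\widetilde\varphi}^{i_2}(d)$ applied inside a bracket of sections of $\T M$ can be traded, via Lemma \ref{lemma:7} and the skew-symmetry/tensoriality already established, for the plain Dorfman bracket of the appropriately $\varphi$-twisted arguments — i.e.\ one should symmetrize the placement of $\varphi$'s so that the ``$d$-slot'' carries no $\varphi$ and the three tangent-bundle arguments carry $\varphi^{i_1},\varphi^{i_2},\varphi^{i_3}$ respectively. Concretely: $[[\x,\ad_{\widetilde\varphi}^{i_2}(d)],\y]$, re-expanded using that $\ad_{\widetilde\varphi}$ is a derivation and acts trivially on $\Omega^0_M$, equals the sum over $j_1+j_2+j_3=i_2$ of multinomial-weighted terms with the Dorfman bracket of $\varphi^{j_1}$-, $\varphi^{j_3}$-twisted arguments contracted against $\varphi^{j_2}(\cdot)$; then contracting with $\z$ and using skew-symmetry of $\varphi$ moves that last $\varphi^{j_2}$ onto $\z$. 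Reindexing the combined sums over $(i_1+j_1, j_2, i_3+j_3)$ collapses everything into the single multinomial sum $\sum_{i_1+i_2+i_3=i}\binom{i}{i_1,i_2,i_3}T(\varphi^{i_1}\x,\varphi^{i_2}\y,\varphi^{i_3}\z)$, which is \eqref{eq:7}.

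The main obstacle I expect is precisely this bookkeeping: correctly tracking the Koszul signs through the iterated graded Leibniz expansion (establishing the clean $(-1)^i$), and then performing the reindexing that converts the nested double multinomial sum (one sum from distributing $\ad_{\widetilde\varphi}^i$ over the outer bracket $[[\x,d],\y]$, another from the internal structure $\ad_{\widetilde\varphi}^{i_2}(d)$) into the single symmetric multinomial sum over $(i_1,i_2,i_3)$. A cleaner route that sidesteps some of this is to work directly at the level of the Courant tensor: since $T$ is totally antisymmetric and $\Omega^0_M$-trilinear (this is the cocycle/Courant $3$-form), and since $\varphi$ is skew-symmetric for $\langle\,,\,\rangle$, the expression $\langle P(\ad_{\widetilde\varphi})(\text{derived bracket data}),\z\rangle$ becomes manifestly symmetric under permuting the roles of $\x,\y,\z$ with the attached powers of $\varphi$, forcing the multinomial coefficient. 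I would present the $\Omega^0_M$-linearity reductions first (allowing one to compute on a local frame, or at least to ignore terms where $\ad_{\widetilde\varphi}$ hits a function), then carry out the Leibniz expansion, then invoke the antisymmetry of $T$ and skew-symmetry of $\varphi$ to collapse the sums. Everything else is a direct, if tedious, verification.
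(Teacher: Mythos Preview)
Your approach is workable in principle but considerably more roundabout than the paper's, and your sign argument is not right. The paper does not try to distribute $\ad_{\widetilde\varphi}^i$ over the bracket $[[\x,d],\y]$ and then untangle the middle slot; instead it works from the start at the level of the ternary form $T_\delta(\x,\y,\z)=\langle\lbra\x,\y\rbra_\delta,\z\rangle$, which (up to a constant) equals $\ad_\z\ad_\y\ad_\x(\delta)\in\Omega_M^0$. Commuting $\ad_{\widetilde\varphi}$ leftwards past $\ad_\x,\ad_\y,\ad_\z$ one step at a time yields
\[
T_{[\widetilde\varphi,\delta]}(\x,\y,\z)=-T_\delta(\varphi(\x),\y,\z)-T_\delta(\x,\varphi(\y),\z)-T_\delta(\x,\y,\varphi(\z)),
\]
because the remaining term $\ad_{\widetilde\varphi}\bigl(\ad_\z\ad_\y\ad_\x(\delta)\bigr)$ vanishes: it is $[\widetilde\varphi,\text{function}]=0$. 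Iterating this single identity gives the multinomial formula for $T_{\ad_{\widetilde\varphi}^i(d)}$ immediately, with no nested sums or reindexing. Your ``cleaner route'' paragraph is groping toward exactly this.

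On the sign: the $(-1)^i$ does \emph{not} come from Koszul signs in the Leibniz expansion. Since $\widetilde\varphi$ is even, $\ad_{\widetilde\varphi}$ is an ungraded derivation of $[\,\cdot\,,\,\cdot\,]$ and produces no signs when distributed. The minus sign arises instead from the commutation relation $\ad_\x\ad_{\widetilde\varphi}=\ad_{\widetilde\varphi}\ad_\x-\ad_{\varphi(\x)}$, i.e.\ from $[\x,\widetilde\varphi]=-[\widetilde\varphi,\x]=-\varphi(\x)$, applied once per slot as $\ad_{\widetilde\varphi}$ is pushed to the outside; iterating gives $(-1)^i$. If you keep your current framing you will not locate this sign correctly, and the nested double-multinomial bookkeeping you anticipate is an artifact of not having paired with $\z$ early enough to exploit $[\widetilde\varphi,\Omega_M^0]=0$.
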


\begin{proof}
Assume $\delta\in \mathcal E_M$ is odd and such that $\lbra \T M,\T M\rbra_\delta \subseteq \T M$. Then, for all $\x,\y,\z\in \T M$, the operator 
\begin{equation}\label{eq:39}
(\ad_\z \ad_\y\ad_\x \ad_{\widetilde \varphi}  + \ad_{\varphi(\z)}\ad_\y\ad_\x + \ad_\z\ad_{\varphi(\y)}\ad_\x + \ad_\z\ad_\y\ad_{\varphi(\x)}-\ad_{\widetilde \varphi} \ad_\z \ad_\y\ad_\x)(\delta) 
\end{equation}
is equal to zero. The last term of \eqref{eq:39} vanishes because $(\ad_\z \ad_\y\ad_\x) (\delta)\in \Omega_M^0$ and $[\widetilde \varphi,\Omega_M^0]=0$. Hence
\begin{equation}\label{eq:8}
T_{[\widetilde \varphi,\delta]}(\x,\y,\z)+T_\delta (\varphi(\x),\y,\z)+T_\delta (\x,\varphi(\y),\z)+ T_\delta(\x,\y,\varphi(\z))=0\,.
\end{equation}
Iterating this step, we find 
\[
T_{\ad^i_{\widetilde \varphi}(d)}(\x,\y,\z) = (-1)^i\sum_{i_1+i_2+i_3=i} \binom{i}{i_1,i_2,i_3}T(\varphi^{i_1}(\x),\varphi^{i_2}(\y),\varphi^{i_3}(\z))\, ,
\]
from which the result easily follows.
\end{proof}

\begin{rem}\label{rem:PolynomialAction}
Let $\varphi$ be an endomorphism of $\T M$. Following \cite{Kosmann-Schwarzbach19}, we consider the action of the polynomial ring $\mathbb C[u,v,w]$ on the space of local linear operators $\zeta:\T M\otimes \T M\to \T M$ defined by $(u\cdot \zeta)(\x,\y)=\varphi(\zeta(\x,\y))$, $(v\cdot \zeta)(\x,\y)=\zeta(\varphi(\x),\y)$, and $(w\cdot \zeta)(\x,\y)=\zeta(\x,\varphi(\y))$. If $\zeta_{CD}$ is the Dorfman bracket, then Proposition \ref{prop:46} can be equivalently restated as $\mathcal M_\varphi=P(u-v-w)\cdot \zeta_{CD}$. 
Note that that the above  action is equivalent to the action of $\C[\varphi_1, \varphi_2, \varphi_3]$ on the sheaf $\T M^{\otimes 3}$ determined by
$\varphi_1 \cdot = \varphi \otimes {\rm Id}\otimes {\rm Id}, \quad \varphi_2 \cdot = {\rm Id}\otimes\varphi \otimes {\rm Id}, \quad \varphi_3 \cdot = {\rm Id} \otimes {\rm Id}\otimes\varphi$
so that, for all $\zeta:\T M\otimes \T M\to \T M$,
$\langle u^iu^ju^k\zeta(\x,\y),\z\rangle = f((-\varphi_3)^i\varphi_1^j\varphi_2^k(\x\otimes\y\otimes \z))$
where $f(\x\otimes\y\otimes\z)=\langle\zeta(\x,\y),\z\rangle$.
Using this latter action, Proposition \ref{prop:46} reads $\mathcal C_\varphi=T\circ P(-\varphi_1-\varphi_2-\varphi_3)$.

\end{rem}
\begin{example}\label{ex:47}
Let $\varphi$ be a generalized almost complex structure. Then
\begin{equation}
    \mathcal C_\varphi(\x,\y,\z)=2(-T(\x,\y,\z)+T(\varphi(\x),\varphi(\y),\z)+T(\varphi(\x),\y,\varphi(\z))+T(\x,\varphi(\y),\varphi(\z)))
\end{equation}
for all $\x,\y,\z\in \T M$. Using the skew-symmetry of $\varphi$ and non-degeneracy of the tautological inner product this is equivalent to $\mathcal M_\varphi=2 \mathcal T_\varphi$, where $\mathcal T_\varphi$ is the Courant-Nijenhuis torsion. In particular $\varphi$ is minimal if and only if it is a generalized complex structure.
\end{example}

\begin{example}\label{ex:48}
Let $\varphi$ be a generalized F-structure. The Courant torsion of $\varphi$ can be calculated using Proposition \ref{prop:46}. From there, using skew-symmetry and non-degeneracy as in Example \ref{ex:47}, we obtain $\mathcal M_\varphi=-3\mathcal S_{\varphi}$, where $\mathcal S_\varphi$ is the shifted Courant-Nijenhuis torsion of $\varphi$. Hence, thanks to Example \ref{ex:58}, we conclude that $\varphi$ is minimal if and only if it is strongly integrable.
\end{example}

\begin{prop}\label{prop:52}
Let $\varphi$ be a generalized polynomial structure with minimal polynomial $P$ of degree $N$. Consider Jordan chains $\{\x_\alpha\}\subseteq L_\lambda$, $\{\y_\beta\}\subseteq L_\mu$, and $\{\z_\gamma\}\subseteq L_\nu$, so that $\varphi(\x_\alpha)=\lambda \x_\alpha+\x_{\alpha-1}$, $\varphi(\y_\beta)=\mu \x_\beta+\x_{\beta-1}$, and $\varphi(\z_\gamma)=\nu \x_\gamma+\x_{\gamma-1}$. Then
\begin{equation}\label{eq:11}
    \mathcal C_\varphi (\x_\alpha,\y_\beta,\z_\gamma)=(-1)^NP(\lambda+\mu+\nu+S)T(\x_\alpha,\y_\beta,\z_\gamma)\, ,
\end{equation}
where $S$ is the shift operator defined by the formula
\begin{equation}\label{eq:43}
    S T(\x_\alpha,\y_\beta,\z_\gamma)=T(\x_{\alpha-1},\y_\beta,\z_\gamma)+ T(\x_{\alpha}, \y_{\beta-1},\z_\gamma)+T(\x_\alpha,\y_\beta,\z_{\gamma-1})\,.
\end{equation}
\end{prop}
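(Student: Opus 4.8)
The plan is to deduce \eqref{eq:11} from the polynomial reformulation of Proposition \ref{prop:46} recorded in Remark \ref{rem:PolynomialAction}, namely $\mathcal C_\varphi=T\circ P(-\varphi_1-\varphi_2-\varphi_3)$ (equivalently, directly from the explicit formula \eqref{eq:7} via the multinomial theorem, since $\varphi_1,\varphi_2,\varphi_3$ commute), where $T$ is the linearization $\x\otimes\y\otimes\z\mapsto\langle\lbra\x,\y\rbra,\z\rangle$ and $\varphi_a$ acts by $\varphi$ on the $a$-th factor of $\T M^{\otimes 3}$. First I would restrict $\varphi_1,\varphi_2,\varphi_3$ to the subspace $\Span\{\x_\alpha\}\otimes\Span\{\y_\beta\}\otimes\Span\{\z_\gamma\}$. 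By the Jordan chain relations, $\varphi$ acts on $\Span\{\x_\alpha\}$ as $\lambda\cdot\mathrm{Id}$ plus the nilpotent down-shift $\x_\alpha\mapsto\x_{\alpha-1}$, and similarly for the other two chains; hence on this subspace $\varphi_1=\lambda+S_1$, $\varphi_2=\mu+S_2$, $\varphi_3=\nu+S_3$, where $S_a$ is the down-shift in the $a$-th factor, and $S_1,S_2,S_3$ pairwise commute because they act on distinct factors. Therefore $-\varphi_1-\varphi_2-\varphi_3=-(\lambda+\mu+\nu)-\Sigma$ with $\Sigma:=S_1+S_2+S_3$, so $P(-\varphi_1-\varphi_2-\varphi_3)$ is a polynomial in the single operator $\Sigma$.

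The heart of the argument is the identity
\[
T\!\left(\Sigma^{k}(\x_\alpha\otimes\y_\beta\otimes\z_\gamma)\right)=S^{k}\,T(\x_\alpha,\y_\beta,\z_\gamma),\qquad k\ge 0,
\]
where $S$ is the shift operator of \eqref{eq:43}. I would prove it by induction on $k$: the multinomial expansion of $\Sigma^k$ writes it as a combination with multinomial coefficients of the commuting operators $S_1^iS_2^jS_3^l$ ($i+j+l=k$), so that applying $T$ term by term produces the corresponding combination of the scalars $T(\x_{\alpha-i},\y_{\beta-j},\z_{\gamma-l})$, which is exactly what the multinomial expansion of $S$ yields on the index-labelled values. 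I expect this intertwining to be the main (though routine) obstacle: $\Sigma$ acts on vectors of $\T M^{\otimes 3}$ while $S$ acts on the family of scalars $T(\x_\alpha,\y_\beta,\z_\gamma)$, and these are matched only after applying $T$, so some care with the bookkeeping of the three index slots is required.

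With this identity in hand, writing $P(-(\lambda+\mu+\nu)-t)=\sum_k c_k t^k$ we obtain
\[
\mathcal C_\varphi(\x_\alpha,\y_\beta,\z_\gamma)=\sum_k c_k\,T\!\left(\Sigma^k(\x_\alpha\otimes\y_\beta\otimes\z_\gamma)\right)=\sum_k c_k\,S^kT(\x_\alpha,\y_\beta,\z_\gamma)=P\!\bigl(-(\lambda+\mu+\nu)-S\bigr)\,T(\x_\alpha,\y_\beta,\z_\gamma).
\]
To finish I would invoke Remark \ref{rem:34}: the minimal polynomial $P$ of a skew-symmetric endomorphism is even or odd, hence $P(-x)=(-1)^{N}P(x)$ as polynomials, with $N=\deg P$. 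Substituting the operator $x=(\lambda+\mu+\nu)+S$ turns the last display into $(-1)^{N}P(\lambda+\mu+\nu+S)\,T(\x_\alpha,\y_\beta,\z_\gamma)$, which is precisely \eqref{eq:11}; specializing to trivial chains ($S=0$) recovers $\mathcal C_\varphi(\x,\y,\z)=(-1)^{N}P(\lambda+\mu+\nu)\,T(\x,\y,\z)$ as a consistency check.
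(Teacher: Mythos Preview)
Your argument is correct and follows essentially the same route as the paper's proof. The only packaging difference is that the paper factors $P(x)=\prod_i(x-\omega_i)$ into linear factors and iterates the relation \eqref{eq:8} one factor at a time, invoking the symmetry of $\Sigma(\varphi)$ about $0$ to turn $\prod_i(\lambda+\mu+\nu+\omega_i+S)$ into $P(\lambda+\mu+\nu+S)$; you instead start from the already-established Proposition~\ref{prop:46}/Remark~\ref{rem:PolynomialAction} and use the equivalent parity identity $P(-x)=(-1)^N P(x)$ from Remark~\ref{rem:34} at the end. Both arguments amount to the same computation.
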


\begin{proof}
Consider a factorization $P(x)=\prod_{i=1}^N (x-\omega_i)$ with $\omega_i\in \Sigma(\varphi)$ not necessarily distinct. Using \eqref{eq:8} we have
\begin{equation}
    T_{(\ad_{\widetilde\varphi}-\omega_i)(\delta)}(\x_\alpha,\y_\beta\,\z_\gamma)=-(\lambda+\mu+\nu+\omega_i+S)T_\delta(\x_\alpha,\y_\beta,\z_\gamma)\, .
\end{equation}
Keeping into account that $\Sigma(\varphi)$ is symmetric with respect to $0\in \mathbb C$, and iterating over all factors of $P$, yields the result.
\end{proof}

\begin{cor}\label{cor:88}
Let $\varphi$ be a generalized polynomial structure with minimal polynomial $P(x)$. If $\x\in {\rm Ker}((\varphi-\lambda I)^p)$ and $\y\in{\rm Ker}((\varphi-\mu I)^q)$ then
\begin{equation}
    P^{p+q-1}((\lambda+\mu) I-\varphi)(\lbra \x,\y\rbra)=0\,
\end{equation}
\end{cor}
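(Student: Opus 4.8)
The plan is to test the asserted identity against an arbitrary section $\z$ of $\T M\otimes\C$ and transport it into the language of the ternary operation $T(\x,\y,\z)=\langle\lbra\x,\y\rbra,\z\rangle$. By non-degeneracy of the tautological inner product it is enough to show $\langle P^{p+q-1}((\lambda+\mu)I-\varphi)(\lbra\x,\y\rbra),\z\rangle=0$ for every such $\z$. Since $\varphi$ is skew-symmetric, $\langle Q(\varphi)(\w),\z\rangle=\langle\w,Q(-\varphi)(\z)\rangle$ for every polynomial $Q$; applying this with $Q(x)=P((\lambda+\mu)-x)^{p+q-1}$ (so that $Q(-x)=P((\lambda+\mu)+x)^{p+q-1}$) turns the claim into $T(\x,\y,P((\lambda+\mu)I+\varphi)^{p+q-1}(\z))=0$, i.e., in the notation of Remark \ref{rem:PolynomialAction},
\[
T\circ P\bigl((\lambda+\mu)\,{\rm Id}+\varphi_3\bigr)^{p+q-1}\,(\x\otimes\y\otimes\z)=0 .
\]

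Here is where minimality of $\varphi$ enters (and it cannot be dropped): it gives $\mathcal C_\varphi=0$, so Remark \ref{rem:PolynomialAction} (equivalently Proposition \ref{prop:46}) yields $T\circ P(-\varphi_1-\varphi_2-\varphi_3)=0$ identically on $\T M^{\otimes 3}\otimes\C$, which by Remark \ref{rem:34} ($P$ even or odd) is the same as $T\circ P(\varphi_1+\varphi_2+\varphi_3)=0$. Moreover $\x\otimes\y\otimes\z$ lies in the $\C[\varphi_1,\varphi_2,\varphi_3]$-submodule ${\rm Ker}((\varphi-\lambda I)^p)\otimes{\rm Ker}((\varphi-\mu I)^q)\otimes(\T M\otimes\C)$, on which $(\varphi_1-\lambda\,{\rm Id})^p$ and $(\varphi_2-\mu\,{\rm Id})^q$ act as zero. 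So everything reduces to the purely algebraic statement that in $\C[\varphi_1,\varphi_2,\varphi_3]$
\[
P\bigl((\lambda+\mu)\,{\rm Id}+\varphi_3\bigr)^{p+q-1}\in\bigl\langle(\varphi_1-\lambda\,{\rm Id})^p,\ (\varphi_2-\mu\,{\rm Id})^q,\ P(\varphi_1+\varphi_2+\varphi_3)\bigr\rangle .
\]

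For the membership I would set $a=\varphi_1-\lambda\,{\rm Id}$, $b=\varphi_2-\mu\,{\rm Id}$, $\xi=\varphi_3+(\lambda+\mu)\,{\rm Id}$ and $w=a+b$, so that $P(\varphi_1+\varphi_2+\varphi_3)=P(\xi+w)$; a Taylor expansion shows $P(\xi+w)-P(\xi)$ is divisible by $w$, hence $P(\xi)\in\langle P(\xi+w),w\rangle$, and raising to the power $p+q-1$ and expanding, each term is a multiple of $P(\xi+w)$ or of $w^{p+q-1}=(a+b)^{p+q-1}$, the latter lying in $\langle a^p,b^q\rangle$ because every monomial $a^ib^{p+q-1-i}$ has $i\ge p$ or $p+q-1-i\ge q$. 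Finally, writing $P((\lambda+\mu)\,{\rm Id}+\varphi_3)^{p+q-1}=A_1(\varphi_1-\lambda\,{\rm Id})^p+A_2(\varphi_2-\mu\,{\rm Id})^q+A_3P(\varphi_1+\varphi_2+\varphi_3)$ with $A_i\in\C[\varphi_1,\varphi_2,\varphi_3]$ and applying it to $\x\otimes\y\otimes\z$, the first two summands die by the kernel hypotheses on $\x,\y$ and the third dies after applying $T$, since the $\varphi_i$ commute and $T\circ P(\varphi_1+\varphi_2+\varphi_3)=0$. This gives the display of the first paragraph, hence the corollary.

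The one substantive step is the ideal membership; the rest is bookkeeping with commuting operators. The same argument can equally be run through Proposition \ref{prop:52}, with the $\C[\varphi_1,\varphi_2,\varphi_3]$-action replaced by the shift operator $S$ along Jordan chains — the algebraic core is unchanged, and the main obstacle (extracting a clean vanishing from $P(\lambda+\mu+\nu+S)T=0$) is precisely the ideal-theoretic lemma above.
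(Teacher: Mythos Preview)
Your argument is correct, and the algebraic core---Taylor-expanding $P$ around $(\lambda+\mu)I-\varphi$ (equivalently $\xi$) and using that $P^{r-s}$ divides $(P^r)^{(s)}$, together with the pigeonhole fact $(a+b)^{p+q-1}\in\langle a^p,b^q\rangle$---is the same as the paper's. The packaging differs: the paper works with the shift operator $\mathbf S$ on derived brackets of Jordan chains and runs an induction on $p+q$, while you isolate the computation as a single ideal-membership statement in $\C[\varphi_1,\varphi_2,\varphi_3]$ and then evaluate. Your formulation has the advantage of making the role of each hypothesis transparent (the kernel conditions on $\x,\y$ kill the $a^p,b^q$ generators; minimality kills the $P(\varphi_1+\varphi_2+\varphi_3)$ generator), and it avoids the bookkeeping of Jordan chains; the paper's version stays closer to the spinorial language of Proposition~\ref{prop:52}.

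One point worth flagging: you are right that minimality cannot be dropped, and indeed the paper's proof uses it implicitly (the left-hand side $P^{p+q-1}((\lambda+\mu)I-\varphi+\mathbf S)$ applied to $\lbra\x,\y\rbra$ is, up to sign, an iterate built from $\mathcal M_\varphi$, which vanishes only under minimality). Already for $p=q=1$ the claim reads $P((\lambda+\mu)I-\varphi)\lbra\x,\y\rbra=0$ for eigenvectors $\x,\y$, which by Remark~\ref{rem:PolynomialAction} is exactly $\mathcal M_\varphi(\x,\y)=0$; for a non-integrable generalized almost complex structure this fails. So your explicit insertion of the minimality hypothesis is a genuine correction to the statement as printed.
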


\begin{proof}
Let $\mathbf S$ be the shift operator defined on derived brackets of Jordan chains by  $\mathbf S \lbra \mathbf x_\alpha,\mathbf y_\beta\rbra_\delta=\lbra \mathbf x_{\alpha-1},\mathbf y_\beta\rbra_\delta+\lbra \mathbf x_\alpha,\mathbf y_{\beta-1}\rbra_\delta$. Then we have the Taylor expansion
\begin{equation}
    P^r((\lambda+\mu)I-\varphi+\mathbf S)=\sum_{s=0}^\infty (P^r)^{(s)}((\lambda+\mu) I -\varphi)\frac{\mathbf S^s}{s!}
\end{equation}
for every integer $r\ge 1$. The result then follows upon using induction on $p+q$ and the fact that $P$ divides $(P^r)^{(s)}$ for every $r>s$.
\end{proof}

\begin{prop}\label{prop:87}
Let $\varphi$ be a minimal generalized polynomial structure. Then the semisimple and nilpotent parts of $\varphi$ are both minimal.
\end{prop}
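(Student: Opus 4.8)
The plan is to deduce both claims from Proposition \ref{prop:52}, applied to each of $\varphi$, $\varphi_s$ and $\varphi_n$, together with the fact (see Section \ref{sec:3}) that $\varphi_s$ and $\varphi_n$ are generalized polynomial structures, with minimal polynomials $P_s(x)=\prod_{\lambda\in\Sigma(\varphi)}(x-\lambda)$, the radical of $P$, and $P_n(x)=x^{m}$, where $m=\max_{\lambda\in\Sigma(\varphi)}m(\lambda)$. By Proposition \ref{prop:44} it suffices to prove that the Courant tensors $\mathcal C_{\varphi_s}$ and $\mathcal C_{\varphi_n}$ vanish; since these are $\Omega_M^0$-trilinear, it is enough to check this fiberwise. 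Fix a point of $M$. Because ${\rm rk}\, L_\lambda=m(\lambda)$ equals the degree of nilpotency of $(\varphi-\lambda I)$ on $L_\lambda$, the restriction $\varphi|_{L_\lambda}$ is a single Jordan block, so $L_\lambda$ is spanned by one Jordan chain $\{\x_\alpha\}_{\alpha=0}^{m(\lambda)-1}$ of $\varphi$ with $\varphi(\x_\alpha)=\lambda\x_\alpha+\x_{\alpha-1}$. This same chain is a Jordan chain of $\varphi_n$ with eigenvalue $0$, while its members are eigenvectors of $\varphi_s$ with eigenvalue $\lambda$. Hence Proposition \ref{prop:52} applies to all three structures on these chains, the shift operator $S$ being the same combinatorial operator throughout (and acting as $0$ for $\varphi_s$).

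The crux is an elementary fact about $P(\sigma+S)$. Fix $\lambda,\mu,\nu\in\Sigma(\varphi)$, chains $\{\x_\alpha\}\subseteq L_\lambda$, $\{\y_\beta\}\subseteq L_\mu$, $\{\z_\gamma\}\subseteq L_\nu$ as above, and put $\sigma=\lambda+\mu+\nu$. On the finite-dimensional span $V$ of $\{T(\x_\alpha,\y_\beta,\z_\gamma)\}$ the operator $S$ is nilpotent. Writing $P(x)=(x-\sigma)^{\ell}Q(x)$ with $\ell$ the multiplicity of $\sigma$ as a root of $P$ (so $\ell=0$ when $\sigma\notin\Sigma(\varphi)$) and $Q(\sigma)\neq 0$, we get $P(\sigma+S)=S^{\ell}Q(\sigma+S)$ with $Q(\sigma+S)=Q(\sigma)\,{\rm id}+(\text{nilpotent})$ invertible on $V$ and commuting with $S$. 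Since $\varphi$ is minimal, $\mathcal C_\varphi=0$ (Proposition \ref{prop:44}), so Proposition \ref{prop:52} gives $P(\sigma+S)\,T(\x_\alpha,\y_\beta,\z_\gamma)=0$ for all $\alpha,\beta,\gamma$, equivalently
\[
S^{\ell}\,T(\x_\alpha,\y_\beta,\z_\gamma)=0\qquad\text{for all }\alpha,\beta,\gamma .
\]

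Both conclusions now follow. For $\varphi_s$, since $\varphi_s$ acts as the scalar $\lambda$ on $L_\lambda$, Proposition \ref{prop:52} gives $\mathcal C_{\varphi_s}(\x,\y,\z)=(-1)^{\deg P_s}P_s(\sigma)\,T(\x,\y,\z)$ for all $\x\in L_\lambda$, $\y\in L_\mu$, $\z\in L_\nu$. If $\sigma\notin\Sigma(\varphi)$ then $\ell=0$, so the display above gives $T=0$ on $L_\lambda\otimes L_\mu\otimes L_\nu$; if $\sigma\in\Sigma(\varphi)$ then $P_s(\sigma)=0$ because $P$ and $P_s$ have the same roots. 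Either way $\mathcal C_{\varphi_s}$ vanishes on $L_\lambda\otimes L_\mu\otimes L_\nu$. For $\varphi_n$, Proposition \ref{prop:52} gives $\mathcal C_{\varphi_n}(\x_\alpha,\y_\beta,\z_\gamma)=(-1)^{m}S^{m}\,T(\x_\alpha,\y_\beta,\z_\gamma)$; since $\ell\le m$ and powers of $S$ commute, the display forces $S^{m}T=0$, so $\mathcal C_{\varphi_n}$ too vanishes on $L_\lambda\otimes L_\mu\otimes L_\nu$. Letting $(\lambda,\mu,\nu)$ range over $\Sigma(\varphi)^{3}$, we conclude $\mathcal C_{\varphi_s}=\mathcal C_{\varphi_n}=0$, and Proposition \ref{prop:44} finishes the proof.

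The main obstacle I expect is purely bookkeeping: verifying that Proposition \ref{prop:52} is legitimately applicable to $\varphi_s$ and $\varphi_n$ using the Jordan data of $\varphi$, and keeping straight that the eigenvalue entering Proposition \ref{prop:52} along a chain in $L_\lambda$ is $\lambda$ for $\varphi$ and for $\varphi_s$ but $0$ for $\varphi_n$, so that the $S$-dependence of the three instances of Proposition \ref{prop:52} differs. No new analytic or algebraic input beyond Proposition \ref{prop:52} and the factorization $P(\sigma+S)=S^{\ell}\cdot(\text{invertible})$ appears to be required.
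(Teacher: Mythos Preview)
Your overall strategy is sound and genuinely different from the paper's, but there is one incorrect claim you should fix. You assert that ${\rm rk}\, L_\lambda = m(\lambda)$ and hence that $\varphi|_{L_\lambda}$ is a single Jordan block. This is false in general: $m(\lambda)$ is the multiplicity of $\lambda$ in the \emph{minimal} polynomial, i.e.\ the size of the largest Jordan block for $\lambda$, whereas ${\rm rk}\, L_\lambda$ is the algebraic multiplicity of $\lambda$ in the characteristic polynomial. For instance, a generalized almost complex structure on an $n$-manifold with $n>1$ has $m(\sqrt{-1})=1$ but ${\rm rk}\, L_{\sqrt{-1}}=n$. Fortunately this does not damage your argument: at a fixed point, each $(L_\lambda)_p$ decomposes into a direct sum of Jordan chains for $\varphi$, and Proposition~\ref{prop:52} applies to any single chain. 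Since $\mathcal C_{\varphi_s}$ and $\mathcal C_{\varphi_n}$ are trilinear, it suffices to run your computation for every triple of chains (one in $L_\lambda$, one in $L_\mu$, one in $L_\nu$), not merely one triple per $(\lambda,\mu,\nu)$. With that correction your factorization $P(\sigma+S)=S^\ell\, Q(\sigma+S)$ with $Q(\sigma+S)$ invertible, the consequence $S^\ell T=0$, and the two case analyses for $\varphi_s$ and $\varphi_n$ are all correct.

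By contrast, the paper's proof avoids Jordan chains and Proposition~\ref{prop:52} entirely. It works at the operator level: since $\varphi_s=Q_s(\varphi)$ for a polynomial $Q_s$ (from \eqref{eq:44}), the identity $P_s(\varphi_s)=P_s(Q_s(\varphi))=0$ forces $P\mid P_s\circ Q_s$, and then $P_s(\ad_{\widetilde{\varphi_s}})(d)=P_s(Q_s(\ad_{\widetilde\varphi}))(d)$ is a polynomial in $\ad_{\widetilde\varphi}$ applied to $\delta_\varphi=P(\ad_{\widetilde\varphi})(d)\in\T M$, hence lies in $\T M$; the same argument handles $\varphi_n$. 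The paper's route is shorter and uses only the divisibility $P\mid P_s\circ Q_s$, while your route is more explicit and actually extracts the stronger pointwise information $S^\ell T(\x_\alpha,\y_\beta,\z_\gamma)=0$, which is of independent interest and closer in spirit to Corollary~\ref{cor:88}.
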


\begin{proof}
It follows from $\eqref{eq:44}$ that the $\varphi_s=Q_s(\varphi)$ for some polynomial $Q_s$, which also implies $\varphi_n=Q_n(\varphi)$ with $Q_n(x)=x-Q_s(x)$. Let $P_s$ be the minimal polynomial of $\varphi_s$. Then $0=P_s(\varphi_s)=P_s(Q_s(\varphi))$ and thus $P_s(Q_s(x))$ is divisible by the minimal polynomial of $\varphi$. Hence \begin{equation}
    P_s(\ad_{\widetilde {\varphi_s}})(d)=P_s(Q_s(\ad_{\widetilde\varphi}))(d)
\end{equation}
is a section of $\mathbb T M$, proving that $\varphi_s$ is minimal. A completely analogous argument shows that $\varphi_n$ is minimal.
\end{proof}

\begin{cor}\label{cor:89}
Let $\varphi$ be a generalized polynomial structure such that all the eigenvalues have the same multiplicity. Then $\varphi$ is minimal if and only if its semisimple and nilpotent parts are both minimal.
\end{cor}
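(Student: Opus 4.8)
The forward implication requires no restriction on multiplicities: it is exactly Proposition \ref{prop:87}. So the plan is to prove the converse, assuming that both $\varphi_s$ and $\varphi_n$ are minimal and deducing that $\varphi$ is minimal. The first step is to unpack the equal-multiplicity hypothesis. If every eigenvalue of $\varphi$ occurs with multiplicity $m$ in the minimal polynomial $P$ of $\varphi$, then $P(x)=P_s(x)^m$, where $P_s(x)=\prod_{\lambda\in\Sigma(\varphi)}(x-\lambda)$ is the minimal polynomial of $\varphi_s$; moreover, since on each $L_\lambda$ one has $\varphi-\lambda I=\varphi_n$, nilpotent of degree exactly $m$, the minimal polynomial of $\varphi_n$ is $x^m$. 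These are the only two places the hypothesis will be used.

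Next I would pass to the tensor reformulation of Remark \ref{rem:PolynomialAction}. Write $A=-\varphi_1-\varphi_2-\varphi_3$ for the associated operator on $\T M^{\otimes 3}\otimes\mathbb C$, and let $A_s,A_n$ be the analogous operators built from $\varphi_s$ and $\varphi_n$; then $A=A_s+A_n$, and since the factorwise actions of $\varphi_s$ and $\varphi_n$ commute pairwise, $A_s$ is semisimple, $A_n$ is nilpotent, and $[A_s,A_n]=0$, so this is the Jordan--Chevalley decomposition of $A$. By Remark \ref{rem:PolynomialAction} we have $\mathcal C_\varphi=T\circ P(A)=T\circ P_s(A)^m$, while minimality of $\varphi_s$ and of $\varphi_n$ read $T\circ P_s(A_s)=0$ and $T\circ A_n^m=0$. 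Now decompose $\T M^{\otimes 3}\otimes\mathbb C=\bigoplus_{\lambda,\mu,\nu\in\Sigma(\varphi)}V_{\lambda\mu\nu}$ with $V_{\lambda\mu\nu}=L_\lambda\otimes L_\mu\otimes L_\nu$ (Theorem \ref{theorem:40}); each $V_{\lambda\mu\nu}$ is $A$-invariant, $A_s$ acts on it as the scalar $-(\lambda+\mu+\nu)$, and $A_n$ restricts to a nilpotent endomorphism. Since $\mathcal C_\varphi$ is $\Omega_M^0$-trilinear, it is enough to show $T\circ P(A)$ vanishes on each $V_{\lambda\mu\nu}$.

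This splits into two cases. If $\lambda+\mu+\nu\notin\Sigma(\varphi)$, then $P_s(-(\lambda+\mu+\nu))\neq0$, and restricting $T\circ P_s(A_s)=0$ to $V_{\lambda\mu\nu}$ gives $P_s(-(\lambda+\mu+\nu))\,T|_{V_{\lambda\mu\nu}}=0$, hence $T|_{V_{\lambda\mu\nu}}=0$ and a fortiori $(T\circ P(A))|_{V_{\lambda\mu\nu}}=0$. If $\lambda+\mu+\nu\in\Sigma(\varphi)$, then $x+(\lambda+\mu+\nu)$ divides $P_s(x)$, say $P_s(x)=(x+(\lambda+\mu+\nu))\,g(x)$; since $A|_{V_{\lambda\mu\nu}}=-(\lambda+\mu+\nu)\,\mathrm{Id}+A_n|_{V_{\lambda\mu\nu}}$ is a scalar plus $A_n|_{V_{\lambda\mu\nu}}$, we obtain $P_s(A)|_{V_{\lambda\mu\nu}}=A_n|_{V_{\lambda\mu\nu}}\,g(A|_{V_{\lambda\mu\nu}})$ with $g(A|_{V_{\lambda\mu\nu}})$ a polynomial in $A_n|_{V_{\lambda\mu\nu}}$, hence $P(A)|_{V_{\lambda\mu\nu}}=P_s(A)^m|_{V_{\lambda\mu\nu}}=(A_n|_{V_{\lambda\mu\nu}})^m\,g(A|_{V_{\lambda\mu\nu}})^m$. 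Thus $T\circ P(A)$ factors on $V_{\lambda\mu\nu}$ through $T\circ A_n^m=0$, so it vanishes there. Combining the two cases and summing over the $V_{\lambda\mu\nu}$ gives $\mathcal C_\varphi=0$, i.e.\ $\varphi$ is minimal by Proposition \ref{prop:44}.

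The computations are all formal once the set-up is in place; the point requiring care is the divisibility step in the second case, where the equal-multiplicity hypothesis is precisely what guarantees that $P_s(A)^m$ is divisible on $V_{\lambda\mu\nu}$ by exactly $A_n^m$, so that it matches the condition $T\circ A_n^m=0$ supplied by minimality of $\varphi_n$. (The same argument can be run directly on Jordan chains via Proposition \ref{prop:52}: with the shift operator $S$ in place of $A_n$, one expands $P_s(\lambda+\mu+\nu+S)^m$ by the binomial theorem and uses $\varphi_s$-minimality to kill the terms without a full factor $S^m$ and $\varphi_n$-minimality to kill the $S^m$-term; this is the same content in different language.)
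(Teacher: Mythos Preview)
Your proof is correct, but it takes a genuinely different route from the paper's. The paper works directly at the level of the minimal operator: writing $P=P_s^k$ (with $k$ the common multiplicity) and Taylor-expanding
\[
P(\ad_{\widetilde\varphi})(d)=\sum_{j\ge 0}\frac{P^{(j)}(\ad_{\widetilde{\varphi_s}})}{j!}\,\ad_{\widetilde{\varphi_n}}^{\,j}(d),
\]
it observes that the terms with $j\ge k$ land in $\T M$ by minimality of $\varphi_n$, while for $0\le j<k$ the derivative $P^{(j)}=(P_s^k)^{(j)}$ is still divisible by $P_s$, so those terms land in $\T M$ by minimality of $\varphi_s$. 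No case analysis is needed.

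Your argument instead passes to the Courant-tensor picture of Remark~\ref{rem:PolynomialAction}, diagonalizes $A_s$ via the eigenbundle decomposition $\T M^{\otimes 3}\otimes\C=\bigoplus V_{\lambda\mu\nu}$, and splits according to whether $\lambda+\mu+\nu\in\Sigma(\varphi)$. This is a bit longer but more geometric, and it yields a small bonus: in the case $\lambda+\mu+\nu\notin\Sigma(\varphi)$ you actually show $T|_{V_{\lambda\mu\nu}}=0$, which is stronger than what is needed and ties in directly with the characterization in Theorem~\ref{theorem:56}. Your parenthetical sketch via Proposition~\ref{prop:52} and the shift operator is closer in spirit to the paper's Taylor-expansion argument, though the paper carries it out at the level of $\ad_{\widetilde\varphi}$ rather than on Jordan chains.
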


\begin{proof}
If $\varphi$ is minimal, then $\varphi_s$ and $\varphi_n$ are both minimal by Proposition \ref{prop:87}. For the converse, let $k$ be the common multiplicity of all eigenvalues and let $P_s(x)$ be the minimal polynomial of $\varphi_s$ so that $P(x)=P_s^k(x)$ is the minimal polynomial of $\varphi$. If $\varphi_n$ is minimal, then $\ad_{\widetilde{\varphi_n}}^j(d)\in \T M$ for all $j\ge k$ and thus, upon taking the Taylor expansion,
\begin{equation}
    P(\ad_{\widetilde\varphi})(d)-P(\ad_{\widetilde{\varphi_s}})(d)-\cdots-P^{(k-1)}(\ad_{\widetilde{\varphi_s}})\ad^{k-1}_{\widetilde{\varphi_n}}(d)\in \T M.
\end{equation}
On the other hand, $P_s(x)$ divides $P^{(i)}(x)$ for all $0\le i<k$ which concludes the proof.
\end{proof}

\begin{prop}\label{prop:88}
Let $\varphi$ be a semisimple generalized polynomial structure. If $\varphi$ is a weak generalized Nijenhuis operator, then $\varphi$ is minimal.
\end{prop}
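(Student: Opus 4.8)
The plan is to prove that the Courant tensor $\mathcal C_\varphi$ vanishes, which by Proposition \ref{prop:44} is equivalent to minimality. Since $\mathcal C_\varphi$ is $\Omega_M^0$-trilinear it extends $\C$-trilinearly to $\T M\otimes\C$, and by Theorem \ref{theorem:40} this complexification decomposes as $\bigoplus_{\lambda\in\Sigma(\varphi)}L_\lambda$; hence it suffices to show that $\mathcal C_\varphi(\x,\y,\z)=0$ whenever $\x\in L_\lambda$, $\y\in L_\mu$, $\z\in L_\nu$ with $\lambda,\mu,\nu\in\Sigma(\varphi)$. I would first specialize Proposition \ref{prop:52} to the semisimple setting: each $L_\lambda$ is now an honest eigenbundle, all Jordan chains have length one, the shift operator $S$ acts as $0$, and the formula collapses to $\mathcal C_\varphi(\x,\y,\z)=(-1)^N P(\lambda+\mu+\nu)\,T(\x,\y,\z)$. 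Because $\varphi$ is semisimple, $P(t)=\prod_{\omega\in\Sigma(\varphi)}(t-\omega)$, so $P(\lambda+\mu+\nu)=0$ exactly when $\lambda+\mu+\nu\in\Sigma(\varphi)$, in which case the term already vanishes. The remaining ``non-resonant'' case is $\lambda+\mu+\nu\notin\Sigma(\varphi)$, where it remains to show $T(\x,\y,\z)=\langle\lbra\x,\y\rbra,\z\rangle=0$.

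This is where the hypothesis enters. By Theorem \ref{theorem:56} (the implication 1)$\Rightarrow$2), applied with $\varphi=\varphi_s$), being a weak generalized Nijenhuis operator gives $(\lambda+\mu)\lbra L_\lambda,L_\mu\rbra\subseteq L_\lambda+L_\mu$. In the non-resonant case $\lambda+\mu\neq 0$ — otherwise $\lambda+\mu+\nu=\nu$ would lie in $\Sigma(\varphi)$ — so $\lbra\x,\y\rbra\in L_\lambda+L_\mu$. Pairing with $\z\in L_\nu$ and invoking the orthogonality relations of Theorem \ref{theorem:40}, namely $\langle L_\alpha,L_\beta\rangle=0$ unless $\alpha+\beta=0$, the component $\langle L_\lambda,L_\nu\rangle$ vanishes because $\nu\neq-\lambda$ (otherwise $\lambda+\mu+\nu=\mu\in\Sigma(\varphi)$) and the component $\langle L_\mu,L_\nu\rangle$ vanishes because $\nu\neq-\mu$ (otherwise $\lambda+\mu+\nu=\lambda\in\Sigma(\varphi)$). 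Hence $T(\x,\y,\z)=0$, which finishes the argument.

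I do not expect a genuine obstacle here: the proof is essentially a bookkeeping assembly of eigenbundle orthogonality (Theorem \ref{theorem:40}) and eigenbundle involutivity (Theorem \ref{theorem:56}) through the clean formula of Proposition \ref{prop:52}. The only point demanding care is the case analysis — one must observe that each of the three degenerate situations $\lambda+\mu=0$, $\nu=-\lambda$, $\nu=-\mu$ forces $\lambda+\mu+\nu\in\Sigma(\varphi)$, so that the single non-resonance hypothesis simultaneously excludes all of them — together with the observation that the vanishing of the shift operator in the semisimple case is precisely what makes Proposition \ref{prop:52} applicable in the sharp form used above.
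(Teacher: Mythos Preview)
Your argument is correct and follows essentially the same route as the paper's own proof: specialize Proposition \ref{prop:52} to the semisimple case so that the shift operator vanishes and $\mathcal C_\varphi(\x,\y,\z)=(-1)^N P(\lambda+\mu+\nu)\,T(\x,\y,\z)$, then use Theorem \ref{theorem:56} together with the orthogonality of the eigenbundles from Theorem \ref{theorem:40} to conclude that either $T(\x,\y,\z)=0$ or $\lambda+\mu+\nu\in\Sigma(\varphi)$. The paper phrases the dichotomy as ``$T(\x,\y,\z)=0$ unless $(\lambda+\mu)(\lambda+\nu)(\mu+\nu)=0$, in which case $P(\lambda+\mu+\nu)=0$,'' while you run the same case analysis contrapositively and in more detail, but the content is identical.
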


\begin{proof}Assume $\varphi$ has minimal polynomial $P(x)$ of degree $N$.
Due to semisimplicity, the shift operator $S$ in \eqref{eq:11} acts trivially. For every $\x\in L_\lambda$, $\y\in L_\mu$, and $\z\in L_\nu$ we have
\begin{equation}\label{eq:47}
    \mathcal C_{\varphi}(\x,\y,\z)=(-1)^N P(\lambda +\mu+\nu)T(\x,\y,\z)
\end{equation}
where $P(\lambda +\mu+\nu)T(\x,\y,\z)$ is the ordinary product of real numbers. By Theorem \ref{theorem:56}, $T(\x,\y,\z)=0$ unless $(\lambda+\mu)(\lambda+\nu)(\mu+\nu)=0$ in which case $P(\lambda +\mu+\nu)=0$. Hence the Courant tensor vanishes identically and $\varphi$ is minimal.  
\end{proof}

\subsection{Higher Courant-Nijenhuis torsions}

Let $\varphi$ be an operator acting on $\mathbb T M$. Following \cite{Kosmann-Schwarzbach19} we define the {\it higher Courant-Nijenhuis torsions} $\mathcal T_\varphi^{(n)}$, for any positive integer $n$, as follows. If $n=1$ set $\mathcal T_\varphi^{(1)}=\mathcal T_\varphi$. If $n\ge 1$ we define $\mathcal T_\varphi^{(n)}$ recursively by the formula
\begin{equation}\label{eq:17}
    \mathcal T_\varphi^{(n+1)}(\x,\y)=\varphi^2 \mathcal T_\varphi^{(n)}(\x,\y)+\mathcal T_\varphi^{(n)}(\varphi(\x),\varphi(\y))-\varphi \mathcal S_\varphi^{(n)}(\x,\y)
\end{equation}
for all $\x,\y\in \T M$ where, for every $n\ge 1$,
\begin{equation}
\mathcal S_\varphi^{(n)}(\x,\y)=\mathcal T_\varphi^{(n)}(\varphi(\x),\y)+\mathcal T_\varphi^{(n)}(\x,\varphi(\y))
\end{equation}
is the $n$-th {\it shifted higher Courant-Nijenhuis torsion} of $\varphi$. It is convenient to extend this definition and set $\mathcal T_\varphi^{(0)}(\x,\y)=\lbra \x,\y\rbra$ and $\mathcal S^{(0)}_\varphi (\x,\y)=\lbra \varphi(\x),\y\rbra+\lbra \x, \varphi(\y)\rbra$ for all $\x,\y\in \T M$.

\begin{rem}\label{rem:65}
Let $f:TM\to TM$ be a polynomial structure and let $\varphi=f\oplus (-f^T)$ the associated generalized polynomial structure. Then $\mathcal T_\varphi^{(m)}(X,Y)=\tau_f^{(m)}(X,Y)$ for all $X,Y\in TM$, where $\tau_f^{(m)}$ is the $m$-th higher Nijenhuis torsion of $f$ as defined in \cite{TempestaTondo18a}. In particular, $\tau^{(1)}_f$ and $\tau^{(2)}_f$ are respectively the ordinary Nijenhuis torsion and the Haantjes torsion of $f$. 
\end{rem}

\begin{example}
If $n=2$ we have a Courant algebroid analogue of the Haantjies tensor
\begin{align*}
    \mathcal T_\varphi^{(2)}(\x,\y)=&\varphi^4(\lbra \x,\y\rbra) + \lbra \varphi^2(\x),\varphi^2(\y)\rbra + \varphi^2(\lbra \varphi^2(\x),\y\rbra +\lbra \x,\varphi^2(\y)\rbra+4\lbra \varphi(x),\varphi(\y)\rbra) \\
    &-2\varphi(\varphi^2(\lbra \varphi(\x),\y\rbra+\lbra \x,\varphi(\y)\rbra)+\lbra \varphi^3(\x),\y\rbra+\lbra \x,\varphi^3(\y)\rbra)
\end{align*}
for which we propose the terminology {\it Courant-Haantjies tensor}.
\end{example}

\begin{rem}\label{rem:56}
Using the notation introduced in Remark \ref{rem:PolynomialAction}, we have
\begin{equation}
    \mathcal T_\varphi^{(n)}=(u^2-uv-uw+vw)^n\cdot\zeta_{CD}=(u-v)^n(u-w)^n\cdot \zeta_{CD}\, .
\end{equation}
Therefore,
\begin{equation}
    \mathcal T_\varphi^{(n)}(\x,\y)=\sum_{i,j=0}^n (-1)^{i+j}\binom{n}{i}\binom{n}{j}\varphi^{i+j}\lbra \varphi^{n-i}(\x),\varphi^{n-j}(\y)\rbra
\end{equation}
for all $\x,\y\in \T M$. Similarly, from
\begin{equation}
    \mathcal S_\varphi^{(n+1)}=(u-v)^n(u-w)^n(u^2-uv-uw+vw)(v+w)\cdot \zeta_{CD}
\end{equation}
we obtain
\begin{equation}
    \mathcal S_\varphi^{(n+1)}(\x,\y)=\sum_{i,j=0}^{n}(-1)^{i+j}\binom{n}{i}\binom{n}{j}\varphi^{i+j}\mathcal S_\varphi(\varphi^{n-i}(\x),\varphi^{n-j}(\y))
\end{equation}
for all $\x,\y\in \T M$.
\end{rem}

\begin{theorem}\label{theorem:57}
Let $\varphi$ be a generalized polynomial structure with minimal polynomial $P(x)$. If $P(x)=a_0+a_2x^2+\cdots+a_{2N}x^{2N}$ then 
\begin{equation}\label{eq:20}
    \mathcal M_\varphi(\x,\y)=2\sum_{n=1}^{N} \sum_{m=1}^n \sum_{r=0}^{2n-2m} a_{2n} C_{n,m,r}\mathcal T_\varphi^{(m)}(\varphi^r(\x),\varphi^{2n-2m-r}(\y))
\end{equation}
for all $\x,\y\in \T M$, where
\begin{equation}\label{eq:21}
    C_{n,m,r}=\sum_{k=m}^n\sum_{q=2n-2m-r}^{2n-2k-r}(-1)^{q+r}4^{m-n}\binom{2n}{2k}\binom{k}{m}\binom{2k-2m}{2n-2m-q-r}\binom{2n-2k}{q}\,.
\end{equation}
If $P(x)=a_1x+a_3x^3+\cdots+a_{2N+1}x^{2N+1}$, then 
\begin{equation}\label{eq:22}
    \mathcal M_\varphi(\x,\y)=-\sum_{n=1}^{N} \sum_{m=1}^n \sum_{r=0}^{2n-2m} a_{2n+1} D_{n,m,r}\mathcal S_\varphi^{(m)}(\varphi^r(\x),\varphi^{2n-2m-r}(\y))
\end{equation}
for all $\x,\y\in \T M$, where
\begin{equation}\label{eq:23}
    D_{n,m,r}=\sum_{k=m}^n\sum_{q=2n-2m-r}^{2n-2k-r}(-1)^{q+r}4^{m-n}\binom{2n+1}{2k}\binom{k}{m}\binom{2k-2m}{2n-2m-q-r}\binom{2n-2k}{q}\,.
\end{equation}
\end{theorem}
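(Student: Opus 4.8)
The plan is to work entirely within the polynomial-ring formalism of Remark \ref{rem:PolynomialAction}, where $\mathcal M_\varphi = P(u-v-w)\cdot \zeta_{CD}$ and $\mathcal T_\varphi^{(m)} = (u-v)^m(u-w)^m\cdot \zeta_{CD}$, and to reduce the whole statement to a polynomial identity in $\mathbb{C}[u,v,w]$. Since $P$ is even in the first case, write $P(x)=\sum_{n=0}^N a_{2n}x^{2n}$; then $\mathcal M_\varphi = \sum_{n=1}^N a_{2n}(u-v-w)^{2n}\cdot \zeta_{CD}$ (the $n=0$ term contributes $a_0\,\lbra\x,\y\rbra$, which is symmetric in $\x,\y$ and hence killed after we symmetrize using skew-symmetry of the Dorfman bracket on $\T M$ — or more precisely, the $n=0$ term is absorbed because $\mathcal M_\varphi$ only depends on the higher-degree part; I should be careful to confirm that $a_0(u-v-w)^0\cdot\zeta_{CD}=a_0\zeta_{CD}$ and that this is what gets subtracted off, leaving the sum from $n=1$). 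The key algebraic claim is then the identity
\begin{equation}\label{eq:key-even}
(u-v-w)^{2n} = \frac{1}{2}\sum_{m=1}^n \sum_{r=0}^{2n-2m} C_{n,m,r}\, u^r\,(u-v)^m(u-w)^m\, w^{2n-2m-r} + (\text{terms symmetric under } v\leftrightarrow w \text{ swapped with sign}),
\end{equation}
modulo the relations that make $\zeta_{CD}$ well-defined; equivalently, that \eqref{eq:20} holds after applying $T(-,-,\z)$ and using the $\mathfrak{S}_3$-symmetry that $T$ enjoys (total antisymmetry of the Courant tensor, Remark after Proposition \ref{prop:46}).

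First I would expand $(u-v-w)^{2n}$. The natural move is to substitute $u-v = (u-v)$ and $u-w=(u-w)$ as new "coordinates": writing $p=u-v$, $q=u-w$, one has $u-v-w = p+q-u$, so $(u-v-w)^{2n}=(p+q-u)^{2n}$. Expanding by the multinomial theorem in the three quantities $p,q,-u$ gives a sum of terms $u^{\bullet}p^{\bullet}q^{\bullet}$; the obstacle is that the exponents of $p$ and $q$ need not be equal, whereas the target \eqref{eq:20} only features $(u-v)^m(u-w)^m$ with matched exponents. This mismatch must be resolved using the hidden symmetry: after pairing with $T$, the variable triple $(\x,\y,\z)$ can be permuted, and under such permutations $p=u-v$ and $q=u-w$ get shuffled among the three differences $u-v$, $u-w$, $v-w$. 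The honest combinatorial core of the proof is to show that grouping the multinomial expansion according to the smaller of the two exponents on $p$ and $q$, and repeatedly using $v-w = p-q$ together with the $T$-symmetry to fold unmatched terms back, produces exactly the coefficients $C_{n,m,r}$ in \eqref{eq:21}. I expect that $C_{n,m,r}$ is most cleanly derived by first establishing an intermediate expansion $(u-v-w)^{2n}=\sum_{k} \binom{2n}{2k}\,(\text{stuff})\cdot[(u-v)(u-w)]^{?}\cdots$ coming from the even/odd split $( (u-v)+(u-w) - (v+w) )$ — i.e. set $s=(u-v)+(u-w)$ and $t = v+w$ temporarily — but I anticipate several reindexings (the index $q$ in \eqref{eq:21} running over $2n-2m-r\le q\le 2n-2k-r$ strongly suggests a double telescoping).

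For the odd case, $P(x)=\sum_{n=0}^N a_{2n+1}x^{2n+1}$, factor $(u-v-w)^{2n+1} = (u-v-w)\cdot(u-v-w)^{2n}$ and observe that $(v+w)\cdot\zeta_{CD}$ is precisely $\mathcal S_\varphi^{(0)}$ in the notation after Remark \ref{rem:56}, while $(u-v-w)$ acting once more is what upgrades $(u-v)^m(u-w)^m$ to the shifted torsion $\mathcal S_\varphi^{(m)} = (u-v)^m(u-w)^m(u^2-uv-uw+vw)(v+w)\cdot\zeta_{CD}/(\ldots)$ — here I need to be slightly careful: the precise statement is that the odd expansion, after extracting one factor $u-v-w=u-(v+w)$ and using $u\cdot\zeta_{CD}$ gives the $\varphi$-prefactor while $(v+w)\cdot\zeta_{CD}$ feeds into the shift, reduces \eqref{eq:22} to the same combinatorial identity as before with $2n$ replaced by $2n+1$ in the top binomial coefficient, giving $D_{n,m,r}$ as in \eqref{eq:23}. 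Concretely, the odd identity follows from the even one by noting $(u-v-w)^{2n+1}\cdot\zeta_{CD} = (u-v-w)^{2n}\cdot\big((u-v-w)\cdot\zeta_{CD}\big)$ and that $(u-v-w)\cdot\zeta_{CD} = \varphi\circ\zeta_{CD} - \zeta_{CD}(\varphi\cdot,\cdot) - \zeta_{CD}(\cdot,\varphi\cdot)$ pairs, under $T$ and antisymmetrization, into a combination governed by $\mathcal S_\varphi^{(m)}$ rather than $\mathcal T_\varphi^{(m)}$. The main obstacle throughout is purely bookkeeping: verifying that the triple sum defining $C_{n,m,r}$ (and $D_{n,m,r}$) is exactly what emerges, including the correct ranges of summation and the factors $4^{m-n}$ (which come from the $(u-v)(u-w)$ versus $(u-v-w)^2$ normalization — note $(u-v-w)^2 = (u-v)^2 + (u-w)^2 + \cdots$, and half-integer shifts force powers of $1/4$). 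I would verify small cases $N=1,2$ by hand against Examples \ref{ex:47} and \ref{ex:48} (where $\mathcal M_\varphi = 2\mathcal T_\varphi$ and $\mathcal M_\varphi = -3\mathcal S_\varphi$ respectively) to pin down signs before committing to the general induction.
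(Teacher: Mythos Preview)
Your overall framework---reducing everything to an identity in $\mathbb C[u,v,w]$ acting on $\zeta_{CD}$---is exactly what the paper does. But there are two genuine gaps.

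First, the handling of the $a_0$ term is wrong: $a_0\lbra\x,\y\rbra$ is \emph{not} symmetric in $\x,\y$ (the Dorfman bracket is certainly not), so it is not ``killed by skew-symmetry''. The actual mechanism that removes $a_0$---and, crucially, all the pure-power terms $u^{2n},v^{2n},w^{2n}$ that inevitably appear in any expansion of $(u-v-w)^{2n}$---is the relation $P(\varphi)=0$, which in the polynomial language reads $P(u)\cdot\zeta_{CD}=P(v)\cdot\zeta_{CD}=P(w)\cdot\zeta_{CD}=0$. The paper implements this by introducing the auxiliary quantity
\[
\mathcal M_\varphi^{(2n)}=\bigl((u-v-w)^{2n}+u^{2n}-v^{2n}-w^{2n}\bigr)\cdot\zeta_{CD}
\]
and observing that $\mathcal M_\varphi=\sum_{n\ge 1}a_{2n}\mathcal M_\varphi^{(2n)}$, the added terms summing to zero precisely by $P(\varphi)=0$. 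Without this subtraction you cannot express $(u-v-w)^{2n}$ as a combination of $t^m v^r w^{2n-2m-r}$ with $m\ge 1$ (where $t=(u-v)(u-w)$), because the $m=0$ part is genuinely present.

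Second, your plan to match the exponents on $p=u-v$ and $q=u-w$ by invoking the $\mathfrak S_3$-antisymmetry of $T$ is a dead end; the proof never permutes $\x,\y,\z$. The substitution that actually works is the alternative you almost committed to: set $z=2u-v-w$, so that $u-v-w=\tfrac{1}{2}(z-(v+w))$ and $u=\tfrac{1}{2}(z+(v+w))$, whence
\[
(u-v-w)^{2n}+u^{2n}=2^{1-2n}\sum_{k=0}^n\binom{2n}{2k}z^{2k}(v+w)^{2n-2k}\, .
\]
Now the key identity $z^2=4t+(v-w)^2$ turns $z^{2k}$ into $\sum_{m}\binom{k}{m}4^m t^m(v-w)^{2k-2m}$. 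The $m=0$ contribution sums to $v^{2n}+w^{2n}$ (this is a standard binomial identity) and is cancelled by the $-v^{2n}-w^{2n}$ already built into $\mathcal M_\varphi^{(2n)}$; expanding the remaining factors $(v-w)^{2k-2m}(v+w)^{2n-2k}$ into monomials $v^r w^{2n-2m-r}$ yields $C_{n,m,r}$ directly, and explains the $4^{m-n}$. The odd case is parallel: one extra factor of $z(v+w)$ survives, and since $\mathcal S_\varphi^{(m)}=t^m(v+w)\cdot\zeta_{CD}$ the $(v+w)$ is absorbed, producing $D_{n,m,r}$ with $\binom{2n+1}{2k}$ in place of $\binom{2n}{2k}$.
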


\begin{proof} Let $\mathcal M_\varphi^{(n)}$ denote the derived bracket of the operator $(\ad_{\widetilde\varphi}^n+(-1)^n\ad_{\widetilde{\varphi^n}})(d)$. Assume $P(x)=a_0+a_2x^2+\cdots+a_{2N}x^{2N}$. Then, by Remark \ref{rem:PolynomialAction}, the quantity
\begin{equation}
   \mathcal M_\varphi(\x,\y)-\sum_{n=1}^N a_{2n} \mathcal M^{(2n)}_\varphi(\x,\y)
   \end{equation}
is equal to 
\begin{equation}
    a_0\lbra \x,\y\rbra+\sum_{n=1}^N a_{2n}( \varphi^{2n}(\lbra \x,\y\rbra)-\lbra\varphi^{2n}(\x),\y \rbra  -\lbra \x,\varphi^{2n}(\y)\rbra)=0\, .
\end{equation}
Using the notation of Remark \ref{rem:PolynomialAction} and setting $z=2u-v-w$, we have
\begin{align*}
    \mathcal M_\varphi^{(2n)} &= ((u-v-w)^{2n}+u^{2n}-v^{2n}-w^{2n})\cdot \zeta_{CD}\\
    & = \left( \left(\frac{z-v-w}{2} \right)^{2n}+\left(\frac{z+v+w}{2} \right)^{2n} - v^{2n}-w^{2n} \right) \cdot \zeta_{CD}\\
    &= \left(2^{1-2n}\sum_{k=0}^n \binom{2n}{2k}z^{2k}(v+w)^{2n-2k}-v^{2n}-w^{2n} \right) \cdot \zeta_{CD}\,.
\end{align*}
Setting $t=u^2-u(v+w)+v w$ we have $\mathcal T_\varphi^{(m)}=t^m\cdot \zeta_{CD}$. On the other hand, $z^2=4t+(v-w)^2$ from which we obtain
\begin{align*}
    \mathcal M_\varphi^{(2n)}&=\left(2^{1-2n}\sum_{k=0}^n\binom{2n}{2k}(v+w)^{2n-2k}\sum_{m=0}^k 4^mt^m(v-w)^{2k-2m}-v^{2n}-w^{2n}\right)\cdot \zeta_{CD}\\
    &=\left(2\sum_{m=0}^n 4^{m-n}t^m\sum_{k=m}^n\binom{2n}{2k}\binom{k}{m}(v-w)^{2k-2m}(v+w)^{2n-2k}-v^{2n}-w^{2n} \right) \cdot \zeta_{CD}\\
    &=\left(\sum_{m=1}^n 2^{2m-2n+1}t^m\sum_{k=m}^n\binom{2n}{2k}\binom{k}{m}(v-w)^{2k-2m}(v+w)^{2n-2k} \right) \cdot \zeta_{CD}\, ,
\end{align*}
where the last equality follows from the identity
\begin{equation}
    v^{2n}+w^{2n}=2^{1-2n}\sum_{k=0}^n\binom{2n}{2k}(v-w)^{2k}(v+w)^{2n-2k}\,.
\end{equation}
By expanding $(v-w)^{2k-2m}(v+w)^{2n-2k}$ and then summing over $n$ we obtain \eqref{eq:20} and \eqref{eq:21}. The proof of \eqref{eq:22} and \eqref{eq:23} is analogous: starting with
\begin{equation}
    \mathcal M_\varphi^{(2n+1)} = ((u-v-w)^{2n+1}-u^n+v^n+w^n)\cdot \zeta_{CD}
\end{equation}
one repeats the changes of variables as above and then, using the representation $\mathcal S_\varphi^{(m)}=t^m(v+w)\cdot \zeta_{CD}$, obtain the desired expansion.
\end{proof}

\begin{example}\label{ex:69}
By Remark \ref{rem:34}, if the minimal polynomial $P(x)$ is quadratic it has to be of the form $x^2+a_0$ for some real number $a_0$. Substituting into \eqref{eq:20} we obtain $\mathcal M_\varphi = 2\mathcal T_\varphi$, independently of $a_0$. We conclude that a generalized polynomial structure of degree 2 is minimal if and only if it is a generalized Nijenhuis operator. In particular, for $a_0=1$ we recover the the observation made in Example \ref{ex:47}.
\end{example}

\begin{example}\label{ex:87}
Similarly, if the minimal polynomial $P(x)$ is cubic it has to be of the form $x^3+a_1x$ for some real number $a_1$. Substituting into \eqref{eq:22} we obtain $\mathcal M_\varphi=-3\mathcal S_\varphi$ independently of $a_1$. We conclude that a generalized polynomial structure of degree 3 is minimal if and only if it is a weak generalized Nijenhuis operator.
\end{example}

\begin{cor}\label{cor:90}
Let $\varphi$ be generalized polynomial structure of degree $4$ with exactly two eigenvalues. Then $\varphi$ is minimal if and only its semisimple and nilpotent parts are both generalized Nijenhuis operators.  
\end{cor}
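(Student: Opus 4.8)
The plan is to reduce the statement to the degree-$2$ case, where minimality has already been shown in Example~\ref{ex:69} to be equivalent to the vanishing of the Courant-Nijenhuis torsion.

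First I would pin down the minimal polynomial of $\varphi$. Since $\varphi$ has degree $4$ and, being skew-symmetric, has spectrum symmetric about $0\in\C$ (Remark~\ref{rem:34}), the requirement that $\varphi$ have exactly two eigenvalues forces those eigenvalues to be $\pm\lambda$ for a single $\lambda\neq 0$ and the minimal polynomial to be
\[
P(x)=(x-\lambda)^2(x+\lambda)^2=(x^2+c)^2,\qquad c=-\lambda^2\neq 0,
\]
as already recorded in Remark~\ref{rem:61}; in particular both eigenvalues occur in $P(x)$ with multiplicity $2$.

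Since all eigenvalues of $\varphi$ then share the common multiplicity $2$, Corollary~\ref{cor:89} applies and shows that $\varphi$ is minimal if and only if both $\varphi_s$ and $\varphi_n$ are minimal, so it remains to analyse the two parts separately (for the forward implication one could also invoke Proposition~\ref{prop:87} directly). The semisimple part $\varphi_s$ has the two distinct eigenvalues $\pm\lambda$, hence minimal polynomial $x^2+c$; as an odd polynomial in the skew-symmetric operator $\varphi$ it is again a skew-symmetric generalized polynomial structure, and of degree $2$. For the nilpotent part, the fact that $(x^2+c)^2$ is the minimal polynomial of $\varphi_p$ at \emph{every} point $p$ forces the restriction of $\varphi_p-\lambda I$ to the fibre of $L_\lambda$ to square to zero while being non-zero; hence $\varphi_n\neq 0$ everywhere and, being nilpotent of nilpotency degree $2$, it is a skew-symmetric generalized polynomial structure with minimal polynomial $x^2$, again of degree $2$. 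Applying Example~\ref{ex:69} to $\varphi_s$ and to $\varphi_n$ identifies minimality of each with its being a generalized Nijenhuis operator, and combining this with Corollary~\ref{cor:89} yields the stated equivalence.

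The only point that needs a little care is this last identification of minimal polynomials: one must be sure that $\varphi_n$ is an honest degree-$2$ generalized polynomial structure, not merely a weak one, i.e.\ that its nilpotency degree is exactly $2$ at every point. This is precisely what the constancy of the minimal polynomial $(x^2+c)^2$ of $\varphi$ guarantees, through the identification of the multiplicity $m(\lambda)=2$ with the size of the largest Jordan block of $\varphi_p$ at the eigenvalue $\lambda$.
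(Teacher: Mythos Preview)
Your argument is correct and follows exactly the same route as the paper: identify the minimal polynomial as $(x^2+c)^2$ with $c\neq 0$, invoke Corollary~\ref{cor:89} (equal multiplicities) to reduce to minimality of $\varphi_s$ and $\varphi_n$, and then apply Example~\ref{ex:69} to each degree-$2$ part. Your additional paragraph verifying that $\varphi_n$ is a genuine (not merely weak) degree-$2$ generalized polynomial structure is a nice clarification the paper omits; it is indeed needed for Example~\ref{ex:69} to apply as stated, and your justification via the constancy of the minimal polynomial of $\varphi$ at every point is correct.
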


\begin{proof}
By assumption the minimal polynomial of $\varphi$ is of the form $(x^2+c)^2$ for some non-zero $c$. Hence the minimal polynomials of the semisimple and nilpotent part are, respectively, $x^2+c$ and $x^2$. In particular they are both quadratic and the statement follows by combining Corollary \ref{cor:89} and Example \ref{ex:69}.
\end{proof}

\begin{cor}
There is a canonical bijection between the set of minimal generalized polynomial structures with minimal polynomial $P(x)=(x^2+1)^2$ and the set of pairs $(\mathcal J_1,\mathcal J_2)$ consisting of a generalized complex structure $\mathcal J_1$ and a weak generalized tangent structure $\mathcal J_2$ such that $[\mathcal J_1,\mathcal J_2]=0$. 
\end{cor}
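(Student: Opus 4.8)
The plan is to realize the bijection explicitly as the pair of mutually inverse assignments
\[
\varphi \longmapsto (\varphi_s,\varphi_n), \qquad (\mathcal J_1,\mathcal J_2)\longmapsto \mathcal J_1+\mathcal J_2,
\]
where $\varphi=\varphi_s+\varphi_n$ is the Jordan--Chevalley decomposition furnished by Theorem \ref{theorem:40} and the remarks following it. Both assignments are manifestly canonical --- the Jordan--Chevalley decomposition involves no choices, and addition of endomorphisms is intrinsic --- so the work is to verify that each map lands in the advertised target set and that the two compositions are the identity.

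For the forward map I would argue as follows. If $\varphi$ is a minimal generalized polynomial structure with minimal polynomial $(x^2+1)^2$, then $\Sigma(\varphi)=\{\sqrt{-1},-\sqrt{-1}\}$ with each eigenvalue of multiplicity $2$, so $\varphi_s$ is semisimple with minimal polynomial $x^2+1$ and $\varphi_n$ is nilpotent with $\varphi_n^2=0$; since $\varphi_p$ is non-semisimple at every $p\in M$ we get $(\varphi_n)_p\neq 0$ everywhere. Both $\varphi_s$ and $\varphi_n$ are odd polynomials in $\varphi$, hence commute with each other and are skew-symmetric, so $\varphi_s$ is a generalized almost complex structure and $\varphi_n$ a nowhere-zero weak generalized almost tangent structure. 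As $\varphi$ has degree $4$ with exactly two eigenvalues, Corollary \ref{cor:90} tells us that $\varphi$ is minimal if and only if $\varphi_s$ and $\varphi_n$ are both generalized Nijenhuis operators; combined with Examples \ref{ex:52} and \ref{ex:53}, this is exactly the statement that $\mathcal J_1:=\varphi_s$ is a generalized complex structure and $\mathcal J_2:=\varphi_n$ a weak generalized tangent structure, with $[\mathcal J_1,\mathcal J_2]=0$ automatic. This is the required pair.

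For the backward map, given a commuting pair $(\mathcal J_1,\mathcal J_2)$ with $\mathcal J_1$ generalized complex and $\mathcal J_2$ a (nowhere-zero) weak generalized tangent structure, I would set $\varphi=\mathcal J_1+\mathcal J_2$. It is skew-symmetric as a sum of skew-symmetric endomorphisms, and using $\mathcal J_1^2=-I$, $\mathcal J_2^2=0$ and $[\mathcal J_1,\mathcal J_2]=0$ one computes $\varphi^2+I=2\mathcal J_1\mathcal J_2$, hence $(\varphi^2+I)^2=4\mathcal J_1^2\mathcal J_2^2=0$, so $(x^2+1)^2$ annihilates $\varphi$; moreover $\varphi_p^2+I=2(\mathcal J_1)_p(\mathcal J_2)_p\neq 0$ because $\mathcal J_2$ is nowhere zero, which by Lemma \ref{lem:GPS} shows that $(x^2+1)^2$ is the fiberwise minimal polynomial and thus that $\varphi$ is a genuine (not merely weak) generalized polynomial structure. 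Uniqueness of the Jordan--Chevalley decomposition --- $\mathcal J_1$ semisimple, $\mathcal J_2$ nilpotent, commuting --- gives $\varphi_s=\mathcal J_1$ and $\varphi_n=\mathcal J_2$, so Examples \ref{ex:52}--\ref{ex:53} together with Corollary \ref{cor:90} show $\varphi$ is minimal. The two maps are then visibly mutually inverse: forward-then-backward returns $\varphi_s+\varphi_n=\varphi$, and backward-then-forward returns $\bigl((\mathcal J_1+\mathcal J_2)_s,(\mathcal J_1+\mathcal J_2)_n\bigr)=(\mathcal J_1,\mathcal J_2)$ by the same uniqueness. I expect the main obstacle to be not conceptual but bookkeeping: correctly pinning down the \emph{fiberwise} minimal polynomial so that one stays among ``generalized'' rather than ``weak generalized'' polynomial structures --- which is exactly where the nowhere-vanishing of $\mathcal J_2$ (equivalently, of $\varphi_n$) enters --- together with the routine checks that $\varphi_s$ and $\varphi_n$ are skew-symmetric and commute, most cleanly via their explicit odd-polynomial expressions in $\varphi$.
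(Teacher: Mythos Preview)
Your proof is correct and follows essentially the same route as the paper: pass to the Jordan--Chevalley decomposition, identify $\varphi_s$ and $\varphi_n$ as structures of the right type, and invoke Corollary~\ref{cor:90} (together with Examples~\ref{ex:52} and~\ref{ex:53}) to match minimality of $\varphi$ with the integrability conditions on the pair. Your version is in fact more careful than the paper's: you explicitly verify that $(x^2+1)^2$ is the \emph{fiberwise} minimal polynomial in the backward direction and flag the nowhere-vanishing of $\mathcal J_2$ needed for this, a point the paper leaves implicit.
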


\begin{proof}
Given a generalized polynomial structure $\varphi$ with minimal polynomial $P(x)=(x^2+1)^2$, then $\mathcal J_1=\varphi_s$ is a generalized almost complex structure and $\mathcal J_2=\varphi_n$ is a weak generalized almost tangent structure. Conversely, if $\mathcal J_1$ and $\mathcal J_2$ are as above then $\varphi=\mathcal J_1+\mathcal J_2$ is a generalized polynomial structure with minimal polynomial $P(x)=(x^2+1)^2$. The result then follows from Corollary \ref{cor:90}. 
\end{proof}

\begin{cor}\label{cor:71}
Let $\varphi$ be a generalized polynomial structure with minimal polynomial $P(x)$ of degree at least 4. If $P(x)=a_0+a_2 x^2+\cdots+a_{2N-2}x^{2N-2}+x^{2N}$ then $\varphi$ is minimal if and only if
\begin{equation}
    \mathcal T_\varphi^{(N)}(\x,\y)=-\sum_{n=1}^{N-1}\sum_{m=1}^n\sum_{r=0}^{2n-2m}a_{2n}C_{n,m,r}\mathcal T_\varphi^{(m)}(\varphi^r(\x),\varphi^{2n-2m-r}(\y))
\end{equation}
for all $\x,\y\in \T M$, where the coefficients $C_{n,m,r}$ are defined by \eqref{eq:21}. If $P(x)=a_1x+a_3x^3+\cdots+a_{2N}x^{2N}+x^{2N+1}$ then $\varphi$ is minimal if and only if
\begin{equation}\label{eq:35}
    \mathcal S_\varphi^{(N)}(\x,\y)=-\frac{1}{2N+1}\sum_{n=1}^{N-1}\sum_{m=1}^n\sum_{r=0}^{2n-2m}a_{2n+1}D_{n,m,r}\mathcal S_\varphi^{(m)}(\varphi^r(\x),\varphi^{2n-2m-r}(\y))
\end{equation}
for all $\x,\y\in \T M$, where the coefficients $D_{n,m,r}$ are defined by \eqref{eq:23}.
\end{cor}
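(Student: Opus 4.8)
The plan is to recast minimality as the vanishing of the minimal torsion and then read the statement off the explicit expansion of $\mathcal M_\varphi$ provided by Theorem \ref{theorem:57}, isolating the summand of highest order in the nilpotent part.

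First, by Proposition \ref{prop:44}, $\varphi$ is minimal if and only if $\mathcal M_\varphi$ vanishes identically. Suppose we are in the even case $P(x)=a_0+a_2x^2+\cdots+a_{2N-2}x^{2N-2}+x^{2N}$, so that $a_{2N}=1$. By Theorem \ref{theorem:57}, $\mathcal M_\varphi(\x,\y)$ equals $2\sum_{n=1}^{N}\sum_{m=1}^{n}\sum_{r=0}^{2n-2m}a_{2n}C_{n,m,r}\mathcal T^{(m)}_\varphi(\varphi^r(\x),\varphi^{2n-2m-r}(\y))$. The index choice $n=m=N$ forces $r=0$, and in the defining formula \eqref{eq:21} the only surviving contribution is $k=N$, $q=0$, which gives $C_{N,N,0}=\binom{2N}{2N}\binom{N}{N}=1$; hence that single summand is exactly $2\mathcal T^{(N)}_\varphi(\x,\y)$. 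Setting $\mathcal M_\varphi=0$ and solving for this summand then yields the asserted identity, provided one shows that the remaining contributions with $n=N$ and $m<N$ do not survive on the right-hand side.

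Thus the combinatorial heart of the proof is to verify that $\sum_{m=1}^{N-1}\sum_{r=0}^{2N-2m}C_{N,m,r}\mathcal T^{(m)}_\varphi(\varphi^r(\x),\varphi^{2N-2m-r}(\y))$ gets absorbed into $\sum_{n=1}^{N-1}\sum_{m=1}^{n}\sum_{r}a_{2n}C_{n,m,r}\mathcal T^{(m)}_\varphi(\cdots)$. The natural route is to return to the decomposition $\mathcal M_\varphi=\mathcal M_\varphi^{(2N)}+\sum_{n=1}^{N-1}a_{2n}\mathcal M_\varphi^{(2n)}$ used in the proof of Theorem \ref{theorem:57}, to invoke $P(\varphi)=0$ (equivalently $\varphi^{2N}=-\sum_{n=0}^{N-1}a_{2n}\varphi^{2n}$ on $\T M$, so the same relation holds in every slot of the Dorfman bracket) in order to reduce the powers $\varphi^{2N}$ that occur in $\mathcal M_\varphi^{(2N)}$, and then to match the resulting lower-order operators, slot by slot, against the $\mathcal M_\varphi^{(2n)}$ with $n<N$. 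I expect this matching to be the main obstacle, as it requires tracking how the multinomial weights entering \eqref{eq:21} recombine after the substitution.

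For the odd case $P(x)=a_1x+a_3x^3+\cdots+a_{2N-1}x^{2N-1}+x^{2N+1}$ one argues identically, now starting from \eqref{eq:22}--\eqref{eq:23}: the only difference is that the leading summand is $-a_{2N+1}D_{N,N,0}\mathcal S^{(N)}_\varphi(\x,\y)$ with $D_{N,N,0}=\binom{2N+1}{2N}=2N+1$, which is precisely the source of the factor $\frac{1}{2N+1}$ in the statement, and the remaining bookkeeping is verbatim with $\mathcal S^{(m)}_\varphi$ and $D_{n,m,r}$ replacing $\mathcal T^{(m)}_\varphi$ and $C_{n,m,r}$. As consistency checks one can specialize: for degree $2$ and $3$ the right-hand sums are empty, recovering Examples \ref{ex:69} and \ref{ex:87}, while for the quartic polynomials $(x^2+c)^2$ and $x^2(x^2+c)$ the identity should agree with the description obtained in Corollary \ref{cor:90}.
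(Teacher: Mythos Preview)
Your opening is exactly right: the paper offers no separate proof, so the intended argument is simply to set $\mathcal M_\varphi=0$ in Theorem~\ref{theorem:57} and isolate the top-order term. Your identification of the leading coefficients $C_{N,N,0}=1$ and $D_{N,N,0}=2N+1$ is also correct and explains the prefactor $\frac{1}{2N+1}$ in the odd case.

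Where the proposal goes astray is in the ``combinatorial heart''. You correctly notice that the $n=N$, $m<N$ summands from \eqref{eq:20} are absent from the right-hand side of the corollary as printed, and you propose to eliminate them by invoking $P(\varphi)=0$. This cannot succeed: in those summands the arguments are $\varphi^r(\x)$ and $\varphi^{2N-2m-r}(\y)$ with $r+(2N-2m-r)=2N-2m\le 2N-2$, so no power of $\varphi$ reaches $2N$ and the minimal-polynomial relation gives no reduction. (Moreover, $P(\varphi)=0$ has already been fully exploited in the proof of Theorem~\ref{theorem:57}; nothing further can be squeezed from it.) Nor can these terms be absorbed into the $n<N$ part of the sum, since the latter carries the free coefficients $a_{2n}$ while the former carries $a_{2N}=1$, and no identity among the $\mathcal T^{(m)}_\varphi$ ties them together.

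In fact, the quartic Example immediately following the corollary displays precisely these ``missing'' terms: for $N=2$ the contribution $-2(\mathcal T_\varphi(\varphi(\x),\varphi(\y))+\mathcal T_\varphi(\varphi^2(\x),\y)+\mathcal T_\varphi(\x,\varphi^2(\y)))$ is exactly the $n=2$, $m=1$ piece of \eqref{eq:20}. So the discrepancy you detected is a slip in the printed statement (the outer sum should run through $n=N$, excluding only the single pair $(n,m)=(N,N)$), not a gap requiring a combinatorial argument. Once the statement is read this way, the proof is the one-line rearrangement you began with.
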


\begin{example}
If $P(x)$ is quartic, then $P(x)=x^4+a_2x^2+a_0$ for some real numbers $a_2$ and $a_0$. In this case the minimal torsion depends on $a_2$ but not on $a_0$. Moreover, $\varphi$ is minimal if and only if the Courant-Haantjes torsion satisfies
\begin{equation}
    \mathcal T^{(2)}_\varphi(\x,\y)=-2(\mathcal T_\varphi(\varphi(\x),\varphi(\y))+\mathcal T_\varphi(\varphi^2(\x),\y)+\mathcal T_\varphi(\x,\varphi^2(\y)))-a_2\mathcal T_\varphi(\x,\y)\,
\end{equation}
for all $\x,\y\in \T M$.
\end{example}

\begin{cor}\label{cor:74}
Let $\varphi$ be a generalized polynomial structure.  
\begin{enumerate}[1)]
\item If $\varphi$ is generalized Nijenhuis operator, then $\varphi$ is minimal.
\item If $\varphi$ is of odd degree and a weak generalized Nijenhuis operator, then it is minimal.
\end{enumerate}
\end{cor}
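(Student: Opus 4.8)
The plan is to read both statements off the two expansions of the minimal torsion given in Theorem~\ref{theorem:57}. The first step is to invoke Remark~\ref{rem:34}: the minimal polynomial $P(x)$ of a generalized polynomial structure is either an even or an odd polynomial, so exactly one of the formulas \eqref{eq:20}, \eqref{eq:22} is applicable. In both cases the structural feature I want is that the right-hand side is a $\C$-linear combination of terms $\mathcal T_\varphi^{(m)}(\varphi^r(\x),\varphi^{s}(\y))$ (respectively $\mathcal S_\varphi^{(m)}(\varphi^r(\x),\varphi^{s}(\y))$) in which the order $m$ of the higher torsion is always at least $1$; the bare Dorfman bracket never occurs.

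Next I would record an elementary propagation fact: if $\mathcal T_\varphi=0$ then $\mathcal T_\varphi^{(m)}=0$ and $\mathcal S_\varphi^{(m)}=0$ for every $m\ge 1$, and if $\mathcal S_\varphi=0$ then $\mathcal S_\varphi^{(m)}=0$ for every $m\ge 1$. This is immediate from Remark~\ref{rem:56}: writing $t=(u-v)(u-w)$ in the notation of Remark~\ref{rem:PolynomialAction}, one has $\mathcal T_\varphi^{(m)}=t^{m-1}\cdot\mathcal T_\varphi$ and $\mathcal S_\varphi^{(m)}=t^{m-1}\cdot\mathcal S_\varphi$ for $m\ge 1$ (using $\mathcal T_\varphi^{(1)}=\mathcal T_\varphi$ and $\mathcal S_\varphi^{(1)}=\mathcal S_\varphi$, which is just the definition of Section~\ref{sec:5} read against \eqref{eq:12}), and the $\C[u,v,w]$-action is linear, so a zero input produces a zero output. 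When $\mathcal T_\varphi=0$ the vanishing of the $\mathcal S_\varphi^{(m)}$ also follows directly since $\mathcal S_\varphi^{(m)}(\x,\y)=\mathcal T_\varphi^{(m)}(\varphi(\x),\y)+\mathcal T_\varphi^{(m)}(\x,\varphi(\y))$.

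With these two observations in hand, 1) is immediate: if $\varphi$ is a generalized Nijenhuis operator then $\mathcal T_\varphi=0$, hence every $\mathcal T_\varphi^{(m)}$ and every $\mathcal S_\varphi^{(m)}$ with $m\ge 1$ vanishes, and substituting into whichever of \eqref{eq:20}, \eqref{eq:22} applies gives $\mathcal M_\varphi=0$, i.e.\ $\varphi$ is minimal. For 2), ``$\varphi$ of odd degree'' means $\deg P$ is odd, which by Remark~\ref{rem:34} forces $P$ to be an odd polynomial; then Theorem~\ref{theorem:57} applies in the form \eqref{eq:22}, so $\mathcal M_\varphi$ is a combination of $\mathcal S_\varphi^{(m)}$'s with $m\ge 1$, and if moreover $\varphi$ is a weak generalized Nijenhuis operator then $\mathcal S_\varphi=0$, whence $\mathcal M_\varphi=0$ again. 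Equivalently one may quote \eqref{eq:35} of Corollary~\ref{cor:71} directly.

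I do not expect a genuine obstacle here: the substantive work is already contained in Theorem~\ref{theorem:57}, and what remains is bookkeeping. The two points that need a little care are not forgetting the parity dichotomy of Remark~\ref{rem:34}, so that a single branch of Theorem~\ref{theorem:57} governs the whole argument, and noting that the lower bound $m\ge 1$ on the order of the higher torsions appearing in those formulas is precisely what allows the vanishing of $\mathcal T_\varphi$ (respectively $\mathcal S_\varphi$) to propagate all the way to $\mathcal M_\varphi$.
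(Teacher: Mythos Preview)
Your proposal is correct and follows essentially the same approach as the paper: both invoke Theorem~\ref{theorem:57} and propagate the vanishing of $\mathcal T_\varphi$ (respectively $\mathcal S_\varphi$) to the higher torsions appearing in the expansion of $\mathcal M_\varphi$. The only cosmetic difference is that the paper cites the recursion \eqref{eq:17} for the propagation step, whereas you use the closed-form polynomial expressions of Remark~\ref{rem:56}; these are equivalent, and your explicit handling of the even/odd parity split via Remark~\ref{rem:34} is if anything a bit more careful than the paper's one-line appeal to Theorem~\ref{theorem:57}.
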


\begin{proof}
It follows from \eqref{eq:17} that $\mathcal T_\varphi=0$ implies $\mathcal T_\varphi^{(m)}=0$ for every $m\ge 1$. Then 1) follows from Theorem \ref{theorem:57}. The proof of 2) is similar and left to the reader.
\end{proof}

\begin{example}
Let $f:TM\to TM$ be a polynomial structure and let $\varphi=f\oplus (-f^T)$ be the associated generalized polynomial structure. Since the Dorfman bracket vanishes on $T^*M$ and preserves $TM$, then (using the anti-symmetry of the Courant tensor) we conclude that $\varphi$ is minimal if and only if $\mathcal C_\varphi(TM,TM,T^*M)=0$. Let us further assume assume that $f$ is integrable or, equivalently, that the Nijenhuis torsion of $f$ vanishes i.e.\ $\mathcal T_\varphi(TM,TM)=0$. Hence, invoking Theorem \ref{theorem:57}, the integrability of $f$ implies the minimality of $\varphi$. More generally, combining Remark \ref{rem:65} and Corollary \ref{cor:71} we obtain a remarkable characterization of minimality for generalized polynomial structures of the form $\varphi=f\oplus (-f^T)$ purely in terms of (ordinary) higher Nijenhuis torsions. Namely, if the minimal polynomial of $\varphi$ is $P(x)=a_0+a_2 x^2+\cdots+a_{2N-2}x^{2N-2}+x^{2N}$, then minimality is equivalent to the $N$-th higher Nijenhuis torsion being expressed as linear combination with constant coefficients of the lower order torsions according to the formula
\begin{equation}\label{eq:33}
    \tau_f^{(N)}(X,Y)=-\sum_{n=1}^{N-1}\sum_{m=1}^n\sum_{r=0}^{2n-2m}a_{2n}C_{n,m,r}\tau_f^{(m)}(f^r(X),f^{2n-2m-r}(Y))
\end{equation}
for all $X,Y\in TM$. Similarly, if the minimal polynomial of $\varphi$ is of the form $P(x)=a_1x+a_3x^3+\cdots+a_{2N}x^{2N}+x^{2N+1}$, then $\varphi$ is minimal if and only if the restriction of \eqref{eq:35} to sections of $TM$ holds. It is worthwhile to note that while \eqref{eq:33} expresses minimality purely in terms of $f$ and its higher Nijenhuis torsions, the $a_k$ are the coefficients of the minimal polynomial of $\varphi$ which, as pointed out in Remark \ref{rem:29}, is in general not equal to the minimal polynomial of $f$. Hence, in the case of generalized polynomial structures defined by classical polynomial structures, minimality amounts to a novel relationship between the higher Nijenhuis torsions that, to the best of our knowledge, appears to be natural only from the vantage viewpoint of the generalized tangent bundle.
\end{example}

\begin{cor}
Let $\varphi$ be a non-zero semisimple polynomial structure. Then $\varphi$ is a generalized Nijenhuis operator if and and only if $\varphi$ is minimal with exactly two eigenvalues. 
\end{cor}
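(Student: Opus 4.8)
The plan is to obtain both implications by assembling results already established, since in the semisimple case the statement collapses to a short dictionary argument with essentially no new computation.

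First I would treat the forward direction. Assume $\varphi$ is a generalized Nijenhuis operator. Being non-zero and semisimple, $\varphi$ falls under the hypotheses of Remark~\ref{rem:54}, whose conclusion is precisely that $\Sigma(\varphi)=\{\lambda,-\lambda\}$ for some $\lambda\neq 0$; thus $\varphi$ has exactly two eigenvalues. That $\varphi$ is minimal is then immediate from Corollary~\ref{cor:74}(1), according to which every generalized Nijenhuis operator is minimal.

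For the converse, I would assume $\varphi$ is minimal with exactly two eigenvalues. Since $\varphi$ is skew-symmetric, its spectrum is symmetric about $0\in\mathbb C$ (Remark~\ref{rem:34}); the only way this is compatible with $\varphi$ having exactly two eigenvalues is $\Sigma(\varphi)=\{\lambda,-\lambda\}$ with $\lambda\neq 0$, so that, since $\varphi$ is semisimple, its minimal polynomial is $P(x)=x^2-\lambda^2$, of degree $2$. I would then invoke Example~\ref{ex:69}: for a generalized polynomial structure of degree $2$ one has $\mathcal M_\varphi=2\mathcal T_\varphi$, so minimality is equivalent to the vanishing of the Courant--Nijenhuis torsion, i.e.\ to $\varphi$ being a generalized Nijenhuis operator. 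Applying this to our $\varphi$ finishes the proof.

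The only point that needs a moment's care is the spectral bookkeeping in the converse: one must note that ``exactly two eigenvalues'' together with the symmetry of the spectrum excludes the possibility that $0$ is one of them, so that $P(x)$ is genuinely quadratic with non-zero constant term rather than $x$ times a linear factor. Everything else is a direct appeal to Remark~\ref{rem:54}, Corollary~\ref{cor:74}, and Example~\ref{ex:69}.
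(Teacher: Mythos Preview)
Your proof is correct and follows essentially the same route as the paper's: both directions are assembled from Remark~\ref{rem:54}, Corollary~\ref{cor:74}, and the quadratic computation $\mathcal M_\varphi=2\mathcal T_\varphi$. The only cosmetic difference is that the paper cites Example~\ref{ex:47} for the last step while you cite Example~\ref{ex:69}; your choice is in fact the more accurate reference, since Example~\ref{ex:47} treats only the case $P(x)=x^2+1$ whereas Example~\ref{ex:69} covers arbitrary quadratic $P(x)=x^2+a_0$.
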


\begin{proof} One direction follows from Corollary \ref{cor:74} and Remark \ref{rem:54}. For the converse, let us assume that $\varphi$ is semisimple with exactly two eigenvalues. Then its minimal polynomial is of the form $P(x)=x^2-\lambda^2$ for some non-zero real number $\lambda$. Hence, by Example \ref{ex:47} we have $0=\mathcal M_\varphi= 2\mathcal  T_\varphi$ forcing $\varphi$ to be a generalized Nijenhuis operator. 
\end{proof}

\begin{rem}
Let $\varphi$ be a generalized polynomial structure and let $\{\x_\alpha\}\subseteq L_\lambda$, $\{\y_\beta\}\subseteq L_\mu$ be Jordan chains. In the notation of Remark \ref{rem:PolynomialAction}, using the identity $(u-uv-uw+vw)=(u-v)(u-w)$ as in \cite{Kosmann-Schwarzbach19} we immediately obtain
\begin{equation}\label{eq:30}
    \mathcal T_\varphi^{(m)}(\x_\alpha,\y_\beta)=\sum_{i,j=0}^{m} (-1)^{i+j}\binom{m}{i}\binom{m}{j}(\varphi-\lambda I)^{m-i}(\varphi-\mu I)^{m-j}\lbra \x_{\alpha-i},\y_{\beta-j}\rbra
\end{equation}
which generalizes Proposition 29 in \cite{TempestaTondo18a} to sections of $TM$. Because of this, many of the results of \cite{TempestaTondo18a} still hold if $TM$ is extended to $\T M$ and the Lie bracket is extended to the Dorfman bracket.
\end{rem}

\begin{example} Let $\varphi$ be a generalized polynomial structure with minimal polynomial of the form $P(x)=x^r Q(x)$ for some real polynomial $Q$ such that $Q(0)\neq 0$. It follows from \eqref{eq:30} that if $\x,\y\in {\rm Ker}(\varphi)$, then $\mathcal T_\varphi^{(m)}(\x,\y)=\varphi^{2m}\lbra \x,\y\rbra$. Replacing $\varphi$ with $\varphi^{r}$ we see that $L_0$ is closed under the Dorfman bracket if and only if the restriction of $\mathcal T_{\varphi^r}^{(m)}$ to $L_0$ vanishes. 
\end{example}

\begin{prop}\label{prop:72}
Let $\varphi$ be a generalized polynomial structure and let $\lambda\in \Sigma(\varphi)$. If there exists $m\ge 1$ such that $\mathcal T_\varphi^{(m)}(L_\lambda,L_\lambda)=0$, then $L_\lambda$ is closed under the Dorfman bracket. 
\end{prop}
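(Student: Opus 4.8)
The plan is to linearise the problem by combining the polynomial-ring description of the higher torsion from Remark \ref{rem:56}, $\mathcal T^{(m)}_\varphi = (u-v)^m(u-w)^m\cdot\zeta_{CD}$, with the fact that $\varphi-\lambda I$ restricts to a nilpotent, $\Omega_M^0$-linear endomorphism $N$ of $L_\lambda$, and with the eigenbundle decomposition of Theorem \ref{theorem:40}, under which $\sum_{\mu}\mathcal P_\mu = I$ and each projector $\mathcal P_\mu$ commutes with $\varphi$ (it is a polynomial in $\varphi$ by \eqref{eq:9}). First I would restrict the operators $u,v,w$ of Remark \ref{rem:PolynomialAction} to bilinear maps defined on $L_\lambda\times L_\lambda$; this is legitimate because $L_\lambda$ is $\varphi$-invariant, and for $\x\in L_\lambda$ the identity $\varphi\x=\lambda\x+N\x$ shows that $u,v,w$ become $\lambda+\mathfrak u$, $\lambda+\mathfrak v$, $\lambda+\mathfrak w$, where $\mathfrak u$ denotes post-composition by $\varphi-\lambda I$ and $\mathfrak v,\mathfrak w$ denote pre-composition by $N$ in the first and the second slot. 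Thus $\mathcal T^{(m)}_\varphi\big|_{L_\lambda\times L_\lambda}=(\mathfrak u-\mathfrak v)^m(\mathfrak u-\mathfrak w)^m\cdot\zeta_{CD}\big|_{L_\lambda\times L_\lambda}$. Since $\mathfrak u,\mathfrak v,\mathfrak w$ are $\Omega_M^0$-linear operations on bilinear maps, the failure of the Dorfman bracket to be tensorial on $L_\lambda$ plays no role, and the argument will be uniform in $\lambda$ (in particular $\lambda=0$ is not special).

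Next, fix $\nu\in\Sigma(\varphi)\setminus\{\lambda\}$ and post-compose with $\mathcal P_\nu$. Writing $B_\nu:=\mathcal P_\nu\circ\zeta_{CD}\big|_{L_\lambda\times L_\lambda}\colon L_\lambda\times L_\lambda\to L_\nu$ and using that $\mathcal P_\nu$ commutes with $\varphi$ and with pre-composition, one obtains $\mathcal P_\nu\circ\mathcal T^{(m)}_\varphi\big|_{L_\lambda\times L_\lambda}=\Psi(B_\nu)$, where $\Psi=(\mathfrak u_\nu-\mathfrak v)^m(\mathfrak u_\nu-\mathfrak w)^m$ and $\mathfrak u_\nu$ is post-composition by the bundle endomorphism $(\varphi-\lambda I)\big|_{L_\nu}$. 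The key point is that $\Psi$ is invertible: because $(\varphi-\lambda I)\big|_{L_\nu}=(\nu-\lambda)I+N_\nu$ with $N_\nu$ nilpotent and $\nu-\lambda\neq 0$, the operator $\mathfrak u_\nu$ is invertible, while $\mathfrak v,\mathfrak w$ are nilpotent (as $N^{m(\lambda)}=0$) and commute with each other and with $\mathfrak u_\nu$; hence each factor $\mathfrak u_\nu-\mathfrak v=\mathfrak u_\nu\bigl(1-\mathfrak u_\nu^{-1}\mathfrak v\bigr)$ is invertible, being an invertible operator followed by a unipotent one. This invertibility is the crux of the argument, and it is precisely where the hypothesis $\nu\neq\lambda$ — i.e.\ the decomposition of Theorem \ref{theorem:40} — enters.

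To conclude: the hypothesis $\mathcal T^{(m)}_\varphi(L_\lambda,L_\lambda)=0$ gives $\Psi(B_\nu)=0$, so $B_\nu=0$ for every $\nu\neq\lambda$; since $\sum_\mu\mathcal P_\mu=I$, this means $\lbra\x,\y\rbra=\mathcal P_\lambda\lbra\x,\y\rbra\in L_\lambda$ for all sections $\x,\y$ of $L_\lambda$, i.e.\ $L_\lambda$ is closed under the Dorfman bracket. The remaining work is purely bookkeeping: checking that the restricted and projected operators $\mathfrak u,\mathfrak u_\nu,\mathfrak v,\mathfrak w$ commute as asserted, and that $\mathcal P_\nu$ intertwines the $\T M\otimes\C$-valued and the $L_\nu$-valued versions of $\mathfrak u$. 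I expect no genuine obstacle beyond organizing this together with the invertibility of $\Psi$; in particular no use of isotropy of $L_\lambda$ is needed, which is why the statement holds verbatim for $\lambda=0$ as well.
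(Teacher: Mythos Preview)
Your argument is correct and is, in substance, the same mechanism the paper invokes via equation~\eqref{eq:30} and the reference to \cite{TempestaTondo18a}: on $L_\lambda\times L_\lambda$ one rewrites $\mathcal T_\varphi^{(m)}$ as $(\mathfrak u-\mathfrak v)^m(\mathfrak u-\mathfrak w)^m$ applied to the Dorfman bracket, where $\mathfrak u$ is post-composition by $\varphi-\lambda I$ and $\mathfrak v,\mathfrak w$ are the nilpotent Jordan shifts, and then uses that on each $L_\nu$ with $\nu\neq\lambda$ the operator $(\varphi-\lambda I)|_{L_\nu}$ is invertible. The difference is only in packaging: the paper (following \cite{TempestaTondo18a}) writes this out along explicit Jordan chains and argues by induction on the chain length, whereas you abstract the same computation into commuting operators on the space of bilinear maps and invoke the invertibility of ``invertible $+$ commuting nilpotent'' once. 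Your formulation is slightly cleaner and makes transparent why the case $\lambda=0$ requires no special treatment; the Jordan-chain version has the advantage of yielding the explicit identity~\eqref{eq:30} as a by-product.
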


\begin{proof}
Thanks to \eqref{eq:30}, the proof of the corresponding statement (Proposition 42) in \cite{TempestaTondo18a} can be adapted step-by-step without difficulty.
\end{proof}

\begin{cor}\label{cor:72}
Let $\varphi$ be a generalized polynomial structure that is neither invertible nor nilpotent. Then none of the higher Courant-Nijenhuis torsions of $\varphi$ vanishes identically. In particular, $\varphi$ is not a generalized Nijenhuis operator.
\end{cor}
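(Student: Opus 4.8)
The plan is to obtain the statement by combining Corollary~\ref{cor:42} with Proposition~\ref{prop:72}, applied to the eigenvalue $0$. First I would record the relevant structure of $L_0$: since $\varphi$ is not invertible, $0\in\Sigma(\varphi)$, so by Theorem~\ref{theorem:40} the bundle $L_0=E_0$ is a non-zero quasi split structure; and since $\varphi$ is moreover not nilpotent, $L_0$ is a \emph{proper} subbundle of $\T M\otimes\C$. These are exactly the hypotheses of Corollary~\ref{cor:42}, which then tells us that $L_0$ is not closed under the Dorfman bracket.

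Next I would argue by contradiction. Suppose that for some integer $m\ge 1$ the higher Courant-Nijenhuis torsion $\mathcal T_\varphi^{(m)}$ vanishes identically on $\T M$. Then in particular $\mathcal T_\varphi^{(m)}(L_0,L_0)=0$, so Proposition~\ref{prop:72} applies with $\lambda=0$ and forces $L_0$ to be closed under the Dorfman bracket, contradicting the previous paragraph. Hence none of the higher Courant-Nijenhuis torsions of $\varphi$ vanishes identically. The final assertion is then immediate: taking $m=1$ and recalling that $\mathcal T_\varphi^{(1)}=\mathcal T_\varphi$ and that a generalized Nijenhuis operator is by definition one with vanishing Courant-Nijenhuis torsion, we conclude that $\varphi$ is not a generalized Nijenhuis operator.

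I do not anticipate any genuine obstacle here, since the substantive content has already been packaged into Proposition~\ref{prop:72} (Dorfman-closedness of $L_\lambda$ from vanishing of some $\mathcal T_\varphi^{(m)}$ on $L_\lambda$) and Corollary~\ref{cor:42} (non-closedness of $L_0$ in the non-invertible, non-nilpotent case). The only point requiring a moment's care is that Proposition~\ref{prop:72} is stated for an arbitrary $\lambda\in\Sigma(\varphi)$, so it legitimately covers the degenerate eigenvalue $\lambda=0$, which is precisely the case that is used.
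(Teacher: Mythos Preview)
Your proposal is correct and follows exactly the approach of the paper, which simply says to combine Proposition~\ref{prop:72} (applied at $\lambda=0$) with Corollary~\ref{cor:42}. Your write-up merely unpacks that one-line proof with the obvious contradiction argument, and the extra paragraph recapping why $L_0$ is a proper non-zero quasi split structure is already contained in the proof of Corollary~\ref{cor:42} itself.
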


\begin{proof}
Combine Proposition \ref{prop:72} and Corollary \ref{cor:42}.
\end{proof}

\begin{rem}
Proposition \ref{prop:72} can be generalized as follows (see Theorem 45 in \cite{TempestaTondo18a}). If $\lambda_1,\ldots,\lambda_k$ are eigenvalues of $\varphi$ such that the restriction of one of the higher Courant-Nijenhuis torsions of $\varphi$ to $L=L_{\lambda_1}\oplus \cdots\oplus L_{\lambda_k}$ vanishes, then $L$ is closed under the Dorfman bracket. In light of Remark \ref{rem:13}, we observe that this statement (and, in particular, Proposition \ref{prop:72}) is in general false if the Dorfman bracket is replaced by the (antisymmetric) Courant bracket.  
\end{rem}

\subsection{Bivariate Courant-Nijenhuis torsion}

\begin{mydef}
The {\it bivariate Courant-Nijenhuis torsion} of two commuting operators $\varphi'$ and $\varphi''$ acting on $\T M$ is defined as $\mathcal T_{\varphi'+\varphi''}-\mathcal T_{\varphi'}-\mathcal T_{\varphi''}$.
\end{mydef}

\begin{prop}\label{prop:93}
Let $\varphi'$, $\varphi''$ be generalized polynomial structures. Then $\varphi=\varphi'+\varphi''$ is minimal provided that the following conditions hold:
\begin{enumerate}[1)]
\item $\varphi'\circ \varphi''=0=\varphi''\circ \varphi'$;
\item $\varphi'$ and $\varphi''$ are both minimal;
\item $\varphi'$ and $\varphi''$ both annihilate the minimal polynomial of $\varphi$;
\item the bivariate Courant-Nijenhuis torsion of $\varphi'$ and $\varphi''$ vanishes.
\end{enumerate}
\end{prop}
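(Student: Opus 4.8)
The plan is to show that the minimal operator $\delta_\varphi = P(\ad_{\widetilde\varphi})(d)$ of $\varphi = \varphi' + \varphi''$ is a section of $\T M$, using Proposition \ref{prop:44}. The first step is to exploit condition 1) to understand $\varphi^k$ in terms of $\varphi'$ and $\varphi''$. Since $\varphi'\circ\varphi''=\varphi''\circ\varphi'=0$, an easy induction gives $\varphi^k = (\varphi')^k + (\varphi'')^k$ for all $k\ge 1$; correspondingly, after choosing lifts, $\widetilde\varphi = \widetilde{\varphi'} + \widetilde{\varphi''} + (\textrm{section of }\T M)$, and by Remark \ref{rem:68} the minimal torsion is insensitive to the $\T M$-ambiguity in the lift, so we may compute with $\widetilde\varphi = \widetilde{\varphi'}+\widetilde{\varphi''}$. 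The goal is then to rewrite $P(\ad_{\widetilde\varphi})(d)$ as a sum of an operator built from $\ad_{\widetilde{\varphi'}}$ alone, an operator built from $\ad_{\widetilde{\varphi''}}$ alone, and a genuinely mixed remainder, and to control each piece.

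The second step is the key algebraic manipulation. Write $P(x) = x^s R(x)$ where $R(0)\ne 0$ (using Remark \ref{rem:34}), and use condition 3): both $\varphi'$ and $\varphi''$ annihilate $P$, so $P((\varphi')) = P((\varphi'')) = 0$ as operators on $\T M$. Since $\varphi^k = (\varphi')^k + (\varphi'')^k$ for $k\ge 1$, for any polynomial $Q$ with $Q(0)=0$ we get $Q(\varphi) = Q(\varphi') + Q(\varphi'')$ on $\T M$. Writing $P(x) - P(0) = x\,\widehat P(x)$ and using that $\delta_\varphi$ differs from $P(\ad_{\widetilde\varphi})(d)$ evaluated via the polynomial action of Remark \ref{rem:PolynomialAction}, I would split $\mathcal C_\varphi = T\circ P(-\varphi_1 - \varphi_2 - \varphi_3)$. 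Expanding $P(-\varphi_1-\varphi_2-\varphi_3)$ multinomially and organizing the monomials by how many of the three slots carry a "mixed" contribution, the purely $\varphi'$-monomials reassemble into $\mathcal C_{\varphi'}$, the purely $\varphi''$-monomials into $\mathcal C_{\varphi''}$, and the remaining terms are exactly those that produce, after applying $\varphi^{i_1}\otimes\varphi^{i_2}\otimes\varphi^{i_3} = ((\varphi')^{i_1}+(\varphi'')^{i_1})\otimes\cdots$, genuinely cross terms in which at least one tensor factor carries $\varphi'$ and another carries $\varphi''$. Conditions 2) ($\mathcal C_{\varphi'}=\mathcal C_{\varphi''}=0$) kill the first two blocks.

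The third step handles the cross terms, and this is where I expect the main obstacle to lie. The cross terms are, up to combinatorial coefficients, values of the Dorfman bracket evaluated on Jordan chains of $\varphi'$ and $\varphi''$ in "opposite" generalized eigenbundles, and the vanishing of the bivariate Courant–Nijenhuis torsion (condition 4)) should, via the identity $\mathcal T_{\varphi'+\varphi''} = \mathcal T_{\varphi'}+\mathcal T_{\varphi''}$ together with the Jordan-chain formula of Proposition \ref{prop:52} and Corollary \ref{cor:88}, force these cross terms to vanish as well; concretely, since $\varphi'$ and $\varphi''$ have orthogonal ranges, a section of $L_\lambda^{\varphi'}$ and a section of $L_\mu^{\varphi''}$ with $\lambda,\mu\ne 0$ pair into $E_0$-type behaviour under the mixed shift operator, and $P$ annihilates the resulting combined eigenvalue data. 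The careful bookkeeping — matching the multinomial coefficients in Proposition \ref{prop:46}/\ref{prop:52} against the binomial identities implicit in the definition of the bivariate torsion, and checking that every mixed monomial is indeed captured by condition 4) rather than leaking an uncontrolled term — is the delicate part; I would do it by working slot-by-slot with the $\C[\varphi_1,\varphi_2,\varphi_3]$-action, reducing everything to an identity of polynomials in two families of commuting variables and then invoking conditions 3) and 4) to annihilate it. Once all three blocks vanish, $\mathcal C_\varphi \equiv 0$, so by Proposition \ref{prop:44} $\varphi$ is minimal.
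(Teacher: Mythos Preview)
Your overall architecture --- split $\mathcal M_\varphi$ into a pure-$\varphi'$ part, a pure-$\varphi''$ part, and mixed terms, then kill each block with conditions 2)--4) --- is exactly what the paper does. The gap is in your third step. You propose to control the mixed terms via Jordan chains, generalized eigenbundles of $\varphi'$ versus $\varphi''$, and Proposition~\ref{prop:52}/Corollary~\ref{cor:88}; none of that is needed, and it is not clear it would work as written, because condition~4) is a single quadratic identity and your plan gives no mechanism for why it should dominate all the higher-degree cross monomials.

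The clean point you are missing is an algebraic factorization. In the polynomial language of Remark~\ref{rem:PolynomialAction}, write $A=u'-v'-w'$ and $B=u''-v''-w''$; using condition~1) (so $u'u''=v'v''=w'w''=0$) one checks directly that
\[
\mathcal T_{\varphi'+\varphi''}-\mathcal T_{\varphi'}-\mathcal T_{\varphi''}
=(u-v)(u-w)-(u'-v')(u'-w')-(u''-v'')(u''-w'')
=AB
\]
as operators on $\zeta_{CD}$. Hence condition~4) is exactly $AB\cdot\zeta_{CD}=0$. Since $A$ and $B$ commute, every cross term in $(A+B)^n$ contains $AB$ as a factor, and therefore
\[
(u-v-w)^n\cdot\zeta_{CD}=(A+B)^n\cdot\zeta_{CD}=(A^n+B^n)\cdot\zeta_{CD}
\]
for all $n\ge 1$. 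This single line replaces all of your ``careful bookkeeping'' and eigenbundle discussion. After that, summing against the coefficients of $P$ and invoking condition~3) (so that $P(\varphi')=P(\varphi'')=0$, hence $P(u')\cdot\zeta_{CD}=\cdots=P(w'')\cdot\zeta_{CD}=0$) together with condition~2) gives $\mathcal M_\varphi=0$ immediately. Your steps 1 and 2 are fine; replace step 3 with the factorization above and the proof is complete.
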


\begin{proof} Let $P(x)=a_0+a_1x+\cdots+a_Nx^N$ be the minimal polynomial of $\varphi$.
We begin by noticing that 1) implies $[\varphi',\varphi'']=0$ and thus their bivariate Courant-Nijenhuis torsion as defined above is well defined. Adapting the notation of Remark \ref{rem:56} we denote by $\{u,v,w\}$, $\{u',v',w'\}$, and $\{u'',v'',w''\}$ generators of the polynomial actions on tensors defined, respectively, by $\varphi$, $\varphi'$, and $\varphi''$. In particular, $u=u'+u''$, $v=v'+v''$, and $w=w'+w''$. From 1), we see that $u'u''=v'v''=w'w''=0$. Using this we have
\begin{align}
    \mathcal T_{\varphi'+\varphi''}-\mathcal T_{\varphi'}-\mathcal T_{\varphi''}&=((u-v)(u-w)-(u'-v')(u'-w')-(u''-v'')(u''-w''))\cdot \zeta_{CD}\nonumber\\
    & = (u'-v'-w')(u''-v''-w'')\cdot \zeta_{CD}\label{eq:51}
\end{align}
and thus
\begin{equation}
    0=((u-v-w)^n-(u'-v'-w')^n-(u''-v''-w'')^n)\cdot \zeta_{CD}
\end{equation}
for all $n\ge 1$. Hence
\begin{equation*}
    \mathcal M_\varphi^{(n)}-\mathcal M_{\varphi'}^{(n)}-\mathcal M_{\varphi''}^{(n)}=\pm (u^n-v^n-w^n-(u')^n+(v')^n+(w')^n-(u'')^n+(v'')^n+(w'')^n)\cdot \zeta_{CD}.
\end{equation*}
By 3), we have $P(u)=P(u')=\cdots=P(w'')=0$. Hence
\begin{equation}\label{eq:46}
    \mathcal M_\varphi = \sum_{k=0}^n a_k \mathcal M_{\varphi'}^{(k)}+\sum_{k=0}^n a_k \mathcal M_{\varphi''}^{(k)}\,.
\end{equation}
Another consequence of 3) is that the minimal polynomial of $\varphi'$ divides $P(x)$. Together with 2), this implies
\begin{equation}\label{eq:47bis}
    \sum_{k=0}^N a_k \mathcal M^{(k)}_{\varphi'} = P(u'-v'-w')\cdot \zeta_{CD}=0\,.
\end{equation}
Finally, substituting \eqref{eq:47bis} and the analogous result for $\varphi''$ into \eqref{eq:46} shows that $\varphi$ is indeed minimal.
\end{proof}

\begin{example}\label{ex:102}
Let $\varphi$ be a generalized polynomial structure with minimal polynomial of the form $P(x)=x^r(x^2+c_0)(x^2+c_2)\cdots(x^2+c_s)$, for non-negative integers $r,s$ and arbitrary real numbers $c_1<c_2<\cdots<c_s$. Then the semisimple and the nilpotent parts of $\varphi$ satisfy conditions 1) and 3) in Proposition \ref{prop:93}. Hence $\varphi$ is minimal provided that $\varphi_s$ and $\varphi_n$ are both minimal with vanishing bivariate Courant-Nijenhuis torsion.
\end{example}

\begin{cor}\label{cor:103}
Let $\varphi$ be a generalized polynomial structure of degree 4 with exactly 3 distinct eigenvalues. Then $\varphi$ is minimal if and only if 
\begin{enumerate}[1)]
    \item the semisimple part $\varphi_s$ of $\varphi$ is a weak generalized Nijenhuis operator;
    \item the nilpotent part $\varphi_n$ of $\varphi$ is a generalized Nijenhuis operator;
    \item the bivariate Courant-Nijenhuis torsion of $\varphi_s$ and $\varphi_n$ vanishes.
\end{enumerate}
\end{cor}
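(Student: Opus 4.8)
The plan is to first pin down the minimal polynomial of $\varphi$, then to reduce the whole statement to the minimality of $\varphi_s$ and $\varphi_n$ by means of a single identity in the polynomial calculus of Remark \ref{rem:PolynomialAction}.

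First I would note that, since $\varphi$ has degree $4$ and, by Remark \ref{rem:34}, its spectrum is symmetric about $0$ and consists of exactly three elements, the spectrum must be $\{0,\lambda,-\lambda\}$ with $\lambda\ne 0$, and the only way to write an even or odd quartic with precisely these roots is $P(x)=x^2(x^2+c)$ with $c=-\lambda^2\ne 0$. Consequently $\varphi_s$ is a generalized polynomial structure of degree $3$ with minimal polynomial $x(x^2+c)$, and $\varphi_n$ is a generalized polynomial structure of degree $2$ with minimal polynomial $x^2$. By Example \ref{ex:87}, condition 1) is then equivalent to the minimality of $\varphi_s$; by Example \ref{ex:69}, condition 2) is equivalent to the minimality of $\varphi_n$. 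Moreover, exactly as in Example \ref{ex:102}, one has $\varphi_s\circ\varphi_n=0=\varphi_n\circ\varphi_s$ and both $\varphi_s$ and $\varphi_n$ annihilate $P$, so conditions 1) and 3) of Proposition \ref{prop:93} are automatically satisfied by the pair $(\varphi_s,\varphi_n)$.

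The heart of the argument is a polynomial identity. Adopting the notation of Remark \ref{rem:PolynomialAction}, write $A=u'-v'-w'$ and $B=u''-v''-w''$ for the operators attached to $\varphi_s$ and $\varphi_n$ respectively; these commute (since $\varphi_s$ and $\varphi_n$ do), the operators attached to $\varphi$ are $u'+u'',v'+v'',w'+w''$, and by \eqref{eq:51} the bivariate Courant-Nijenhuis torsion $\mathcal M_{\varphi_s+\varphi_n}-\mathcal M$-type quantity $\mathcal B:=\mathcal T_{\varphi_s+\varphi_n}-\mathcal T_{\varphi_s}-\mathcal T_{\varphi_n}$ equals $AB\cdot\zeta_{CD}$. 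We also have $\mathcal M_\varphi=\bigl((A+B)^4+c(A+B)^2\bigr)\cdot\zeta_{CD}$, $\mathcal M_{\varphi_s}=(A^3+cA)\cdot\zeta_{CD}$, and $\mathcal M_{\varphi_n}=B^2\cdot\zeta_{CD}$. Expanding and regrouping one obtains
\begin{equation*}
(A+B)^4+c(A+B)^2=(A+4B)(A^3+cA)+\bigl(6A^2+4AB+B^2+c\bigr)B^2-2c\,AB\,,
\end{equation*}
so that, applying to $\zeta_{CD}$,
\begin{equation*}
\mathcal M_\varphi=(A+4B)\cdot\mathcal M_{\varphi_s}+\bigl(6A^2+4AB+B^2+c\bigr)\cdot\mathcal M_{\varphi_n}-2c\,\mathcal B\,.
\end{equation*}
With this identity the corollary follows in both directions. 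If 1)--3) hold, then $\mathcal M_{\varphi_s}=\mathcal M_{\varphi_n}=0$ (by the two equivalences above) and $\mathcal B=0$, hence $\mathcal M_\varphi=0$ and $\varphi$ is minimal. Conversely, if $\varphi$ is minimal, Proposition \ref{prop:87} gives $\mathcal M_{\varphi_s}=\mathcal M_{\varphi_n}=0$, which is exactly 1) and 2); substituting into the identity leaves $0=-2c\,\mathcal B$, and since $c\ne 0$ the bivariate Courant-Nijenhuis torsion vanishes, which is 3).

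The step I expect to demand the most care, though it remains a routine computation, is the regrouping identity for $(A+B)^4+c(A+B)^2$: one must check that the coefficients $A+4B$ and $6A^2+4AB+B^2+c$ really are polynomials in $A$ and $B$, so that they annihilate $\mathcal M_{\varphi_s}$ and $\mathcal M_{\varphi_n}$ once these vanish — this is precisely where $[A,B]=0$ and the specific shape $P(x)=x^2(x^2+c)$ are used. An alternative, slightly less self-contained route for the forward implication would be to invoke Example \ref{ex:102} and Proposition \ref{prop:93} directly, but deriving the displayed identity settles both implications uniformly.
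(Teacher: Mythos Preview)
Your proof is correct and follows essentially the same route as the paper: identify $P(x)=x^2(x^2+c)$ with $c\neq 0$, use Proposition~\ref{prop:87} together with Examples~\ref{ex:69} and~\ref{ex:87} to obtain 1) and 2) from minimality, and then reduce $(A+B)^4+c(A+B)^2$ modulo $A^3+cA$ and $B^2$ to extract the bivariate torsion term $-2c\,AB$. The only cosmetic difference is that you package both implications into one explicit polynomial decomposition, whereas the paper cites Example~\ref{ex:102} (via Proposition~\ref{prop:93}) for the forward direction and performs the reduction computation only for the converse; your displayed identity is exactly the refinement of that reduction with the quotient polynomials made explicit.
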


\begin{proof} By assumption the minimal polynomial of $\varphi$ is of the form $P(x)=x^2(x^2+c)$ for some non-zero real number $c$.
In one direction this is a particular case of Example \ref{ex:102}. For the converse, assume that $\varphi$ is minimal. Proposition \ref{prop:87} then implies that $\varphi_s$ and $\varphi_n$ are both minimal. Thanks to the observations made in Example \ref{ex:87} and in Example \ref{ex:69}, this is enough to conclude that $\varphi_s$ is a weak generalized Nijenhuis operator and $\varphi_n$ is a generalized Nijenhuis operator. To prove 3), we employ the notation introduced in the proof of Proposition \ref{prop:93} with $\varphi'=\varphi_s$ and $\varphi''=\varphi_n$. Since, by the minimality of $\varphi_s$ and $\varphi_n$ we have $((u'-v'-w')^3+c(u'-v'-w'))\cdot \zeta_{CD}=0$ and $(u''-v''-w'')^2\cdot \zeta_{CD}=0$. Imposing the minimality of $\varphi$ we then obtain
\begin{equation}\label{eq:73}
    0=(u-v-w)^2((u-v-w)^2+c)\cdot \zeta_{CD}=-2c(u'-v'-w')(u''-v''-w'')\cdot \zeta_{CD}\,. 
\end{equation}
By \eqref{eq:51}, \eqref{eq:73} implies that the bivariate Courant-Nijenhuis torsion of $\varphi_s$ and $\varphi_n$ vanishes.
\end{proof}

\begin{cor}
There is a canonical bijection between the set of minimal generalized polynomial structures with minimal polynomial $P(x)=x^2(x^2+1)$ and the set of pairs $(\mathcal J_1,\mathcal J_2)$ consisting of a strongly integrable generalized F-structure $\mathcal J_1$ and a weak generalized tangent structure $\mathcal J_2$ such that 
\begin{enumerate}[1)]
    \item $\mathcal J_1\circ \mathcal J_2=0=\mathcal J_2\circ \mathcal J_1$
    \item the bivariate Courant-Nijenhuis torsion of $\mathcal J_1$ and $\mathcal J_2$ vanishes.
\end{enumerate}
\end{cor}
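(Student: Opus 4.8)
The plan is to build the bijection from the Jordan--Chevalley decomposition: send a minimal generalized polynomial structure $\varphi$ with minimal polynomial $P(x)=x^2(x^2+1)$ to the pair $(\varphi_s,\varphi_n)$ of its semisimple and nilpotent parts, and send a pair $(\mathcal J_1,\mathcal J_2)$ as in the statement to $\varphi=\mathcal J_1+\mathcal J_2$. Since the Jordan--Chevalley decomposition is fiberwise unique, these assignments are automatically mutually inverse once we check that each lands in the intended set, and almost all of that check is packaged in Corollary~\ref{cor:103}.

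For the forward map, note that the minimal polynomial $x^2(x^2+1)$ forces $\Sigma(\varphi)=\{0,\sqrt{-1},-\sqrt{-1}\}$ with $0$ of multiplicity $2$ and $\pm\sqrt{-1}$ of multiplicity $1$ in $P$. Using the orthogonality $L_\lambda^\perp=\bigoplus_{\mu\neq-\lambda}L_\mu$ of Theorem~\ref{theorem:40}, which gives $\langle\varphi_s\x,\y\rangle+\langle\x,\varphi_s\y\rangle=(\lambda+\mu)\langle\x,\y\rangle=0$ for $\x\in L_\lambda$, $\y\in L_\mu$, one sees $\varphi_s$ is skew-symmetric; being semisimple with eigenvalues $0,\pm\sqrt{-1}$ it has minimal polynomial $x^3+x$, so it is a generalized $F$-structure that is not a generalized almost complex structure. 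Similarly $\varphi_n$ is skew-symmetric, vanishes on the rank-one bundles $L_{\pm\sqrt{-1}}$, and restricts to $L_0$ with minimal polynomial $x^2$, so $\varphi_n^2=0$, $\varphi_n$ is nowhere zero, and $\varphi_n$ is a generalized almost tangent structure; the relations $\varphi_s\circ\varphi_n=\varphi_n\circ\varphi_s=0$ are read off the action on each $L_\lambda$. Since $\varphi$ has degree $4$ and exactly three eigenvalues, Corollary~\ref{cor:103} says minimality of $\varphi$ is equivalent to $\varphi_s$ being a weak generalized Nijenhuis operator, $\varphi_n$ being a generalized Nijenhuis operator, and the bivariate Courant--Nijenhuis torsion of $\varphi_s$, $\varphi_n$ vanishing. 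Translating via Examples~\ref{ex:48} and~\ref{ex:87} (for a generalized $F$-structure, being a weak generalized Nijenhuis operator is the same as being strongly integrable) and Example~\ref{ex:53} (for a weak generalized almost tangent structure, being a weak generalized tangent structure is the same as being a generalized Nijenhuis operator), the pair $(\varphi_s,\varphi_n)$ is exactly of the stated type, with conditions 1) and 2) holding.

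For the inverse map, start from a pair $(\mathcal J_1,\mathcal J_2)$ and set $\varphi=\mathcal J_1+\mathcal J_2$, which is skew-symmetric. From $\mathcal J_1\circ\mathcal J_2=\mathcal J_2\circ\mathcal J_1=0$ and $\mathcal J_2^2=0$ one gets $\varphi^2=\mathcal J_1^2$ and hence $\varphi^2(\varphi^2+I)=\mathcal J_1(\mathcal J_1^3+\mathcal J_1)=0$, so $\varphi$ annihilates $x^2(x^2+1)$; moreover $\varphi^3=\mathcal J_1^3=-\mathcal J_1$ and $\varphi+\varphi^3=\mathcal J_2$, so $I,\varphi,\varphi^2,\varphi^3$ span the same space as $I,\mathcal J_1,\mathcal J_1^2,\mathcal J_2$. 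As $\mathcal J_1$ is semisimple and $\mathcal J_2$ is nilpotent with $\mathcal J_1\mathcal J_2=\mathcal J_2\mathcal J_1=0$, uniqueness of the fiberwise Jordan--Chevalley decomposition identifies $\varphi_s=\mathcal J_1$ and $\varphi_n=\mathcal J_2$, after which Corollary~\ref{cor:103} with the same translations as above yields minimality of $\varphi$. The step I expect to be the main obstacle is showing that $\varphi$ is a genuine generalized polynomial structure with minimal polynomial constantly $x^2(x^2+1)$, and not merely a weak one: by the span computation this is the pointwise linear independence of $I,\mathcal J_1,\mathcal J_1^2,\mathcal J_2$, which (using that $\mathcal J_1$ has eigenvalues $0,\pm\sqrt{-1}$ and $\mathcal J_2$ is square-zero) reduces to $(\mathcal J_2)_p\neq 0$ for every $p$ — precisely the content of $\mathcal J_2$ being a generalized tangent structure that can be summed with $\mathcal J_1$. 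With this in hand the remaining bookkeeping — that the two maps are mutually inverse — is immediate from the uniqueness of Jordan--Chevalley.
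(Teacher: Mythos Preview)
Your approach is essentially the same as the paper's: set up the bijection via the Jordan--Chevalley decomposition $\varphi\leftrightarrow(\varphi_s,\varphi_n)$ and invoke Corollary~\ref{cor:103} together with the translations in Examples~\ref{ex:48}, \ref{ex:53}, \ref{ex:87} to match minimality with the stated conditions on $(\mathcal J_1,\mathcal J_2)$. Your write-up is in fact considerably more careful than the paper's very terse proof (which even contains the typo $(x^2+1)^2$ for $x^2(x^2+1)$), and you rightly flag the one genuine subtlety---namely that ``weak'' generalized tangent structure does not a priori force $(\mathcal J_2)_p\neq 0$ pointwise, which is needed for $\varphi$ to have minimal polynomial exactly $x^2(x^2+1)$ at every point; the paper does not address this either.
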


\begin{proof}
Given a generalized polynomial structure $\varphi$ with minimal polynomial $P(x)=x^2(x^2+1)$ we have that $\mathcal J_1=\varphi_s$ is a generalized F-structure and $\mathcal J_2=\varphi_n$ is a weak generalized almost tangent structure satisfying condition 1). Conversely if $\mathcal J_1$ and $\mathcal J_2$ are as above, then $\varphi=\mathcal J_1+\mathcal J_2$ is a generalized polynomial structure with minimal polynomial $P(x)=(x^2+1)^2$. The result then follows from Corollary \ref{cor:103}.
\end{proof}

\subsection{Decomposition of the de Rham operator}

In this subsection we show how any generalized polynomial structure $\varphi$ induces a canonical decomposition of the de Rham operator into components labeled by the spectrum of $\varphi$. The following theorem provides a characterization of minimality in terms of the compatibility of this decomposition with the adjoint action of $\varphi$.

\begin{theorem}\label{theorem:83}
Let $\varphi$ be a generalized polynomial structure with minimal polynomial
\begin{equation}
    P(x)=\prod_{\lambda\in \Sigma(\varphi)}(x-\lambda)^{m(\lambda)}\,.
\end{equation}
Then $\varphi$ is minimal if and only if there exists a decomposition of the de Rham operator
\begin{equation}\label{eq:38}
    d = \sum_{\lambda \in \Sigma(\varphi)} d_\lambda
\end{equation}
such that $(\ad_{\widetilde \varphi}-\lambda I)^{m(\lambda)}(d_\lambda)\in\T M$ for every $\lambda\in \Sigma(\varphi)$.
\end{theorem}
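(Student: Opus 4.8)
The plan is to reduce the statement to elementary commutative algebra in the single operator $\ad_{\widetilde\varphi}$ acting on $\mathcal E_M\otimes\C$, using two ingredients established earlier. First, by Proposition~\ref{prop:9} the lift satisfies $[\widetilde\varphi,\x]=\varphi(\x)$ for $\x\in\T M$, so $\ad_{\widetilde\varphi}$ preserves $\T M\otimes\C$ and restricts there to $\varphi$; consequently $Q(\ad_{\widetilde\varphi})$ maps $\T M\otimes\C$ to itself for every polynomial $Q$. Second, by Proposition~\ref{prop:44}, $\varphi$ is minimal exactly when the minimal operator $\delta_\varphi=P(\ad_{\widetilde\varphi})(d)$ is a section of $\T M$. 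The bridge between these two facts and the desired decomposition will be the partial fraction expansion of $1/P$.

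Concretely, I would first fix, for each $\lambda\in\Sigma(\varphi)$, the polynomials $Q_\lambda(x)=P(x)/(x-\lambda)^{m(\lambda)}$ together with $a_\lambda(x)$ of degree $<m(\lambda)$ coming from $1/P(x)=\sum_\lambda a_\lambda(x)/(x-\lambda)^{m(\lambda)}$, and record the two polynomial identities $\sum_\lambda a_\lambda(x)Q_\lambda(x)=1$ and $(x-\lambda)^{m(\lambda)}a_\lambda(x)Q_\lambda(x)=a_\lambda(x)P(x)$, both valid in $\C[x]$. For the ``only if'' direction I would then set $d_\lambda:=a_\lambda(\ad_{\widetilde\varphi})Q_\lambda(\ad_{\widetilde\varphi})(d)$: the first identity yields $d=\sum_\lambda d_\lambda$, and the second yields $(\ad_{\widetilde\varphi}-\lambda I)^{m(\lambda)}(d_\lambda)=a_\lambda(\ad_{\widetilde\varphi})(\delta_\varphi)$, which lies in $\T M$ because $\delta_\varphi$ does (minimality) and $a_\lambda(\ad_{\widetilde\varphi})$ preserves $\T M$.

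For the ``if'' direction I would start from an arbitrary decomposition $d=\sum_\lambda d_\lambda$ with $(\ad_{\widetilde\varphi}-\lambda I)^{m(\lambda)}(d_\lambda)\in\T M$ for all $\lambda$, write $P(\ad_{\widetilde\varphi})(d_\lambda)=Q_\lambda(\ad_{\widetilde\varphi})\bigl[(\ad_{\widetilde\varphi}-\lambda I)^{m(\lambda)}(d_\lambda)\bigr]$, observe that each summand lies in $\T M$ since $Q_\lambda(\ad_{\widetilde\varphi})$ preserves $\T M$, and conclude $\delta_\varphi=\sum_\lambda P(\ad_{\widetilde\varphi})(d_\lambda)\in\T M$, so that $\varphi$ is minimal by Proposition~\ref{prop:44}.

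I do not expect a serious obstacle here: the geometric content is already absorbed into Propositions~\ref{prop:9} and~\ref{prop:44}, and what remains is polynomial bookkeeping. The one subtlety to flag is that $\ad_{\widetilde\varphi}$ does \emph{not} in general annihilate $P$ when acting on all of $\mathcal E_M$ (the obstruction is precisely $\delta_\varphi$), so I must treat the identities above as honest equalities in $\C[x]$ rather than congruences modulo $P$, and the only step in which the ambient space $\mathcal E_M$ is traded for $\T M$ is the single input $\delta_\varphi\in\T M$. I would also remark that the $d_\lambda$ produced this way are the canonical components referred to at the beginning of this subsection, and that, since $P$ is real, grouping $\lambda$ with $\overline\lambda$ yields a decomposition indexed by $\Sigma_+(\varphi)\cup\{0\}$ with real components.
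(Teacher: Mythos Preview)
Your proposal is correct and follows essentially the same approach as the paper. Both directions match: the ``if'' direction factors $P(\ad_{\widetilde\varphi})$ through $(\ad_{\widetilde\varphi}-\lambda I)^{m(\lambda)}$ exactly as the paper does, and for the ``only if'' direction your $d_\lambda=a_\lambda(\ad_{\widetilde\varphi})Q_\lambda(\ad_{\widetilde\varphi})(d)$ coincides with the paper's $d_\lambda=\sum_{i=1}^{m(\lambda)}a_{\lambda,i}Q_{\lambda,i}(\ad_{\widetilde\varphi})(d)$ once one expands $a_\lambda(x)=\sum_i a_{\lambda,i}(x-\lambda)^{m(\lambda)-i}$ in the partial fraction data of \eqref{eq:9}--\eqref{eq:44}; the only difference is that you package the partial fraction coefficients into a single polynomial $a_\lambda$ rather than listing the scalars $a_{\lambda,i}$ separately.
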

\begin{proof}
Assume such a decomposition exists. Since $(\ad_{\widetilde \varphi}-\lambda I)^{m(\lambda)}$ divides $P(\ad_{\widetilde\varphi})$, then $P(\ad_{\widetilde\varphi})(d_\lambda)\in \T M$ for every $\lambda\in \Sigma(\varphi)$. Hence $P(\ad_{\widetilde\varphi})(d)\in \T M$ and thus $\varphi$ is minimal. Conversely, assume that $\varphi$ is minimal.  let
\begin{equation}\label{eq:40}
    d_\lambda = \sum_{i=1}^{m(\lambda)}a_{\lambda,i}Q_{\lambda,i}(\ad_{\widetilde\varphi})(d)\,,
\end{equation}
where, $Q_{\lambda,i}$ is a polynomial defined by the relation $(x-\lambda)^i Q_{\lambda,i}(x)=P(x)$ for each $\lambda\in \Sigma(\varphi)$ and for each $i\in\{1,\ldots,m(\lambda)\}$. Then \eqref{eq:38} holds and, for each $\lambda\in \Sigma(\varphi)$
\begin{equation}\label{eq:41}
    (\ad_{\widetilde\varphi}-\lambda I)^{m(\lambda)}(d_\lambda)=\sum_{i=1}^{m(\lambda)}a_{\lambda,i}(\ad_{\widetilde\varphi}-\lambda I)^{m(\lambda)-i}P(\ad_{\widetilde\varphi})(d)\,.
\end{equation}
The minimality of $\varphi$ implies that $P(\ad_{\widetilde\varphi})(d)$ and thus \eqref{eq:41} is a section of the generalized tangent bundle.
\end{proof}

\begin{rem}
A priori the definition of $d_\lambda$ given in \eqref{eq:41} depends on the particular lift of $\varphi$ to $\mathcal E_M$ obtained by imposing condition 3) in Proposition \ref{prop:9}. On the other hand, if $\varphi$ is minimal then choosing a different lift $\widetilde \varphi +g$ for some function $g\in \Omega_M^0$ would result in a a different decomposition $d=\sum_\lambda d_\lambda'$ such that $d_\lambda-d_\lambda'$ is a section of $\T M\otimes \mathbb C$ annihilated by $(\ad_{\widetilde \varphi}-\lambda I)^{m(\lambda)}$ i.e.\ a section of $L_\lambda$. As $\lambda$ runs through $\Sigma(\varphi),$, the sum of these sections is $\sum_\lambda (d_\lambda-d'_\lambda)=0$ and yet,  by Theorem \ref{theorem:40} no cancellation can occur between sections labelled by different eigenvalues. Hence $d_\lambda=d'_\lambda$ for all $\lambda\in \Sigma(\varphi)$ and the decomposition $\eqref{eq:40}$ induced by a minimal generalized polynomial structure does not depend on its lift to $\mathcal E_M$. This is consistent with the observation made in Remark \ref{rem:68} that the minimal torsion and thus the notion of minimality does not depend on such a lift.
\end{rem}

\begin{example}\label{ex:103}
Let $\varphi$ be a generalized almost complex structure. Then $a_{\pm \sqrt{-1},1}=\mp\frac{\sqrt{-1}}{2}$ and $Q_{\pm \sqrt{-1},1}(x)=x\pm \sqrt{-1}$. Substituting into \eqref{eq:40} we obtain
\begin{equation}
    d_{\sqrt{-1}}=\frac{1}{2}(d-\sqrt{-1}[\varphi,d])= \overline \partial 
\end{equation}
and, similarly, $d_{-\sqrt{-1}}=\partial$. 
\end{example}

\begin{rem}
Since \eqref{eq:40} expresses $d_\lambda$ as the image of $d$ under a polynomial in $\ad_{\widetilde \varphi}$, then $\lbra \x,\y\rbra_{d_\lambda}\in \T M\otimes \mathbb C$ for all $\x,\y\in \T M\otimes \mathbb C$.
\end{rem}

\begin{cor}\label{cor:104}
Let $\varphi$ be a minimal generalized polynomial structure and let $d_\lambda$ be as in \eqref{eq:40}. Then $\lbra L_\mu, L_\nu \rbra_{d_\lambda}\subseteq L_{\lambda+\mu+\nu}$ for all $\lambda,\mu,\nu\in \Sigma(\varphi)$.
\end{cor}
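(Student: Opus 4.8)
The plan is to prove the sharper statement that, for all $\x\in L_\mu$ and $\y\in L_\nu$, the section $\lbra\x,\y\rbra_{d_\lambda}$ --- which lies in $\T M\otimes\C$ by the remark preceding the corollary --- is annihilated by a sufficiently high power of $\varphi-(\lambda+\mu+\nu)I$. Since $\varphi$ acts on $\T M\otimes\C$ as $\ad_{\widetilde\varphi}$, and since $\ker\bigl((\varphi-cI)^k\bigr)$ is contained in $L_c$ for every $k$ (with the convention $L_c=0$ when $c\notin\Sigma(\varphi)$), this immediately gives $\lbra L_\mu,L_\nu\rbra_{d_\lambda}\subseteq L_{\lambda+\mu+\nu}$.

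The computational engine is the derivation identity
\[
\ad_{\widetilde\varphi}\bigl(\lbra\x,\y\rbra_\delta\bigr)=\lbra\x,\y\rbra_{\ad_{\widetilde\varphi}(\delta)}+\lbra\varphi(\x),\y\rbra_\delta+\lbra\x,\varphi(\y)\rbra_\delta,
\]
valid for every odd $\delta\in\mathcal E_M\otimes\C$ and all $\x,\y\in\T M\otimes\C$; it follows by expanding graded commutators and using $[\widetilde\varphi,\x]=\varphi(\x)$ from Proposition \ref{prop:9}. Subtracting $(\lambda+\mu+\nu)\lbra\x,\y\rbra_\delta$ and regrouping, one sees that $\ad_{\widetilde\varphi}-(\lambda+\mu+\nu)I$ acts on the derived bracket as the sum of the three mutually commuting operations $\delta\mapsto(\ad_{\widetilde\varphi}-\lambda I)\delta$, $\x\mapsto(\varphi-\mu I)\x$ and $\y\mapsto(\varphi-\nu I)\y$ on the respective slots (here one uses, again via Proposition \ref{prop:9}, that $\ad_{\widetilde\varphi}$ preserves $\T M\otimes\C$, so that every iterate of $d_\lambda$ remains an admissible odd operator). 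Iterating $k$ times and expanding multinomially --- safest done by repeatedly applying the displayed identity, exactly as in the proof of Corollary \ref{cor:88} --- expresses $\bigl(\ad_{\widetilde\varphi}-(\lambda+\mu+\nu)I\bigr)^k\lbra\x,\y\rbra_{d_\lambda}$ as a sum over $p+q+r=k$ of terms $\lbra(\varphi-\mu I)^q\x,\,(\varphi-\nu I)^r\y\rbra_{(\ad_{\widetilde\varphi}-\lambda I)^p d_\lambda}$.

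Taking $k=m(\lambda)+m(\mu)+m(\nu)-2$, no admissible triple $(p,q,r)$ can satisfy $p\le m(\lambda)-1$, $q\le m(\mu)-1$ and $r\le m(\nu)-1$ simultaneously, so each term falls into one of three vanishing cases: if $q\ge m(\mu)$ then $(\varphi-\mu I)^q\x=0$ as $\x\in L_\mu$; if $r\ge m(\nu)$ then $(\varphi-\nu I)^r\y=0$; and if $p\ge m(\lambda)$ then minimality enters through Theorem \ref{theorem:83}, which gives $(\ad_{\widetilde\varphi}-\lambda I)^{m(\lambda)}(d_\lambda)\in\T M$, hence $(\ad_{\widetilde\varphi}-\lambda I)^p d_\lambda\in\T M\otimes\C$ after applying $\ad_{\widetilde\varphi}$ a further $p-m(\lambda)$ times, so that the corresponding derived bracket vanishes by part 3) of Lemma \ref{lemma:7}. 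Thus $\bigl(\varphi-(\lambda+\mu+\nu)I\bigr)^k$ kills $\lbra\x,\y\rbra_{d_\lambda}$ and we are done. The step I expect to require the most care is the bookkeeping in the second paragraph: making the ``three commuting slot operations'' rigorous and keeping straight that in the case $p\ge m(\lambda)$ it is the whole term, not merely a correction, that vanishes --- because Lemma \ref{lemma:7}(3) makes a derived bracket along any section of $\T M$ identically zero. Beyond that I foresee no serious obstacle.
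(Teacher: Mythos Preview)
Your proof is correct and follows essentially the same approach as the paper's: both rest on the derivation identity that realises $\ad_{\widetilde\varphi}-(\lambda+\mu+\nu)I$ as the sum of the three commuting shifts on $\delta$, $\x$, $\y$, iterate to a sufficiently high power, and kill each term using either $\x\in L_\mu$, $\y\in L_\nu$, or minimality via Theorem~\ref{theorem:83} together with Lemma~\ref{lemma:7}(3). The only cosmetic difference is that the paper phrases this through the $(u,v,w)$-action and Jordan chains with the shift operator $\mathbf S$ and finishes by induction on $p+q$, whereas your direct trinomial expansion and pigeonhole with $k=m(\lambda)+m(\mu)+m(\nu)-2$ avoids the induction altogether; both are valid packagings of the same argument.
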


\begin{proof}
Adapting the notation of Remark \ref{rem:PolynomialAction}, let $\zeta_{CD,\lambda}$ denote the local linear operator representing the derived bracket with respect to $d_\lambda$. Since $\varphi$ is minimal, by Theorem \ref{theorem:83} we have $(u-v-w-\lambda)^{m(\lambda)}\cdot \zeta_{CD,\lambda}=0$. Recall the shift operator $\mathbf S$ introduced in the proof of Corollary \ref{cor:88}. Multiplying both sides by $(u-v-w-\lambda)^{m(\mu)+m(\nu)}$ and setting $m=m(\lambda)+m(\mu)+m(\nu)$ we obtain
\begin{align*}
    0&=(\varphi-(\lambda+\mu+\nu)I+\mathbf S)^m \lbra \x_\alpha,\y_\beta\rbra_{d_\lambda}\\ 
    &= \sum_{k=0}^{m(\mu)+m(\nu)}\binom{m}{k}(\varphi-(\lambda+\mu+\nu)I)^{m-k}\mathbf S^k\lbra \x_\alpha,\y_\beta\rbra_{d_\lambda}
\end{align*}
for all Jordan chains $\{\x_\alpha\}\subseteq {\rm Ker}((\varphi-\mu I)^p)$ and $\{\y_\beta\}\subseteq {\rm Ker}((\varphi-\nu I)^q)$. If $p=q=1$ this reduces to $(\varphi-(\lambda+\mu+\nu)I)^m\lbra \x_\alpha,\y_\beta \rbra_{d_\lambda}=0$ which implies $\lbra {\rm Ker}(\varphi-\mu I),{\rm Ker}(\varphi-\nu I)\rbra_{d_\lambda}\subseteq L_{\lambda+\mu+\nu}$. The result then follows by induction on $p+q$.
\end{proof}

\begin{rem}
As a consequence of Corollary \ref{cor:104}, if $\varphi$ is a minimal generalized polynomial structure, then
\begin{equation}\label{eq:57}
    \lbra \x ,\y \rbra = \sum_{\lambda\in \Sigma(\varphi)} \lbra \x,\y\rbra_{d_\lambda} \subseteq L_\mu+L_\nu + \sum_{\lambda\in \Sigma(\varphi)\setminus\{-\mu,-\nu\}} L_{\mu+\nu+\lambda}
\end{equation}
for all $\x\in L_\mu$, $\y\in L_\nu$.
\end{rem}

\subsection{The non-resonance condition}

The following definition is motivated by \eqref{eq:47}.

\begin{mydef}
Let $\varphi$ be a generalized polynomial structure with minimal polynomial $P(x)$. We say that $\varphi$ is {\it non-resonant} if $P(\lambda+\mu+\nu)\neq 0$ for any $\lambda,\mu,\nu \in \Sigma(\varphi)$ such that $(\lambda+\mu)(\lambda+\nu)(\mu+\nu)\neq 0$.
\end{mydef}

\begin{rem}
Non-resonance is a generic condition requiring that no eigenvalue can be written as a non-trivial (i.e.\ when two of the summands cancel each other) sum of three eigenvalues. 
\end{rem}

\begin{example}
Every generalized polynomial structure with at most three eigenvalues is non-resonant. In particular, generalized polynomial structure of degree at most 3 are always non-resonant.
\end{example}

\begin{example} Let $\varphi$ be a generalized polynomial structure with at most five distinct eigenvalues (in particular, this occurs if $\varphi$ has degree at most five). Then $\varphi$ is non-resonant if and only if $\lambda\notin \{2\mu, 3\mu\}$ for all $\lambda,\mu\in \Sigma(\varphi)$.
\end{example}

\begin{theorem}\label{theorem:93}
Let $\varphi$ be a minimal non-resonant generalized polynomial structure. Then the semisimple part of $\varphi$ is a weak generalized Nijenhuis operator.
\end{theorem}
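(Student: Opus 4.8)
The plan is to reduce the assertion to condition 2) of Theorem \ref{theorem:56} and then exploit the refined decomposition \eqref{eq:57} of the Dorfman bracket, using non-resonance to discard the unwanted terms.

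First, by Theorem \ref{theorem:56} it suffices to prove that $(\mu+\nu)\lbra L_\mu,L_\nu\rbra\subseteq L_\mu+L_\nu$ for all $\mu,\nu\in\Sigma(\varphi)$, which is automatic when $\mu+\nu=0$. So fix $\mu,\nu\in\Sigma(\varphi)$ with $\mu+\nu\neq0$ and let $\x\in L_\mu$, $\y\in L_\nu$. Since $\varphi$ is minimal, Theorem \ref{theorem:83} provides the decomposition $d=\sum_{\lambda\in\Sigma(\varphi)}d_\lambda$, Corollary \ref{cor:104} applies, and, as recorded in \eqref{eq:57},
\begin{equation*}
\lbra\x,\y\rbra=\sum_{\lambda\in\Sigma(\varphi)}\lbra\x,\y\rbra_{d_\lambda}\in L_\mu+L_\nu+\sum_{\lambda\in\Sigma(\varphi)\setminus\{-\mu,-\nu\}}L_{\mu+\nu+\lambda}\,.
\end{equation*}

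Next I would show that every summand indexed by $\lambda\in\Sigma(\varphi)\setminus\{-\mu,-\nu\}$ vanishes. For such a $\lambda$ we have $\lambda+\mu\neq0$, $\lambda+\nu\neq0$ and $\mu+\nu\neq0$, so $(\lambda+\mu)(\lambda+\nu)(\mu+\nu)\neq0$; by non-resonance this forces $P(\lambda+\mu+\nu)\neq0$, where $P$ is the minimal polynomial of $\varphi$, and hence $\lambda+\mu+\nu\notin\Sigma(\varphi)$. In particular $\varphi-(\lambda+\mu+\nu)I$ is invertible, so the component $\lbra\x,\y\rbra_{d_\lambda}$, which by the proof of Corollary \ref{cor:104} is annihilated by a power of that operator, is zero. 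Consequently $\lbra\x,\y\rbra\in L_\mu+L_\nu$, which is exactly condition 2) of Theorem \ref{theorem:56}; therefore the semisimple part $\varphi_s$ is a weak generalized Nijenhuis operator.

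The analytic content is entirely carried by Corollary \ref{cor:104}, so the argument is short; the one genuinely new ingredient is the elementary but essential observation that the non-resonance hypothesis is precisely what prevents a new eigenvalue $\lambda+\mu+\nu$ from appearing among the components of $\lbra L_\mu,L_\nu\rbra$. This is the conceptual crux of the statement and the sole place where a hypothesis beyond minimality is invoked; I expect the main obstacle to be not the calculation itself but the careful bookkeeping of which eigenvalue each derived-bracket component $\lbra\x,\y\rbra_{d_\lambda}$ is attached to.
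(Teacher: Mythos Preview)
Your proposal is correct and follows essentially the same approach as the paper: both arguments invoke the decomposition \eqref{eq:57} (a consequence of Corollary \ref{cor:104}), use non-resonance to conclude that $L_{\mu+\nu+\lambda}=0$ for each $\lambda\in\Sigma(\varphi)\setminus\{-\mu,-\nu\}$ when $\mu+\nu\neq0$, and then appeal to condition 2) of Theorem \ref{theorem:56}. Your write-up is simply a more detailed unpacking of the paper's two-sentence proof.
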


\begin{proof}
The non-resonance assumption guarantees that if $\mu+\nu\neq 0$, then \eqref{eq:57} reduces to $\lbra L_\mu,L_\nu\rbra\subseteq L_\mu+L_\nu$. Hence the result follows from Theorem \ref{theorem:56}.
\end{proof}

\begin{prop}\label{prop:116}
Let $\varphi$ be a non-resonant generalized polynomial structure. Then its semisimple part $\varphi_s$ is minimal if and only if $\varphi_s$ is a weak generalized Nijenhuis operator.
\end{prop}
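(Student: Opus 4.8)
The statement asserts, for a non-resonant generalized polynomial structure $\varphi$, the equivalence "$\varphi_s$ minimal $\iff$ $\varphi_s$ a weak generalized Nijenhuis operator." One direction is already essentially in hand: by Proposition \ref{prop:88}, any semisimple generalized polynomial structure that is a weak generalized Nijenhuis operator is minimal, and this holds without any non-resonance hypothesis. So the forward implication ("minimal $\Rightarrow$ weak generalized Nijenhuis") is the only thing that needs the non-resonance assumption.

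For that forward direction, the plan is to invoke Theorem \ref{theorem:93} applied to $\varphi_s$ in place of $\varphi$. The subtle point is that Theorem \ref{theorem:93} requires $\varphi_s$ itself to be non-resonant, not just $\varphi$. But this is immediate: $\Sigma(\varphi_s) = \Sigma(\varphi)$ since the semisimple part has the same spectrum as $\varphi$, and the non-resonance condition only refers to the spectrum. Hence $\varphi$ non-resonant implies $\varphi_s$ non-resonant. Since we are assuming $\varphi_s$ is minimal, and $\varphi_s$ is (trivially) semisimple, Theorem \ref{theorem:93} applied to $\varphi_s$ yields that the semisimple part of $\varphi_s$ — which is $\varphi_s$ itself — is a weak generalized Nijenhuis operator.

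So the proof is essentially a two-line bookkeeping argument: one direction is Proposition \ref{prop:88}, the other is Theorem \ref{theorem:93} combined with the observation $\Sigma(\varphi_s) = \Sigma(\varphi)$. The main (minor) obstacle is simply making sure that the hypotheses of the two cited results line up correctly — in particular noting that "semisimple part of a semisimple operator is itself" and that non-resonance passes from $\varphi$ to $\varphi_s$ for free.

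\begin{proof}
By definition, $\Sigma(\varphi_s)=\Sigma(\varphi)$, so the non-resonance of $\varphi$ implies that $\varphi_s$ is a non-resonant generalized polynomial structure. If $\varphi_s$ is minimal, then, since $\varphi_s$ is semisimple, Theorem \ref{theorem:93} applied to $\varphi_s$ shows that the semisimple part of $\varphi_s$, namely $\varphi_s$ itself, is a weak generalized Nijenhuis operator. Conversely, if $\varphi_s$ is a weak generalized Nijenhuis operator then, being semisimple, it is minimal by Proposition \ref{prop:88}.
\end{proof}
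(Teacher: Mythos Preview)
Your proof is correct and follows essentially the same approach as the paper: one direction is Proposition \ref{prop:88}, the other is Theorem \ref{theorem:93} applied to $\varphi_s$, together with the observation that non-resonance passes from $\varphi$ to $\varphi_s$ because they share the same spectrum (the paper phrases this last point as ``$\varphi$ and $\varphi_s$ share the same generalized eigenbundles'').
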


\begin{proof}
In one direction this statement 2) in Proposition \ref{prop:88} specialized to the non-resonant case. The converse follows from Theorem \ref{theorem:93} applied to $\varphi_s$ (which is equal to its own semisimple part), taking into account that $\varphi$ and $\varphi_s$ share the same generalized eigenbundles. 
\end{proof}

\begin{example}\label{ex:117}
Let $\varphi$ a generalized polynomial structure of with minimal polynomial $P(x)=(x^2+c_1)^{m_1}(x^2+c_2)^{m_2}$ for some positive integers $m_1$, $m_2$ and real numbers $c_1$, $c_2$ such that $c_1c_2(c_1-c_2)(c_1-4c_2)(4c_1-c_2)(c_1-9c_2)(9c_1-c_2)\neq 0$. Then $\varphi_s$ is minimal if and only it is a weak generalized Nijenhuis operator. In particular, a generic generalized polynomial structure of degree $4$ is minimal if and only if it is a weak generalized Nijenhuis operator. 
\end{example}

\begin{example}
To illustrate the fact that the non-resonance assumption in Theorem \ref{theorem:93} and in Proposition \ref{prop:116} cannot be completely removed, consider the following example. Let $M$ be the cartesian product of the 3-dimensional Heisenberg nilmanifold with $S^1$. Then $\mathbb T M$ can be globally trivialized by vector fields $\e_1,\ldots,\e_4$ and dual 1-forms $\e^1,\ldots,\e^4$ with Dorfman brackets encoded by the single (up to skew-symmetry) constraint $T(\e_1,\e_2,\e^3)=1$. Let $\lambda$, $\mu$ be real numbers and let $\varphi$ be the unique skew-symmetric endomorphism of $\T M$ such that $\varphi(\e_1)=\lambda \e_1$, $\varphi(\e_2)=\lambda \e_2$, $\varphi(\e_3)=\mu \e_3$ and $\varphi(\e_4)=\mu \e_4$. Then the minimal polynomial of $\varphi$ is $P(x)=(x^2-\lambda^2)(x^2-\mu^2)$. Let us assume $\lambda\neq \mu$ so that $\varphi$ is semisimple and $L_\lambda$ is not closed under the Dorfman bracket. Let us further assume that $\lambda\neq 0$ so that by Theorem \ref{theorem:56} we know that $\varphi$ cannot be a weak generalized Nijenhuis operator.  From \eqref{eq:47} we then have that $\varphi$ is minimal if and only if 
\begin{equation}
    0=\mathcal C_\varphi(\e_1,\e_2,\e^3)=P(2\lambda-\mu)=4\lambda(3\lambda-\mu)(\lambda-\mu)^2
\end{equation}
if and only if $3\lambda=\mu$, which as shown in Example \ref{ex:117} is a case in which non-resonance fails to hold. Hence $3\lambda=\mu\neq 0$ implies that $\varphi$ is minimal and semisimple but not a weak generalized Nijenhuis operator.    
\end{example}

\begin{rem}
Let $\varphi$ be a non-resonant minimal generalized polynomial structure so that, by Theorem \ref{theorem:93}, $\varphi_s$ is a weak generalized Nijenhuis operator. In principle, we have two decompositions of the de Rham operator, namely \eqref{eq:38} and the decomposition $d=\sum_\lambda \delta_\lambda$ whose existence is guaranteed by Theorem \ref{theorem:56}. By comparing Remark \ref{rem:61} and Example \ref{ex:103}, we see that in the case of generalized complex structures one has $\delta_{\pm\sqrt{-1}}=d_{\mp\sqrt{-1}}$. For the general case, we note that specializing Corollary \ref{cor:104} by imposing the non-resonance condition, we can repeat the proof of 2)$\Rightarrow$ 3) in Theorem \ref{theorem:56} with $\delta_\lambda$ replaced by $d_{-\lambda}$. Thus, $d_{\pm\lambda}$ has degree $\mp 1$ with respect to the $(L_\lambda, L_{-\lambda})$-grading and degree 0 with respect to the $(L_\mu,L_{-\mu})$-grading for every $\mu\in \Sigma(\varphi)\setminus\{0,\pm \lambda\}$. In particular, we conclude that if $\varphi$ is minimal and non-resonant, then $d_\lambda=\delta_{-\lambda}$ for every $\lambda\in \Sigma(\varphi)\setminus\{0\}$. \end{rem}

\section{Examples: invariant polynomial structures on Lie groups}\label{sec:6}

In this section, $G$ will denote a real Lie group and $\mathfrak{g}$ will be its Lie algebra. The invariant polynomial structures on $G$ coincide with all skew-symmetric endomorphisms of the space $D_1=D(\mathfrak{g})=\mathfrak{g}\oplus\mathfrak{g}^*$ (the \emph{Drinfeld double} of $\mathfrak{g}$), identified with the space of all left-invariant sections of $\T G$.  
Note that $D_1$ is itself a Lie algebra with respect to the Dorfman bracket.
In the following examples we will consider elements of the vector space $D_3=D_1^{\otimes 3}$, associated to the Lie algebra structure and an invariant polynomial structure $\varphi:D_1\rightarrow D_1$.
Note that the tautological inner product defines an isomorphism $D_1^*\rightarrow D_1, \ \omega\mapsto \widehat{\omega}$
such that $\omega(\x)=2\langle\widehat{\omega},\x\rangle$ for all $\x\in D_1$. Via tensor product, we obtain isomorphisms
$D_1^{*\otimes h}\otimes D_1^{\otimes k}\rightarrow D_1^{\otimes(h+k)}, {\mathcal T} \mapsto \widehat{{\mathcal T}}$.
In particular, any invariant polynomial structure $\varphi$ produces the elements
$\widehat{\mathcal{C}}_{\varphi},\ \widehat{\mathcal{T}}^{(n)}_{\varphi},\ \widehat{\mathcal{S}}^{(n)}_{\varphi}\in D_3 $.
In the following examples we calculate the elements  $\varphi_1^{i_1}\varphi_2^{i_2}\varphi_3^{i_3}\cdot \widehat{\mathcal{T}}^{(0)}_{\varphi}$, where the action was defined in Remark \ref{rem:PolynomialAction}: in particular, we will represent these elements with respect to a Jordan basis and adopt the following abbreviation:
$$\x\y\z:=\x\otimes\y\otimes\z.$$
Note that the polynomial action of $\varphi$ is graded skew-symmetric with respect to the  following non-degenerate inner product:
$$\langle\x_1\x_2\x_3, \y_1\y_2\y_3\rangle=\prod_{i=1}^3\langle\x_i,\y_i\rangle$$
In our examples, $\{v_i\}_{i=1}^n$ will denote a fixed basis of $\mathfrak{g}$, with dual basis given by $\{\alpha_i\}_{i=1}^n$.
Moreover, in our description of the tensors it will be convenient to work with the complexification  of $\varphi$ and to use to use a basis built upon the block decomposition of $\varphi$.
More specifically, we will choose bases of $D_1\otimes\C$ of the form $$\left\{\b_j^{V,l}:j,V,l\right\}$$ where the indices $V,l$ and $j$ are described as follows:
\begin{itemize}
    \item each index $V$ indicates a distinct indecomposable complex block;
    \item for a fixed block $V$ of degree $k$ we have $1\leq j\leq k$ and the vectors $$\left\{\b_k^{V,l}:l\right\}$$ form a basis of the semisimple part $W$ as described in Section \ref{decompos}. Also, these vectors will be chosen to be real if the eigenvalues of $V$ are real;
    \item for fixed $V$ (of degree $k$) and $l$, we have $\varphi_n\left(\b_j^{V,l}\right)=\b_{j-1}^{V,l}$ for all $j=2,3,\dots k$ while $\varphi_n\left(\b_1^{V,l}\right)=0$, that is we have the Jordan chain
    $$\b_k^{V,l}\longmapsto \b_{k-1}^{V,l}\longmapsto \b_{k-2}^{V,l}\longmapsto \b_{k-3}^{V,l}\longmapsto\dots\longmapsto\b_1^{V,l}\longmapsto 0.$$
    (We will simplify the indices $V,l$ to lighten the notation).
\end{itemize}
\subsection{The Heisenberg group.}
 Let $\mathfrak{g}=\mathfrak{n}(3)$, the 3-dimensional Heisenberg Lie algebra, which is nilpotent and has structure equations
$$[v_1,v_2]=v_3, \quad [v_1,v_3]=[v_2,v_3]=0.$$
Therefore, 
$$\widehat{\mathcal{T}}^{(0)}_{\varphi}=\alpha_1\alpha_2v_3\pm \text{perm.}$$
where \lq\lq $\pm\ \text{perm.}$\rq\rq refers to the signed sum of terms where the three factors are permuted.

\subsubsection{Example.} Define the endomorphism $\varphi:D_1\rightarrow D_1$ as follows:
$$\varphi(v_1)=v_2,\quad \varphi(v_2)=v_3+\alpha_3,\quad \varphi(v_3)=-\alpha_2,$$
$$\varphi(\alpha_1)=0,\quad \varphi(\alpha_2)=-\alpha_1, \quad \varphi(\alpha_3)=-\alpha_2$$
By construction, $\varphi$ is skewsymmetric, it has
minimal polynomial $P(x)=x^5$, and two Jordan chains 
$$v_1\longmapsto v_2\longmapsto v_3+\alpha_3\longmapsto -2\alpha_2\longmapsto 2\alpha_1\longmapsto 0$$
$$v_3-\alpha_3\longmapsto 0$$
which span two real blocks, of types $\Delta_5^+(0)$ and  $\Delta_1^-(0)$ respectively.
Defining
$$\b_1=2\alpha_1,\quad \b_2=-2\alpha_2,\quad \b_3=v_3+\alpha_3,\quad \b_4=v_2,\quad \b_5=v_1,\quad \b_1'=v_3-\alpha_3$$
we get
\begin{equation}
-8\widehat{\mathcal{T}}^{(0)}_{\varphi}=\b_1\b_2\b_3+\b_1\b_2\b_1'\pm \text{perm.}
\end{equation}
Upon inspection, we see that the polynomials that do not annihilate the summands have total degree at most $3$. in particular, $\widehat{\mathcal{C}}_{\varphi}=P(\varphi_1+\varphi_2+\varphi_3)\cdot \widehat{\mathcal{T}}^{(0)}_{\varphi}=0$, i.e. $\varphi$ is minimal. For the same reason, $\widehat{\mathcal{T}}^{(k)}_{\varphi}=0$ for $k\geq 2$. Moreover,
\begin{align*}
-8\widehat{\mathcal T}^{(1)}_{\varphi}=&(\varphi_3^2+\varphi_1\varphi_2+\varphi_1\varphi_3+\varphi_2\varphi_3)\cdot (-8\widehat{\mathcal T}^{(0)}_{\varphi})\\
=&(\b_1\b_2\b_1 -\b_2\b_1\b_1)+(\b_1\b_2\b_1-\b_2\b_1\b_1)\\
& +(-\b_1\b_1\b_2+\b_2\b_1\b_1)+(\b_1\b_1\b_2-\b_1\b_2\b_1)\\
=&-\b_2\b_1\b_1+\b_1\b_2\b_1\,,
\end{align*}
while 
\begin{equation}
-8\widehat{{\mathcal S}}^{(1)}_{\varphi}=(-\varphi_1-\varphi_2)\cdot (-\b_2\b_1\b_1+\b_1\b_2\b_1)=\b_1\b_1\b_1-\b_1\b_1\b_1=0\,,
\end{equation}
and also $\widehat{{\mathcal S}}^{(n)}_{\varphi}=0$ for all $n\geq 1$.

\subsubsection{Example}
Let now $$\varphi(v_1)=0,\quad \varphi(v_2)=-v_1,\quad \varphi(v_3)=-v_2,$$
$$\varphi(\alpha_1)=-\alpha_2,\quad \varphi(\alpha_2)=v_3+\alpha_3, \quad \varphi(\alpha_3)=-v_2$$
As in the previous example, $\varphi$ is skew-symmetric, is has minimal polynomial $P(x)=x^5$ and real block decomposition
$\Delta_5^+(0)\oplus\Delta_1^-(0)$. Its 
Jordan chains are
$$\alpha_1\longmapsto \alpha_2\longmapsto v_3+\alpha_3\longmapsto -2v_2\longmapsto 2v_1\longmapsto 0$$
$$v_3-\alpha_3\longmapsto 0$$
and block decomposition of same type as in the previous example. Defining
$$\b_1=2v_1,\quad \b_2=-2v_2,\quad \b_3=v_3+\alpha_3,\quad \b_4=\alpha_2,\quad \b_5=\alpha_1,\quad \b_1'=v_3-\alpha_3$$
we get
\begin{equation}
2\widehat{\mathcal{T}}^{(0)}=\b_5\b_4\b_3+\b_5\b_4\b_1'\pm \text{perm.}
\end{equation}
In this case, upon inspection we see that the polynomials that do not annihilate the summands have total degree at most $9$, so that $\widehat{\mathcal T}^{(k)}_{\varphi}=0$ and  $\widehat{\mathcal S}^{(k)}_{\varphi}=0$ for $k\geq 5$.  Now we prove that $\widehat{\mathcal{C}}_{\varphi}\neq 0$,
by showing that
$\langle\widehat{\mathcal{C}}_{\varphi}, \x\y\z\rangle\neq 0$ for some $\x\y\z$.
Note first that
\begin{equation}\label{eq:82}
\langle\widehat{\mathcal{C}}_{\varphi}, \x\y\z\rangle=\langle P(\varphi_1+\varphi_2+\varphi_3)\cdot\widehat{\mathcal{T}}^{(0)}_{\varphi}, \x\y\z\rangle=-\langle \widehat{\mathcal{T}}^{(0)}_{\varphi}, P(\varphi_1+\varphi_2+\varphi_3)\cdot\x\y\z\rangle
\end{equation}
Setting $\x\y\z=\b_1'\b_5\b_3$ in \eqref{eq:82}, we get
\begin{equation}
P(\varphi_1+\varphi_2+\varphi_3)\cdot \b_1'\b_5\b_3=5\b_1'\b_1\b_2+10\b_1'\b_2\b_1
\end{equation}
and 
\begin{align*}
\langle2\widehat{\mathcal{T}}^{(0)}_{\varphi},5\b_1'\b_1\b_2+10\b_1'\b_2\b_1\rangle=&\langle\b_1'\b_5\b_4-\b_1'\b_4\b_5,5\b_1'\b_1\b_2+10\b_1'\b_2\b_1\rangle\\
=&5\langle\b_1'\b_5\b_4,\b_1'\b_1\b_2\rangle-10\langle\b_1'\b_4\b_5,\b_1'\b_2\b_1\rangle\\
=&-5\,,
\end{align*}
so that $\langle\widehat{\mathcal{C}}_{\varphi}, \b_1'\b_5\b_3\rangle \neq 0$. Therefore, $\varphi$ is not minimal. \\
Now, we are going to prove that $\widehat{{\mathcal T}}^{(4)}_{\varphi}\neq 0$, by showing that $\langle\widehat{{\mathcal T}}^{(4)}_{\varphi}, \b_5\b_4\b_5\rangle\neq 0$. In fact,
\begin{align*}
\langle 2\widehat{\mathcal T}^{(4)}_{\varphi}, \b_5\b_4\b_5\rangle=&\langle2\widehat{\mathcal T}^{(0)}_{\varphi}, (\varphi_1+\varphi_3)^4(\varphi_2+\varphi_3)^4\cdot\b_5\b_4\b_5\rangle\\
=&\sum_{i,j}\binom{4}{i}\binom{4}{j}\langle2\widehat{\mathcal T}^{(0)}_{\varphi}, \b_{1+i}\b_{j}\b_{5-i-j}\rangle\\
=& 6\langle\b_5\b_4\b_3,\b_1\b_2\b_3\rangle-4\langle\b_5\b_3\b_4,\b_1\b_3\b_2\rangle -16\langle\b_4\b_5\b_3,\b_2\b_1\b_3\rangle\\
&+16\langle\b_4\b_3\b_5,\b_2\b_3\b_1\rangle+24\langle\b_3\b_5\b_4,\b_3\b_1\b_2\rangle-36\langle\b_3\b_4\b_5,\b_3\b_2\b_1\rangle\\
=&10\,.
\end{align*}
However, a similar calculation shows that $\widehat{\mathcal{S}}^{(4)}_{\varphi}=0$.

\subsubsection{Example}
Define $$\varphi(v_1)=v_2-\alpha_2, \quad \varphi(v_2)=-v_1+\alpha_1,\quad \varphi(v_3)=0$$
$$\varphi(\alpha_1)=\alpha_2\quad \varphi(\alpha_2)=-\alpha_1,\quad\varphi(\alpha_3)=0$$
In this case, $\varphi$ has minimal polynomial $P(x)=x(x^2+1)^2$, block decomposition $\Delta_2^+(\sqrt{-1},-\sqrt{-1})\oplus \Delta_1^0(0,0)$
and Jordan chains
$$v_1\longmapsto -\alpha_2\longmapsto 0,\qquad v_2\longmapsto \alpha_1\longmapsto 0$$
$$v_3\longmapsto 0 \qquad \alpha_3\longmapsto 0$$
Define
$$\b_1^1=-\alpha_2-\sqrt{-1}\alpha_1,\quad \b_2^1=v_1-\sqrt{-1}v_2,\quad \b_i^2=\overline{\b_i^1},\quad \b_1^3=v_3, \quad \b_1^4=\alpha_3 $$
Then, 
\begin{equation}
2\sqrt{-1}\widehat{\mathcal{T}}^{(0)}_{\varphi}=\b_1^1\b_1^2\b_1^3\pm \text{perm.}
\end{equation}
Therefore,
$\widehat{\mathcal{C}}_{\varphi}, \widehat{\mathcal{T}}^{(n)}_{\varphi}, \widehat{\mathcal{S}}^{(n)}_{\varphi}$ do not depend on the nilpotent part of $\varphi$, and
$\varphi_1^{i_1}\varphi_2^{i_2}\varphi_3^{i_3}\cdot \widehat{\mathcal{T}}^{(0)}_{\varphi}=0$ if $i_1i_2i_3\neq 0$.
Moreover, if $i_1i_2\neq 0
$,
\begin{equation}
2\varphi_1^{i_1}\varphi_2^{i_2}\cdot\widehat{\mathcal{T}}^{(0)}_{\varphi}=\left({\sqrt{-1}}\right)^{i_1+i_2-1}\left((-1)^{i_2}\b_1^1\b_1^2\b_1^3
-(-1)^{i_1}\b_1^2\b_1^1\b_1^3
\right)
\end{equation}
and analogously, if $i_1i_3\neq 0$,
\begin{equation}
2\varphi_1^{i_1}\varphi_3^{i_3}\cdot\widehat{\mathcal{T}}^{(0)}_{\varphi}=\left({\sqrt{-1}}\right)^{i_1+i_3-1}\left(-(-1)^{i_3}\b^1_1\b_1^3\b_1^2+(-1)^{i_1}\b_1^2\b_1^3\b_1^1\right)
\end{equation}
and if $i_2i_3\neq 0$,
\begin{equation}
2\varphi_2^{i_2}\varphi_3^{i_3}\cdot\widehat{\mathcal{T}}^{(0)}_{\varphi}=\left({\sqrt{-1}}\right)^{i_2+i_3-1}\left((-1)^{i_3}\b_1^3\b_1^1\b_1^2-(-1)^{i_2}\b_1^3\b_1^2\b_1^1\right)\,.
\end{equation}
Finally, for $i\neq 0$,
\begin{align*}
2\varphi_1^{i}\cdot\widehat{\mathcal{T}}^{(0)}_{\varphi}=&\left({\sqrt{-1}}\right)^{i-1}\left(\b_1^1\b_1^2\b_1^3-\b^1_1\b_1^3\b_1^2-(-1)^i\b_1^2\b_1^1\b_1^3+(-1)^i\b_1^2\b_1^3\b_1^1\right)\\
2\varphi_2^{i}\cdot\widehat{\mathcal{T}}^{(0)}_{\varphi}=&\left({\sqrt{-1}}\right)^{i-1}\left((-1)^i\b_1^1\b_1^2\b_1^3-\b_1^2\b_1^1\b_1^3+\b_1^3\b_1^1\b_1^2-(-1)^i\b_1^3\b_1^2\b_1^1\right)\\
2\varphi_3^{i}\cdot\widehat{\mathcal{T}}^{(0)}_{\varphi}=&\left({\sqrt{-1}}\right)^{i-1}\left(-(-1)^i\b^1_1\b_1^3\b_1^2+\b_1^2\b_1^3\b_1^1+(-1)^i\b_1^3\b_1^1\b_1^2-\b_1^3\b_1^2\b_1^1\right)\,.
\end{align*}
In particular, $(\varphi_1+\varphi_2+\varphi_3)\cdot\widehat{\mathcal{T}}^{(0)}_{\varphi}=0$, whence $\widehat{\mathcal{C}}_{\varphi}=0$. Also,
\begin{align*}
2\widehat{\mathcal{T}}^{(n)}_{\varphi}=&2(\varphi_1+\varphi_3)^n(\varphi_2+\varphi_3)^n\cdot \widehat{\mathcal{T}}^{(0)}_{\varphi}\\
=&2\varphi_1^n\varphi_2^n\cdot \widehat{\mathcal{T}}^{(0)}_{\varphi}\\
=&-\sqrt{-1}\left(\b_1^1\b_1^2\b_1^3
-\b_1^2\b_1^1\b_1^3\right)
\end{align*}
which is not vanishing for all integers $n>0$. This calculation also implies $\widehat{\mathcal{S}}^{(n)}_{\varphi}=0$ for all integers $n>0$. 
\subsubsection{Example}

$$\varphi(v_1)=v_2, \quad \varphi(v_2)=-v_1,\quad \varphi(v_3)=0$$
$$\varphi(\alpha_1)=\alpha_2-v_2, \quad \varphi(\alpha_2)=-\alpha_1+v_1,\quad \varphi(\alpha_3)=0$$
$$\alpha_1\longmapsto -v_2\longmapsto 0, \quad \alpha_2\longmapsto v_1 \longmapsto 0$$
$$v_3\longmapsto 0, \quad \alpha_3\longmapsto 0$$
$$\Delta_2^+(\sqrt{-1},-\sqrt{-1})\oplus \Delta_1^0(0,0)$$
Let
$$\b_1^1=-v_2-\sqrt{-1}v_1, \quad \b_2^1=\alpha_1-\sqrt{-1}\alpha_2,\quad \b_i^2=\overline{\b_i^1},\quad \b_1^3=v_3, \quad \b_1^4=\alpha_3$$
Hence
\begin{equation}
    2\sqrt{-1}\widehat{\mathcal{T}}^{(0)}_{\varphi}=\b_2^1\b_2^2\b_1^3\pm \text{perm.}
\end{equation}
so that, as before, $\varphi_1\varphi_2\varphi_3\cdot \widehat{\mathcal{T}}^{(0)}_{\varphi}=0$. In order to prove that $\widehat{{\mathcal C}}_{\varphi}\neq 0$, it is sufficient to show that $\langle \widehat{{\mathcal C}}_{\varphi}, \b_1^1\b_2^2\b_1^4\rangle\neq 0$.
First of all, note that
\begin{equation}
\langle \widehat{{\mathcal C}}_{\varphi}, \b_1^1\b_2^2\b_1^4\rangle=-\langle \widehat{{\mathcal T}}_{\varphi}^{(0)}, R(\varphi_1,\varphi_2)\cdot \b_1^1\b_2^2\b_1^4\rangle\,,
\end{equation}
where
\begin{equation}
R(\varphi_1,\varphi_2)=5(\varphi_1^4\varphi_2+\varphi_1\varphi_2^4)+10(\varphi_1^3\varphi^2_2+\varphi_1^2\varphi_2^3)+6(\varphi_1^2\varphi_2+\varphi_1\varphi_2^2)\,.
\end{equation}
Now, for all positive integers $a,b$ with $a+b$ odd,
\begin{equation}
(\varphi_1^a\varphi_2^b+\varphi_1^b\varphi_2^a)\cdot \b_1^1\b_2^2\b_1^4=(a-b)\left(\sqrt{-1}\right)^{a+b-1}(-1)^b\b_1^1\b_1^2\b_1^4
\end{equation}
whence
\begin{equation}
R(\varphi_1,\varphi_2)\cdot\b_1^1\b_2^2\b_1^4=\b_1^1\b_1^2\b_1^4
\end{equation}
and
\begin{align*}\langle \b_2^1\b_2^2\b_1^3-\b_2^1\b_1^3\b_2^2-&\b_2^2\b_2^1\b_1^3+\b_2^2\b_1^3\b_2^1+\b_1^3\b_2^1\b_2^2-\b_1^3\b_2^2\b_2^1  \    ,\ \b_1^1\b_1^2\b_1^4    \rangle=\\
=&\langle -\b_2^2\b_2^1\b_1^3\      ,\ \b_1^1\b_1^2\b_1^4    \rangle\neq 0,
\end{align*}
which implies $\widehat{{\mathcal C}}_{\varphi}\neq 0$. Similar calculations show that $\widehat{\mathcal{T}}^{(n)}_{\varphi}\neq 0 $ and $\widehat{\mathcal{S}}^{(n)}_{\varphi}\neq 0 $ for $n\geq 1$.

\subsection{A four-dimensional nilpotent group} Let $\mathfrak{g}$ be the four-dimensional nilpotent Lie algebra with structure equations
\begin{equation}
[v_1,v_2]=v_3, \quad [v_2,v_4]=-v_1
\end{equation}
and $[v_i,v_j]=0$ if $\{i,j\}\neq \{1,2\}$ or $\{2,4\}$. In this case,
\begin{equation}
\widehat{\mathcal{T}}^{(0)}_{\varphi}=\alpha_1\alpha_2v_3-\alpha_2\alpha_4v_1\pm \text{perm.}
\end{equation}

\subsubsection{Example} The endomorphism $\varphi: D_1\rightarrow D_1$ defined by
$$\varphi(v_1)=0, \quad \varphi(v_2)=v_4, \quad \varphi(v_3)=v_1, \quad \varphi(v_4)=v_3,$$
$$\varphi(\alpha_1)=-\alpha_3,\quad \varphi(\alpha_2)=0, \quad \varphi(\alpha_3)=-\alpha_4, \quad \varphi(\alpha_4)=-\alpha_2$$
is skew-symmetric, has single indecomposable real block of type $\Delta_4^0(0,0)$
and minimal polynomial $P(x)=x^4$. Moreover, its Jordan chains are
$$v_2\longmapsto v_4\longmapsto v_3\longmapsto v_1\longmapsto 0,$$
$$-\alpha_1\longmapsto\alpha_3\longmapsto-\alpha_4\longmapsto\alpha_2\longmapsto 0.$$
As before, setting
$$\b_1^1=v_1, \quad \b_2^1=v_3, \quad \b_3^1=v_4,\quad \b_4^1=v_2$$
$$\b_1^2=\alpha_2, \quad \b_2^2=-\alpha_4, \quad \b_3^2=\alpha_3,\quad \b_4^2=-\alpha_1$$
we get
\begin{equation}\widehat{\mathcal{T}}^{(0)}_{\varphi}=-\b_4^2\b_1^2\b_2^1+\b_1^2\b_2^2\b_1^1 \pm \text{perm.}
\end{equation}
and 
\begin{equation}
    \widehat{\mathcal{C}}_{\varphi}=(\varphi_1+\varphi_3)^4\cdot(-\b_4^2\b_1^2\b_2^1)\pm \text{perm.}=-4\b_1^2\b_1^2\b_1^1\pm \text{perm.}=0\,,
\end{equation}
so that $\varphi$ is minimal. Moreover, for $n\geq 1$,
\begin{equation}
    \widehat{\mathcal T}^{(n)}_{\varphi}=(\varphi_2\varphi_3+\varphi_3^2)^n(\b_1^2\b_4^2\b_2^2-\b_1^2\b_2^2\b_4^2)+(\varphi_1\varphi_3+\varphi_3^2)^n(-\b_4^2\b_1^2\b_2^2+\b_2^2\b_1^2\b_4^2)
\end{equation}
whence
\begin{equation}
\widehat{\mathcal T}^{(1)}_{\varphi}=\b_1^2\b_3^2\b_1^2-\b_1^2\b_2^1\b_3^2-\b_1^2\b_2^2\b_2^2-\b_3^2\b_1^2\b_1^2+\b_1^2\b_1^2\b_3^2+\b_2^2\b_1^2\b_2^2
\end{equation}
and $\widehat{\mathcal T}^{(n)}_{\varphi}=0$ for $n\geq 2$. Finally,

\begin{equation}\widehat{\mathcal S}^{(1)}_{\varphi}=\b_1^2\b_2^2\b_1^2-\b_1^2\b_1^1\b_3^2-\b_2^2\b_1^2\b_1^2\end{equation}
and $\widehat{\mathcal S}^{(n)}_{\varphi}=0$ for $n\geq 2$.

\begin{bibdiv} 
\begin{biblist}

\bib{AldiGrandini15}{article}{
   author={Aldi, Marco},
   author={Grandini, Daniele},
   title={Generalized contact geometry and T-duality},
   journal={J. Geom. Phys.},
   volume={92},
   date={2015},
   pages={78--93}
}

\bib{AldiGrandini16}{article}{
   author={Aldi, Marco},
   author={Grandini, Daniele},
   title={An abstract Morimoto theorem for generalized $F$-structures},
   journal={Q. J. Math.},
   volume={67},
   date={2016},
   number={2},
   pages={161--182}
}

\bib{AldiGrandini17}{article}{
   author={Aldi, Marco},
   author={Grandini, Daniele},
   title={Generalized almost product structures and generalized
   CRF-structures},
   journal={J. Geom. Phys.},
   volume={121},
   date={2017},
   pages={93--107}
}

\bib{Batista14}{article}{
   author={Batista, Carlos},
   title={Pure subspaces, generalizing the concept of pure spinors},
   journal={J. Geom. Phys.},
   volume={81},
   date={2014},
   pages={117--127}
}

\bib{BenedettiLisca18}{article}{
   author={Benedetti, Riccardo},
   author={Lisca, Paolo},
   title={Framing 3-manifolds with bare hands},
   journal={Enseign. Math.},
   volume={64},
   date={2018},
   number={3-4},
   pages={395--413},
}

\bib{BlagaCrasmareanu14}{article}{
   author={Blaga, A. M.},
   author={Crasmareanu, M.},
   title={A class of almost tangent structures in generalized geometry},
   journal={Balkan J. Geom. Appl.},
   volume={19},
   date={2014},
   number={2},
   pages={23--35}
}

\bib{BlagaNannicini20}{article}{
   author={Blaga, Adara M.},
   author={Nannicini, Antonella},
   title={Generalized metallic structures},
   journal={Rev. Un. Mat. Argentina},
   volume={61},
   date={2020},
   number={1},
   pages={73--86}
}

\bib{BoualemBrouzet12}{article}{
   author={Boualem, Hassan},
   author={Brouzet, Robert},
   title={Semi-simple generalized Nijenhuis operators},
   journal={J. Geom. Mech.},
   volume={4},
   date={2012},
   number={4},
   pages={385--395}
}

\bib{Buttin74}{article}{
   author={Buttin, Claudette},
   title={Th\'{e}orie des op\'{e}rateurs diff\'{e}rentiels gradu\'{e}s sur les formes
   diff\'{e}rentielles},
   journal={Bull. Soc. Math. France},
   volume={102},
   date={1974},
   pages={49--73}
}

 \bib{BurgoyneCushman77}{article}{
   author={Nicholas Burgoyne},
   author={Richard Cushman},
   title={Conjugacy Classes in Linear Groups },
   journal={Journal of Algebra},
   volume={44},
   date={1977},
   pages={339--362}
}

\bib{Cavalcanti06}{article}{
   author={Cavalcanti, Gil R.},
   title={The decomposition of forms and cohomology of generalized complex
   manifolds},
   journal={J. Geom. Phys.},
   volume={57},
   date={2006},
   number={1},
   pages={121--132}
}

\bib{Courant90}{article}{
   author={Courant, Theodore James},
   title={Dirac manifolds},
   journal={Trans. Amer. Math. Soc.},
   volume={319},
   date={1990},
   number={2},
   pages={631--661}
}

\bib{Eliopoulos65}{article}{
   author={Eliopoulos, Hermes A.},
   title={On the general theory of differentiable manifolds with almost
   tangent structure},
   journal={Canad. Math. Bull.},
   volume={8},
   date={1965},
   pages={721--748}
}

\bib{GoldbergPetridis73}{article}{
   author={Goldberg, Samuel I.},
   author={Petridis, Nicholas C.},
   title={Differentiable solutions of algebraic equations on manifolds},
   journal={Kodai Math. Sem. Rep.},
   volume={25},
   date={1973},
   pages={111--128}
}

\bib{GoldbergYano70}{article}{
   author={Goldberg, Samuel I.},
   author={Yano, Kentaro},
   title={Polynomial structures on manifolds},
   journal={Kodai Math. Sem. Rep.},
   volume={22},
   date={1970},
   pages={199--218}
}

\bib{Gualtieri11}{article}{
author={Gualtieri, Marco},
title={Generalized complex geometry},
journal={Ann. of Math. (2)},
volume={174},
date={2011},
number={1},
pages={75--123}
}

\bib{Guttenberg07}{article}{
   author={Guttenberg, Sebastian},
   title={Brackets, sigma models and integrability of generalized complex
   structures},
   journal={J. High Energy Phys.},
   date={2007},
   number={6},
   pages={004, 67 pp. (electronic)}
}

\bib{Haantjes55}{article}{
   author={Haantjes, J.},
   title={On $X_m$-forming sets of eigenvectors},
   journal={Nederl. Akad. Wetensch. Proc. Ser. A. {\bf 58} = Indag. Math.},
   volume={17},
   date={1955},
   pages={158--162},
}

\bib{Hitchin03}{article}{
   author={Hitchin, Nigel},
   title={Generalized Calabi-Yau manifolds},
   journal={Q. J. Math.},
   volume={54},
   date={2003},
   number={3},
   pages={281--308}
}

\bib{Humphreys78}{book}{
   author={Humphreys, James E.},
   title={Introduction to Lie algebras and representation theory},
   series={Graduate Texts in Mathematics},
   volume={9},
   note={Second printing, revised},
   publisher={Springer-Verlag, New York-Berlin},
   date={1978}
}

\bib{IshiharaYano65}{article}{
   author={Yano, Kentaro},
   author={Ishihara, Shigeru},
   title={Structure defined by $f$ satisfying $f^{3}+f=0$},
   conference={
      title={Proc. U.S.-Japan Seminar in Differential Geometry},
      address={Kyoto},
      date={1965},
   },
   book={
      publisher={Nippon Hyoronsha, Tokyo},
   },
   date={1966},
   pages={153--166}
}

\bib{Kosmann-Schwarzbach04}{article}{
   author={Kosmann-Schwarzbach, Yvette},
   title={Derived brackets},
   journal={Lett. Math. Phys.},
   volume={69},
   date={2004},
   pages={61--87},
}

\bib{Kosmann-Schwarzbach19}{article}{
   author={Kosmann-Schwarzbach, Yvette},
   title={Beyond recursion operators},
   conference={
      title={Geometric methods in physics XXXVI},
   },
   book={
      series={Trends Math.},
      publisher={Birkh\"{a}user/Springer, Cham},
   },
   date={2019},
   pages={167--180}
}

\bib{Lehmann-Lejeune66}{article}{
   author={Lehmann-Lejeune, J.},
   title={Int\'{e}grabilit\'{e} des $G$-structures d\'{e}finies par une $1$-forme
   $0$-d\'{e}formable \`a valeurs dans le fibre tangent},
   language={French},
   journal={Ann. Inst. Fourier (Grenoble)},
   volume={16},
   date={1966},
   number={fasc. 2},
   pages={329--387}
}

\bib{Meinrenken13}{book}{
   author={Meinrenken, Eckhard},
   title={Clifford algebras and Lie theory},
   series={Ergebnisse der Mathematik und ihrer Grenzgebiete. 3. Folge. A
   Series of Modern Surveys in Mathematics [Results in Mathematics and
   Related Areas. 3rd Series. A Series of Modern Surveys in Mathematics]},
   volume={58},
   publisher={Springer, Heidelberg},
   date={2013},
}

\bib{PoonWade11}{article}{
   author={Poon, Yat Sun},
   author={Wade, A{\"{\i}}ssa},
   title={Generalized contact structures},
   journal={J. Lond. Math. Soc. (2)},
   volume={83},
   date={2011},
   number={2},
   pages={333--352}
}

\bib{Tomasiello08}{article}{
   author={Tomasiello, Alessandro},
   title={Reformulating supersymmetry with a generalized Dolbeault operator},
   journal={J. High Energy Phys.},
   date={2008},
   number={2},
   pages={010, 25}
}

\bib{TempestaTondo18a}{article}{
   author={Tempesta, Piergiulio},
   author={Tondo, Giorgio},
   title={A new family of higher-order Haantjes tensors and integrability},
   eprint={arXiv:1809.05908v3}
}

\bib{TempestaTondo21}{article}{
   author={Tempesta, Piergiulio},
   author={Tondo, Giorgio},
   title={Haantjes algebras and diagonalization},
   journal={J. Geom. Phys.},
   volume={160},
   date={2021},
   pages={103968, 21}
}

\bib{Vaisman08}{article}{
   author={Vaisman, Izu},
   title={Generalized CRF-structures},
   journal={Geom. Dedicata},
   volume={133},
   date={2008},
   pages={129--154}
}

\bib{Vanzura76}{article}{
   author={Vanzura, Jiri},
   title={Integrability conditions for polynomial structures},
   journal={Kodai Math. Sem. Rep.},
   volume={27},
   date={1976},
   number={1-2},
   pages={42--50}
}

\bib{Vanzurova98}{article}{
   author={Vanzurov\'{a}, Alena},
   title={Differential forms on manifolds with a polynomial structure},
   journal={Math. Slovaca},
   volume={48},
   date={1998},
   number={5},
   pages={527--533}
}

\bib{Vinogradov90}{article}{
   author={Vinogradov, A. M.},
   title={The union of the Schouten and Nijenhuis brackets, cohomology, and
   superdifferential operators},
   language={Russian},
   journal={Mat. Zametki},
   volume={47},
   date={1990},
   number={6},
   pages={138--140},
}

\bib{Yano63}{article}{
   author={Yano, Kentaro},
   title={On a structure defined by a tensor field $f$ of type $(1,\,1)$
   satisfying $f^{3}+f=0$},
   journal={Tensor (N.S.)},
   volume={14},
   date={1963},
   pages={99--109}
}

\end{biblist}
\end{bibdiv}

\vskip.1in\noindent
\address{Marco Aldi\\
Department of Mathematics and Applied Mathematics\\
Virginia Commonwealth University\\
Richmond, VA 23284, USA\\
\email{maldi2@vcu.edu}}

\vskip.1in\noindent
\address{Daniele Grandini\\
Department of Mathematics and Economics\\
Virginia State University\\
Petersburg, VA 23806, USA\\
\email{dgrandini@vsu.edu}}

\end{document}